\documentclass[a4paper,fleqn]{cas-dc}

\usepackage[numbers]{natbib}
\usepackage{graphicx}
\usepackage{subcaption}
\usepackage{dcolumn}
\usepackage{bm}

\usepackage{amsthm}  
\newtheorem{theorem}{Theorem}[section]
\newtheorem{lemma}[theorem]{Lemma}

\theoremstyle{definition} 
\newtheorem{remark}{Remark}

\usepackage{etoolbox}
\usepackage{booktabs}   
\usepackage{tabularx}
\usepackage{hyperref}

\def\tsc#1{\csdef{#1}{\textsc{\lowercase{#1}}\xspace}}
\tsc{WGM}
\tsc{QE}

\begin{document}
\let\WriteBookmarks\relax
\def\floatpagepagefraction{1}
\def\textpagefraction{.001}

\shorttitle{}    

\shortauthors{}  

\title [mode = title]{Higher-Order Multiscale Computational Method for Multi-Continuum Problems in Highly Heterogeneous Media}  



%

\author[1]{Hao Dong}
\cormark[1]
\ead{hao.dong@xidian.edu.cn}

\author[1]{Jiayuan Peng}

\author[2]{Jian Huang}

\affiliation[1]{organization={School of Mathematics and Statistics, Xidian University},
            addressline={No.2 South Taibai Road}, 
            city={Xi'an},
            postcode={710071}, 
            state={Shaanxi},
            country={China}}

\affiliation[2]{organization={School of Mathematics and Computational Science, Xiangtan University},
            addressline={}, 
            city={Xiangtan},
            postcode={411105}, 
            state={Hunan},
            country={China}}

\cortext[1]{Corresponding author.}


\begin{abstract}
This paper presents a high-accuracy higher-order multiscale method for solving multi-continuum problems in in highly heterogeneous media. First, microscopic unit cell functions are defined, leading to the derivation of macroscopic homogenized equations and formulas for calculating effective parameters, which yield a higher-order multi-scale (HOMS) asymptotic solution. Subsequently, the pointwise approximation properties of this solution to the original equations are analyzed, and its convergence rate in the integral norm is rigorously established under certain assumptions. Furthermore, a multiscale numerical algorithm is developed by integrating the finite element method (FEM), finite difference method, and interpolation technique. Finally, numerical experiments demonstrate the high accuracy, efficiency, and stability of the proposed HOMS numerical algorithm.
\end{abstract}




\begin{keywords}
porous media \sep multi-continuum \sep multiscale method
\end{keywords}

\maketitle

\section{Introduction}
Porous media are functional materials composed of a solid skeleton and interconnected or isolated pores, which can be filled with gases, liquids, or other fluid phases. Such materials are widely found in both natural and engineered systems, with typical examples including soil, rocks, sand layers, wood, biological tissues, ceramic filters, and catalyst supports, as shown in Fig~\ref{fig:gctpic}. Due to their internal structure featuring a vast number of typically interconnected pores, porous media provide physical pathways for fluid storage, transport, and reactions, making them a central focus of interdisciplinary research across fields such as fluid mechanics, geosciences, environmental engineering, biomedical engineering, and materials science. Accurately understanding and modeling fluid flow behavior in porous media is essential for numerous critical technological applications. These include the efficient extraction of oil and gas resources, prediction and remediation of groundwater contamination, optimization of mass transport in porous electrodes of fuel cells, and long-term stability assessments in geological-scale carbon capture and storage processes.
\begin{figure}[pos=htbp]
\centering
{\includegraphics[width=0.3\textwidth]{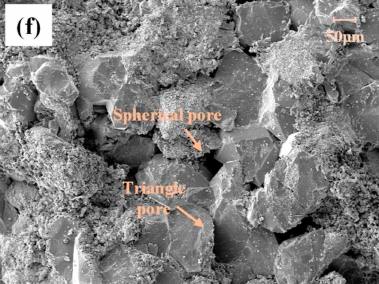}}
\caption{Structure of sandstone (from Zhao et al.~\cite{ZHAO2022110987})}
\label{fig:gctpic}
\end{figure}

Some of the first studies on porous media treated the porous medium as a single continuum. Fluid flow in porous media is often described at the macroscopic scale by Darcy's Law, which assigns effective properties to each representative volume through local simulations to construct coarse-grid equations\cite{henning_heterogeneous_2009,hillairet_homogenization_2018}. Allaire et al. investigated the influence of different obstacles on fluid behavior by homogenizing the microscopic Stokes and Navier-Stokes equations, deriving macroscopic models including Darcy's law, Stokes equations, and Brinkman-type equations~\cite{Gr1991Homogenization1,Gr1991Homogenization2,Gr1991Homogenization3}. In terms of numerical methods, Efendiev et al. developed the Generalized Multiscale Finite Element Method (GMsFEM) and extended it to perforated domains, enabling efficient coarse-grid simulations through the offline construction of local snapshot spaces and spectral decomposition\cite{2013Generalized,chung_conservative_2017,chung_generalized_2016,chung_mixed_2016,chung_online_2017,xie_time_2025}.

In contrast to the single-continuum problem, the multi-continuum problem refers to the coexistence of two or more overlapping and interacting continua within the same spatial region. Each continuum possesses its own distinct physical properties and field variables, and they are coupled through specific interaction terms. Park et al. proposed a multi-continuum model that partitions complex porous media into multiple interacting continua (e.g., matrix and fractures) and employs coupling terms to describe mass exchange between them, thereby capturing non-equilibrium flow phenomena beyond the capability of traditional single-continuum models. \cite{park_hierarchical_2020,park_homogenization_2022,park_multiscale_2020,park_multiscale_2021}. G.I. Barenblatt et al., addressing the issue of seepage in fractured rocks, introduced the pressure in pores and the pressure of liquid in fractures at each point in space. Considering liquid transfer between fractures and pores, they derived the fundamental equations for liquid seepage in fractured rocks and the general equations for liquid seepage in porous media with dual porosity \cite{1960Basic}. Eric T. Chun systematically studied the intrinsic connections and coupling mechanisms between multiscale methods and multi-continuum approaches, establishing a generalized multiscale computational framework capable of efficiently simulating flow-heat transfer processes in complex fractured media \cite{2017Coupling}. K. Pruess et al. proposed the MINC method, which accurately captures transient thermal-fluid coupling between fractures and matrix by subdividing matrix blocks into multiple interacting continua, thereby addressing strongly non-equilibrium physical phenomena that traditional models fail to describe \cite{K1982On,1985Practical,1988A}. Xie et al. divided the perforated region into multiple continua based on the size of the holes, established macroscopic equations for each medium separately, and captured the coupling between them by imposing constraints on the local problems~\cite{xie_multicontinuum_2025}. Ammosov et al. derived a multi-continuum model for coupled flow and transport equations by applying the multi-continuum homogenization method, performing multi-continuum expansions for both flow and transport solutions and formulating novel coupled constraint cell problems to capture multiscale characteristics, ultimately constructing a macroscopic system composed of homogenized elliptic equations and convection-diffusion-reaction equations~\cite{ammosov_multicontinuum_2026,2024Generalized}. Wu et al. proposed a triple-continuum model that treats fractures, matrix, and vugs as independent continua and established the mass exchange mechanisms among them. This model is capable of characterizing different connection patterns between vugs and fractures. In terms of numerical implementation, it accommodates non-Darcy flow and multiphase flow simulation~\cite{wu_multiple-continuum_2011,li_multi-continuum_2015}. 

In previous studies on multiphase flow in porous media, the matrix system was commonly treated as a single continuum with locally uniform pressure and fluid saturation distributions. Using traditional single-continuum models to describe flow in fractured rocks may lead to incorrect or even opposite conclusions, as such models fail to account for the non-equilibrium mass transfer between the matrix and fractures. Therefore, it is essential to establish multi-continuum coupling models that incorporate the interaction between the matrix and fractures to more accurately characterize flow behavior in complex fractured media. Furthermore, due to strong material heterogeneity and the complexity of micro- and meso-scale structures, the multi-continuum model is inherently a complex multiscale problem. Conventional numerical methods, such as finite element and finite volume methods, typically require extremely fine mesh discretization to achieve sufficiently accurate solutions, resulting in high computational cost and low efficiency—particularly in large-scale engineering applications. Moreover, for multi-continuum problems with strong nonlinearity and high-contrast parameters, the convergence and stability of traditional methods face significant challenges. To address these issues, this paper develops the HOMS computational method tailored for multi-continuum problems in microscopic highly heterogeneous media. This approach achieves high-precision asymptotic solutions while substantially reducing computational resource consumption, striking a balance between efficiency and accuracy. It provides a new theoretical framework and numerical tool for the efficient simulation of multiphase flow in complex fractured media.

This paper primarily investigates multi-continuum problems in in highly heterogeneous media and establishes a relatively comprehensive HOMS computational framework for addressing these challenges. The main contents are organized as follows: In Section~\ref{sec:2}, a HOMS method is developed for multi-continuum problems in periodic composite structures. This includes the definition of microscopic unit cell functions and the derivation of macroscopic equivalent parameter calculation formulas along with the HOMS asymptotic solution. In Section~\ref{sec:level3}, the pointwise approximation properties of the HOMS solution to the original equations are analyzed. Furthermore, under certain assumptions, the convergence order of the solution in the integral norm is rigorously established. In Section ~\ref{sec:4} and Section ~\ref{sec:5} , the corresponding HOMS algorithm is presented. Numerical experiments are conducted to validate the high accuracy, efficiency, and stability of the proposed method, thereby demonstrating the necessity of incorporating second-order correction terms. In Section~\ref{sec:6}, the research findings of the paper are systematically summarized, and potential directions for future investigation are proposed.

\section{\label{sec:2}Higher-Order Multiscale Computational Method for Multi-Continuum Problems in in Highly Heterogeneous Media}

\subsection{\label{sec:2.1}Mathematical Model of the Multiscale Multi-Continuum Problem}
For multi-continuum problems, we denote the solution of the \(l\)-th continuum as \({u}_{\scriptscriptstyle l}^{\scriptscriptstyle \varepsilon}\), and assume each continuum interacts with the others. The governing equations for the continua can be formulated as:
\begin{equation}
\begin{aligned}
&c_{\scriptscriptstyle l}^{\scriptscriptstyle \varepsilon}(\boldsymbol{x})
\frac{\partial u_{\scriptscriptstyle l}^{\scriptscriptstyle \varepsilon}(\boldsymbol{x},t)}{\partial t} 
= \frac{\partial}{\partial x_{\scriptscriptstyle i}}
\left(\kappa_{\scriptscriptstyle l,ij}^{\scriptscriptstyle \varepsilon}(\boldsymbol{x})
\frac{\partial u_{\scriptscriptstyle l}^{\scriptscriptstyle \varepsilon}(\boldsymbol{x},t)}{\partial x_{\scriptscriptstyle j}}\right) \\
&+ Q_{\scriptscriptstyle l}^{\scriptscriptstyle \varepsilon}
\left(u_{\scriptscriptstyle 1}^{\scriptscriptstyle \varepsilon}(\boldsymbol{x},t),\ldots,
u_{\scriptscriptstyle N}^{\scriptscriptstyle \varepsilon}(\boldsymbol{x},t)\right), 
\quad (\boldsymbol{x},t)\in\Omega\times(0,T^{*}),
\end{aligned}
\end{equation}
where \(c_{\scriptscriptstyle l}^{\scriptscriptstyle\varepsilon}(\boldsymbol{x})\ (l=1,2,...N)\) represents the multiscale porosity, \(\kappa_{\scriptscriptstyle l,ij}^{\scriptscriptstyle\varepsilon}(\boldsymbol{x})\) denotes the multiscale permeability, and \({Q}_{\scriptscriptstyle l}^{\scriptscriptstyle\varepsilon}\left({u}_{\scriptscriptstyle 1}^{\scriptscriptstyle\varepsilon}(\boldsymbol{x},t),\ldots,{u}_{\scriptscriptstyle N}^{\scriptscriptstyle\varepsilon}(\boldsymbol{x},t)\right)\) is the exchange term describing the interactions between continua.

In the context of seepage flow in fissured rocks\cite{1960Basic,2017Coupling,K1982On,1988A,1976Numerical,1963The}, if only the interactions between the first continuum \({u}_{\scriptscriptstyle 1}^{\scriptscriptstyle \varepsilon}\) and the second continuum \({u}_{\scriptscriptstyle 2}^{\scriptscriptstyle \varepsilon}\) are considered, the governing equations for the multi-continuum problem can be written as follows:
\begin{equation}
\begin{cases}
\begin{aligned}
&c_{\scriptscriptstyle 1}^{\scriptscriptstyle \varepsilon}(\boldsymbol{x})
\frac{\partial{u}_{\scriptscriptstyle 1}^{\scriptscriptstyle \varepsilon}(\boldsymbol{x},t)}{\partial t} \!
= \! \frac{\partial}{\partial x_{\scriptscriptstyle i}} \!
\Bigl(\kappa_{\scriptscriptstyle 1,ij}^{\scriptscriptstyle \varepsilon}(\boldsymbol{x})
\frac{\partial{u}_{\scriptscriptstyle 1}^{\scriptscriptstyle \varepsilon}(\boldsymbol{x},t)}{\partial x_{\scriptscriptstyle j}}\Bigr) \! + q(\boldsymbol{x},t)\\ 
&+ \alpha\big({u}_{\scriptscriptstyle 1}^{\scriptscriptstyle \varepsilon}(\boldsymbol{x},t) 
- {u}_{\scriptscriptstyle 2}^{\scriptscriptstyle \varepsilon}(\boldsymbol{x},t)\big) ,
\ (\boldsymbol{x},t)\in\Omega\times(0,T^{*}),  \\
&c_{\scriptscriptstyle 2}^{\scriptscriptstyle \varepsilon}(\boldsymbol{x})
\frac{\partial{u}_{\scriptscriptstyle 2}^{\scriptscriptstyle \varepsilon}(\boldsymbol{x},t)}{\partial t} \!
= \! \frac{\partial}{\partial x_{\scriptscriptstyle i}} \!
\Bigl(\kappa_{\scriptscriptstyle 2,ij}^{\scriptscriptstyle \varepsilon}(\boldsymbol{x})
\frac{\partial{u}_{\scriptscriptstyle 2}^{\scriptscriptstyle \varepsilon}(\boldsymbol{x},t)}{\partial x_{\scriptscriptstyle j}}\Bigr) \! + q(\boldsymbol{x},t) \\
&+ \alpha\big({u}_{\scriptscriptstyle 2}^{\scriptscriptstyle \varepsilon}(\boldsymbol{x},t) 
- {u}_{\scriptscriptstyle 1}^{\scriptscriptstyle \varepsilon}(\boldsymbol{x},t)\big) ,
\ (\boldsymbol{x},t)\in\Omega\times(0,T^{*}), 
\label{eq:1}
\end{aligned}
\end{cases}
\end{equation}
where \(q(\boldsymbol{x},t)\) is the source term, and \(\alpha\) represents the intensity of liquid transfer between the two media, which is inversely proportional to \(\varepsilon^{\scriptscriptstyle 2}\), i.e., \(\alpha=O(1/\varepsilon^{\scriptscriptstyle 2})\). 

The multi-continuum approach establishes an averaged model that differs from classical seepage theory. Instead of introducing a single liquid pressure at each point in space, it introduces two distinct pressures, \({u}_{\scriptscriptstyle 1}^{\scriptscriptstyle\varepsilon}(\boldsymbol{x},t)\) and \({u}_{\scriptscriptstyle 2}^{\scriptscriptstyle\varepsilon}(\boldsymbol{x},t)\) . Here, \({u}_{\scriptscriptstyle 1}^{\scriptscriptstyle\varepsilon}\) denotes the average pressure of the liquid in the fractures near a given point, while \({u}_{\scriptscriptstyle 2}^{\scriptscriptstyle\varepsilon}\) represents the average pressure of the liquid in the porous matrix in the vicinity of the same point. In this paper, we investigate multi-continuum problems in highly heterogeneous media. Considering the interactions between the two media, we establish the following model:
\begin{equation}
\begin{cases} 
\displaystyle c_{\scriptscriptstyle 1}^{\varepsilon}(\boldsymbol{x}) \frac{\partial u_{\scriptscriptstyle 1}^{\varepsilon}(\boldsymbol{x},t)}{\partial t} \! =  \!
\frac{\partial}{\partial x_{i}} \Bigl( \kappa_{\scriptscriptstyle 1,ij}^{\scriptscriptstyle \varepsilon}(\boldsymbol{x}) \frac{\partial u_{\scriptscriptstyle 1}^{\scriptscriptstyle\varepsilon}(\boldsymbol{x},t)}{\partial x_{j}} \! \Bigr)\!+ \! q(\boldsymbol{x},t)\\ 
\displaystyle + \frac{1}{\varepsilon} Q_{\scriptscriptstyle 1}^{\scriptscriptstyle \varepsilon}(\boldsymbol{x}) \bigl( \! u_{\scriptscriptstyle 2}^{\scriptscriptstyle \varepsilon}(\boldsymbol{x},t) \! - \! u_{\scriptscriptstyle 1}^{\scriptscriptstyle \varepsilon}(\boldsymbol{x},t) \! \bigr)
 , \ (\boldsymbol{x},t) \in \Omega \times (0,T^{*} \! ), \\
\displaystyle c_{\scriptscriptstyle 2}^{\varepsilon}(\boldsymbol{x}) \frac{\partial u_{\scriptscriptstyle 2}^{\varepsilon}(\boldsymbol{x},t)}{\partial t} \! = \!
\frac{\partial}{\partial x_{i}} \Bigl(\! \kappa_{\scriptscriptstyle 2,ij}^{\varepsilon}(\boldsymbol{x}) \frac{\partial u_{\scriptscriptstyle 2}^{\varepsilon}(\boldsymbol{x},t)}{\partial x_{j}} \! \Bigr) \!+\!q(\boldsymbol{x},t)\\
\displaystyle + \frac{1}{\varepsilon} Q_{\scriptscriptstyle 2}^{\scriptscriptstyle \varepsilon}(\boldsymbol{x}) \bigl( \! u_{\scriptscriptstyle 1}^{\scriptscriptstyle \varepsilon}(\boldsymbol{x},t) \!-\! u_{\scriptscriptstyle 2}^{\scriptscriptstyle \varepsilon}(\boldsymbol{x},t) \! \bigr)  , \ (\boldsymbol{x},t) \in \Omega \times (0,T^{*}\!),\\
\displaystyle u_{\scriptscriptstyle 1}^{\varepsilon}(\boldsymbol{x},t) = 0, \ u_{\scriptscriptstyle 2}^{\scriptscriptstyle\varepsilon}(\boldsymbol{x},t) = 0,\ (\boldsymbol{x},t) \in \partial \Omega \times (0,T^*), \\
\displaystyle u_{\scriptscriptstyle 1}^{\scriptscriptstyle\varepsilon}(\boldsymbol{x},0) = g_{\scriptscriptstyle 1}(\boldsymbol{x}),\ u_{\scriptscriptstyle 2}^{\scriptscriptstyle\varepsilon}(\boldsymbol{x},0) = g_{\scriptscriptstyle 2}(\boldsymbol{x}),\ \boldsymbol{x} \in \Omega. 
\label{eq:3}
\end{cases}
\end{equation}
Here, \(\Omega\) is a bounded convex domain in \(\mathbb{R}^{d}(d=2,3)\), with \(\partial\Omega\) as its boundary; \(\varepsilon\) is a small periodic parameter, \(u_{\scriptscriptstyle l}^{\scriptscriptstyle\varepsilon}(\boldsymbol{x},t) (l=1,2)\) is the unknown pressure field, \(c_{\scriptscriptstyle l}^{\scriptscriptstyle\varepsilon}(\boldsymbol{x})\) represents the multiscale porosity, \(\kappa_{\scriptscriptstyle l,ij}^{\scriptscriptstyle\varepsilon}(\boldsymbol{x})\) represents the multiscale permeability, \(Q_{\scriptscriptstyle l}^{\scriptscriptstyle\varepsilon}(\boldsymbol{x})\) is the exchange term and \(Q_{\scriptscriptstyle l}^{\scriptscriptstyle\varepsilon}(\boldsymbol{x})=O(1/\varepsilon)\); \(q(\boldsymbol{x},t)\) denotes the heat source function, and \(g_{\scriptscriptstyle l}(\boldsymbol{x})\) denotes the pressure at the initial moment.

First, the following assumptions are made regarding the coefficients of problem~\eqref{eq:3}:
\begin{description}
\item[\((A_{1})\)] Let \(\boldsymbol{y}=\displaystyle\frac{\boldsymbol{x}}{\varepsilon}\) denote the local coordinate in the microscale unit cell \(Y=[0,1]^{d}\). Then we have:
\end{description}
\begin{align*}
Q_{\scriptscriptstyle l}^{\varepsilon}(\boldsymbol{x}) &= Q_{\scriptscriptstyle l}\! \left(\frac{\boldsymbol{x}}{\varepsilon}\right) = Q_{\scriptscriptstyle l}(\boldsymbol{y}), \
c_{l}^{\scriptscriptstyle\varepsilon}(\boldsymbol{x}) = c_{\scriptscriptstyle l}\left(\frac{\boldsymbol{x}}{\varepsilon}\right) = c_{\scriptscriptstyle l}(\boldsymbol{y}), \\
&\kappa_{\scriptscriptstyle l,ij}^{\scriptscriptstyle\varepsilon}(\boldsymbol{x}) = \kappa_{\scriptscriptstyle l,ij}\left(\frac{\boldsymbol{x}}{\varepsilon}\right) = \kappa_{\scriptscriptstyle l,ij}(\boldsymbol{y}),
\end{align*}
where \(Q_{\scriptscriptstyle l}(\boldsymbol{y})\), \(c_{\scriptscriptstyle l}(\boldsymbol{y})\) and \(\kappa_{\scriptscriptstyle l,ij}(\boldsymbol{y})\) are all 1-periodic functions with respect to the variable \(\boldsymbol{y}\).
\begin{description}
\item[\((A_{2})\)] \(\kappa_{\scriptscriptstyle l,ij}(\boldsymbol{y})\) satisfies symmetry, i.e., \(\kappa_{\scriptscriptstyle l,ij}(\boldsymbol{y})=\kappa_{\scriptscriptstyle l,ji}(\boldsymbol{y})\); and there exist \(\gamma_{0},\gamma_{1}>0\) such that the following holds:
\end{description}
\begin{equation*}
\gamma_{0}\left|\xi\right|^{2}\leq\kappa_{\scriptscriptstyle l,ij}(\boldsymbol{y})\xi_{i}\xi_{j}\leq\gamma_{1}\left|\xi\right|^{2},
\end{equation*}
where \(\boldsymbol{\xi}=(\xi_{1},\cdots,\xi_{n})^{\top}\) is any real vector in \(\mathbb{R}^{d}\).
\begin{description}
\item[\((A_{3})\)] \(c_{\scriptscriptstyle l}(\boldsymbol{y})\in L^{\infty}(\Omega)\) and \(\kappa_{\scriptscriptstyle l,ij}(\boldsymbol{y})\in L^{\infty}(\Omega)\). There exist positive constants \(\underline{C}\) and \(\underline{\kappa}\) such that
\end{description}
\[0 \leq \underline{C} \leq c_{\scriptscriptstyle l}(\boldsymbol{y}), \quad 0 \leq \underline{\kappa} \leq \kappa_{\scriptscriptstyle l, ij}(\boldsymbol{y}) .
\]

\subsection{\label{sec:2.2}High-Order Multiscale Computational Model}
For problem~\eqref{eq:3} , assume that the solution has the following asymptotic expansion form:
\begin{equation}
\begin{cases}
\begin{aligned}
u_{\scriptscriptstyle 1}^{\varepsilon}(\boldsymbol{x},t) &= u_{\scriptscriptstyle 1}^{\scriptscriptstyle(0)}(\boldsymbol{x},\boldsymbol{y},t) 
+ \varepsilon u_{\scriptscriptstyle 1}^{\scriptscriptstyle(1)}(\boldsymbol{x},\boldsymbol{y},t) \\
&+ \varepsilon^{\scriptscriptstyle 2} u_{\scriptscriptstyle 1}^{\scriptscriptstyle(2)}(\boldsymbol{x},\boldsymbol{y},t) 
+ O(\varepsilon^{\scriptscriptstyle 3}), \\
u_{\scriptscriptstyle 2}^{\varepsilon}(\boldsymbol{x},t) &= u_{\scriptscriptstyle 2}^{\scriptscriptstyle(0)}(\boldsymbol{x},\boldsymbol{y},t) 
+ \varepsilon u_{\scriptscriptstyle 2}^{\scriptscriptstyle(1)}(\boldsymbol{x},\boldsymbol{y},t) \\
& + \varepsilon^{\scriptscriptstyle 2} u_{\scriptscriptstyle 2}^{\scriptscriptstyle(2)}(\boldsymbol{x},\boldsymbol{y},t) 
+ O(\varepsilon^{\scriptscriptstyle 3}).
\end{aligned}
\end{cases}
\label{eq:expansion}
\end{equation}

Substitute Eqs.~\eqref{eq:expansion} into problem~\eqref{eq:3} and apply the chain rule
\begin{equation}
\frac{\partial \Phi^\varepsilon(\boldsymbol{x},t)}{\partial x_i} = \frac{\partial \Phi(\boldsymbol{x},\boldsymbol{y},t)}{\partial x_i} + \frac{1}{\varepsilon} \frac{\partial \Phi(\boldsymbol{x},\boldsymbol{y},t)}{\partial y_i},
\label{eq:chainrule}
\end{equation}
to expand the partial derivatives in problem~\eqref{eq:3}. Then, consolidate terms with like powers of \(\varepsilon\) on both sides. Through comparison, a sequence of equations can be obtained:
\begin{equation}
O(\frac{1}{\varepsilon^{\scriptscriptstyle 2}}):
\begin{cases}
\displaystyle \frac{\partial}{\partial y_{i}} \Bigl( \kappa_{\scriptscriptstyle 1,ij}(\boldsymbol{y}) 
\frac{\partial u_{\scriptscriptstyle 1}^{\scriptscriptstyle(0)}}{\partial y_{j}} \Bigr) = 0, \\[8pt]
\displaystyle \frac{\partial}{\partial y_{i}} \Bigl( \kappa_{\scriptscriptstyle 2,ij}(\boldsymbol{y}) 
\frac{\partial u_{\scriptscriptstyle 2}^{\scriptscriptstyle(0)}}{\partial y_{j}} \Bigr) = 0,
\end{cases}
\label{eq:order-2}
\end{equation}
\begin{equation}
O(\frac{1}{\varepsilon}):
\begin{cases}
\displaystyle \frac{\partial}{\partial y_{i}} \Bigl( \kappa_{\scriptscriptstyle 1,ij}(\boldsymbol{y}) 
\frac{\partial u_{\scriptscriptstyle 1}^{\scriptscriptstyle(0)}}{\partial x_{j}} \Bigr) 
+ \frac{\partial}{\partial y_{i}} \Bigl( \kappa_{\scriptscriptstyle 1,ij}(\boldsymbol{y}) 
\frac{\partial u_{\scriptscriptstyle 1}^{\scriptscriptstyle(1)}}{\partial y_{j}} \Bigr) \\[8pt]
+ {Q}(\boldsymbol{y}) \bigl( u_{\scriptscriptstyle 2}^{\scriptscriptstyle(0)} - u_{\scriptscriptstyle 1}^{\scriptscriptstyle(0)} \bigr) = 0, \\[2pt]
\displaystyle \frac{\partial}{\partial y_{i}} \Bigl( \kappa_{\scriptscriptstyle 2,ij}(\boldsymbol{y}) 
\frac{\partial u_{\scriptscriptstyle 2}^{\scriptscriptstyle(0)}}{\partial x_{j}} \Bigr) 
+ \frac{\partial}{\partial y_{i}} \Bigl( \kappa_{\scriptscriptstyle 2,ij}(\boldsymbol{y}) 
\frac{\partial u_{\scriptscriptstyle 2}^{\scriptscriptstyle(1)}}{\partial y_{j}} \Bigr) \\[8pt]
 + {Q}(\boldsymbol{y}) \bigl( u_{\scriptscriptstyle 1}^{\scriptscriptstyle(0)} - u_{\scriptscriptstyle 2}^{\scriptscriptstyle(0)} \bigr) = 0,
\end{cases}
\label{eq:order-1}
\end{equation}
\begin{equation}
O(\varepsilon^{0}):
\begin{cases}
\displaystyle c_{\scriptscriptstyle 1}(\boldsymbol{y}) \frac{\partial u_{\scriptscriptstyle 1}^{\scriptscriptstyle(0)}}{\partial t} 
= \frac{\partial}{\partial x_{i}} \Bigl( \kappa_{\scriptscriptstyle 1,ij}(\boldsymbol{y}) 
\frac{\partial u_{\scriptscriptstyle1}^{\scriptscriptstyle(0)}}{\partial x_{j}} \Bigr) +q\\
\displaystyle+ \frac{\partial}{\partial y_{i}} \Bigl( \kappa_{\scriptscriptstyle 1,ij}(\boldsymbol{y}) 
\frac{\partial u_{\scriptscriptstyle 1}^{\scriptscriptstyle(1)}}{\partial x_{j}} \Bigr) 
\! + \! \frac{\partial}{\partial x_{i}} \Bigl( \kappa_{\scriptscriptstyle 1,ij}(\boldsymbol{y}) 
\frac{\partial u_{\scriptscriptstyle 1}^{\scriptscriptstyle(1)}}{\partial y_{j}} \Bigr) \\[8pt]
\displaystyle + \frac{\partial}{\partial y_{i}} \Bigl( \kappa_{\scriptscriptstyle 1,ij}(\boldsymbol{y}) 
\frac{\partial u_{\scriptscriptstyle 1}^{\scriptscriptstyle(2)}}{\partial y_{j}} \Bigr) 
\displaystyle + Q_{\scriptscriptstyle 1}(\boldsymbol{y}) \bigl( u_{\scriptscriptstyle 2}^{\scriptscriptstyle(1)} - u_{\scriptscriptstyle 1}^{\scriptscriptstyle(1)} \bigr), \\[8pt]
\displaystyle c_{\scriptscriptstyle 2}(\boldsymbol{y}) \frac{\partial u_{\scriptscriptstyle 2}^{\scriptscriptstyle(0)}}{\partial t} 
= \frac{\partial}{\partial x_{i}} \Bigl( \kappa_{\scriptscriptstyle 2,ij}(\boldsymbol{y}) 
\frac{\partial u_{\scriptscriptstyle 2}^{\scriptscriptstyle(0)}}{\partial x_{j}} \Bigr)+q \\
\displaystyle + \frac{\partial}{\partial y_{i}} \Bigl( \kappa_{\scriptscriptstyle 2,ij}(\boldsymbol{y}) 
\frac{\partial u_{\scriptscriptstyle 2}^{\scriptscriptstyle(1)}}{\partial x_{j}} \Bigr) 
\! + \! \frac{\partial}{\partial x_{i}} \Bigl( \kappa_{\scriptscriptstyle 2,ij}(\boldsymbol{y}) 
\frac{\partial u_{\scriptscriptstyle 2}^{\scriptscriptstyle(1)}}{\partial y_{j}} \Bigr) \\[8pt]
\displaystyle + \frac{\partial}{\partial y_{i}} \Bigl( \kappa_{\scriptscriptstyle 2,ij}(\boldsymbol{y}) 
\frac{\partial u_{\scriptscriptstyle 2}^{\scriptscriptstyle(2)}}{\partial y_{j}} \Bigr)
\displaystyle + Q_{\scriptscriptstyle 2}(\boldsymbol{y}) \bigl( u_{\scriptscriptstyle 1}^{\scriptscriptstyle(1)} - u_{\scriptscriptstyle 2}^{\scriptscriptstyle(1)} \bigr).
\end{cases}
\label{eq:order0}
\end{equation}

Next, the aforementioned three sets of equations are analyzed sequentially. The specific expressions of each term in \eqref{eq:expansion} are solved recursively, and finally, the second-order two-scale expanded solution of problem~\eqref{eq:3} is assembled.

According to the classical asymptotic homogenization theory\cite{Bensoussan1978Asymptotic,2012Mathematical,1999An}, from Eqs.~\eqref{eq:order-2}, it is known that \(u_{\scriptscriptstyle l}^{\scriptscriptstyle(0)}\) is independent of the microscale \(\boldsymbol{y}\), i.e.,
\begin{equation}
u_{\scriptscriptstyle l}^{\scriptscriptstyle(0)}(\boldsymbol{x},\boldsymbol{y},t) = u_{\scriptscriptstyle l}^{\scriptscriptstyle(0)}(\boldsymbol{x},t).
\label{eq:u0_independent}
\end{equation}

Substituting Eq.~\eqref{eq:u0_independent} into Eqs.~\eqref{eq:order-1}, the expression for \(u_{\scriptscriptstyle l}^{\scriptscriptstyle(1)}(\boldsymbol{x},\boldsymbol{y},t)\) can be constructively given as follows:
\begin{equation}
\begin{cases}
\displaystyle u_{\scriptscriptstyle 1}^{\scriptscriptstyle(1)}(\boldsymbol{x},\boldsymbol{y},t) = N_{\scriptscriptstyle 1}^{\scriptscriptstyle\alpha_{1}}(\boldsymbol{y}) 
\frac{\partial u_{\scriptscriptstyle 1}^{\scriptscriptstyle(0)}}{\partial x_{\scriptscriptstyle\alpha_{1}}} + M_{\scriptscriptstyle 1}(\boldsymbol{y}) 
\bigl( u_{\scriptscriptstyle 2}^{\scriptscriptstyle(0)} - u_{\scriptscriptstyle 1}^{\scriptscriptstyle(0)} \bigr), \\[8pt]
\displaystyle u_{\scriptscriptstyle 2}^{\scriptscriptstyle(1)}(\boldsymbol{x},\boldsymbol{y},t) = N_{\scriptscriptstyle2}^{\scriptscriptstyle\alpha_{1}}(\boldsymbol{y}) 
\frac{\partial u_{\scriptscriptstyle2}^{\scriptscriptstyle(0)}}{\partial x_{\scriptscriptstyle\alpha_{1}}} + M_{\scriptscriptstyle 2}(\boldsymbol{y}) 
\bigl( u_{\scriptscriptstyle 1}^{\scriptscriptstyle(0)} - u_{\scriptscriptstyle 2}^{\scriptscriptstyle(0)} \bigr),
\end{cases}
\label{eq:u1_expression}
\end{equation}
where \(N_{\scriptscriptstyle l}^{\scriptscriptstyle\alpha_{1}}(\boldsymbol{y})\) and \(M_{\scriptscriptstyle l}(\boldsymbol{y})\) are first-order auxiliary cell functions. Substituting Eqs.~\eqref{eq:u0_independent} and~\eqref{eq:u1_expression} into Eqs.~\eqref{eq:order-1}, it can be obtained that \(N_{\scriptscriptstyle l}^{\scriptscriptstyle\alpha_{1}}(\boldsymbol{y})\) and \(M_{\scriptscriptstyle l}(\boldsymbol{y})\) satisfy the following cell problem:
\begin{equation}
\begin{cases}
\displaystyle \frac{\partial}{\partial y_{i}} \left( \kappa_{\scriptscriptstyle l,ij} 
\frac{\partial N_{\scriptscriptstyle l}^{\scriptscriptstyle\alpha_{1}}(\boldsymbol{y})}{\partial y_{j}} \right) 
= -\frac{\partial \kappa_{\scriptscriptstyle l,i\alpha_{1}}}{\partial y_{i}}, \quad \boldsymbol{y} \in Y, \\[8pt]
\displaystyle N_{\scriptscriptstyle l}^{\scriptscriptstyle\alpha_{1}}(\boldsymbol{y}) = 0, \quad \boldsymbol{y} \in \partial Y, \\
\end{cases}
\label{eq:N1_cell_problem}
\end{equation}
\begin{equation}
\begin{cases}
\displaystyle \frac{\partial}{\partial y_{i}} \Bigl( \kappa_{\scriptscriptstyle l,ij} 
\frac{\partial M_{\scriptscriptstyle l}(\boldsymbol{y})}{\partial y_{j}} \Bigr) 
= Q_{\scriptscriptstyle l}(\boldsymbol{y}), \quad \boldsymbol{y} \in Y, \\[8pt]
\displaystyle M_{\scriptscriptstyle l}(\boldsymbol{y}) = 0, \quad \boldsymbol{y} \in \partial Y.
\end{cases}
\label{eq:M_cell_problem2}
\end{equation}

Substituting Eqs.~\eqref{eq:u0_independent} and~\eqref{eq:u1_expression} into Eqs.~\eqref{eq:order0} and integrating over the unit cell \(Y\) yields the following homogenized problem:
\begin{equation}
\begin{cases}
\begin{aligned}
&c_{\scriptscriptstyle 1}^{\scriptscriptstyle\ast} \frac{\partial u_{\scriptscriptstyle 1}^{\scriptscriptstyle(0)}(\boldsymbol{x},t)}{\partial t} 
= \frac{\partial}{\partial x_{i}} \Bigl( \kappa_{\scriptscriptstyle 1,ij}^{\ast} 
\frac{\partial u_{\scriptscriptstyle 1}^{\scriptscriptstyle(0)}(\boldsymbol{x},t)}{\partial x_{j}} \Bigr) \\
&+ Q_{\scriptscriptstyle 1}^{\scriptscriptstyle*} \bigl( u_{\scriptscriptstyle 2}^{\scriptscriptstyle(0)}(\boldsymbol{x},t) - u_{\scriptscriptstyle 1}^{\scriptscriptstyle(0)}(\boldsymbol{x},t) \bigr) 
+ \bar{K}_{\scriptscriptstyle 1i}^{\scriptscriptstyle1\ast} \frac{\partial u_{\scriptscriptstyle 2}^{\scriptscriptstyle(0)}(\boldsymbol{x},t)}{\partial x_{i}} \\
&- \bar{K}_{\scriptscriptstyle 2i}^{\scriptscriptstyle1\ast} \frac{\partial u_{\scriptscriptstyle 1}^{\scriptscriptstyle(0)}(\boldsymbol{x},t)}{\partial x_{i}} 
+ q(\boldsymbol{x},t), \ (\boldsymbol{x},t) \in \Omega \times (0,T^{*}), \\
&c_{\scriptscriptstyle 2}^{\scriptscriptstyle\ast} \frac{\partial u_{\scriptscriptstyle 2}^{\scriptscriptstyle(0)}(\boldsymbol{x},t)}{\partial t} 
= \frac{\partial}{\partial x_{i}} \Bigl( \kappa_{\scriptscriptstyle 2,ij}^{\ast} 
\frac{\partial u_{\scriptscriptstyle 2}^{\scriptscriptstyle(0)}(\boldsymbol{x},t)}{\partial x_{j}} \Bigr) \\
&+ Q_{\scriptscriptstyle 2}^{\scriptscriptstyle\ast} \bigl( u_{\scriptscriptstyle 1}^{\scriptscriptstyle(0)}(\boldsymbol{x},t) - u_{\scriptscriptstyle 2}^{\scriptscriptstyle(0)}(\boldsymbol{x},t) \bigr) 
+ \bar{K}_{\scriptscriptstyle 2i}^{\scriptscriptstyle2\ast} \frac{\partial u_{\scriptscriptstyle 1}^{\scriptscriptstyle(0)}(\boldsymbol{x},t)}{\partial x_{i}} \\
&- \bar{K}_{\scriptscriptstyle 1i}^{\scriptscriptstyle 2\ast} \frac{\partial u_{\scriptscriptstyle 2}^{\scriptscriptstyle(0)}(\boldsymbol{x},t)}{\partial x_{i}} 
+ q(\boldsymbol{x},t), \ (\boldsymbol{x},t) \in \Omega \times (0,T^{\scriptscriptstyle *}), \\
&u_{\scriptscriptstyle 1}^{\scriptscriptstyle(0)}(\boldsymbol{x},t) \!= 0, \ u_{\scriptscriptstyle 2}^{\scriptscriptstyle(0)}(\boldsymbol{x},t) \! = 0, \
(\! \boldsymbol{x},t \!) \in \partial\Omega \times (0,T^{\scriptscriptstyle *}), \\
&u_{\scriptscriptstyle 1}^{\scriptscriptstyle(0)}(\boldsymbol{x},0) = g_{1}(\boldsymbol{x}), \ u_{\scriptscriptstyle 2}^{\scriptscriptstyle(0)}(\boldsymbol{x},0) = g_{2}(\boldsymbol{x}), \ \boldsymbol{x} \in \Omega,
\end{aligned}
\end{cases}
\label{eq:homogenized}
\end{equation}
where the homogenized material coefficients \(c_{\scriptscriptstyle l}^{\scriptscriptstyle*}\), \(\kappa_{\scriptscriptstyle l,ij}^{\scriptscriptstyle*}\), 
\(\bar{K}_{\scriptscriptstyle 1i}^{\scriptscriptstyle l\ast}\), \(\bar{K}_{\scriptscriptstyle 2i}^{\scriptscriptstyle l\ast}\) and \(Q_{\scriptscriptstyle l}^{\scriptscriptstyle\ast}\) are given as follows:

\begin{equation}
\begin{cases}
\begin{aligned}
&c_{\scriptscriptstyle l}^{\scriptscriptstyle\ast} = \int_{Y} c_{\scriptscriptstyle l}(\boldsymbol{y}) \, \mathrm{d}Y, \\
&\kappa_{\scriptscriptstyle l,ij}^{\scriptscriptstyle\ast} = \int_{Y} \left( \kappa_{\scriptscriptstyle l,ij}(\boldsymbol{y}) 
+ \kappa_{\scriptscriptstyle l,i\alpha_{1}}(\boldsymbol{y}) \frac{\partial N_{\scriptscriptstyle l}^{\scriptscriptstyle j}(\boldsymbol{y})}{\partial y_{\scriptscriptstyle\alpha_1}} \right) \mathrm{d}Y, \\
&\bar{K}_{\scriptscriptstyle 1i}^{\scriptscriptstyle l\ast} = \int_{Y} \left( \kappa_{\scriptscriptstyle l,ij}(\boldsymbol{y}) 
\frac{\partial M_{\scriptscriptstyle 1}(\boldsymbol{y})}{\partial y_{j}} 
+ Q_{\scriptscriptstyle l}(\boldsymbol{y}) N_{\scriptscriptstyle 2}^{\scriptscriptstyle i}(\boldsymbol{y}) \right) \mathrm{d}Y, \\
&\bar{K}_{\scriptscriptstyle 2i}^{\scriptscriptstyle l\ast} = \int_{Y} \left( \kappa_{\scriptscriptstyle l,ij}(\boldsymbol{y}) 
\frac{\partial M_{\scriptscriptstyle 1}(\boldsymbol{y})}{\partial y_{j}} 
+ Q_{\scriptscriptstyle l}(\boldsymbol{y}) N_{\scriptscriptstyle 1}^{\scriptscriptstyle i}(\boldsymbol{y}) \right) \mathrm{d}Y, \\
&Q_{\scriptscriptstyle l}^{\scriptscriptstyle\ast} = - \int_{Y} Q_{\scriptscriptstyle l}(\boldsymbol{y}) \left( M_{\scriptscriptstyle 1}(\boldsymbol{y}) 
+ M_{\scriptscriptstyle 2}(\boldsymbol{y}) \right) \mathrm{d}Y.
\end{aligned}
\end{cases}
\label{eq:coefficients}
\end{equation}

To further solve for \(u_{\scriptscriptstyle l}^{\scriptscriptstyle(2)}(\boldsymbol{x},\boldsymbol{y},t)\), subtract the homogenized equation in the homogenized problem~\eqref{eq:homogenized} from Eqs.~\eqref{eq:order0}, and then apply Eqs.~\eqref{eq:u0_independent} and~\eqref{eq:u1_expression} to obtain:
\begin{equation}
\begin{cases}
\begin{aligned}
&\displaystyle \frac{\partial}{\partial y_i} \Bigl(\kappa_{\scriptscriptstyle 1,ij}(\boldsymbol{y}) \frac{\partial u^{\scriptscriptstyle (2)}_{\scriptscriptstyle 1}}{\partial y_j} \Bigr) 
= \bigl( c_{\scriptscriptstyle 1}(\boldsymbol{y}) - c^{\scriptscriptstyle *}_{\scriptscriptstyle 1} \bigr) \frac{\partial u^{\scriptscriptstyle (0)}_{\scriptscriptstyle 1}}{\partial t} \\
&\displaystyle  + \bigl( Q_{\scriptscriptstyle 1}^{\scriptscriptstyle *} + Q_{\scriptscriptstyle 1}(\boldsymbol{y}) \left( M_{\scriptscriptstyle 1}(\boldsymbol{y}) + M_{\scriptscriptstyle 2}(\boldsymbol{y}) \right) \bigr) 
\bigl( u^{\scriptscriptstyle (0)}_{\scriptscriptstyle 2} - u^{\scriptscriptstyle (0)}_{\scriptscriptstyle 1} \bigr) \\
&\displaystyle  + \Bigl( \kappa^{\scriptscriptstyle *}_{\scriptscriptstyle 1,\alpha_1\alpha_2} - \kappa_{\scriptscriptstyle 1,\alpha_1\alpha_2}(\boldsymbol{y}) -
\frac{\partial \bigl( \kappa_{\scriptscriptstyle 1,i\alpha_1}(\boldsymbol{y}) N^{\scriptscriptstyle \alpha_{2}}_{\scriptscriptstyle 1}(\boldsymbol{y}) \bigr)}{\partial y_i} \\
&\displaystyle  - \kappa_{\scriptscriptstyle 1,a_{\scriptscriptstyle 1}j}(\boldsymbol{y}) \frac{\partial N^{\scriptscriptstyle \alpha_{2}}_{\scriptscriptstyle 1}(\boldsymbol{y})}{\partial y_j} \Bigr) 
\frac{\partial^2 u^{\scriptscriptstyle (0)}_{\scriptscriptstyle 1}}{\partial x_{\alpha_{\scriptscriptstyle 1}} \partial x_{\alpha_{\scriptscriptstyle 2}}} \\
&+ \Bigl(  \kappa_{\scriptscriptstyle 1,a_{\scriptscriptstyle 1}j}(\boldsymbol{y}) \frac{\partial M_{\scriptscriptstyle 1}(\boldsymbol{y})}{\partial y_j} 
+ \frac{\partial \bigl( \kappa_{\scriptscriptstyle 1,i\alpha_{\scriptscriptstyle 1}}(\boldsymbol{y}) M_{\scriptscriptstyle 1}(\boldsymbol{y}) \bigr)}{\partial y_i} \\
&+ Q_{\scriptscriptstyle 1}(\boldsymbol{y}) N^{\scriptscriptstyle \alpha_{\scriptscriptstyle 1}}_{\scriptscriptstyle 1}(\boldsymbol{y}) - \bar{K}^{\scriptscriptstyle 1*}_{\scriptscriptstyle 2\alpha_{\scriptscriptstyle 1}} \Bigr)
\frac{\partial u^{\scriptscriptstyle (0)}_{\scriptscriptstyle 1}}{\partial x_{\alpha_{\scriptscriptstyle 1}}} 
+\Bigl( \bar{K}^{\scriptscriptstyle 1*}_{\scriptscriptstyle 1\alpha_{\scriptscriptstyle i}} \\
&- \frac{\partial \bigl( \kappa_{\scriptscriptstyle 1,i\alpha_{\scriptscriptstyle 1}}(\boldsymbol{y}) M_{\scriptscriptstyle 1}(\boldsymbol{y}) \bigr)}{\partial y_i} 
- \kappa_{\scriptscriptstyle 1,\alpha_{\scriptscriptstyle 1}j}(\boldsymbol{y}) \frac{\partial M_{\scriptscriptstyle 1}(\boldsymbol{y})}{\partial y_j} \\
&- Q_{\scriptscriptstyle 1}(\boldsymbol{y}) N^{\scriptscriptstyle \alpha_{\scriptscriptstyle 1}}_{\scriptscriptstyle 2}(\boldsymbol{y}) \Bigr)
\frac{\partial u^{\scriptscriptstyle (0)}_{\scriptscriptstyle 2}}{\partial x_{a_{\scriptscriptstyle 1}}}, \\

&\displaystyle \frac{\partial}{\partial y_i} \Bigl( \kappa_{\scriptscriptstyle 2,ij}(\boldsymbol{y}) \frac{\partial u^{\scriptscriptstyle (2)}_{\scriptscriptstyle 2}}{\partial y_j} \Bigr) 
= \bigl( c_{\scriptscriptstyle 2}(\boldsymbol{y}) - c^{\scriptscriptstyle *}_{\scriptscriptstyle 2} \bigr) \frac{\partial u^{\scriptscriptstyle (0)}_{\scriptscriptstyle 2}}{\partial t} \\
&\displaystyle  + \bigl( Q_{\scriptscriptstyle 2}^{\scriptscriptstyle *} + Q_{\scriptscriptstyle 2}(\boldsymbol{y}) \left( M_{\scriptscriptstyle 1}(\boldsymbol{y}) + M_{\scriptscriptstyle 2}(\boldsymbol{y}) \right) \bigr) 
\bigl( u^{\scriptscriptstyle (0)}_{\scriptscriptstyle 1} - u^{\scriptscriptstyle (0)}_{\scriptscriptstyle 2} \bigr) \\
&\displaystyle  + \Bigl( \kappa^{\scriptscriptstyle *}_{\scriptscriptstyle 2,\alpha_1\alpha_2} - \kappa_{\scriptscriptstyle 2,\alpha_1\alpha_2}(\boldsymbol{y}) -
\frac{\partial \bigl( \kappa_{\scriptscriptstyle 2,i\alpha_1}(\boldsymbol{y}) N^{\scriptscriptstyle \alpha_{2}}_{\scriptscriptstyle 2}(\boldsymbol{y}) \bigr)}{\partial y_i} \\
&\displaystyle  - \kappa_{\scriptscriptstyle 2,\alpha_1j}(\boldsymbol{y})  \frac{\partial N^{\scriptscriptstyle \alpha_{2}}_{\scriptscriptstyle 2}(\boldsymbol{y})}{\partial y_j} \Bigr) 
\frac{\partial^2 u^{\scriptscriptstyle (0)}_{\scriptscriptstyle 2}}{\partial x_{\alpha_{\scriptscriptstyle 1}} \partial x_{\alpha_{\scriptscriptstyle 2}}} \\
& + \Bigl( \frac{\partial \bigl( \kappa_{\scriptscriptstyle 2,i\alpha_{\scriptscriptstyle 1}}(\boldsymbol{y}) M_{\scriptscriptstyle 2}(\boldsymbol{y}) \bigr)}{\partial y_i} 
+ \kappa_{\scriptscriptstyle 2,\alpha_{\scriptscriptstyle 1}j}(\boldsymbol{y}) \frac{\partial M_{\scriptscriptstyle 2}(\boldsymbol{y})}{\partial y_j} \\
&  + Q_{\scriptscriptstyle 2}(\boldsymbol{y}) N^{\scriptscriptstyle \alpha_{\scriptscriptstyle 1}}_{\scriptscriptstyle 2}(\boldsymbol{y}) - \bar{K}^{\scriptscriptstyle 2*}_{\scriptscriptstyle 1\alpha_{\scriptscriptstyle 1}} \Bigr) 
\frac{\partial u^{\scriptscriptstyle (0)}_{\scriptscriptstyle 2}}{\partial x_{\alpha_{\scriptscriptstyle 1}}} 
+ \Bigl( \bar{K}^{\scriptscriptstyle 2*}_{\scriptscriptstyle 2\alpha_{\scriptscriptstyle 1}} \\
&  - \frac{\partial \bigl( \kappa_{\scriptscriptstyle 2,i\alpha_{\scriptscriptstyle 1}}(\boldsymbol{y}) M_{\scriptscriptstyle 2}(\boldsymbol{y}) \bigr)}{\partial y_i} 
- \kappa_{\scriptscriptstyle 2,\alpha_{\scriptscriptstyle 1}j}(\boldsymbol{y}) \frac{\partial M_{\scriptscriptstyle 2}(\boldsymbol{y})}{\partial y_j} \\
&  - Q_{\scriptscriptstyle 2}(\boldsymbol{y}) N^{\scriptscriptstyle \alpha_{\scriptscriptstyle 1}}_{\scriptscriptstyle 1}(\boldsymbol{y}) \Bigr)
\frac{\partial u^{\scriptscriptstyle (0)}_{\scriptscriptstyle 1}}{\partial x_{\alpha_{\scriptscriptstyle 1}}},
\end{aligned}
\end{cases}
\label{eq:u2_equation}
\end{equation}

According to Eqs.~\eqref{eq:u2_equation}, \( u^{\scriptscriptstyle(2)}_{\scriptscriptstyle i}(\boldsymbol{x},\boldsymbol{y},t) \) is defined to have the following specific form:
\begin{equation}
\begin{cases}
\begin{aligned}
u_{\scriptscriptstyle 1}^{\scriptscriptstyle (2)}(\boldsymbol{x},\boldsymbol{y}, t) &= G_{\scriptscriptstyle 1}(\boldsymbol{y}) \frac{\partial u_{\scriptscriptstyle 1}^{\scriptscriptstyle (0)}}{\partial t} 
+ N_{\scriptscriptstyle 1}^{\scriptscriptstyle \alpha_{1}\alpha_{2}} (\boldsymbol{y}) \frac{\partial^2 u_{\scriptscriptstyle 1}^{\scriptscriptstyle (0)}}{\partial x_{\scriptscriptstyle \alpha_1} \partial x_{\scriptscriptstyle \alpha_2}} \\
& + C_{\scriptscriptstyle 1}^{\scriptscriptstyle \alpha_1} (\boldsymbol{y}) \frac{\partial u_{\scriptscriptstyle 1}^{\scriptscriptstyle (0)}}{\partial x_{\scriptscriptstyle \alpha_1}} 
+ F_{\scriptscriptstyle 1}^{\scriptscriptstyle \alpha_1} (\boldsymbol{y}) \frac{\partial u_{\scriptscriptstyle 2}^{\scriptscriptstyle (0)}}{\partial x_{\scriptscriptstyle \alpha_1}}\\
& + K_{\scriptscriptstyle 1} (\boldsymbol{y}) \bigl( u_{\scriptscriptstyle 2}^{\scriptscriptstyle (0)} - u_{\scriptscriptstyle 1}^{\scriptscriptstyle (0)} \bigr), \\

u_{\scriptscriptstyle 2}^{\scriptscriptstyle (2)}(\boldsymbol{x}, \boldsymbol{y}, t) &= G_{\scriptscriptstyle 2}(\boldsymbol{y}) \frac{\partial u_{\scriptscriptstyle 2}^{\scriptscriptstyle (0)}}{\partial t} 
+ N_{\scriptscriptstyle 2}^{\scriptscriptstyle \alpha_{1}\alpha_{2}} (\boldsymbol{y}) \frac{\partial^2 u_{\scriptscriptstyle 2}^{\scriptscriptstyle (0)}}{\partial x_{\scriptscriptstyle \alpha_1} \partial x_{\scriptscriptstyle \alpha_2}} \\
& + C_{\scriptscriptstyle 2}^{\scriptscriptstyle \alpha_2} (\boldsymbol{y}) \frac{\partial u_{\scriptscriptstyle 2}^{\scriptscriptstyle (0)}}{\partial x_{\scriptscriptstyle \alpha_1}} 
+ F_{\scriptscriptstyle 2}^{\scriptscriptstyle \alpha_2} (\boldsymbol{y}) \frac{\partial u_{\scriptscriptstyle 1}^{\scriptscriptstyle (0)}}{\partial x_{\scriptscriptstyle \alpha_1}} \\
& + K_{\scriptscriptstyle 2} (\boldsymbol{y}) \bigl( u_{\scriptscriptstyle 1}^{\scriptscriptstyle (0)} - u_{\scriptscriptstyle 2}^{\scriptscriptstyle (0)} \bigr),
\end{aligned}
\end{cases}
\label{eq:u2_form}
\end{equation}
where \( G_{\scriptscriptstyle l} (\boldsymbol{y}) \), \( N_{\scriptscriptstyle l}^{\scriptscriptstyle \alpha_{1}\alpha_{2}} (\boldsymbol{y}) \), \( C_{\scriptscriptstyle l}^{\scriptscriptstyle \alpha_1} (\boldsymbol{y}) \), \( F_{\scriptscriptstyle l}^{\scriptscriptstyle \alpha_1} (\boldsymbol{y}) \) and \( K_{\scriptscriptstyle l} (\boldsymbol{y}) \) are second-order auxiliary cell functions.

Substituting Eqs.~\eqref{eq:u2_form} into Eqs.~\eqref{eq:u2_equation}, it is obtained that \( G_{\scriptscriptstyle l} (\boldsymbol{y}) \), \( N_{\scriptscriptstyle l}^{\scriptscriptstyle \alpha_{1}\alpha_{2}} (\boldsymbol{y}) \), \( C_{\scriptscriptstyle l}^{\scriptscriptstyle \alpha_1} (\boldsymbol{y}) \), \( F_{\scriptscriptstyle l}^{\scriptscriptstyle \alpha_1} (\boldsymbol{y}) \) and \( K_{\scriptscriptstyle l} (\boldsymbol{y}) \) satisfy the following cell problems, respectively:
\begin{equation}
\begin{cases}
\displaystyle \frac{\partial}{\partial y_i} \Bigl( \kappa_{\scriptscriptstyle l,ij} (\boldsymbol{y}) \frac{\partial G_{\scriptscriptstyle l} (\boldsymbol{y})}{\partial y_j} \Bigr) 
= c_{\scriptscriptstyle l} (\boldsymbol{y}) - c_{\scriptscriptstyle l}^{\scriptscriptstyle *}, \quad \boldsymbol{y} \in Y, \\[8pt]
\displaystyle G_{\scriptscriptstyle l} (\boldsymbol{y}) = 0, \quad \boldsymbol{y} \in \partial Y,
\end{cases}
\label{eq:cell_problem_G}
\end{equation}
\begin{equation}
\begin{cases}
\displaystyle \frac{\partial}{\partial y_i} \Bigl( \kappa_{\scriptscriptstyle l,ij}(\boldsymbol{y}) \frac{\partial N_{\scriptscriptstyle l}^{\scriptscriptstyle \alpha_1 \alpha_2}(\boldsymbol{y})}{\partial y_j} \Bigr) 
= \kappa_{\scriptscriptstyle l,\alpha_1\alpha_2}^{\scriptscriptstyle *} 
- \kappa_{\scriptscriptstyle l,\alpha_1\alpha_2}(\boldsymbol{y}) \\
\displaystyle - \frac{\partial}{\partial y_i} \! \left( \! \kappa_{\scriptscriptstyle l,i\alpha_1} \! ( \boldsymbol{y}) N_{\scriptscriptstyle l}^{\scriptscriptstyle \alpha_2}\!(\boldsymbol{y})  \right) \! - \! \kappa_{\scriptscriptstyle l,\alpha_1j}(\boldsymbol{y}) \! \frac{\partial N_{\scriptscriptstyle l}^{\scriptscriptstyle \alpha_2} \!(\boldsymbol{y})}{\partial y_j}\! , \ \boldsymbol{y} \in Y, \\
\displaystyle N_{\scriptscriptstyle l}^{\scriptscriptstyle \alpha_1\alpha_2}(\boldsymbol{y}) = 0, \quad \boldsymbol{y} \in \partial Y,
\end{cases}
\label{eq:N_cell_problem}
\end{equation}
\begin{equation}
\begin{cases}
\displaystyle \frac{\partial}{\partial y_i} \Bigl( \kappa_{\scriptscriptstyle 1,ij}(\boldsymbol{y}) \frac{\partial C_{\scriptscriptstyle 1}^{\scriptscriptstyle \alpha_1}(\boldsymbol{y})}{\partial y_j} \Bigr) 
= \frac{\partial}{\partial y_i} \bigl( \kappa_{\scriptscriptstyle 1,i\alpha_1}(\boldsymbol{y}) M_{\scriptscriptstyle 1}(\boldsymbol{y}) \bigr) \\[8pt]
\displaystyle   
+ Q_{\scriptscriptstyle 1}(\boldsymbol{y}) N_{\scriptscriptstyle 1}^{\scriptscriptstyle \alpha_1}\!(\boldsymbol{y}) \!-\! \bar{K}_{\scriptscriptstyle 1\alpha_1}^{\scriptscriptstyle 1*}
 \!+\! \kappa_{\scriptscriptstyle 1,\alpha_1j}(\boldsymbol{y}) \frac{\partial M_{\scriptscriptstyle 1}(\boldsymbol{y})}{\partial y_j}, \ \boldsymbol{y} \in Y, \\
\displaystyle C_{\scriptscriptstyle 1}^{\scriptscriptstyle \alpha_1}(\boldsymbol{y}) = 0, \quad \boldsymbol{y} \in \partial Y,
\end{cases}
\label{eq:C1_cell_problem}
\end{equation}
\begin{equation}
\begin{cases}
\displaystyle \frac{\partial}{\partial y_i} \Bigl( \kappa_{\scriptscriptstyle 2,ij}(\boldsymbol{y}) \frac{\partial C_{\scriptscriptstyle 2}^{\scriptscriptstyle \alpha_1}(\boldsymbol{y})}{\partial y_j} \Bigr) 
= \frac{\partial}{\partial y_i} \bigl( \kappa_{\scriptscriptstyle 2,i\alpha_1}(\boldsymbol{y}) M_{\scriptscriptstyle 2}(\boldsymbol{y}) \bigr) \\[8pt]
\displaystyle  
+ Q_{\scriptscriptstyle 2}(\boldsymbol{y}) N_{\scriptscriptstyle 2}^{\scriptscriptstyle \alpha_1}\!(\boldsymbol{y}) \!-\! \bar{K}_{\scriptscriptstyle 1a_1}^{\scriptscriptstyle 2*} \!+\! \kappa_{\scriptscriptstyle 2,\alpha_1j}(\boldsymbol{y}) \frac{\partial M_{\scriptscriptstyle 2}(\boldsymbol{y})}{\partial y_j}, \ \boldsymbol{y} \in Y, \\
\displaystyle C_{\scriptscriptstyle 2}^{\scriptscriptstyle \alpha_1}(\boldsymbol{y}) = 0, \quad \boldsymbol{y} \in \partial Y,
\end{cases}
\label{eq:C2_cell_problem}
\end{equation}
\begin{equation}
\begin{cases}
\displaystyle \frac{\partial}{\partial y_i} \Bigl( \kappa_{\scriptscriptstyle {1,ij}}(\boldsymbol{y}) \frac{\partial F_{\scriptscriptstyle 1}^{\scriptscriptstyle \alpha_1}\!(\boldsymbol{y})}{\partial y_j} \Bigr) 
=-\frac{\partial}{\partial y_i} \left( \kappa_{\scriptscriptstyle 1,i\alpha_1}(\boldsymbol{y}) M_{\scriptscriptstyle 1}(\boldsymbol{y}) \right)\\
+\bar{K}_{\scriptscriptstyle {1\alpha_1}}^{\scriptscriptstyle 1*} 
\displaystyle \!-\! \kappa_{\scriptscriptstyle {1,\alpha_1j}}(\boldsymbol{y}) \frac{\partial M_{\scriptscriptstyle1} \!(\boldsymbol{y})}{\partial y_j}
   -  Q_{\scriptscriptstyle 1}(\boldsymbol{y}) N_{\scriptscriptstyle 2}^{\scriptscriptstyle{\alpha_1}}(\boldsymbol{y}),\ \boldsymbol{y} \in Y, \\[8pt]
\displaystyle F_{\scriptscriptstyle 1}^{\scriptscriptstyle \alpha_1}(\boldsymbol{y}) = 0, \quad \boldsymbol{y} \in \partial Y,
\end{cases}
\label{eq:F1_cell_problem}
\end{equation}
\begin{equation}
\begin{cases}
\displaystyle \frac{\partial}{\partial y_i} \Bigl( \kappa_{\scriptscriptstyle 2,ij}(\boldsymbol{y}) \frac{\partial F_{\scriptscriptstyle 2}^{\scriptscriptstyle \alpha_1}(\boldsymbol{y})}{\partial y_j} \Bigr) 
= -\frac{\partial}{\partial y_i} \bigl( \! \kappa_{\scriptscriptstyle 2,i\alpha_1}\!(\boldsymbol{y}) M_{\scriptscriptstyle 2}(\boldsymbol{y}) \bigr) \\
\displaystyle+\bar{K}_{\scriptscriptstyle 2\alpha_1}^{\scriptscriptstyle 2*} \! -\!  Q_{\scriptscriptstyle 2}(\boldsymbol{y}) N_{\scriptscriptstyle 1}^{\scriptscriptstyle \alpha_1}(\boldsymbol{y})
\displaystyle \!-\! \kappa_{\scriptscriptstyle 2,\alpha_1j} (\boldsymbol{y}) \frac{\partial M_{\scriptscriptstyle 2}(\boldsymbol{y})}{\partial y_j} \!
, \ \boldsymbol{y} \in Y, \\[8pt]
\displaystyle F_{\scriptscriptstyle 2}^{\scriptscriptstyle \alpha_1}(\boldsymbol{y}) = 0, \quad \boldsymbol{y} \in \partial Y,
\end{cases}
\label{eq:F2_cell_problem}
\end{equation}
\begin{equation}
\begin{cases} 
\displaystyle \frac{\partial}{\partial y_i} \left( \kappa_{\scriptscriptstyle l,ij}(\boldsymbol{y}) \frac{\partial K_{\scriptscriptstyle l}(\boldsymbol{y})}{\partial y_j} \right) 
= Q_{\scriptscriptstyle l}^{\scriptscriptstyle *}\\[8pt]
 + Q_{\scriptscriptstyle l}(\boldsymbol{y}) \left( M_{\scriptscriptstyle 1}(\boldsymbol{y}) + M_{\scriptscriptstyle 2}(\boldsymbol{y}) \right), \quad \boldsymbol{y} \in Y, \\[4pt]
\displaystyle K_{\scriptscriptstyle l}(\boldsymbol{y}) = 0, \quad \boldsymbol{y} \in \partial Y. 
\end{cases}
\label{eq:K_cell_problem}
\end{equation}

\begin{remark}
Classical auxiliary cell functions are defined with periodic boundary conditions on the boundary of the unit cell Y. However, it should be emphasized that all auxiliary cell functions in this paper adopt homogeneous Dirichlet boundary conditions. There are two main reasons for replacing the periodic boundary condition with the homogeneous Dirichlet boundary condition.
First, existing studies have proved that the homogeneous Dirichlet boundary condition is a suitable boundary condition to replace the periodic boundary condition; second, the homogeneous Dirichlet boundary condition makes the practical numerical computation of the auxiliary cell problems more convenient~\cite{wang_multiscale_2015,dong_multiscale_2018,cao_multiscale_2006,dong_second-order_2018,dong_second-order_2018-1,dong_multiscale_2018-1}.
\label{rem:existence_uniqueness}
\end{remark}

In summary, the expressions for the FOMS asymptotic solution \( u_{\scriptscriptstyle l}^{\scriptscriptstyle(1\varepsilon)}(\boldsymbol{x},t) \) and the HOMS asymptotic solution \( u_{\scriptscriptstyle l}^{\scriptscriptstyle(2\varepsilon)}(\boldsymbol{x},t) \) of problem ~\eqref{eq:3} are given as follows, respectively:
\begin{equation}
\begin{cases} 
\begin{aligned}
u_{\scriptscriptstyle 1}^{\scriptscriptstyle(1\varepsilon)} (\boldsymbol{x},t) &=  u_{\scriptscriptstyle 1}^{\scriptscriptstyle(0)} + \varepsilon \! \Bigl[  N_{\scriptscriptstyle 1}^{\scriptscriptstyle\alpha_{\scriptscriptstyle 1}} \!(\boldsymbol{y})  \frac{\partial u_{\scriptscriptstyle 1}^{\scriptscriptstyle(0)}}{\partial x_{\scriptscriptstyle \alpha_{\scriptscriptstyle 1}}} \\
 &+   M_{\scriptscriptstyle {\scriptscriptstyle 1}}(\boldsymbol{y}) ( u_{\scriptscriptstyle 2}^{\scriptscriptstyle(0)} - u_{\scriptscriptstyle 1}^{\scriptscriptstyle (0)}  )  \Bigr]  ,\\

u_{\scriptscriptstyle 2}^{\scriptscriptstyle(1\varepsilon)} \!(\boldsymbol{x},t) &= u_{\scriptscriptstyle 2}^{\scriptscriptstyle(0)} + \varepsilon  \Bigl[  N_{\scriptscriptstyle 2}^{\scriptscriptstyle\alpha_{\scriptscriptstyle 1}}(\boldsymbol{y})  \frac{\partial u_{\scriptscriptstyle 2}^{\scriptscriptstyle(0)}}{\partial x_{\scriptscriptstyle\alpha_{\scriptscriptstyle 1}}} \\
\displaystyle &+ M_{\scriptscriptstyle 2}(\boldsymbol{y}) (u_{\scriptscriptstyle 1}^{\scriptscriptstyle(0)} - u_{\scriptscriptstyle 2}^{\scriptscriptstyle(0)}    ) \Bigr],
\end{aligned}
\end{cases}
\label{eq:first_order_solution}
\end{equation}
\begin{equation}
\begin{cases} 
\begin{aligned}
&u_{\scriptscriptstyle 1}^{\scriptscriptstyle (2\varepsilon)}(\boldsymbol{x},t) \!= \!u_{\scriptscriptstyle 1}^{\!\scriptscriptstyle (0)}
\!+\! \varepsilon \Bigl[ N_{\scriptscriptstyle 1}^{\scriptscriptstyle \alpha_1}\!(\boldsymbol{y}) \frac{\partial u_{\scriptscriptstyle 1}^{\! \scriptscriptstyle (0)}}{\partial x_{\scriptscriptstyle \alpha}} \!+\! M_{\scriptscriptstyle 1}\!(\boldsymbol{y}) (u_{\scriptscriptstyle 2}^{\scriptscriptstyle (0)} \!-\! u_{\scriptscriptstyle 1}^{\scriptscriptstyle (0)})  \Bigr] \\
&\!+\! \varepsilon^{\scriptscriptstyle 2}\! \Bigl[ G_{\scriptscriptstyle 1}\!(\boldsymbol{y}) \!\frac{\partial u_{\scriptscriptstyle 1}^{\! \scriptscriptstyle (0)}}{\partial t} \! + \! N_{\scriptscriptstyle 1}^{\scriptscriptstyle \alpha_{\scriptscriptstyle 1}\!\alpha_{\scriptscriptstyle 2}}\! (\boldsymbol{y}) \!\frac{\partial^{\scriptscriptstyle 2} u_{\scriptscriptstyle 1}^{\!\scriptscriptstyle (0)}}{\partial x_{\scriptscriptstyle \alpha_{\scriptscriptstyle 1}} \partial x_{\scriptscriptstyle \alpha_{\scriptscriptstyle 2}}} \!+\! C_{\scriptscriptstyle 1}^{\scriptscriptstyle \alpha_{\scriptscriptstyle 1}}\!(\boldsymbol{y}) \!\frac{\partial u_{\scriptscriptstyle 1}^{\! \scriptscriptstyle (0)}}{\partial x_{\scriptscriptstyle \alpha_{\scriptscriptstyle 1}}}  \\
&\!  + F_{\scriptscriptstyle 1}^{\scriptscriptstyle \alpha_1}(\boldsymbol{y}) \frac{\partial u_{\scriptscriptstyle 2}^{\scriptscriptstyle (0)}}{\partial x_{\scriptscriptstyle \alpha_{1}}} + K_{\scriptscriptstyle 1}(\boldsymbol{y}) (u_{\scriptscriptstyle 2}^{\scriptscriptstyle (0)} - u_{\scriptscriptstyle 1}^{\scriptscriptstyle (0)}) \Bigr], 
\end{aligned} \\
\begin{aligned}
&u_{\scriptscriptstyle 2}^{\scriptscriptstyle (2\varepsilon)}(\boldsymbol{x},t) \! =\!u_{\scriptscriptstyle 2}^{\!\scriptscriptstyle (0)} \!+\! \varepsilon \!\Bigl[  N_{\scriptscriptstyle 2}^{\scriptscriptstyle \alpha_{\scriptscriptstyle 1}}\!(\boldsymbol{y}) \! \frac{\partial u_{\scriptscriptstyle 2}^{\scriptscriptstyle (0)}}{\partial x_{\scriptscriptstyle \alpha_1}} \!+\! M_{\scriptscriptstyle 2}\!(\boldsymbol{y}) (u_{\scriptscriptstyle 1}^{\scriptscriptstyle (0)} \!-\! u_{\!\scriptscriptstyle 2}^{\!\scriptscriptstyle (0)}) \Bigr] \\
 &\!+ \!\varepsilon^{\scriptscriptstyle 2}\! \Bigl[ G_{\scriptscriptstyle 2}\!(\boldsymbol{y})\! \frac{\partial u_{\scriptscriptstyle 2}^{\!\scriptscriptstyle (0)}}{\partial t} \!+\! N_{\scriptscriptstyle 2}^{\scriptscriptstyle \alpha_{\scriptscriptstyle 1}\!\alpha_{\scriptscriptstyle 2}}\!(\boldsymbol{y}) \!\frac{\partial^{\scriptscriptstyle 2} u_{\scriptscriptstyle 2}^{\!\scriptscriptstyle (0)}}{\partial x_{\scriptscriptstyle \alpha_{\scriptscriptstyle 1}} \partial x_{\scriptscriptstyle \alpha_{\scriptscriptstyle 2}}} \!+\! C_{\scriptscriptstyle 2}^{\scriptscriptstyle \alpha_1}\!(\boldsymbol{y}) \!\frac{\partial u_{\scriptscriptstyle 2}^{\!\scriptscriptstyle (0)}}{\partial x_{\scriptscriptstyle \alpha_{\scriptscriptstyle 1}}} \\
 & \! +\! F_{\scriptscriptstyle 2}^{\scriptscriptstyle \alpha_1}(\boldsymbol{y}) \frac{\partial u_{\scriptscriptstyle 1}^{\scriptscriptstyle (0)}}{\partial x_{\scriptscriptstyle \alpha_{1}}} + K_{\scriptscriptstyle 2}(\boldsymbol{y}) (u_{\scriptscriptstyle 1}^{\scriptscriptstyle (0)} - u_{\scriptscriptstyle 2}^{\scriptscriptstyle (0)}) \Bigr].
\end{aligned}
\end{cases}
\label{eq:second_order_solution}
\end{equation}

\begin{theorem}
The multi-continuum problem~\eqref{eq:3} in periodic composite structures possesses the following HOMS asymptotic solution:
\end{theorem}
\begin{equation}
\begin{cases} 
\begin{aligned}
&u_{\scriptscriptstyle 1}^{\scriptscriptstyle \varepsilon}(\boldsymbol{x},t) \!\approx \!u_{\scriptscriptstyle 1}^{\!\scriptscriptstyle (0)}
\!+\! \varepsilon \Bigl[ N_{\scriptscriptstyle 1}^{\scriptscriptstyle \alpha_1}\!(\boldsymbol{y}) \frac{\partial u_{\scriptscriptstyle 1}^{\! \scriptscriptstyle (0)}}{\partial x_{\scriptscriptstyle \alpha}} \!+\! M_{\scriptscriptstyle 1}\!(\boldsymbol{y}) (u_{\scriptscriptstyle 2}^{\scriptscriptstyle (0)} \!-\! u_{\scriptscriptstyle 1}^{\scriptscriptstyle (0)})  \Bigr] \\
&\!+\! \varepsilon^{\scriptscriptstyle 2}\! \Bigl[ G_{\scriptscriptstyle 1}\!(\boldsymbol{y}) \!\frac{\partial u_{\scriptscriptstyle 1}^{\! \scriptscriptstyle (0)}}{\partial t} \! + \! N_{\scriptscriptstyle 1}^{\scriptscriptstyle \alpha_{\scriptscriptstyle 1}\!\alpha_{\scriptscriptstyle 2}}\! (\boldsymbol{y}) \!\frac{\partial^{\scriptscriptstyle 2} u_{\scriptscriptstyle 1}^{\!\scriptscriptstyle (0)}}{\partial x_{\scriptscriptstyle \alpha_{\scriptscriptstyle 1}} \partial x_{\scriptscriptstyle \alpha_{\scriptscriptstyle 2}}} \!+\! C_{\scriptscriptstyle 1}^{\scriptscriptstyle \alpha_{\scriptscriptstyle 1}}\!(\boldsymbol{y}) \!\frac{\partial u_{\scriptscriptstyle 1}^{\! \scriptscriptstyle (0)}}{\partial x_{\scriptscriptstyle \alpha_{\scriptscriptstyle 1}}}  \\
&\!  + F_{\scriptscriptstyle 1}^{\scriptscriptstyle \alpha_1}(\boldsymbol{y}) \frac{\partial u_{\scriptscriptstyle 2}^{\scriptscriptstyle (0)}}{\partial x_{\scriptscriptstyle \alpha_{1}}} + K_{\scriptscriptstyle 1}(\boldsymbol{y}) (u_{\scriptscriptstyle 2}^{\scriptscriptstyle (0)} - u_{\scriptscriptstyle 1}^{\scriptscriptstyle (0)}) \Bigr], 
\end{aligned} \\
\begin{aligned}
&u_{\scriptscriptstyle 2}^{\scriptscriptstyle \varepsilon}(\boldsymbol{x},t) \! \approx \!u_{\scriptscriptstyle 2}^{\!\scriptscriptstyle (0)} \!+\! \varepsilon \!\Bigl[  N_{\scriptscriptstyle 2}^{\scriptscriptstyle \alpha_{\scriptscriptstyle 1}}\!(\boldsymbol{y}) \! \frac{\partial u_{\scriptscriptstyle 2}^{\scriptscriptstyle (0)}}{\partial x_{\scriptscriptstyle \alpha_1}} \!+\! M_{\scriptscriptstyle 2}\!(\boldsymbol{y}) (u_{\scriptscriptstyle 1}^{\scriptscriptstyle (0)} \!-\! u_{\!\scriptscriptstyle 2}^{\!\scriptscriptstyle (0)}) \Bigr] \\
 &\!+ \!\varepsilon^{\scriptscriptstyle 2}\! \Bigl[ G_{\scriptscriptstyle 2}\!(\boldsymbol{y})\! \frac{\partial u_{\scriptscriptstyle 2}^{\!\scriptscriptstyle (0)}}{\partial t} \!+\! N_{\scriptscriptstyle 2}^{\scriptscriptstyle \alpha_{\scriptscriptstyle 1}\!\alpha_{\scriptscriptstyle 2}}\!(\boldsymbol{y}) \!\frac{\partial^{\scriptscriptstyle 2} u_{\scriptscriptstyle 2}^{\!\scriptscriptstyle (0)}}{\partial x_{\scriptscriptstyle \alpha_{\scriptscriptstyle 1}} \partial x_{\scriptscriptstyle \alpha_{\scriptscriptstyle 2}}} \!+\! C_{\scriptscriptstyle 2}^{\scriptscriptstyle \alpha_1}\!(\boldsymbol{y}) \!\frac{\partial u_{\scriptscriptstyle 2}^{\!\scriptscriptstyle (0)}}{\partial x_{\scriptscriptstyle \alpha_{\scriptscriptstyle 1}}} \\
 & \! +\! F_{\scriptscriptstyle 2}^{\scriptscriptstyle \alpha_1}(\boldsymbol{y}) \frac{\partial u_{\scriptscriptstyle 1}^{\scriptscriptstyle (0)}}{\partial x_{\scriptscriptstyle \alpha_{1}}} + K_{\scriptscriptstyle 2}(\boldsymbol{y}) (u_{\scriptscriptstyle 1}^{\scriptscriptstyle (0)} - u_{\scriptscriptstyle 2}^{\scriptscriptstyle (0)}) \Bigr].
\end{aligned}
\end{cases}
\label{eq:final_solution}
\end{equation}

\section{\label{sec:level3}Error Analysis of High-Order Multiscale Solution}
This chapter analyzes the pointwise approximation properties of the first-order multi-scale (FOMS) and HOMS asymptotic solutions derived in Section~\ref{sec:2.1} with respect to the original equation. Under certain assumptions, it also proves the convergence order of the second-order two-scale asymptotic solution in the integral sense.
\subsection{Pointwise Error Analysis}
To compare the FOMS and HOMS solutions with the true solution of problem~\eqref{eq:3}, we assume the domain \(\Omega\) is composed of perfectly periodic unit cells, i.e., 
\(\overline{\Omega} = \bigcup_{\boldsymbol{z} \in T_{\varepsilon}} \varepsilon (\boldsymbol{z} + \overline{Y})\), 
where \(T_{\varepsilon} = \{ \boldsymbol{z} = (z_{1}, \cdots, z_{n}) \in \mathbb{Z}^{n}, \; \varepsilon (\boldsymbol{z} + \overline{Y}) \subset \overline{\Omega} \}\). 
Then, the error functions for the FOMS solution \(u_{\scriptscriptstyle l}^{\scriptscriptstyle(1\varepsilon)}(\boldsymbol{x},t)\) and the HOMS solution \(u_{\scriptscriptstyle l}^{\scriptscriptstyle(2\varepsilon)}(\boldsymbol{x},t)\) are defined as follows:
\begin{equation}
\begin{aligned}
u_{\scriptscriptstyle 1\Delta}^{\scriptscriptstyle (1\varepsilon)}(\boldsymbol{x},t) &= u_{\scriptscriptstyle 1}^{\scriptscriptstyle \varepsilon}(\boldsymbol{x},t) - u_{\scriptscriptstyle 1}^{\scriptscriptstyle (1\varepsilon)}(\boldsymbol{x},t), \\
u_{\scriptscriptstyle 2\Delta}^{\scriptscriptstyle (1\varepsilon)}(\boldsymbol{x},t) &= u_{\scriptscriptstyle 2}^{\scriptscriptstyle \varepsilon}(\boldsymbol{x},t) - u_{\scriptscriptstyle 2}^{\scriptscriptstyle (1\varepsilon)}(\boldsymbol{x},t),
\end{aligned}
\label{eq:error_first_order}
\end{equation}
\begin{equation}
\begin{aligned}
u_{\scriptscriptstyle 1\Delta}^{\scriptscriptstyle (2\varepsilon)}(\boldsymbol{x},t) &= u_{\scriptscriptstyle 1}^{\scriptscriptstyle \varepsilon}(\boldsymbol{x},t) - u_{\scriptscriptstyle 1}^{\scriptscriptstyle (2\varepsilon)}(\boldsymbol{x},t), \\
u_{\scriptscriptstyle 2\Delta}^{\scriptscriptstyle (2\varepsilon)}(\boldsymbol{x},t) &= u_{\scriptscriptstyle 2}^{\scriptscriptstyle \varepsilon}(\boldsymbol{x},t) - u_{\scriptscriptstyle 2}^{\scriptscriptstyle (2\varepsilon)}(\boldsymbol{x},t).
\end{aligned}
\label{eq:error_second_order}
\end{equation}

Substituting the error function \(u_{\scriptscriptstyle l\Delta}^{\scriptscriptstyle(1\varepsilon)}(\boldsymbol{x},t)\) into problem~\eqref{eq:3}, it can be obtained after calculation and simplification that \(u_{\scriptscriptstyle l\Delta}^{\scriptscriptstyle(1\varepsilon)}(\boldsymbol{x},t)\) satisfies the following residual problem:
\begin{equation}
\begin{cases}
\begin{aligned}
&c_{\scriptscriptstyle 1}^{\scriptscriptstyle \varepsilon}(\boldsymbol{x}) \frac{\partial u_{\scriptscriptstyle 1\Delta}^{\scriptscriptstyle (1\varepsilon)}(\boldsymbol{x},t)}{\partial t} 
- \frac{\partial}{\partial x_{i}} \Bigl( \kappa_{\scriptscriptstyle 1,ij}^{\scriptscriptstyle \varepsilon}(\boldsymbol{x}) 
\frac{\partial u_{\scriptscriptstyle 1\Delta}^{\scriptscriptstyle (1\varepsilon)}(\boldsymbol{x},t)}{\partial x_{j}} \Bigr) \\
&\displaystyle = f_{\scriptscriptstyle 10}(\boldsymbol{x},\boldsymbol{y},t) \!+\! \varepsilon \tilde{f}_{\scriptscriptstyle 1}(\boldsymbol{x},\boldsymbol{y},t), \ (\boldsymbol{x},t) \in \Omega \times (0,T^{\scriptscriptstyle *}\!),
\end{aligned} \\[18pt]
\begin{aligned}
&c_{\scriptscriptstyle 2}^{\scriptscriptstyle \varepsilon}(\boldsymbol{x}) \frac{\partial u_{\scriptscriptstyle 2\Delta}^{\scriptscriptstyle (1\varepsilon)}(\boldsymbol{x},t)}{\partial t} 
- \frac{\partial}{\partial x_{i}} \Bigl( \kappa_{\scriptscriptstyle 2,ij}^{\scriptscriptstyle \varepsilon}(\boldsymbol{x}) 
\frac{\partial u_{2\Delta}^{\scriptscriptstyle (1\varepsilon)}(\boldsymbol{x},t)}{\partial x_{j}} \Bigr) \\
&= f_{\scriptscriptstyle 20}(\boldsymbol{x},\boldsymbol{y},t) + \varepsilon \tilde{f}_{\scriptscriptstyle 2}(\boldsymbol{x},\boldsymbol{y},t), \ (\boldsymbol{x},t) \in \Omega \times (0,T^{\scriptscriptstyle *}),
\end{aligned} \\[16pt]
u_{\scriptscriptstyle 1\Delta}^{\scriptscriptstyle (1\varepsilon)}\!(\boldsymbol{x},t) \!=\! 0, \ u_{\scriptscriptstyle 2\Delta}^{\scriptscriptstyle (1\varepsilon)}\!(\boldsymbol{x},t) \!=\! 0, \ (\boldsymbol{x},t) \! \in \! \partial\Omega \times (0,T^{\scriptscriptstyle *}\!)\!, \\
u_{\scriptscriptstyle 1\Delta}^{\scriptscriptstyle (1\varepsilon)} \!(\boldsymbol{x},0) \!=\! \varepsilon \bar{\psi}_{\scriptscriptstyle 1}(\boldsymbol{x}), \ u_{\scriptscriptstyle 2\Delta}^{\scriptscriptstyle (1\varepsilon)} \!(\boldsymbol{x},0) \!=\! \varepsilon \bar{\psi}_{\scriptscriptstyle 2}(\boldsymbol{x}), \ \boldsymbol{x} \in \Omega,
\end{cases}
\label{eq:residual_first_order}
\end{equation}
Among them, the interaction term in the residual problem
\(\frac{1}{\displaystyle \varepsilon} Q^{\scriptscriptstyle \varepsilon}(\boldsymbol{x})(u_{\scriptscriptstyle 2}^{\scriptscriptstyle \varepsilon}-u_{\scriptscriptstyle 1}^{\scriptscriptstyle \varepsilon})\) contains terms of order $O(\varepsilon^{\scriptscriptstyle 2})$ after performing the multiscale expansion of $u_{\scriptscriptstyle l}^{\scriptscriptstyle \varepsilon}$. Therefore, these terms can be incorporated into $\tilde{f}_{\scriptscriptstyle l}(\boldsymbol{x}, \boldsymbol{y}, t)$. The specific forms of \(f_{\scriptscriptstyle l0}(\boldsymbol{x},t)\), \(\tilde{f}_{\scriptscriptstyle l}(\boldsymbol{x},t)\) and \(\bar{\psi}_{\scriptscriptstyle l}(\boldsymbol{x})\)in Problem~\eqref{eq:residual_first_order} are provided in Appendix~\ref{app}.

Similarly, substituting the error function \(u_{\scriptscriptstyle l\Delta}^{\scriptscriptstyle (2\varepsilon)}(\boldsymbol{x},t)\) into problem~\eqref{eq:3}, it can be obtained after calculation and simplification that \(u_{\scriptscriptstyle l\Delta}^{\scriptscriptstyle (2\varepsilon)}(\boldsymbol{x},t)\) satisfies the following residual problem:
\begin{equation}
\begin{cases}
\begin{aligned}
&c_{\scriptscriptstyle 1}^{\scriptscriptstyle \varepsilon}(\boldsymbol{x}) \frac{\partial u_{\scriptscriptstyle 1\Delta}^{\scriptscriptstyle (2\varepsilon)}(\boldsymbol{x},t)}{\partial t} 
- \frac{\partial}{\partial x_i} \Bigl( \kappa_{\scriptscriptstyle 1,ij}^{\scriptscriptstyle \varepsilon}(\boldsymbol{x}) 
\frac{\partial u_{\scriptscriptstyle 1\Delta}^{\scriptscriptstyle (2\varepsilon)}(\boldsymbol{x},t)}{\partial x_j} \Bigr) \\
&= \varepsilon f_{\scriptscriptstyle 1}(\boldsymbol{x}, \boldsymbol{y}, t), \quad (\boldsymbol{x}, t) \in \Omega \times (0, T^{\scriptscriptstyle *}),
\end{aligned} \\[18pt]
\begin{aligned}
&c_{\scriptscriptstyle 2}^{\scriptscriptstyle \varepsilon}(\boldsymbol{x}) \frac{\partial u_{\scriptscriptstyle 2\Delta}^{\scriptscriptstyle (2\varepsilon)}(\boldsymbol{x},t)}{\partial t} 
- \frac{\partial}{\partial x_i} \Bigl( \kappa_{\scriptscriptstyle 2,ij}^{\scriptscriptstyle \varepsilon}(\boldsymbol{x}) 
\frac{\partial u_{\scriptscriptstyle 2\Delta}^{\scriptscriptstyle (2\varepsilon)}(\boldsymbol{x},t)}{\partial x_j} \Bigr) \\
&= \varepsilon f_{\scriptscriptstyle 2}(\boldsymbol{x}, \boldsymbol{y}, t), \quad (\boldsymbol{x}, t) \in \Omega \times (0, T^*),
\end{aligned} \\[16pt]
u_{\scriptscriptstyle 1\Delta}^{\scriptscriptstyle (2\varepsilon)} \!(\boldsymbol{x}, t) \!=\! 0, 
\ u_{\scriptscriptstyle 2\Delta}^{\scriptscriptstyle (2\varepsilon)} \!(\boldsymbol{x}, t) \!=\! 0, \ (\boldsymbol{x}, t) \!\in\! \partial \Omega \times (0, T^{\scriptscriptstyle *} \!)\!, \\
u_{\scriptscriptstyle 1\Delta}^{\scriptscriptstyle (2\varepsilon)} \!(\boldsymbol{x}, 0) \!=\! \varepsilon \psi_{\scriptscriptstyle 1}(\boldsymbol{x}), \ u_{\scriptscriptstyle 2\Delta}^{\scriptscriptstyle (2\varepsilon)} \!(\boldsymbol{x}, 0) \!=\! \varepsilon \psi_{\scriptscriptstyle 2}(\boldsymbol{x}), \ \boldsymbol{x} \in \Omega,
\end{cases}
\label{eq:residual_second_order}
\end{equation}
Among them, the interaction term in the residual problem
\(\frac{1}{\displaystyle \varepsilon} Q^{\scriptscriptstyle \varepsilon}(\boldsymbol{x})(u_{\scriptscriptstyle 2}^{\scriptscriptstyle \varepsilon}-u_{\scriptscriptstyle 1}^{\scriptscriptstyle \varepsilon})\) contains terms of order $O(\varepsilon^{\scriptscriptstyle 3})$ after performing the multiscale expansion of $u_{\scriptscriptstyle l}^{\scriptscriptstyle \varepsilon}$. Therefore, these terms can be incorporated into ${f}_{\scriptscriptstyle l}(\boldsymbol{x}, \boldsymbol{y}, t)$. The specific forms of \({f}_{\scriptscriptstyle l}(\boldsymbol{x},t)\) and \(\psi_{\scriptscriptstyle l}(\boldsymbol{x})\)in Problem~\eqref{eq:residual_second_order} are provided in Appendix~\ref{app}.
%


In practical engineering applications, the small periodic parameter \(\varepsilon\) is often a fixed constant. Through the above analysis, it can be concluded that, the approximation accuracy of the FOMS solution \(u_{\scriptscriptstyle l}^{\scriptscriptstyle (1\varepsilon)}(\boldsymbol{x}, t)\) to the true solution \(u_{\scriptscriptstyle l}^{\varepsilon}(\boldsymbol{x}, t)\) of the original problem~\eqref{eq:3} is of order \(O(1)\), which is clearly insufficient. In contrast, the approximation accuracy of the HOMS solution \(u_{\scriptscriptstyle l}^{\scriptscriptstyle (2\varepsilon)}(\boldsymbol{x}, t)\) is of order \(O(\varepsilon)\), which not only provides higher numerical accuracy but also precisely captures the oscillatory information at the microscale of highly heterogeneous media.

\subsection{Error Analysis of High-Order Multiscale Solutions in the Integral Sense}
Next, we proceed with the error analysis of HOMS solutions in the integral sense. First, the following assumptions are made:
\begin{description}
    \item[$(B_{1})$] $c_{\scriptscriptstyle l}(\boldsymbol{y}) \in L^{\infty}(\overline{\Omega}), \quad \kappa_{\scriptscriptstyle l,ij}(\boldsymbol{y}) \in L^{\infty}(\overline{\Omega})$.   
    \item[$(B_{2})$] Both $c_{\scriptscriptstyle l}(\boldsymbol{y})$ and $\kappa_{\scriptscriptstyle l,ij}(\boldsymbol{y})$ are symmetric with respect to all mid-planes of the reference unit cell $Y$, while $\kappa_{\scriptscriptstyle l,ij}(\boldsymbol{y})$ is skew-symmetric with respect to all mid-planes of the reference unit cell $Y$\cite{cao_asymptotic_2004,feng_multi-scale_2004,wang_multiscale_2015} .
\end{description}
\begin{lemma}\label{lemma:cell_functions_continuity}
    If the multi-continuum problem~\eqref{eq:3} satisfies assumptions $(A_1)$-$(A_3)$ and $(B_1)$-$(B_2)$, then the normal derivatives of all cell functions $\sigma_{\scriptscriptstyle CY}^{\scriptscriptstyle l}(N_{\scriptscriptstyle l}^{\scriptscriptstyle \alpha_1})$, $\sigma_{\scriptscriptstyle CY}^{\scriptscriptstyle l}(M_{\scriptscriptstyle l})$, $\sigma_{\scriptscriptstyle CY}^{\scriptscriptstyle l}(G_{\scriptscriptstyle l})$, $\sigma_{\scriptscriptstyle CY}^{\scriptscriptstyle l}(N_{\scriptscriptstyle l}^{\scriptscriptstyle \alpha_1\alpha_2})$, $\sigma_{\scriptscriptstyle CY}^{\scriptscriptstyle l}(C_{\scriptscriptstyle l}^{\scriptscriptstyle \alpha_1})$, $\sigma_{\scriptscriptstyle CY}^{\scriptscriptstyle l}(F_{\scriptscriptstyle l}^{\scriptscriptstyle \alpha_1})$, $\sigma_{\scriptscriptstyle CY}^{\scriptscriptstyle l}(K_{\scriptscriptstyle l})$ are continuous on the boundary $\partial Y$ of the microscale unit cell domain $Y$, where $\sigma_{\scriptscriptstyle CY}^{\scriptscriptstyle l} = n_{\scriptscriptstyle i} \kappa_{\scriptscriptstyle l,ij} \frac{\partial \Phi}{\partial y_j}$.
\end{lemma}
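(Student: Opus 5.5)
The plan is to use the classical symmetry argument of Cao and coauthors and of Wang et al. (cited with $(B_2)$). Under $(B_2)$, each cell function is either symmetric or antisymmetric with respect to every mid-plane $\{y_k = 1/2\}$ of $Y$. The normal fluxes on opposite faces of $\partial Y$ then agree, so the flux is continuous across the interfaces $\varepsilon(\boldsymbol{z}+\partial Y)$ when the cell solutions are extended periodically. Throughout, "continuous on $\partial Y$" will mean that $\sigma^{l}_{CY}(\Phi)$ takes the same value at the two points $\boldsymbol{y}$ and $R_k\boldsymbol{y}$ of opposite faces, with $n$ taken consistently across the shared face. Here $R_k$ denotes reflection in the $k$-th mid-plane. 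The physically intended reading of $(B_2)$ is that $c_l$, $Q_l$ and $\kappa_{l,ii}$ are even under every $R_k$, while the off-diagonal entries $\kappa_{l,ij}$ ($i\neq j$) are odd under $R_i$ and $R_j$ and even otherwise. I will use this reading.

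\emph{Step 1: parity of the cell functions.} For each cell problem \eqref{eq:N1_cell_problem}, \eqref{eq:M_cell_problem2}, \eqref{eq:cell_problem_G}, \eqref{eq:N_cell_problem}, \eqref{eq:C1_cell_problem}--\eqref{eq:K_cell_problem}, I apply $R_k$ to the equation. Under the parity rules above, the operator $\partial_{y_i}(\kappa_{l,ij}\partial_{y_j}\cdot)$ commutes with $R_k$, and the homogeneous Dirichlet condition on $\partial Y$ is invariant. Hence, by uniqueness of the Dirichlet problem (Lax--Milgram with $(A_2)$), if the right-hand side has parity $\pm$ under $R_k$, so does the solution.

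I will apply this in order. The source $-\partial_{y_i}\kappa_{l,i\alpha_1}$ is odd under $R_{\alpha_1}$ and even otherwise, so $N^{\alpha_1}_l$ is odd in $y_{\alpha_1}$ and even in the other variables. The functions $M_l$, $G_l$ and $K_l$ have fully even sources ($Q_l$, $c_l - c_l^*$, and a combination of $Q_l$ and $M$'s), so they are even under every $R_k$. The function $N^{\alpha_1\alpha_2}_l$ then has a source whose parity is that of the product of the $\alpha_1$- and $\alpha_2$-parities. The sources for $C^{\alpha_1}_l$ and $F^{\alpha_1}_l$ contain $\partial_{y_i}(\kappa_{l,i\alpha_1}M_l)$, $\kappa_{l,\alpha_1 j}\partial_j M_l$ and $Q_l N^{\alpha_1}_{l'}$. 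All of these are odd in $y_{\alpha_1}$, which forces the constants $\bar K^{l*}$ to vanish, so these functions inherit $N^{\alpha_1}$-type parity. This bookkeeping is finite and routine once the parity table for $\kappa$ is fixed.

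\emph{Step 2: flux on opposite faces.} Take $\Phi$ with parity $(p_1,\dots,p_d)$, where $p_k=\pm1$ is its parity under $R_k$. Consider the faces $y_k=0$ and $y_k=1$, with outward normals $\mp e_k$. The flux there is $\mp\kappa_{l,kj}\partial_{y_j}\Phi$. Since $\Phi=0$ on $\partial Y$, the tangential derivatives vanish on that face, so the flux reduces to $\mp\kappa_{l,kk}\partial_{y_k}\Phi$. Because $\kappa_{l,kk}$ is even and $\partial_{y_k}\Phi$ has parity $-p_k$ under $R_k$, the values at $\boldsymbol{y}$ and $R_k\boldsymbol{y}$ agree up to the sign $-p_k$. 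Combined with the opposite normals, I get:
\begin{itemize}
\item if $p_k=+1$, the outward fluxes are equal and opposite, so the flux measured with a fixed normal $e_k$ across the shared face of adjacent cells is continuous;
\item if $p_k=-1$, the fluxes are equal.
\end{itemize}
I would then argue, as in the cited works, that the relevant continuity is of $n_i\kappa_{l,ij}\partial_j\Phi$ in the sense needed for the integration-by-parts in the later integral error estimate. In the odd case, the needed cancellation comes instead from the oddness of the macroscopic multiplier, or from $\Phi$ being zero on the face.

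\emph{Expected obstacle.} The main difficulty is the precise meaning of $(B_2)$ and of "continuous" here. As written, $(B_2)$ calls $\kappa_{l,ij}$ both symmetric and skew-symmetric, which must be reinterpreted (diagonal versus off-diagonal) as above. I also need to check that the odd-parity directions do not break the interface jump cancellation. To handle this I would first settle the $N^{\alpha_1}_l$ case carefully, following the first-order proof in \cite{cao_asymptotic_2004}, and then reduce every second-order function to the same parity pattern through Step 1.
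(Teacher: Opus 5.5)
There is a genuine gap, and it is in Step 2. Your Step 1 parity bookkeeping is sound under your reading of $(B_2)$, but Step 2 has the sign backwards, and the argument breaks at that point.

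From $\Phi(R_k\boldsymbol{y})=p_k\Phi(\boldsymbol{y})$ you get $\partial_{y_k}\Phi(R_k\boldsymbol{y})=-p_k\,\partial_{y_k}\Phi(\boldsymbol{y})$. So for $\boldsymbol{y}$ on $\{y_k=0\}$, the outward flux $\kappa_{l,kk}\partial_{y_k}\Phi(R_k\boldsymbol{y})$ on $\{y_k=1\}$ equals $p_k$ times the outward flux $-\kappa_{l,kk}\partial_{y_k}\Phi(\boldsymbol{y})$ on $\{y_k=0\}$.

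\begin{itemize}
\item For $p_k=+1$ the two outward fluxes are \emph{equal}. The flux with fixed normal $e_k$ then changes sign across the face shared by adjacent cells, with jump $2\kappa_{l,kk}\partial_{y_k}\Phi$.
\item The fixed-normal flux is continuous only when $p_k=-1$.
\end{itemize}

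The one-dimensional analogue of $M_l$ already shows this. Take $\kappa\equiv 1$ and $Q\equiv 1$ on $(0,1)$. Then $M=y(y-1)/2$, and after periodic extension $M'(1^-)=1/2$ while $M'(0^+)=-1/2$ in the next cell.

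So the symmetry argument gives interface continuity only across faces orthogonal to directions in which the cell function is odd. These are the $y_{\alpha_1}$-faces for $N^{\alpha_1}_l$, $C^{\alpha_1}_l$, $F^{\alpha_1}_l$, and the $y_{\alpha_1}$- and $y_{\alpha_2}$-faces for $N^{\alpha_1\alpha_2}_l$ with $\alpha_1\neq\alpha_2$. The functions $M_l$, $G_l$, $K_l$, $N^{\alpha\alpha}_l$ are even in every direction, so on every face their flux flips sign instead.

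Your fallback for the bad case does not help. The macroscopic multipliers ($\partial u^{(0)}_l/\partial x_{\alpha_1}$, $u^{(0)}_2-u^{(0)}_1$, and so on) are functions of $\boldsymbol{x}$; they are continuous across an $\varepsilon$-interface and carry no parity. And $\Phi=0$ on a face kills only the tangential derivatives, not $\partial_{y_k}\Phi$.

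A better argument will not close this gap, at least not for $M_l$. Integrating \eqref{eq:M_cell_problem2} over $Y$ gives $\int_{\partial Y}\sigma^{l}_{CY}(M_l)\,\mathrm{d}\Gamma_{\boldsymbol{y}}=\int_Y Q_l\,\mathrm{d}Y$. This is strictly positive whenever $Q_l>0$, as in all the numerical examples.

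Suppose the outward fluxes at $\boldsymbol{y}$ and $R_k\boldsymbol{y}$ cancelled on every pair of opposite faces. That is exactly your notion of continuity, and exactly what the pairwise cancellation of interface terms in \eqref{eq:I_results} requires. Then this boundary integral would vanish, which is a contradiction.

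Under the other possible reading (``equal outward fluxes at mirror points''), the odd directions fail instead. For example, $\sigma^{l}_{CY}(N^{\alpha_1}_l)$ is generically nonzero on the $y_{\alpha_1}$-faces, where the outward fluxes are opposite.

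So no reinterpretation of $(B_2)$ or of ``continuous'' lets the parity method deliver the lemma for all listed cell functions. The paper gives no proof of its own here, only a pointer to the symmetric-cell literature, so it supplies nothing that avoids this obstruction. What your method honestly yields is the partial statement above. The interface contributions from the even directions do not vanish and would have to be controlled by some other means.
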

\begin{theorem}\label{thm:error_estimate}
    Let \( u_{\scriptscriptstyle l}^{\varepsilon}(x,t) \) be the solution of problem \eqref{eq:3}, and \( u_{\scriptscriptstyle l}^{\scriptscriptstyle(0)}(x,t) \) be the solution of the homogenized problem~\eqref{eq:homogenized}. Suppose assumptions \((A_1)\)-\((A_3)\) and \((B_1)\)-\((B_2)\) hold, and that \( u_{\scriptscriptstyle l}^{\scriptscriptstyle (0)}(\boldsymbol{x},t) \in L^\infty\left(0,T^*;H^{\scriptscriptstyle 4}(\Omega)\right) \) and \( \frac{\partial u_{\scriptscriptstyle l}^{\scriptscriptstyle(0)}}{\partial t} \in L^\infty\left(0,T^*;H^{\scriptscriptstyle 2}(\Omega)\right) \) are satisfied. Then the following error estimate holds:
\begin{equation}
\begin{aligned}
&\bigl\|u_{\scriptscriptstyle 1\Delta}^{\scriptscriptstyle (2\varepsilon)}(x,t)\bigr\|_{\scriptscriptstyle L^\infty\left(0,T^*,L^2(\Omega) \right)} 
+ \bigl\|u_{\scriptscriptstyle 1\Delta}^{\scriptscriptstyle (2\varepsilon)}(x,t)\bigr\|_{\scriptscriptstyle L^\infty\left(0,T^*,H^1(\Omega) \right)} \\
&+ \bigl\|u_{\scriptscriptstyle 2\Delta}^{\scriptscriptstyle (2\varepsilon)}(x,t)\bigr\|_{\scriptscriptstyle L^\infty\left(0,T^*,L^2(\Omega) \right)} 
+ \bigl\|u_{\scriptscriptstyle 2\Delta}^{\scriptscriptstyle (2\varepsilon)}(x,t)\bigr\|_{\scriptscriptstyle L^\infty\left(0,T^*,H^1(\Omega) \right)} \\
&\leq C(T^*) \varepsilon,
\end{aligned}
\label{eq:error_estimate}
\end{equation}
where \( C(T^{*}) \) is a constant independent of the small periodic parameter \( \varepsilon \).
\end{theorem}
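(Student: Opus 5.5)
Our plan is to work directly with the residual problem \eqref{eq:residual_second_order} satisfied by the errors $u_{l\Delta}^{(2\varepsilon)}$ and to apply a standard parabolic energy estimate. The point is to show that the right-hand sides $\varepsilon f_l(\boldsymbol{x},\boldsymbol{y},t)$ and initial data $\varepsilon\psi_l$ are genuinely $O(\varepsilon)$ in the appropriate dual/integral norms. The regularity assumptions on $u_l^{(0)}$ make each $f_l$ a finite sum of products of cell functions (and their $\boldsymbol{y}$-derivatives) with derivatives of $u_l^{(0)}$ up to order three in $\boldsymbol{x}$ and $\partial_t\nabla u_l^{(0)}$.

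\textbf{Step 1: regularity of the error and the residual.} We first verify that $u_{l\Delta}^{(2\varepsilon)}(\cdot,t)\in H^1_0(\Omega)$ and that the residual identities hold in the weak sense on all of $\Omega$, not only cell by cell. The cell functions solve Dirichlet problems on $Y$, so their $\varepsilon$-periodic extensions are continuous across cell interfaces, but their fluxes in general are not. This is exactly where Lemma~\ref{lemma:cell_functions_continuity} enters. Under $(B_1)$--$(B_2)$ the normal fluxes $\sigma_{CY}^l(\cdot)$ of $N_l^{\alpha_1}$, $M_l$, $G_l$, $N_l^{\alpha_1\alpha_2}$, $C_l^{\alpha_1}$, $F_l^{\alpha_1}$, $K_l$ are continuous across $\partial Y$. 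Consequently, when we integrate $-\partial_{x_i}(\kappa_{l,ij}^\varepsilon\partial_{x_j}u_{l\Delta}^{(2\varepsilon)})$ against a test function $v\in H^1_0(\Omega)$ and sum over $\varepsilon(\boldsymbol{z}+Y)$, $\boldsymbol{z}\in T_\varepsilon$, the interface jump terms cancel. Dirichlet data on $\partial\Omega$ vanish because the cell functions vanish on $\partial Y$ and $\Omega$ is a union of whole cells. We also check that the $\varepsilon^{-1}Q_l$ coupling terms, after inserting the expansion, leave only terms absorbed into $\varepsilon f_l$, as stated after \eqref{eq:residual_second_order}.

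\textbf{Step 2: bounds on the data.} Using the $H^1_0(Y)$ (and, by the elliptic regularity implicit in $(B_1)$--$(B_2)$, $W^{1,\infty}$-type) bounds for the cell functions, we estimate $\|f_l(\cdot,\cdot/\varepsilon,t)\|_{L^2(\Omega)}$, or at least $\|f_l\|_{H^{-1}(\Omega)}$, by $C\big(\|u^{(0)}\|_{H^4}+\|\partial_t u^{(0)}\|_{H^2}\big)$, uniformly in $t$. We also bound $\|\psi_l\|_{H^1(\Omega)}\le C$. For the $H^1$ part of the initial data, $\nabla(\varepsilon\psi_l)$ contains $\boldsymbol{y}$-derivatives producing a factor $\varepsilon^{-1}$. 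We plan to handle this by noting that $\psi_l$ consists of terms $\varepsilon\,\Phi(\boldsymbol{y})D^k g_l$, so that $\|\varepsilon\psi_l\|_{H^1}\le C\varepsilon$ after the $\varepsilon$ from the expansion is counted correctly.

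\textbf{Step 3: energy estimates.} We test the $l$-th residual equation with $u_{l\Delta}^{(2\varepsilon)}$ and use $(A_2)$--$(A_3)$ and Gronwall to obtain the $L^\infty(0,T^*;L^2)$ bound $C(T^*)\varepsilon$. We then test with $\partial_t u_{l\Delta}^{(2\varepsilon)}$. Using the symmetry of $\kappa_l$, this gives
\[
\tfrac12\tfrac{d}{dt}\int_\Omega\kappa^\varepsilon_{l,ij}\partial_j u_{l\Delta}\partial_i u_{l\Delta}+\underline{C}\|\partial_t u_{l\Delta}\|^2_{L^2}\le \varepsilon\|f_l\|_{L^2}\|\partial_t u_{l\Delta}\|_{L^2},
\]
and then Young's inequality, integration in time and the initial bound yield the $L^\infty(0,T^*;H^1)$ estimate. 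Summing over $l=1,2$ gives \eqref{eq:error_estimate}.

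The main obstacle is Step 1 together with the $L^2$ control of $f_l$. The $f_l$ contain terms such as $\partial_{y_i}(\kappa_{l,ij}\,\Phi)$ with discontinuous $\kappa$, and the interface contributions must be shown to vanish rather than to produce $O(\varepsilon^{-1})$ surface terms. We would handle this by working entirely in weak form cell by cell, invoking Lemma~\ref{lemma:cell_functions_continuity} to kill the interface integrals. Where $f_l$ only lies in $H^{-1}$, we would switch to the estimate in $L^2(0,T^*;H^{-1})$ and test by $u_{l\Delta}$ only, recovering the $H^1$ bound through the time derivative of the residual if needed.
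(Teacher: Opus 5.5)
Your Step 1 and the first energy estimate of Step 3 are exactly the paper's argument: test the $l$-th residual equation with $u_{l\Delta}^{(2\varepsilon)}$, cancel the interface integrals via Lemma~\ref{lemma:cell_functions_continuity}, integrate in time, and apply Gronwall. This gives $\|u_{l\Delta}^{(2\varepsilon)}\|_{L^\infty(0,T^*;L^2(\Omega))}+\|u_{l\Delta}^{(2\varepsilon)}\|_{L^2(0,T^*;H^1_0(\Omega))}\le C(T^*)\varepsilon$. That is all the paper's proof actually establishes; its final display \eqref{eq:norm_estimate} has $H^1$ in $L^2$ in time, not $L^\infty$.

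\textbf{Main gap: the $L^\infty(0,T^*;H^1)$ step.} The claim $\|\varepsilon\psi_l\|_{H^1(\Omega)}\le C\varepsilon$ is false. By \eqref{eq:psi_expressions_1}--\eqref{eq:psi_expressions_2}, the leading terms of $\psi_l$ are $-N_l^{\alpha_1}(\boldsymbol{y})\,\partial g_l/\partial x_{\alpha_1}$ and $-M_l(\boldsymbol{y})(g_{l'}-g_l)$ with $l'\neq l$. They carry no extra factor $\varepsilon$, so
\[
\nabla_{\boldsymbol{x}}(\varepsilon\psi_l)=-(\nabla_{\boldsymbol{y}}N_l^{\alpha_1})(\boldsymbol{x}/\varepsilon)\,\frac{\partial g_l}{\partial x_{\alpha_1}}-(\nabla_{\boldsymbol{y}}M_l)(\boldsymbol{x}/\varepsilon)\,(g_{l'}-g_l)+O(\varepsilon),
\]
which is of order one in $L^2(\Omega)$ for generic data.

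Testing with $\partial_t u_{l\Delta}^{(2\varepsilon)}$ therefore yields only $\sup_t\|\nabla u_{l\Delta}^{(2\varepsilon)}\|_{L^2(\Omega)}\le C$. No other argument can do better, because $\|u_{l\Delta}^{(2\varepsilon)}(\cdot,0)\|_{H^1(\Omega)}$ is itself of order one. The $L^\infty(0,T^*;H^1)$ part of \eqref{eq:error_estimate} can hold only for well-prepared initial data, e.g.\ $\nabla g_l\equiv 0$ and $g_1=g_2$, as in the numerical examples.

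You should drop the second energy estimate and state the $H^1$ control in $L^2(0,T^*;H^1_0)$, which your first estimate already gives. That estimate needs only $f_l\in L^2(0,T^*;H^{-1}(\Omega))$, so your divergence-form fallback for the $\partial_{y_i}(\kappa_{l,ij}\,\cdot)$ terms is the right tool there. Testing with $\partial_t u_{l\Delta}^{(2\varepsilon)}$ would additionally require $f_l\in L^2$ or more time regularity of $u_l^{(0)}$.

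\textbf{Second gap: the coupling term.} Your plan to check that the $\varepsilon^{-1}Q_l$ coupling is absorbed into $\varepsilon f_l$ cannot succeed as stated, although the paper makes the same move. The exact error equations contain $\varepsilon^{-1}Q_l(\boldsymbol{y})\bigl(u_{l'\Delta}^{(2\varepsilon)}-u_{l\Delta}^{(2\varepsilon)}\bigr)$, which involves the unknown errors. Declaring it $O(\varepsilon^3)$ presupposes the conclusion.

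Only the part coming from the truncated expansion, $\varepsilon Q_l(u_{l'}^{(2)}-u_l^{(2)})$, belongs in $\varepsilon f_l$; the error-dependent part must stay on the left-hand side. After adding the two energy identities, it contributes $\varepsilon^{-1}\int_\Omega Q\,\bigl(u_{1\Delta}^{(2\varepsilon)}-u_{2\Delta}^{(2\varepsilon)}\bigr)^2\,\mathrm{d}\Omega\ge 0$ when $Q_1=Q_2=Q\ge 0$, and can then be dropped. For $Q_1\neq Q_2$ the quadratic form is indefinite, and a direct Gronwall argument picks up a factor $e^{CT^*/\varepsilon}$.

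Your estimate therefore needs this sign structure, or a weighting of the two equations when $Q_1/Q_2$ is constant. Neither is among $(A_1)$--$(A_3)$.
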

\begin{proof}
First, by combining the chain rule~\eqref{eq:chainrule} with Eqs.~\eqref{eq:homogenized}, we obtain:
\begin{equation}
\begin{cases}
\begin{aligned}
&\sigma_{\scriptscriptstyle Y}^{\scriptscriptstyle 1}(u^{\scriptscriptstyle (2\varepsilon)}_{\scriptscriptstyle 1}) 
= n_{\scriptscriptstyle i}\kappa_{\scriptscriptstyle 1,ij}(\boldsymbol{y})\frac{\partial u^{\scriptscriptstyle (2\varepsilon)}_{\scriptscriptstyle 1}}{\partial x_{\scriptscriptstyle j}} \\
& = n_{\scriptscriptstyle i}\kappa_{\scriptscriptstyle 1,ij}\Big{(}\frac{\partial}{\partial x_{\scriptscriptstyle j}} + \frac{1}{\varepsilon}\frac{\partial}{\partial y_{\scriptscriptstyle j}}\Big{)} 
\bigg{[}u^{\scriptscriptstyle (0)}_{\scriptscriptstyle 1} + \varepsilon \Big{(}N^{\scriptscriptstyle \alpha_{1}}_{\scriptscriptstyle 1}\frac{\partial u^{\scriptscriptstyle (0)}_{\scriptscriptstyle 1}}{\partial x_{\scriptscriptstyle \alpha_{1}}} \\
& + M_{\scriptscriptstyle 1}\big{(}u^{\scriptscriptstyle (0)}_{\scriptscriptstyle 2} \! - u^{\scriptscriptstyle (0)}_{\scriptscriptstyle 1}\big{)} \! \Big{)} 
+ \varepsilon^{2}\Big{(} \! G_{\scriptscriptstyle 1}\frac{\partial u^{\scriptscriptstyle (0)}_{\scriptscriptstyle 1}}{\partial t}+ K_{\scriptscriptstyle 1}\big{(}u^{\scriptscriptstyle (0)}_{\scriptscriptstyle 2} - u^{\scriptscriptstyle (0)}_{\scriptscriptstyle 1}\big{)} \\
& + N^{\scriptscriptstyle \alpha_{1}\alpha_{2}}_{\scriptscriptstyle 1}\frac{\partial^{2}u^{\scriptscriptstyle (0)}_{\scriptscriptstyle 1}}{\partial x_{\scriptscriptstyle \alpha_{1}}\partial x_{\scriptscriptstyle \alpha_{2}}} 
+ C^{\scriptscriptstyle \alpha_{1}}_{\scriptscriptstyle 1}\frac{\partial u^{\scriptscriptstyle (0)}_{\scriptscriptstyle 1}}{\partial x_{\scriptscriptstyle \alpha_{1}}} 
+ F^{\scriptscriptstyle \alpha_{1}}_{\scriptscriptstyle 1}\frac{\partial u^{\scriptscriptstyle (0)}_{\scriptscriptstyle 2}}{\partial x_{\scriptscriptstyle \alpha_{1}}} 
\Big{)}\bigg{]} \\
& = n_{\scriptscriptstyle i}\kappa_{\scriptscriptstyle 1,ij} \! \bigg{[}\frac{\partial u^{\scriptscriptstyle(0)}_{\scriptscriptstyle 1}}{\partial x_{\scriptscriptstyle j}} \!
+\! \varepsilon\frac{\partial}{\partial x_{\scriptscriptstyle j}} \Big{(} \! N^{\scriptscriptstyle \alpha_{1}}_{\scriptscriptstyle 1}\frac{\partial u^{\scriptscriptstyle (0)}_{\scriptscriptstyle 1}}{\partial x_{\scriptscriptstyle \alpha_{1}}} \!
+ \! M_{\scriptscriptstyle 1} \! \big{(}  \! u^{\scriptscriptstyle(0)}_{\scriptscriptstyle 2} \! -  \! u^{\scriptscriptstyle(0)}_{\scriptscriptstyle 1} \! \big{)} \! \Big{)} \\
& + \varepsilon^{2}\frac{\partial}{\partial x_{\scriptscriptstyle j}}\Big{(}G_{\scriptscriptstyle 1}\frac{\partial u^{\scriptscriptstyle(0)}_{\scriptscriptstyle 1}}{\partial t} 
+ N^{\scriptscriptstyle \alpha_{1}\alpha_{2}}_{\scriptscriptstyle 1}\frac{\partial^{2}u^{\scriptscriptstyle (0)}_{\scriptscriptstyle 1}}{\partial x_{\scriptscriptstyle\alpha_{1}}\partial x_{\scriptscriptstyle \alpha_{2}}} 
+ C^{\scriptscriptstyle \alpha_{1}}_{\scriptscriptstyle 1}\frac{\partial u^{\scriptscriptstyle (0)}_{\scriptscriptstyle 1}}{\partial x_{\scriptscriptstyle\alpha_{1}}} \\
& + F^{\scriptscriptstyle\alpha_{1}}_{\scriptscriptstyle 1}\frac{\partial u^{\scriptscriptstyle (0)}_{\scriptscriptstyle 1}}{\partial x_{\scriptscriptstyle\alpha_{1}}} 
+ K_{\scriptscriptstyle 1}\big{(}u^{\scriptscriptstyle (0)}_{\scriptscriptstyle 2} - u^{\scriptscriptstyle (0)}_{\scriptscriptstyle 1}\big{)}\Big{)}\bigg{]} 
 + \sigma^{\scriptscriptstyle 1}_{\scriptscriptstyle CY}(N^{\scriptscriptstyle \alpha_{1}}_{\scriptscriptstyle 1})\frac{\partial u^{\scriptscriptstyle (0)}_{\scriptscriptstyle 1}}{\partial x_{\scriptscriptstyle\alpha_{1}}} \\
& + \sigma^{\scriptscriptstyle 1}_{\scriptscriptstyle CY}(M_{\scriptscriptstyle 1})\big{(}u^{\scriptscriptstyle (0)}_{\scriptscriptstyle 2} - u^{\scriptscriptstyle (0)}_{\scriptscriptstyle 1}\big{)} 
 + \varepsilon\Big{(}\sigma^{\scriptscriptstyle 1}_{\scriptscriptstyle CY}(N^{\scriptscriptstyle\alpha_{1}\alpha_{2}}_{\scriptscriptstyle 1})\frac{\partial^{2}u^{\scriptscriptstyle(0)}_{\scriptscriptstyle 1}}{\partial x_{\scriptscriptstyle\alpha_{1}}\partial x_{\scriptscriptstyle\alpha_{2}}} \\
&+ \sigma^{\scriptscriptstyle 1}_{\scriptscriptstyle CY}(C^{\scriptscriptstyle\alpha_{1}}_{\scriptscriptstyle 1})\frac{\partial u^{\scriptscriptstyle(0)}_{\scriptscriptstyle 1}}{\partial x_{\scriptscriptstyle\alpha_{1}}} 
+ \sigma^{\scriptscriptstyle 1}_{\scriptscriptstyle CY}(F^{\scriptscriptstyle\alpha_{1}}_{\scriptscriptstyle 1})\frac{\partial u^{\scriptscriptstyle(0)}_{\scriptscriptstyle 1}}{\partial x_{\scriptscriptstyle\alpha_{1}}} \\
&\ + \sigma^{\scriptscriptstyle 1}_{\scriptscriptstyle CY}(G_{\scriptscriptstyle 1})\frac{\partial u^{\scriptscriptstyle(0)}_{\scriptscriptstyle 1}}{\partial t} 
+ \sigma^{\scriptscriptstyle 1}_{\scriptscriptstyle CY}(K_{\scriptscriptstyle 1})\big{(}u^{\scriptscriptstyle(0)}_{\scriptscriptstyle 2} - u^{\scriptscriptstyle(0)}_{\scriptscriptstyle 1}\big{)}\Big{)}, \\

&\sigma_{\scriptscriptstyle Y}^{\scriptscriptstyle 2}(u^{\scriptscriptstyle (2\varepsilon)}_{\scriptscriptstyle 2}) 
= n_{\scriptscriptstyle i}\kappa_{\scriptscriptstyle 2,ij}(\boldsymbol{y})\frac{\partial u^{\scriptscriptstyle (2\varepsilon)}_{\scriptscriptstyle 2}}{\partial x_{\scriptscriptstyle j}} \\
& = n_{\scriptscriptstyle i}\kappa_{\scriptscriptstyle 2,ij}\Big{(}\frac{\partial}{\partial x_{\scriptscriptstyle j}} + \frac{1}{\varepsilon}\frac{\partial}{\partial y_{\scriptscriptstyle j}}\Big{)} 
\bigg{[}u^{\scriptscriptstyle (0)}_{\scriptscriptstyle 2} + \varepsilon\Big{(}N^{\scriptscriptstyle \alpha_{1}}_{\scriptscriptstyle 1}\frac{\partial u^{\scriptscriptstyle (0)}_{\scriptscriptstyle 2}}{\partial x_{\scriptscriptstyle \alpha_{1}}} \\
& + M_{\scriptscriptstyle 2}\big{(}u^{\scriptscriptstyle(0)}_{\scriptscriptstyle 1} - u^{\scriptscriptstyle (0)}_{\scriptscriptstyle 2}\big{)} \! \Big{)} 
+ \varepsilon^{2}\Big{(} \! G_{\scriptscriptstyle 2}\frac{\partial u^{\scriptscriptstyle (0)}_{\scriptscriptstyle 1}}{\partial t}+\! K_{\scriptscriptstyle 1}\big{(}u^{\scriptscriptstyle(0)}_{\scriptscriptstyle 1} - u^{\scriptscriptstyle (0)}_{\scriptscriptstyle 2}\big{)} \\
& + N^{\scriptscriptstyle \alpha_{1}\alpha_{2}}_{\scriptscriptstyle 1}\frac{\partial^{2}u^{\scriptscriptstyle (0)}_{\scriptscriptstyle 2}}{\partial x_{\scriptscriptstyle\alpha_{1}}\partial x_{\scriptscriptstyle\alpha_{2}}} 
+ C^{\scriptscriptstyle \alpha_{1}}_{\scriptscriptstyle 2}\frac{\partial u^{\scriptscriptstyle (0)}_{\scriptscriptstyle 2}}{\partial x_{\scriptscriptstyle\alpha_{1}}} 
+ F^{\scriptscriptstyle \alpha_{1}}_{\scriptscriptstyle 2}\frac{\partial u^{\scriptscriptstyle (0)}_{\scriptscriptstyle 1}}{\partial x_{\scriptscriptstyle\alpha_{1}}} 
\Big{)}\bigg{]} \\
& = n_{\scriptscriptstyle i}\kappa_{\scriptscriptstyle 2,ij} \! \bigg{[} \! \frac{\partial u^{\scriptscriptstyle (0)}_{\scriptscriptstyle 2}}{\partial x_{\scriptscriptstyle j}} \!
+ \varepsilon\frac{\partial}{\partial x_{\scriptscriptstyle j}} \! \Big{(} \! N^{\scriptscriptstyle \alpha_{1}}_{\scriptscriptstyle 2}\frac{\partial u^{\scriptscriptstyle (0)}_{\scriptscriptstyle 2}}{\partial x_{\scriptscriptstyle\alpha_{1}}} \!
+ \! M_{\scriptscriptstyle 2}\big{(} \! u^{\scriptscriptstyle (0)}_{\scriptscriptstyle 1} \! - \! u^{\scriptscriptstyle (0)}_{\scriptscriptstyle 2} \! \big{)} \! \Big{)} \\
& + \varepsilon^{2}\frac{\partial}{\partial x_{\scriptscriptstyle j}}\Big{(}G_{\scriptscriptstyle 2}\frac{\partial u^{\scriptscriptstyle(0)}_{\scriptscriptstyle 2}}{\partial t} 
+ N^{\scriptscriptstyle \alpha_{1}\alpha_{2}}_{\scriptscriptstyle 1}\frac{\partial^{2}u^{\scriptscriptstyle (0)}_{\scriptscriptstyle 2}}{\partial x_{\scriptscriptstyle\alpha_{1}}\partial x_{\scriptscriptstyle\alpha_{2}}} 
+ C^{\scriptscriptstyle \alpha_{1}}_{\scriptscriptstyle 2}\frac{\partial u^{\scriptscriptstyle (0)}_{\scriptscriptstyle 2}}{\partial x_{\scriptscriptstyle\alpha_{1}}} \\
& + F^{\scriptscriptstyle \alpha_{1}}_{\scriptscriptstyle 2}\frac{\partial u^{\scriptscriptstyle (0)}_{\scriptscriptstyle 1}}{\partial x_{\scriptscriptstyle\alpha_{1}}} 
+ K_{\scriptscriptstyle 2}\big{(}u^{\scriptscriptstyle (0)}_{\scriptscriptstyle 1} - u^{\scriptscriptstyle (0)}_{\scriptscriptstyle 2}\big{)}\Big{)}\bigg{]} 
 + \sigma^{\scriptscriptstyle 2}_{\scriptscriptstyle CY}(N^{\scriptscriptstyle\alpha_{1}}_{\scriptscriptstyle 2})\frac{\partial u^{\scriptscriptstyle (0)}_{\scriptscriptstyle 2}}{\partial x_{\scriptscriptstyle\alpha_{1}}} \\
& + \sigma^{\scriptscriptstyle 2}_{\scriptscriptstyle CY}(M_{\scriptscriptstyle 2})\big{(}u^{\scriptscriptstyle (0)}_{\scriptscriptstyle 1} - u^{\scriptscriptstyle (0)}_{\scriptscriptstyle 2}\big{)} 
 + \varepsilon\Big{(}\sigma^{\scriptscriptstyle 2}_{\scriptscriptstyle CY}(N^{\scriptscriptstyle\alpha_{1}\alpha_{2}}_{\scriptscriptstyle 2})\frac{\partial^{2}u^{\scriptscriptstyle (0)}_{\scriptscriptstyle 2}}{\partial x_{\scriptscriptstyle\alpha_{1}}\partial x_{\scriptscriptstyle\alpha_{2}}} \\
&+ \sigma^{\scriptscriptstyle 2}_{\scriptscriptstyle CY}(C^{\scriptscriptstyle\alpha_{1}}_{\scriptscriptstyle 2})\frac{\partial u^{\scriptscriptstyle (0)}_{\scriptscriptstyle 2}}{\partial x_{\scriptscriptstyle\alpha_{1}}} 
+ \sigma^{\scriptscriptstyle 2}_{\scriptscriptstyle CY}(F^{\scriptscriptstyle\alpha_{1}}_{\scriptscriptstyle 2})\frac{\partial u^{\scriptscriptstyle (0)}_{\scriptscriptstyle 1}}{\partial x_{\scriptscriptstyle\alpha_{1}}} \\
& + \sigma^{\scriptscriptstyle 2}_{\scriptscriptstyle CY}(G_{\scriptscriptstyle 2})\frac{\partial u^{\scriptscriptstyle (0)}_{\scriptscriptstyle 2}}{\partial t} 
+ \sigma^{\scriptscriptstyle 2}_{\scriptscriptstyle CY}(K_{\scriptscriptstyle 2})\big{(}u^{\scriptscriptstyle (0)}_{\scriptscriptstyle 1} - u^{\scriptscriptstyle (0)}_{\scriptscriptstyle 2}\big{)}\Big{)}.
\end{aligned}
\end{cases}
\label{eq:sigma_expressions}
\end{equation}

Next, multiply both sides of the residual equation by \(u_{\scriptscriptstyle l\Delta}^{\scriptscriptstyle(2\varepsilon)}\) and integrate over \(\Omega\) to obtain:
\begin{equation}
\begin{cases}
\begin{aligned}
&- \int_{\Omega} 
\frac{\partial}{\partial x_{i}}\Bigl(\kappa_{\scriptscriptstyle 1,ij}(\boldsymbol{y})\frac{\partial u_{\scriptscriptstyle 1\Delta}^{\scriptscriptstyle(2\varepsilon)}}{\partial x_{j}}\Bigr) 
u_{\scriptscriptstyle 1\Delta}^{\scriptscriptstyle(2\varepsilon)} \mathrm{d}\Omega \\
&+\int_{\Omega} c_{\scriptscriptstyle 1}(\boldsymbol{y})\frac{\partial u_{\scriptscriptstyle 1\Delta}^{\scriptscriptstyle(2\varepsilon)}}{\partial t} u_{\scriptscriptstyle 1\Delta}^{\scriptscriptstyle(2\varepsilon)} \mathrm{d}\Omega
= \int_{\Omega} \varepsilon f_{1} u_{\scriptscriptstyle 1\Delta}^{\scriptscriptstyle(2\varepsilon)} \mathrm{d}\Omega, \\
&-\int_{\Omega}  \frac{\partial}{\partial x_{i}}\Bigl(\kappa_{\scriptscriptstyle 2,ij}(\boldsymbol{y})\frac{\partial u_{\scriptscriptstyle 2\Delta}^{\scriptscriptstyle(2\varepsilon)}}{\partial x_{j}}\Bigr) 
u_{\scriptscriptstyle 2\Delta}^{\scriptscriptstyle(2\varepsilon)} \mathrm{d}\Omega \\
&+\int_{\Omega} c_{\scriptscriptstyle 2}(\boldsymbol{y})\frac{\partial u_{\scriptscriptstyle 2\Delta}^{\scriptscriptstyle(2\varepsilon)}}{\partial t} 
u_{\scriptscriptstyle 2\Delta}^{\scriptscriptstyle(2\varepsilon)} \mathrm{d}\Omega 
= \int_{\Omega} \varepsilon f_{2} u_{\scriptscriptstyle 2\Delta}^{\scriptscriptstyle(2\varepsilon)} \mathrm{d}\Omega,
\end{aligned}
\end{cases}
\label{eq:integral_form}
\end{equation}

Using the formula to simplify Eqs.~\eqref{eq:integral_form}, we obtain:
\begin{equation}
\begin{cases}
\begin{aligned}
&\int_{\Omega} \! c_{\scriptscriptstyle 1} \! (\boldsymbol{y}) \! \frac{\partial u_{\scriptscriptstyle 1\Delta}^{\scriptscriptstyle (2\varepsilon)}}{\partial t} u_{\scriptscriptstyle 1\Delta}^{\scriptscriptstyle (2\varepsilon)} \mathrm{d}\Omega 
+ \! \int_{\Omega} \kappa_{\scriptscriptstyle 1,ij}(\! \boldsymbol{y}\!) \! \frac{\partial u_{\scriptscriptstyle 1\Delta}^{\scriptscriptstyle (2\varepsilon)}}{\partial x_{\scriptscriptstyle j}} \frac{\partial u_{\scriptscriptstyle 1\Delta}^{\scriptscriptstyle (2\varepsilon)}}{\partial x_{\scriptscriptstyle i}} \mathrm{d}\Omega \\
&= \int_{\Omega} \varepsilon f_{\scriptscriptstyle 1} u_{\scriptscriptstyle 1\Delta}^{\scriptscriptstyle (2\varepsilon)} \mathrm{d}\Omega 
+ \int_{\bigcup_{\boldsymbol{z}\in T_{\boldsymbol{z}}}\partial E_{\boldsymbol{z}}} r_{\scriptscriptstyle 1} u_{\scriptscriptstyle 1\Delta}^{\scriptscriptstyle (2\varepsilon)} \mathrm{d}\Gamma_{\scriptscriptstyle\boldsymbol{y}}, \\

&\int_{\Omega} \! c_{\scriptscriptstyle 2} \!(\boldsymbol{y}) \! \frac{\partial u_{\scriptscriptstyle 2\Delta}^{\scriptscriptstyle (2\varepsilon)}}{\partial t} u_{\scriptscriptstyle 2\Delta}^{\scriptscriptstyle (2\varepsilon)} \mathrm{d}\Omega 
+ \! \int_{\Omega} \kappa_{\scriptscriptstyle 2,ij}(\! \boldsymbol{y}\!) \! \frac{\partial u_{\scriptscriptstyle 2\Delta}^{\scriptscriptstyle (2\varepsilon)}}{\partial x_{\scriptscriptstyle j}} \frac{\partial u_{\scriptscriptstyle 2\Delta}^{\scriptscriptstyle (2\varepsilon)}}{\partial x_{\scriptscriptstyle i}} \mathrm{d}\Omega \\
&= \int_{\Omega} \varepsilon f_{\scriptscriptstyle 2} u_{\scriptscriptstyle 2\Delta}^{\scriptscriptstyle (2\varepsilon)} \mathrm{d}\Omega 
+ \int_{\bigcup_{\boldsymbol{z}\in T_{\boldsymbol{z}}}\partial E_{\boldsymbol{z}}} r_{\scriptscriptstyle 2} u_{\scriptscriptstyle 2\Delta}^{\scriptscriptstyle (2\varepsilon)} \mathrm{d}\Gamma_{\scriptscriptstyle\boldsymbol{y}},
\end{aligned}
\end{cases}
\label{eq:simplified_form}
\end{equation}
where \(r_{\scriptscriptstyle l} = n_{\scriptscriptstyle i} \kappa_{\scriptscriptstyle l,ij}(y) \frac{\partial u_{\scriptscriptstyle l\Delta}^{\scriptscriptstyle (2\varepsilon)}}{\partial x_{j}} = \sigma_{\scriptscriptstyle Y}(u_{\scriptscriptstyle l\Delta}^{\scriptscriptstyle (2\varepsilon)})\).

Then, combining Eqs.~\eqref{eq:simplified_form} and the H\"older inequality, we calculate to obtain:
\begin{equation}
\begin{cases}
\begin{aligned}
\left\langle r_{\scriptscriptstyle 1}, u^{\scriptscriptstyle (2\varepsilon)}_{\scriptscriptstyle 1\Delta} \right\rangle 
&= \int_{\bigcup_{\scriptscriptstyle \boldsymbol{z} \in T_{\scriptscriptstyle \varepsilon}} \partial E_{\scriptscriptstyle \varepsilon}} r_{\scriptscriptstyle 1} u^{\scriptscriptstyle (2\varepsilon)}_{\scriptscriptstyle 1\Delta} \mathrm{d}\Gamma_{\scriptscriptstyle \boldsymbol{y}} \\
&= \sum_{\scriptscriptstyle z \in T_{\scriptscriptstyle \varepsilon}} \int_{\partial E_{\scriptscriptstyle \varepsilon}} \sigma_{\scriptscriptstyle Y} \bigl( u^{\scriptscriptstyle \varepsilon}_{\scriptscriptstyle 1} - u^{\scriptscriptstyle (2\varepsilon)}_{\scriptscriptstyle 1} \bigr) u^{\scriptscriptstyle (2\varepsilon)}_{\scriptscriptstyle 1\Delta} \mathrm{d}\Gamma_{\scriptscriptstyle y} \\
&= -\sum_{\scriptscriptstyle z \in T_{\scriptscriptstyle \varepsilon}} \int_{\partial E_{\scriptscriptstyle \varepsilon}} \sigma_{\scriptscriptstyle Y} \bigl( u^{\scriptscriptstyle (2\varepsilon)}_{\scriptscriptstyle 1} \bigr) u^{\scriptscriptstyle (2\varepsilon)}_{\scriptscriptstyle 1\Delta} \mathrm{d}\Gamma_{\scriptscriptstyle y} = 0, \\

\left\langle r_{\scriptscriptstyle 2}, u^{\scriptscriptstyle (2\varepsilon)}_{\scriptscriptstyle 2\Delta} \right\rangle 
&= \int_{\bigcup_{\scriptscriptstyle \boldsymbol{z} \in T_{\scriptscriptstyle \varepsilon}} \partial E_{\scriptscriptstyle \varepsilon}} r_{\scriptscriptstyle 2} u^{\scriptscriptstyle (2\varepsilon)}_{\scriptscriptstyle 2\Delta} \mathrm{d}\Gamma_{\scriptscriptstyle \boldsymbol{y}} \\
&= \sum_{\scriptscriptstyle z \in T_{\scriptscriptstyle \varepsilon}} \int_{\partial E_{\scriptscriptstyle \varepsilon}} \sigma_{\scriptscriptstyle Y} \bigl( u^{\scriptscriptstyle \varepsilon}_{\scriptscriptstyle 2} - u^{\scriptscriptstyle (2\varepsilon)}_{\scriptscriptstyle 2} \bigr) u^{\scriptscriptstyle (2\varepsilon)}_{\scriptscriptstyle 2\Delta} \mathrm{d}\Gamma_{\scriptscriptstyle y} \\
&= -\sum_{\scriptscriptstyle z \in T_{\scriptscriptstyle \varepsilon}} \int_{\partial E_{\scriptscriptstyle \varepsilon}} \sigma_{\scriptscriptstyle Y} \bigl( u^{\scriptscriptstyle (2\varepsilon)}_{\scriptscriptstyle 2} \bigr) u^{\scriptscriptstyle (2\varepsilon)}_{\scriptscriptstyle 2\Delta} \mathrm{d}\Gamma_{\scriptscriptstyle y} = 0.
\end{aligned}
\end{cases}
\label{eq:I_results}
\end{equation}

Further, combining Eqs.~\eqref{eq:simplified_form} and Eqs.~\eqref{eq:I_results}, it is straightforward to verify that the following equation holds:
\begin{equation}
\begin{cases}
\begin{aligned}
&\int_{\Omega} c_{\scriptscriptstyle 1}(\boldsymbol{y}) \frac{\partial u^{\scriptscriptstyle (2\varepsilon)}_{\scriptscriptstyle 1\Delta}}{\partial t} u^{\scriptscriptstyle (2\varepsilon)}_{\scriptscriptstyle 1\Delta} \mathrm{d}\Omega \\
&+\int_{\Omega} \kappa_{\scriptscriptstyle 1,ij}(\boldsymbol{y}) \frac{\partial u^{\scriptscriptstyle (2\varepsilon)}_{\scriptscriptstyle 1\Delta}}{\partial x_{\scriptscriptstyle j}} \frac{\partial u^{\scriptscriptstyle (2\varepsilon)}_{\scriptscriptstyle 1\Delta}}{\partial x_{\scriptscriptstyle i}} \mathrm{d}\Omega 
= \int_{\Omega} \varepsilon f_{\scriptscriptstyle 1} u^{\scriptscriptstyle (2\varepsilon)}_{\scriptscriptstyle 1\Delta} \mathrm{d}\Omega, \\[6pt]
& \int_{\Omega} c_{\scriptscriptstyle 2}(\boldsymbol{y}) \frac{\partial u^{\scriptscriptstyle (2\varepsilon)}_{\scriptscriptstyle 2\Delta}}{\partial t} u^{\scriptscriptstyle (2\varepsilon)}_{\scriptscriptstyle 2\Delta} \mathrm{d}\Omega \\
&+\int_{\Omega} \kappa_{\scriptscriptstyle 2,ij}(\boldsymbol{y}) \frac{\partial u^{\scriptscriptstyle (2\varepsilon)}_{\scriptscriptstyle 2\Delta}}{\partial x_{\scriptscriptstyle j}} \frac{\partial u^{\scriptscriptstyle (2\varepsilon)}_{\scriptscriptstyle 2\Delta}}{\partial x_{\scriptscriptstyle i}} \mathrm{d}\Omega = \int_{\Omega} \varepsilon f_{\scriptscriptstyle 2} u^{\scriptscriptstyle (2\varepsilon)}_{\scriptscriptstyle 2\Delta} \mathrm{d}\Omega.
\end{aligned}
\end{cases}
\label{eq:combined_equation}
\end{equation}

From Eqs.~\eqref{eq:combined_equation}, it is easily verified that the following two equations hold:
\begin{equation}
\begin{aligned}
&\frac{1}{2} \frac{\partial}{\partial t} \int_{\Omega} c_{\scriptscriptstyle 1}(\boldsymbol{y}) u^{\scriptscriptstyle (2\varepsilon)}_{\scriptscriptstyle 1\Delta} u^{\scriptscriptstyle (2\varepsilon)}_{\scriptscriptstyle 1\Delta} \mathrm{d}\Omega \\
&+ \int_{\Omega} \kappa_{\scriptscriptstyle 1,ij}(\boldsymbol{y}) \frac{\partial u^{\scriptscriptstyle (2\varepsilon)}_{\scriptscriptstyle 1\Delta}}{\partial x_{\scriptscriptstyle j}} \frac{\partial u^{\scriptscriptstyle (2\varepsilon)}_{\scriptscriptstyle 1\Delta}}{\partial x_{\scriptscriptstyle i}} \mathrm{d}\Omega 
= \int_{\Omega} \varepsilon f_{\scriptscriptstyle 1} u^{\scriptscriptstyle (2\varepsilon)}_{\scriptscriptstyle 1\Delta} \mathrm{d}\Omega,
\end{aligned}
\label{eq:energy_1}
\end{equation}
\begin{equation}
\begin{aligned}    
&\frac{1}{2} \frac{\partial}{\partial t} \int_{\Omega} c_{\scriptscriptstyle 2}(\boldsymbol{y}) u^{\scriptscriptstyle (2\varepsilon)}_{\scriptscriptstyle 2\Delta} u^{\scriptscriptstyle (2\varepsilon)}_{\scriptscriptstyle 2\Delta} \mathrm{d}\Omega \\
&+ \int_{\Omega} \kappa_{\scriptscriptstyle 2,ij}(\boldsymbol{y}) \frac{\partial u^{\scriptscriptstyle (2\varepsilon)}_{\scriptscriptstyle 2\Delta}}{\partial x_{\scriptscriptstyle j}} \frac{\partial u^{\scriptscriptstyle (2\varepsilon)}_{\scriptscriptstyle 2\Delta}}{\partial x_{\scriptscriptstyle i}} \mathrm{d}\Omega 
= \int_{\Omega} \varepsilon f_{\scriptscriptstyle 2} u^{\scriptscriptstyle (2\varepsilon)}_{\scriptscriptstyle 2\Delta} \mathrm{d}\Omega.
\end{aligned}
\label{eq:energy_2}
\end{equation}

Subsequently, adding Eq.~\eqref{eq:energy_1} to Eq.~\eqref{eq:energy_2} and simplifying the calculation yields:
\begin{align}
\frac{1}{2}\frac{\partial}{\partial t}&\int_{\Omega}c_{\scriptscriptstyle 1}(y)u^{\scriptscriptstyle (2\varepsilon)}_{\scriptscriptstyle 1\Delta}u^{\scriptscriptstyle (2\varepsilon)}_{\scriptscriptstyle 1\Delta}\mathrm{d}\Omega
+\int_{\Omega}\kappa_{\scriptscriptstyle 1,ij}(y)\frac{\partial u^{\scriptscriptstyle (2\varepsilon)}_{\scriptscriptstyle 1\Delta}}{\partial x_{\scriptscriptstyle j}}\frac{\partial u^{\scriptscriptstyle (2\varepsilon)}_{\scriptscriptstyle 1\Delta}}{\partial x_{\scriptscriptstyle i}}\mathrm{d}\Omega \notag\\
+\frac{1}{2}\frac{\partial}{\partial t}&\int_{\Omega}c_{\scriptscriptstyle 2}(y)u^{\scriptscriptstyle (2\varepsilon)}_{\scriptscriptstyle 2\Delta}u^{\scriptscriptstyle (2\varepsilon)}_{\scriptscriptstyle 2\Delta}\mathrm{d}\Omega
+\int_{\Omega}\kappa_{\scriptscriptstyle 2,ij}(y)\frac{\partial u^{\scriptscriptstyle (2\varepsilon)}_{\scriptscriptstyle 2\Delta}}{\partial x_{\scriptscriptstyle j}}\frac{\partial u^{\scriptscriptstyle (2\varepsilon)}_{\scriptscriptstyle 2\Delta}}{\partial x_{\scriptscriptstyle i}}\mathrm{d}\Omega \notag\\
&=\int_{\Omega}\varepsilon f_{\scriptscriptstyle 1}u^{\scriptscriptstyle (2\varepsilon)}_{\scriptscriptstyle 1\Delta}\mathrm{d}\Omega
+\int_{\Omega}\varepsilon f_{\scriptscriptstyle 2}u^{\scriptscriptstyle (2\varepsilon)}_{\scriptscriptstyle 2\Delta}\mathrm{d}\Omega.
\label{eq:combined_derivative}
\end{align}


Then, integrating both sides over the time interval \([0,t]\) (\(0<t<T^{*}\)), and simplifying the calculation, we obtain:
\begin{equation}
\begin{aligned}
&\int_{0}^{t}\frac{\partial}{\partial\tau}\bigg{[}\int_{\Omega}c_{\scriptscriptstyle 1}(\boldsymbol{y})u^{\scriptscriptstyle (2\varepsilon)}_{\scriptscriptstyle 1\Delta}u^{\scriptscriptstyle (2\varepsilon)}_{\scriptscriptstyle 1\Delta}\mathrm{d}\Omega\bigg{]}\mathrm{d}\tau\\
&+2\int_{0}^{t}\int_{\Omega}\kappa_{\scriptscriptstyle 1,ij}(\boldsymbol{y})\frac{\partial u^{\scriptscriptstyle (2\varepsilon)}_{\scriptscriptstyle 1\Delta}}{\partial x_{\scriptscriptstyle j}}\frac{\partial u^{\scriptscriptstyle (2\varepsilon)}_{\scriptscriptstyle 1\Delta}}{\partial x_{\scriptscriptstyle i}}\mathrm{d}\Omega\mathrm{d}\tau \\
&+\int_{0}^{t}\frac{\partial}{\partial\tau}\bigg{[}\int_{\Omega}c_{\scriptscriptstyle 2}(\boldsymbol{y})u^{\scriptscriptstyle (2\varepsilon)}_{\scriptscriptstyle 2\Delta}u^{\scriptscriptstyle (2\varepsilon)}_{\scriptscriptstyle 2\Delta}\mathrm{d}\Omega\bigg{]}\mathrm{d}\tau\\
&+2\int_{0}^{t}\int_{\Omega}\kappa_{\scriptscriptstyle 2,ij}(\boldsymbol{y})\frac{\partial u^{\scriptscriptstyle (2\varepsilon)}_{\scriptscriptstyle 2\Delta}}{\partial x_{\scriptscriptstyle j}}\frac{\partial u^{\scriptscriptstyle (2\varepsilon)}_{\scriptscriptstyle 2\Delta}}{\partial x_{\scriptscriptstyle i}}\mathrm{d}\Omega\mathrm{d}\tau \\
&=2\int_{0}^{t}\int_{\Omega}\varepsilon f_{\scriptscriptstyle 1}u^{\scriptscriptstyle (2\varepsilon)}_{\scriptscriptstyle 1\Delta}\mathrm{d}\Omega\mathrm{d}\tau
+2\int_{0}^{t}\int_{\Omega}\varepsilon f_{\scriptscriptstyle 2}u^{\scriptscriptstyle (2\varepsilon)}_{\scriptscriptstyle 2\Delta}\mathrm{d}\Omega\mathrm{d}\tau.
\end{aligned}
\label{eq:time_integrated}
\end{equation}

After further calculation, it can be rearranged into the following equation:
\begin{equation}
\begin{aligned}
&\int_{\Omega}c_{\scriptscriptstyle 1}(\boldsymbol{y})\bigl(u^{\scriptscriptstyle (2\varepsilon)}_{\scriptscriptstyle 1\Delta}(\boldsymbol{x},t)\bigr)^{2}\mathrm{d}\Omega
+\int_{\Omega}c_{\scriptscriptstyle 2}(\boldsymbol{y})\bigl(u^{\scriptscriptstyle (2\varepsilon)}_{\scriptscriptstyle 2\Delta}(\boldsymbol{x},t)\bigr)^{2}\mathrm{d}\Omega \\
&+2\int_{0}^{t}\!\!\int_{\Omega}\kappa_{\scriptscriptstyle 1,ij}(\boldsymbol{y})\frac{\partial u^{\scriptscriptstyle (2\varepsilon)}_{\scriptscriptstyle 1\Delta}}{\partial x_{\scriptscriptstyle j}}\frac{\partial u^{\scriptscriptstyle (2\varepsilon)}_{\scriptscriptstyle 1\Delta}}{\partial x_{\scriptscriptstyle i}}\mathrm{d}\Omega\mathrm{d}\tau \\
&+2\int_{0}^{t}\!\!\int_{\Omega}\kappa_{\scriptscriptstyle 2,ij}(\boldsymbol{y})\frac{\partial u^{\scriptscriptstyle (2\varepsilon)}_{\scriptscriptstyle 2\Delta}}{\partial x_{\scriptscriptstyle j}}\frac{\partial u^{\scriptscriptstyle (2\varepsilon)}_{\scriptscriptstyle 2\Delta}}{\partial x_{\scriptscriptstyle i}}\mathrm{d}\Omega\mathrm{d}\tau \\
&=2\int_{0}^{t}\!\!\int_{\Omega}\varepsilon f_{\scriptscriptstyle 1}u^{\scriptscriptstyle (2\varepsilon)}_{\scriptscriptstyle 1\Delta}\mathrm{d}\Omega\mathrm{d}\tau
+2\int_{0}^{t}\!\!\int_{\Omega}\varepsilon f_{\scriptscriptstyle 2}u^{\scriptscriptstyle (2\varepsilon)}_{\scriptscriptstyle 2\Delta}\mathrm{d}\Omega\mathrm{d}\tau \\
&+\int_{\Omega}c_{\scriptscriptstyle 1}(\boldsymbol{y})\bigl(u^{\scriptscriptstyle (2\varepsilon)}_{\scriptscriptstyle 1\Delta}(\boldsymbol{x},0)\bigr)^{2}\mathrm{d}\Omega\\
&+\int_{\Omega}c_{\scriptscriptstyle 2}(\boldsymbol{y})\bigl(u^{\scriptscriptstyle (2\varepsilon)}_{\scriptscriptstyle 2\Delta}(\boldsymbol{x},0)\bigr)^{2}\mathrm{d}\Omega.
\end{aligned}
\label{eq:rearranged}
\end{equation}

Next, substituting the initial condition of the residual problem~\eqref{eq:residual_second_order} into Eq.~\eqref{eq:rearranged} yields:
\begin{equation}
\begin{aligned}
&\int_{\Omega}c_{\scriptscriptstyle 1}(\boldsymbol{y})\bigl(u^{\scriptscriptstyle (2\varepsilon)}_{\scriptscriptstyle 1\Delta}(\boldsymbol{x},t)\bigr)^{2}\mathrm{d}\Omega
+\int_{\Omega}c_{\scriptscriptstyle 2}(\boldsymbol{y})\bigl(u^{\scriptscriptstyle (2\varepsilon)}_{\scriptscriptstyle 2\Delta}(\boldsymbol{x},t)\bigr)^{2}\mathrm{d}\Omega \\
&+2\int_{0}^{t}\!\!\int_{\Omega}\kappa_{\scriptscriptstyle 1,ij}(\boldsymbol{y})\frac{\partial u^{\scriptscriptstyle (2\varepsilon)}_{\scriptscriptstyle 1\Delta}}{\partial x_{\scriptscriptstyle j}}\frac{\partial u^{\scriptscriptstyle (2\varepsilon)}_{\scriptscriptstyle 1\Delta}}{\partial x_{\scriptscriptstyle i}}\mathrm{d}\Omega\mathrm{d}\tau \\
&+2\int_{0}^{t}\!\!\int_{\Omega}\kappa_{\scriptscriptstyle 2,ij}(\boldsymbol{y})\frac{\partial u^{\scriptscriptstyle (2\varepsilon)}_{\scriptscriptstyle 2\Delta}}{\partial x_{\scriptscriptstyle j}}\frac{\partial u^{\scriptscriptstyle (2\varepsilon)}_{\scriptscriptstyle 2\Delta}}{\partial x_{\scriptscriptstyle i}}\mathrm{d}\Omega\mathrm{d}\tau \\
&=2\int_{0}^{t}\!\!\int_{\Omega}\varepsilon f_{\scriptscriptstyle 1}u^{\scriptscriptstyle (2\varepsilon)}_{\scriptscriptstyle 1\Delta}\mathrm{d}\Omega\mathrm{d}\tau
+2\int_{0}^{t}\!\!\int_{\Omega}\varepsilon f_{\scriptscriptstyle 2}u^{\scriptscriptstyle (2\varepsilon)}_{\scriptscriptstyle 2\Delta}\mathrm{d}\Omega\mathrm{d}\tau \\
&+\int_{\Omega}c_{\scriptscriptstyle 1}(\boldsymbol{y})\bigl(\varepsilon\psi_{\scriptscriptstyle 1}(\boldsymbol{x})\bigr)^{2}\mathrm{d}\Omega
+\int_{\Omega}c_{\scriptscriptstyle 2}(\boldsymbol{y})\bigl(\varepsilon\psi_{\scriptscriptstyle 2}(\boldsymbol{x})\bigr)^{2}\mathrm{d}\Omega.
\end{aligned}
\label{eq:with_initial}
\end{equation}

In the following, we estimate both sides of Eq.~\eqref{eq:with_initial} separately. First, for the left-hand side, using Assumption $(A_2)$, $(A_3)$ and Poincar\'{e}-Friedrichs inequality, we obtain the following inequality:
\begin{equation}
\begin{aligned}
&\int_{\Omega}c_{\scriptscriptstyle 1}(\boldsymbol{y})\big{(}u^{\scriptscriptstyle (2\varepsilon)}_{\scriptscriptstyle 1\Delta}(\boldsymbol{x},t)\big{)}^{2}\,\mathrm{d}\Omega
+\int_{\Omega}c_{\scriptscriptstyle 2}(\boldsymbol{y})\big{(}u^{\scriptscriptstyle (2\varepsilon)}_{\scriptscriptstyle 2\Delta}(\boldsymbol{x},t)\big{)}^{2}\,\mathrm{d}\Omega\\
&+2\int_{0}^{t}\int_{\Omega}\kappa_{\scriptscriptstyle 1,ij}(\boldsymbol{y})\,\frac{\partial u^{\scriptscriptstyle (2\varepsilon)}_{\scriptscriptstyle 1\Delta}(\boldsymbol{x},t)}{\partial x_{\scriptscriptstyle j}}\,\frac{\partial u^{\scriptscriptstyle (2\varepsilon)}_{\scriptscriptstyle 1\Delta}(\boldsymbol{x},t)}{\partial x_{\scriptscriptstyle i}}\,\mathrm{d}\Omega\mathrm{d}\tau \\
&+2\int_{0}^{t}\int_{\Omega}\kappa_{\scriptscriptstyle 2,ij}(\boldsymbol{y})\,\frac{\partial u^{\scriptscriptstyle (2\varepsilon)}_{\scriptscriptstyle 2\Delta}(\boldsymbol{x},t)}{\partial x_{\scriptscriptstyle j}}\,\frac{\partial u^{\scriptscriptstyle (2\varepsilon)}_{\scriptscriptstyle 2\Delta}(\boldsymbol{x},t)}{\partial x_{\scriptscriptstyle i}}\,\mathrm{d}\Omega\mathrm{d}\tau \\
&\geq C\big{\|}u^{\scriptscriptstyle (2\varepsilon)}_{\scriptscriptstyle 1\Delta}(\boldsymbol{x},t)\big{\|}_{\scriptscriptstyle L^{2}(\Omega)}^{2}
+2K\int_{0}^{t}\big{\|}u^{\scriptscriptstyle (2\varepsilon)}_{\scriptscriptstyle 1\Delta}(\boldsymbol{x},t)\big{\|}_{\scriptscriptstyle H^{1}(\Omega)}^{2}\,\mathrm{d}\tau \\
&+C\big{\|}u^{\scriptscriptstyle (2\varepsilon)}_{\scriptscriptstyle 2\Delta}(\boldsymbol{x},t)\big{\|}_{\scriptscriptstyle L^{2}(\Omega)}^{2}
+2K\int_{0}^{t}\big{\|}u^{\scriptscriptstyle (2\varepsilon)}_{\scriptscriptstyle 2\Delta}(\boldsymbol{x},t)\big{\|}_{\scriptscriptstyle H^{1}_{0}(\Omega)}^{2}\,\mathrm{d}\tau.
\end{aligned}
\label{eq:left_hand_inequality}
\end{equation}

Then, using Assumption $(A_2)$, $(A_3)$ and Young's inequality, and selecting the same parameter $\lambda$ for any arbitrary $\tau\in[0,t]$, the estimation of the right-hand side of Eq.~\eqref{eq:with_initial} yields the following inequality:
\begin{equation}
\begin{aligned}
&\int_{\Omega}c_{\scriptscriptstyle 1}\left(\varepsilon\psi_{\scriptscriptstyle 1}(\boldsymbol{x})\right)^{2}\mathrm{d}\Omega+2\int_{0}^{t}\int_{\Omega}\varepsilon f_{\scriptscriptstyle 1}u_{\scriptscriptstyle 1\Delta}^{\scriptscriptstyle (2\varepsilon)}(\boldsymbol{x},t)\mathrm{d}\Omega\mathrm{d}\tau\\
&+\int_{\Omega}c_{\scriptscriptstyle 2}\left(\varepsilon\psi_{\scriptscriptstyle 2}(\boldsymbol{x})\right)^{2}\mathrm{d}\Omega+2\int_{0}^{t}\int_{\Omega}\varepsilon f_{\scriptscriptstyle 2}u_{\scriptscriptstyle 2\Delta}^{\scriptscriptstyle (2\varepsilon)}(\boldsymbol{x},t)\mathrm{d}\Omega\mathrm{d}\tau\\
&\leq C\varepsilon^{2}+2\left[\frac{1}{\lambda}\int_{0}^{t}\left(\|\varepsilon f_{\scriptscriptstyle 1}\|_{\scriptscriptstyle L^{2}(\Omega)}^{2}
+\|\varepsilon f_{\scriptscriptstyle 2}\|_{\scriptscriptstyle L^{2}(\Omega)}^{2}\right)\mathrm{d}\tau\right. \\
&\left.+\lambda\int_{0}^{t}\left(\|u_{\scriptscriptstyle 1\Delta}^{\scriptscriptstyle (2\varepsilon)}(\boldsymbol{x},t)\|_{\scriptscriptstyle L^{2}(\Omega)}^{2}
+\|u_{\scriptscriptstyle 2\Delta}^{\scriptscriptstyle (2\varepsilon)}(\boldsymbol{x},t)\|_{\scriptscriptstyle L^{2}(\Omega)}^{2}\right)\mathrm{d}\tau\right] \\  
&\leq C\varepsilon^{2}+\lambda\int_{0}^{t}\|u_{\scriptscriptstyle 1\Delta}^{\scriptscriptstyle (2\varepsilon)}(\boldsymbol{x},t)\|_{\scriptscriptstyle L^{2}(\Omega)}^{2}\mathrm{d}\tau\\
&+\lambda\int_{0}^{t}\|u_{\scriptscriptstyle 2\Delta}^{\scriptscriptstyle (2\varepsilon)}(\boldsymbol{x},t)\|_{\scriptscriptstyle L^{2}(\Omega)}^{2}\mathrm{d}\tau \\
&+M\int_{0}^{t}\int_{0}^{\tau}\|u_{\scriptscriptstyle 1\Delta}^{\scriptscriptstyle (2\varepsilon)}(\boldsymbol{x},t)\|_{\scriptscriptstyle H_{0}^{1}(\Omega)}^{2}\mathrm{d}s\mathrm{d}\tau \\
&+M\int_{0}^{t}\int_{0}^{\tau}\|u_{\scriptscriptstyle 2\Delta}^{\scriptscriptstyle (2\varepsilon)}(\boldsymbol{x},t)\|_{\scriptscriptstyle H_{0}^{1}(\Omega)}^{2}\mathrm{d}s\mathrm{d}\tau.
\end{aligned}
\label{eq:right_hand_inequality}
\end{equation}

Combining Eq.~\eqref{eq:left_hand_inequality} and Eq.~\eqref{eq:right_hand_inequality}, we obtain the following inequality:
\begin{equation}
\begin{aligned}
&C\bigl\|u_{\scriptscriptstyle 1\Delta}^{\scriptscriptstyle (2\varepsilon)}(\boldsymbol{x},t)\bigr\|_{\scriptscriptstyle L^{2}(\Omega)}^{2} 
+2K\int_{0}^{t}\bigl\|u_{\scriptscriptstyle 1\Delta}^{\scriptscriptstyle (2\varepsilon)}(\boldsymbol{x},t)\bigr\|_{\scriptscriptstyle H_{0}^{1}(\Omega)}^{2}\mathrm{d}\tau\\
&+C\bigl\|u_{\scriptscriptstyle 2\Delta}^{\scriptscriptstyle (2\varepsilon)}(\boldsymbol{x},t)\bigr\|_{\scriptscriptstyle L^{2}(\Omega)}^{2} 
+2K\int_{0}^{t}\bigl\|u_{\scriptscriptstyle 2\Delta}^{\scriptscriptstyle (2\varepsilon)}(\boldsymbol{x},t)\bigr\|_{\scriptscriptstyle H_{0}^{1}(\Omega)}^{2}\mathrm{d}\tau \\
&\leq C\varepsilon^{2}+\lambda\int_{0}^{t}\bigl\|u_{\scriptscriptstyle 1\Delta}^{\scriptscriptstyle (2\varepsilon)}(\boldsymbol{x},t)\bigr\|_{\scriptscriptstyle L^{2}(\Omega)}^{2}\mathrm{d}\tau\\
&+\lambda\int_{0}^{t}\bigl\|u_{\scriptscriptstyle 2\Delta}^{\scriptscriptstyle (2\varepsilon)}(\boldsymbol{x},t)\bigr\|_{\scriptscriptstyle L^{2}(\Omega)}^{2}\mathrm{d}\tau \\
&+M\int_{0}^{t}\int_{0}^{\tau}\bigl\|u_{\scriptscriptstyle 1\Delta}^{\scriptscriptstyle (2\varepsilon)}(\boldsymbol{x},t)\bigr\|_{\scriptscriptstyle H_{0}^{1}(\Omega)}^{2}\mathrm{d}s\mathrm{d}\tau\\
&+M\int_{0}^{t}\int_{0}^{\tau}\bigl\|u_{\scriptscriptstyle 2\Delta}^{\scriptscriptstyle (2\varepsilon)}(\boldsymbol{x},t)\bigr\|_{\scriptscriptstyle H_{0}^{1}(\Omega)}^{2}\mathrm{d}s\mathrm{d}\tau.
\end{aligned}
\label{eq:combined_inequality}
\end{equation}

Defining \(\delta_{1}=\min(\underline{C},2K)\) and \(\delta_{2}=\max(\lambda,M)\), Eq.~\eqref{eq:combined_inequality} can be further simplified as:
\begin{equation}
\begin{aligned}
&\delta_{1}\Bigl(\bigl\|u_{\scriptscriptstyle 1\Delta}^{\scriptscriptstyle (2\varepsilon)}(\boldsymbol{x},t)\bigr\|_{\scriptscriptstyle L^{2}(\Omega)}^{2}
+\bigl\|u_{\scriptscriptstyle 2\Delta}^{\scriptscriptstyle (2\varepsilon)}(\boldsymbol{x},t)\bigr\|_{\scriptscriptstyle L^{2}(\Omega)}^{2} \\
&+\int_{0}^{t}\bigl\|u_{\scriptscriptstyle 1\Delta}^{\scriptscriptstyle (2\varepsilon)}(\boldsymbol{x},t)\bigr\|_{\scriptscriptstyle H_{0}^{1}(\Omega)}^{2}\mathrm{d}\tau
+\int_{0}^{t}\bigl\|u_{\scriptscriptstyle 2\Delta}^{\scriptscriptstyle (2\varepsilon)}(\boldsymbol{x},t)\bigr\|_{\scriptscriptstyle H_{0}^{1}(\Omega)}^{2}\mathrm{d}\tau\Bigr) \\
&\leq C\varepsilon^{2}+\delta_{2}\Bigl(\int_{0}^{t}\bigl\|u_{\scriptscriptstyle 1\Delta}^{\scriptscriptstyle (2\varepsilon)}(\boldsymbol{x},t)\bigr\|_{\scriptscriptstyle L^{2}(\Omega)}^{2}\mathrm{d}\tau\\
&+\int_{0}^{t}\bigl\|u_{\scriptscriptstyle 2\Delta}^{\scriptscriptstyle (2\varepsilon)}(\boldsymbol{x},t)\bigr\|_{\scriptscriptstyle L^{2}(\Omega)}^{2}\mathrm{d}\tau \\
&+\int_{0}^{t}\int_{0}^{\tau}\bigl\|u_{\scriptscriptstyle 1\Delta}^{\scriptscriptstyle (2\varepsilon)}(\boldsymbol{x},t)\bigr\|_{\scriptscriptstyle H_{0}^{1}(\Omega)}^{2}\mathrm{d}s\mathrm{d}\tau\\
&+\int_{0}^{t}\int_{0}^{\tau}\bigl\|u_{\scriptscriptstyle 2\Delta}^{\scriptscriptstyle (2\varepsilon)}(\boldsymbol{x},t)\bigr\|_{\scriptscriptstyle H_{0}^{1}(\Omega)}^{2}\mathrm{d}s\mathrm{d}\tau\Bigr).
\end{aligned}
\label{eq:simplified_inequality}
\end{equation}

Without loss of generality, defining \(C=C/\delta_{1}\) and \(\delta=\delta_{2}/\delta_{1}\), Eq.~\eqref{eq:simplified_inequality} can be further simplified as:
\begin{equation}
\begin{aligned}
&\bigl\|u_{\scriptscriptstyle 1\Delta}^{\scriptscriptstyle (2\varepsilon)}(\boldsymbol{x},t)\bigr\|_{\scriptscriptstyle L^{2}(\Omega)}^{2}
+\bigl\|u_{\scriptscriptstyle 2\Delta}^{\scriptscriptstyle (2\varepsilon)}(\boldsymbol{x},t)\bigr\|_{\scriptscriptstyle L^{2}(\Omega)}^{2} \\
&+\int_{0}^{t}\bigl\|u_{\scriptscriptstyle 1\Delta}^{\scriptscriptstyle (2\varepsilon)}(\boldsymbol{x},t)\bigr\|_{\scriptscriptstyle H_{0}^{1}(\Omega)}^{2}\,\mathrm{d}\tau
+\int_{0}^{t}\bigl\|u_{\scriptscriptstyle 2\Delta}^{\scriptscriptstyle (2\varepsilon)}(\boldsymbol{x},t)\bigr\|_{\scriptscriptstyle H_{0}^{1}(\Omega)}^{2}\,\mathrm{d}\tau \\
&\leq C\varepsilon^{2}+\delta\Bigl(\int_{0}^{t}\bigl\|u_{\scriptscriptstyle 1\Delta}^{\scriptscriptstyle (2\varepsilon)}(\boldsymbol{x},t)\bigr\|_{\scriptscriptstyle L^{2}(\Omega)}^{2}\,\mathrm{d}\tau \\
& +\int_{0}^{t}\bigl\|u_{\scriptscriptstyle 2\Delta}^{\scriptscriptstyle (2\varepsilon)}(\boldsymbol{x},t)\bigr\|_{\scriptscriptstyle L^{2}(\Omega)}^{2}\,\mathrm{d}\tau \\
& +\int_{0}^{t}\int_{0}^{\tau}\bigl\|u_{\scriptscriptstyle 1\Delta}^{\scriptscriptstyle (2\varepsilon)}(\boldsymbol{x},t)\bigr\|_{\scriptscriptstyle H_{0}^{1}(\Omega)}^{2}\,\mathrm{d}\varsigma\,\mathrm{d}\tau \\
&+\int_{0}^{t}\int_{0}^{\tau}\bigl\|u_{\scriptscriptstyle 2\Delta}^{\scriptscriptstyle (2\varepsilon)}(\boldsymbol{x},t)\bigr\|_{\scriptscriptstyle H_{0}^{1}(\Omega)}^{2}\,\mathrm{d}\varsigma\,\mathrm{d}\tau\Bigr).
\end{aligned}
\label{eq:final_inequality}
\end{equation}

Subsequently, applying Gronwall's inequality to Eq.~\eqref{eq:final_inequality} yields:
\begin{equation}
\begin{aligned}
&\bigl\|u_{\scriptscriptstyle 1\Delta}^{\scriptscriptstyle (2\varepsilon)}(\boldsymbol{x},t)\bigr\|_{L^{\scriptscriptstyle 2}(\Omega)}^{2}
+\bigl\|u_{\scriptscriptstyle 2\Delta}^{\scriptscriptstyle (2\varepsilon)}(\boldsymbol{x},t)\bigr\|_{L^{\scriptscriptstyle 2}(\Omega)}^{2} \\
&+\int_{0}^{t}\big{\|}u_{\scriptscriptstyle 1\Delta}^{\scriptscriptstyle (2\varepsilon)}(\boldsymbol{x},t)\big{\|}_{H_{\scriptscriptstyle 0}^{\scriptscriptstyle 1}(\Omega)}^{2}\,\mathrm{d}\tau
+\int_{0}^{t}\big{\|}u_{\scriptscriptstyle 2\Delta}^{\scriptscriptstyle (2\varepsilon)}(\boldsymbol{x},t)\big{\|}_{H_{\scriptscriptstyle 0}^{\scriptscriptstyle 1}(\Omega)}^{2}\,\mathrm{d}\tau \\
&\leq C(T^{*})\varepsilon^{\scriptscriptstyle 2},\quad \forall\,t\in[0,T^{*}].
\end{aligned}
\label{eq:after_gronwall}
\end{equation}

Then, applying the mean value inequality to Eq.~\eqref{eq:after_gronwall}, we obtain the following inequality:
\begin{equation}
\begin{aligned}
&\bigl\|u_{\scriptscriptstyle 1\Delta}^{\scriptscriptstyle (2\varepsilon)}(\boldsymbol{x},t)\bigr\|_{\scriptscriptstyle L^{\scriptscriptstyle 2}(\Omega)}
+\bigl\|u_{\scriptscriptstyle 2\Delta}^{\scriptscriptstyle (2\varepsilon)}(\boldsymbol{x},t)\bigr\|_{\scriptscriptstyle L^{\scriptscriptstyle 2}(\Omega)} \\
&+\bigl\|u_{\scriptscriptstyle 1\Delta}^{\scriptscriptstyle (2\varepsilon)}(\boldsymbol{x},t)\bigr\|_{\scriptscriptstyle L^{\scriptscriptstyle 2}(0,t;H_{\scriptscriptstyle 0}^{\scriptscriptstyle 1}(\Omega))}
+\bigl\|u_{\scriptscriptstyle 2\Delta}^{\scriptscriptstyle (2\varepsilon)}(\boldsymbol{x},t)\bigr\|_{\scriptscriptstyle L^{\scriptscriptstyle 2}(0,t;H_{\scriptscriptstyle 0}^{\scriptscriptstyle 1}(\Omega))} \\
&\leq C(T^{*})\varepsilon,\quad \forall t\in[0,T^{*}].
\end{aligned}
\label{eq:final_norm_estimate}
\end{equation}

Finally, utilizing the arbitrariness of the time variable \(t\), the following inequality holds:
\begin{equation}
\begin{aligned}
&\bigl\|u_{\scriptscriptstyle 1\Delta}^{\scriptscriptstyle (2\varepsilon)}\bigr\|_{L^{\infty}(0,T^{*};L^{2}(\Omega))}
+\bigl\|u_{\scriptscriptstyle 1\Delta}^{\scriptscriptstyle (2\varepsilon)}\bigr\|_{L^{2}(0,T^{*};H_{1}^{0}(\Omega))} \\
&+\bigl\|u_{\scriptscriptstyle 2\Delta}^{\scriptscriptstyle (2\varepsilon)}\bigr\|_{L^{\infty}(0,T^{*};L^{2}(\Omega))}
+\bigl\|u_{\scriptscriptstyle 2\Delta}^{\scriptscriptstyle (2\varepsilon)}\bigr\|_{L^{2}(0,T^{*};H_{1}^{0}(\Omega))} \\
&\leq C(T^{*})\varepsilon.
\end{aligned}
\label{eq:norm_estimate}
\end{equation}

In summary, Theorem~\ref{thm:error_estimate} is proved.
\end{proof}

\section{\label{sec:4}Numerical Algorithms and Experiments}
This chapter presents the corresponding algorithm for the HOMS computational model established in Section~\ref{sec:2.2}, and conducts numerical experiments to validate the effectiveness of the method and the necessity of introducing the second-order correction term.

Based on the HOMS asymptotic solution for the multiscale problem~\eqref{eq:3} provided in Section~\ref{sec:2.2}, the HOMS algorithm for solving multi-continuum problems in highly heterogeneous media is given as follows:
\begin{enumerate}
    \item Determine the geometric configuration of the reference unit cell \( Y \) and the homogenized macroscopic region \( \Omega \), as well as the material parameters of each constituent phase. Then, perform finite element discretization of the unit cell \( Y \) and the homogenized macroscopic region \( \Omega \) using triangular meshes in two dimensions or tetrahedral meshes in three dimensions. Let \(J^{h_{\scriptscriptstyle 1}} = \{K\}\) and \(J^{h_{\scriptscriptstyle 0}} = \{e\}\) denote a family of regular tetrahedral mesh partitions of the unit cell \(Y\) and the homogenized macroscopic domain \(\Omega\), respectively, where \(h_{\scriptscriptstyle 1} = \max_{\scriptscriptstyle K} \{h_{\scriptscriptstyle K}\}\) and \(h_{\scriptscriptstyle 0} = \max_e \{h_e\}\). Then define the conforming finite element space on the reference unit cell \(Y\) as 
    \( V_{h_{\scriptscriptstyle 1}}(Y) = \left\{ v \in C^{\scriptscriptstyle 0}(\overline{Y}) : v|_{\partial Y} = 0,\ v|_K \in P(K) \right\} \subset H^{\scriptscriptstyle 1}_{\scriptscriptstyle 0}(Y),\)
    and the conforming finite element space on the homogenized macroscopic domain \(\Omega\) as  \(V_{h_{\scriptscriptstyle 0}}(\Omega) = \left\{ v \in C^{\scriptscriptstyle 0}(\overline{\Omega}) : v|_e \in P(e) \right\} \subset H^{\scriptscriptstyle 1}_{\scriptscriptstyle 0}(\Omega).\)

    \item Solve for the first-order cell functions \(N^{\scriptscriptstyle\alpha_1}_l(\boldsymbol{y})\) and \(M_{\scriptscriptstyle l}(\boldsymbol{y})\) using the variational formulation in the finite element space \(V_{h_0}(Y)\).
    \begin{equation}
    \begin{aligned}
        &\int_Y \kappa_{\scriptscriptstyle l,ij}(y) \frac{\partial N^{\alpha_{\scriptscriptstyle 1}}_l(y)}{\partial y_j} \frac{\partial \varphi^{h_{\scriptscriptstyle 1}}}{\partial y_i} \, \mathrm{d}Y \\
        &+ \int_Y \kappa_{\scriptscriptstyle l,i\alpha_1}(y) \frac{\partial \varphi^{h_{\scriptscriptstyle 1}}}{\partial y_i} \, \mathrm{d}Y = 0,
         \ \forall \varphi^{h_{\scriptscriptstyle 1}} \in V_{h_{\scriptscriptstyle 1}}(Y). \label{eq:cell_eq1} 
    \end{aligned}
    \end{equation}
    \item Then, using the obtained first-order cell functions and Eqs.~\eqref{eq:homogenized}, solve for the homogenized coefficients \(c^*_{\scriptscriptstyle l}\), \(\kappa^*_{\scriptscriptstyle l,ij}\), \(\bar{K}^{\scriptscriptstyle l*}_{\scriptscriptstyle 1i}\), \(\bar{K}^{\scriptscriptstyle l*}_{\scriptscriptstyle 2i}\) and \(Q_{\scriptscriptstyle l}^*\). Subsequently, establish the following coupled finite difference-finite element discretization scheme for solving the homogenized problem~\eqref{eq:3}:
\begin{equation}
\begin{cases}
\begin{aligned}
 & \int_{\Omega}c_{\scriptscriptstyle 1}^{*}\frac{u_{\scriptscriptstyle 1}^{\scriptscriptstyle (0),n+1}-u_{\scriptscriptstyle 1}^{\scriptscriptstyle (0),n}}{\Delta t}\nu^{\scriptscriptstyle h_{\scriptscriptstyle 0}}\mathrm{d}\Omega\\
 & =-\frac{1}{2} \!\int_{\Omega}\kappa_{\scriptscriptstyle 1,ij}^{*} \left( \frac{\partial u_{\scriptscriptstyle 1}^{\scriptscriptstyle (0),n+1}}{\partial x_{j}} \! + \!\frac{\partial u_{\scriptscriptstyle 1}^{\scriptscriptstyle (0),n}}{\partial x_{j}} \right)\frac{\partial\nu^{\scriptscriptstyle h_{\scriptscriptstyle 0}}}{\partial x_{i}} \mathrm{d}\Omega  \\
 & +\frac{1}2{}\int_{\Omega} Q_{\scriptscriptstyle 1}^{\scriptscriptstyle *}\left(u_{\scriptscriptstyle 2}^{\scriptscriptstyle (0),n+1}-u_{\scriptscriptstyle 1}^{\scriptscriptstyle (0),n+1}\right)\nu^{\scriptscriptstyle h_{\scriptscriptstyle 0}}\mathrm{d}\Omega\\
 & +\frac{1}{2}\int_{\Omega} Q_{\scriptscriptstyle 1}^{\scriptscriptstyle *}\left(u_{\scriptscriptstyle 2}^{\scriptscriptstyle (0),n}-u_{\scriptscriptstyle 1}^{\scriptscriptstyle (0),n}\right)\nu^{\scriptscriptstyle h_{\scriptscriptstyle 0}}\mathrm{d}\Omega\\
&+\frac{1}{2}\int_{\Omega}\bar{K}_{\scriptscriptstyle1i}^{\scriptscriptstyle1*}\left( \frac{\partial u_{2}^{\scriptscriptstyle (0),n+1}}{\partial x_{j}}+\frac{\partial u_{2}^{\scriptscriptstyle (0),n}}{\partial x_{j}} \right) \nu^{\scriptscriptstyle h_{\scriptscriptstyle 0}}\mathrm{d}\Omega\\
 &- \frac{1}{2}\! \int_{\Omega}\bar{K}_{\scriptscriptstyle 2i}^{\scriptscriptstyle1*}\left(\frac{\partial u_{1}^{\scriptscriptstyle (0),n+1}}{\partial x_{i}}+\frac{\partial u_{1}^{\scriptscriptstyle (0),n}}{\partial x_{i}} \right) \nu^{\scriptscriptstyle h_{\scriptscriptstyle 0}}\mathrm{d}\Omega \\
 & +\int_{\Omega}q\nu^{
  h_{\scriptscriptstyle 0}}\mathrm{d}\Omega,\ \forall\nu^{\scriptscriptstyle h_{\scriptscriptstyle 0}}\in V_{h_{\scriptscriptstyle 0}}(\Omega),t\in(0,T^{*}), \\
 & \int_{\Omega}c_{\scriptscriptstyle 2}^{*}\frac{u_{\scriptscriptstyle 2}^{\scriptscriptstyle (0),n+1}-u_{\scriptscriptstyle 2}^{\scriptscriptstyle (0),n}}{\Delta t}\nu^{\scriptscriptstyle h_{\scriptscriptstyle 0}}\mathrm{d}\Omega\\
 &=-\frac{1}{2}\int_{\Omega}\kappa_{\scriptscriptstyle 2,ij}^{\scriptscriptstyle *}\left(\frac{\partial u_{\scriptscriptstyle 2}^{\scriptscriptstyle (0),n+1}}{\partial x_{j}}+\frac{\partial u_{\scriptscriptstyle 2}^{\scriptscriptstyle (0),n}}{\partial x_{j}}\right)\frac{\partial\nu^{\scriptscriptstyle h_{\scriptscriptstyle 0}}}{\partial x_{i}}\mathrm{d}\Omega \\
 & +\frac{1}{2}\int_{\Omega} Q_{\scriptscriptstyle 2}^{*}\left(u_{\scriptscriptstyle 1}^{\scriptscriptstyle (0),n+1}-u_{\scriptscriptstyle 2}^{\scriptscriptstyle (0),n+1}\right)\nu^{\scriptscriptstyle h_{\scriptscriptstyle 0}}\mathrm{d}\Omega\\
 & +\frac{1}{2}\int_{\Omega} Q_{\scriptscriptstyle 2}^{*}\left(u_{\scriptscriptstyle 1}^{\scriptscriptstyle (0),n}-u_{\scriptscriptstyle 2}^{\scriptscriptstyle (0),n}\right)\nu^{\scriptscriptstyle h_{\scriptscriptstyle 0}}\mathrm{d}\Omega\\
 & +\frac{1}{2}\int_{\Omega}\bar{K}_{\scriptscriptstyle 1i}^{\scriptscriptstyle 2*}\left(\frac{\partial u_{\scriptscriptstyle 1}^{\scriptscriptstyle (0),n+1}}{\partial x_{i}}+\frac{\partial u_{\scriptscriptstyle 1}^{\scriptscriptstyle (0),n}}{\partial x_{i}}\right)\nu^{\scriptscriptstyle h_{\scriptscriptstyle 0}}\mathrm{d}\Omega\\
 &- \frac{1}{2}\! \int_{\Omega}\bar{K}_{\scriptscriptstyle 2i}^{\scriptscriptstyle 2*}\left(\frac{\partial u_{\scriptscriptstyle 2}^{\scriptscriptstyle (0),n+1}}{\partial x_{i}}+\frac{\partial u_{\scriptscriptstyle 2}^{\scriptscriptstyle (0),n}}{\partial x_{i}}\right)\nu^{\scriptscriptstyle h_{\scriptscriptstyle 0}}\mathrm{d}\Omega \\
 & +\int_{\Omega}q\nu^{\scriptscriptstyle h_{\scriptscriptstyle 0}}\mathrm{d}\Omega, \ \forall\nu^{\scriptscriptstyle h_{\scriptscriptstyle 0}}\in V_{h_{\scriptscriptstyle 0}}(\Omega),t\in(0,T^{*}).
\end{aligned}
\end{cases}
\label{eq:yuan68}
\end{equation}

    \item Using the mesh \(J^{\scriptscriptstyle h_{\scriptscriptstyle 1}}=\{K\}\) and the finite element space \(V_{\scriptscriptstyle h_{\scriptscriptstyle 0}}(Y)\) employed for solving the first-order cell functions, determine the second-order cell functions \(G_{\scriptscriptstyle l}(\boldsymbol{y})\), \(N_{\scriptscriptstyle l}^{\scriptscriptstyle\alpha_1\alpha_2}(\boldsymbol{y})\), \(C_{\scriptscriptstyle l}^{\scriptscriptstyle\alpha_1}(\boldsymbol{y})\), \(F_{\scriptscriptstyle l}^{\scriptscriptstyle\alpha_1}(\boldsymbol{y})\) and \(K_{\scriptscriptstyle l}(\boldsymbol{y})\) according to the cell problems \eqref{eq:cell_problem_G}-\eqref{eq:K_cell_problem}.

    \item For any point \((\boldsymbol{x}, t) \in \Omega \times (0, T^*)\), the values of the first-order and second-order cell functions and the homogenized solutions at that point are obtained using interpolation methods. The partial derivatives of the homogenized solution \(u_{\scriptscriptstyle l}^{\scriptscriptstyle(0)}\) with respect to the spatial variables, \(\frac{\partial u_{\scriptscriptstyle l}^{\scriptscriptstyle(0)}}{\partial x_{\scriptscriptstyle\alpha_1}}\) and \(\frac{\partial^2 u_{\scriptscriptstyle l}^{\scriptscriptstyle(0)}}{\partial x_{\scriptscriptstyle\alpha_1}\partial x_{\scriptscriptstyle\alpha_2}}\), are computed using the element average method\cite{dong_multiscale_2009,dong_multiscale_2014}. The partial derivative with respect to time, \(\frac{\partial u_{\scriptscriptstyle l}^{\scriptscriptstyle(0)}}{\partial t}\), can be obtained from \eqref{eq:yuan68}. Finally, the HOMS solution of the multiscale problem \eqref{eq:3} at any point \((\boldsymbol{x}, t) \in \Omega \times (0, T^*)\) is calculated according to formula \eqref{eq:second_order_solution}.

    \item The reference solution \(u_{\scriptscriptstyle
     l}^{\varepsilon}\) for problem \eqref{eq:3} is computed using the refined FEM over the highly heterogeneous media domain \(\Omega\).
\end{enumerate}

\section{\label{sec:5}Numerical Experiments}
In this section, we present four numerical examples to verify the effectiveness and stability of the HOMS method. For simplicity, we assume each medium is isotropic in this paper (and the anisotropic case is handled similarly). All numerical experiments are conducted on an HP desktop workstation equipped with an 3th Gen Intel(R) Core(TM)17-13700 processor (2.10 GHz) and 32.0 GB of internal memory, and all numerical simulations are performed based on Freefem++ software.  The relative errors of $u_{\scriptscriptstyle l}^{\scriptscriptstyle(0)}$, $u_{\scriptscriptstyle l}^{\scriptscriptstyle(1\varepsilon)}$ and $u_{\scriptscriptstyle l}^{\scriptscriptstyle(2\varepsilon)}$ under the corresponding norms are denoted as:
\begin{flalign}    
&&\text{Lerr10}(t)=\frac{\left\|u_{\scriptscriptstyle 1e}-u_{\scriptscriptstyle 1}^{(\scriptscriptstyle 0)}\right\|_{L^{\scriptscriptstyle 2}}}{\left\|u_{\scriptscriptstyle 1e}\right\|_{L^{\scriptscriptstyle 2}}},\ \text{Lerr11}(t)=\frac{\left\|u_{\scriptscriptstyle 1e}-u_{\scriptscriptstyle 1}^{(\scriptscriptstyle 1\varepsilon)}\right\|_{L^{\scriptscriptstyle 2}}}{\left\|u_{\scriptscriptstyle 1e}\right\|_{L^{\scriptscriptstyle 2}}},\notag\\
&&\text{Lerr12}(t)=\frac{\left\|u_{\scriptscriptstyle 1e}-u_{\scriptscriptstyle 1}^{(\scriptscriptstyle 2\varepsilon)}\right\|_{L^{\scriptscriptstyle 2}}}{\left\|u_{\scriptscriptstyle 1e}\right\|_{L^{\scriptscriptstyle 2}}},\
\text{Herr10}(t)=\frac{\left|u_{\scriptscriptstyle 1e}-u_{\scriptscriptstyle 1}^{(\scriptscriptstyle 0)}\right|_{H^{\scriptscriptstyle 1}}}{\left|u_{\scriptscriptstyle 1e}\right|_{H^{\scriptscriptstyle 1}}},\notag \\
&&\text{Herr11}(t)=\frac{\left|u_{\scriptscriptstyle 1e}-u_{\scriptscriptstyle 1}^{(\scriptscriptstyle 1\varepsilon)}\right|_{H^{\scriptscriptstyle 1}}}{\left|u_{\scriptscriptstyle 1e}\right|_{H^{\scriptscriptstyle 1}}},\ \text{Herr12}(t)=\frac{\left|u_{\scriptscriptstyle 1e}-u_{\scriptscriptstyle 1}^{(\scriptscriptstyle 2\varepsilon)}\right|_{H^{\scriptscriptstyle 1}}}{\left|u_{\scriptscriptstyle 1e}\right|_{H^{\scriptscriptstyle 1}}},\notag \\
&&\text{Lerr20}(t)=\frac{\left\|u_{\scriptscriptstyle 2e}-u_{\scriptscriptstyle 2}^{(\scriptscriptstyle 0)}\right\|_{L^{\scriptscriptstyle 2}}}{\left\|u_{\scriptscriptstyle 2e}\right\|_{L^{\scriptscriptstyle 2}}},\ \text{Lerr21}(t)=\frac{\left\|u_{\scriptscriptstyle 2e}-u_{\scriptscriptstyle 2}^{(\scriptscriptstyle 1\varepsilon)}\right\|_{L^{\scriptscriptstyle 2}}}{\left\|u_{\scriptscriptstyle 2e}\right\|_{L^{\scriptscriptstyle 2}}},\notag \\
&&\text{Lerr22}(t)=\frac{\left\|u_{\scriptscriptstyle 2e}-u_{\scriptscriptstyle 2}^{(\scriptscriptstyle 2\varepsilon)}\right\|_{L^{\scriptscriptstyle 2}}}{\left\|u_{\scriptscriptstyle 2e}\right\|_{L^{\scriptscriptstyle 2}}}, \ \text{Herr20}(t)=\frac{\left|u_{\scriptscriptstyle 2e}-u_{\scriptscriptstyle 2}^{(\scriptscriptstyle 0)}\right|_{H^{\scriptscriptstyle 1}}}{\left|u_{\scriptscriptstyle 2e}\right|_{H^{\scriptscriptstyle 1}}},\notag \\
&&\text{Herr21}(t)=\frac{\left|u_{\scriptscriptstyle 2e}-u_{\scriptscriptstyle 2}^{(\scriptscriptstyle 1\varepsilon)}\right|_{H^{\scriptscriptstyle 1}}}{\left|u_{\scriptscriptstyle 2e}\right|_{H^{\scriptscriptstyle 1}}},\ \text{Herr22}(t)=\frac{\left|u_{\scriptscriptstyle 2e}-u_{\scriptscriptstyle 2}^{(\scriptscriptstyle 2\varepsilon)}\right|_{H^{\scriptscriptstyle 1}}}{\left|u_{\scriptscriptstyle 2e}\right|_{H^{\scriptscriptstyle 1}}}.\notag
\end{flalign}

\subsection{Example 1. 2D porous media}
For the two-dimensional case of problem \eqref{eq:3}, consider the macroscopic domain \(\Omega = (x_{\scriptscriptstyle 1}, x_{\scriptscriptstyle 2}) = [0, 1]^2\) and the microscopic unit cell \(Y = (y_{\scriptscriptstyle 1}, y_{\scriptscriptstyle 2}) = [0, 1]^2\) as shown in Fig.~\ref{fig:yuan2d}, where \(\varepsilon = 1/8\).

\begin{figure}[pos=htbp]
\centering
{\includegraphics[width=0.483\textwidth]{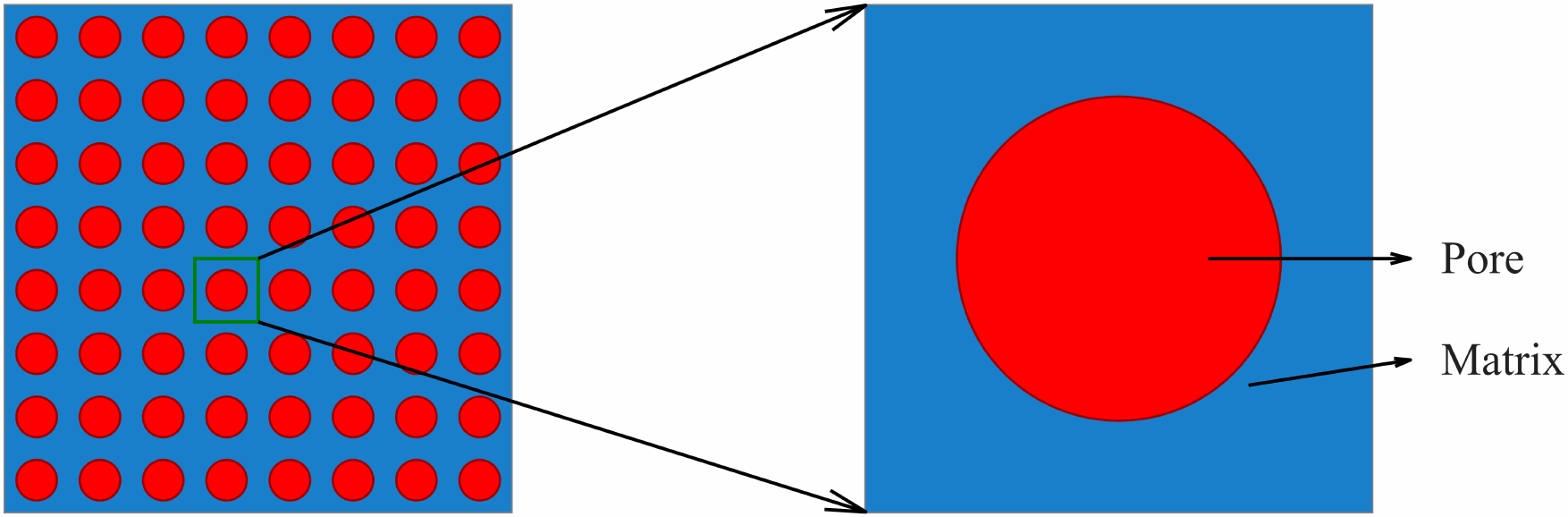}}
\caption{The schematic of 2D periodic media}\label{fig:yuan2d}
\end{figure}

The input parameters for the validation example are given in Table~\ref{tab:materialy2d}. The source term, initial pressure, and boundary conditions for this problem are as follows:
\[
\begin{gathered}
q(\boldsymbol{x},t)=1\times10^{\scriptscriptstyle 5},\ g_{\scriptscriptstyle 1}(\boldsymbol{x})=10,\ g_{\scriptscriptstyle 2}(\boldsymbol{x})=10, \\
u_{\scriptscriptstyle 1}^{\scriptscriptstyle \epsilon}(\boldsymbol{x},t)=10,\ u_{\scriptscriptstyle 2}^{\scriptscriptstyle \epsilon}(\boldsymbol{x},t)=10.
\end{gathered}
\]

\begin{table}[htbp]
\centering
\caption{Input parameters}
\label{tab:materialy2d}
\begin{tabular}{@{} >{\centering\arraybackslash}p{3.9cm} >{\centering\arraybackslash}p{3.9cm} @{}}
\toprule
Material parameters    & Matrix / Porous  \\
\midrule
\( c_{\scriptscriptstyle 1}\)    
    & \( 5 \ / \ 2 \) \\
\( c_{\scriptscriptstyle 2} \)    
    & \( 4.5 \ / \ 1.5 \) \\
\( \kappa_{\scriptscriptstyle 1,ij} \)    
    & \( 100 \ / \ 1 \) \\
\( \kappa_{\scriptscriptstyle 2,ij}\)    
    & \( 50 \ /\ 1 \) \\
\( Q_{\scriptscriptstyle 1} \)    
    & 20 \ / \ 0.25 \\
\( Q_{\scriptscriptstyle 2} \)    
    & 20 \ / \ 0.25 \\
\bottomrule
\end{tabular}
\end{table}

Given that the analytical solution \( u_{\scriptscriptstyle l}^{\varepsilon} \) for problem \eqref{eq:3} is difficult to obtain directly, this paper uses the finite element reference solution \( u_{\scriptscriptstyle l,Fe}^{\scriptscriptstyle\varepsilon} \) computed on an extremely fine mesh as a substitute. By comparing and analyzing this finite element reference solution with the asymptotic solutions of various orders obtained through the two-scale method, the accuracy of the HOMS solution is validated. To this end, tetrahedral mesh partitions are performed for the macroscopic domain, the unit cell domain, and the homogenized domain. Table~\ref{tab:y2d} lists the element and node information for the three sets of meshes, as well as a comparison of computation times between the refined FEM and the HOMS method. 

\begin{table}[htbp]
\centering
\setlength{\tabcolsep}{2pt}
\caption{Summary of computational cost}
\label{tab:y2d}
\begin{tabular}{lccc}
\toprule
 & \textbf{Multiscale eqs.} & \textbf{Cell eqs.} & \textbf{Homogenized eqs.} \\
\midrule
\textbf{FEM Elements}   & 56,064    & 858     & 8,590 \\
\textbf{FEM Nodes}      & 28,353    & 470     & 4,416 \\
\midrule
 & \textbf{FEM}    & \multicolumn{2}{c}{\textbf{HOMS}} \\
\midrule
\textbf{Time}   & 168.284\,s   & \multicolumn{2}{c}{61.164\,s} \\
\bottomrule
\end{tabular}
\end{table}

Set the time step as $\Delta t = 0.02$, then compute the solutions $u_{\scriptscriptstyle l}^{\scriptscriptstyle\varepsilon}$, $u_{\scriptscriptstyle l}^{\scriptscriptstyle(0)}$, $u_{\scriptscriptstyle l}^{\scriptscriptstyle(1\varepsilon)}$ and $u_{\scriptscriptstyle l}^{\scriptscriptstyle(2\varepsilon)}$ of problem~\eqref{eq:3} over the time interval $[0,1]$. Denote the $L^{\scriptscriptstyle 2}$ norm and $H^1$ seminorm as $\|\cdot\|_{\scriptscriptstyle{L^{\scriptscriptstyle 2}(\Omega)}}$ and $|\cdot|_{\scriptscriptstyle{H^{\scriptscriptstyle 1}(\Omega)}}$, respectively.

Fig.~\ref{fig:yuan2du1} and Fig.~\ref{fig:yuan2du2} display the distribution profiles of $u_{\scriptscriptstyle l}^{\scriptscriptstyle(0)}$, $u_{\scriptscriptstyle l}^{\scriptscriptstyle(1\varepsilon)}$, $u_{\scriptscriptstyle l}^{\scriptscriptstyle(2\varepsilon)}$ and $u_{\scriptscriptstyle l}^{\scriptscriptstyle\varepsilon}$ at time $t = 1.0$. Fig.~\ref{fig:Ery2d} shows the evolution of the relative errors in the $L^{\scriptscriptstyle 2}$ norm and $H^{\scriptscriptstyle 1}$ seminorm for $u_{\scriptscriptstyle l}^{\scriptscriptstyle(0)}$, $u_{\scriptscriptstyle l}^{\scriptscriptstyle(1\varepsilon)}$, and $u_{\scriptscriptstyle l}^{\scriptscriptstyle(2\varepsilon)}$ over the time interval $[0,1]$.

\begin{figure}[pos=htbp]
\centering
\subcaptionbox{$u_{\scriptscriptstyle 1}^{\scriptscriptstyle(0)}$\label{fig:y2d_u1A}}
{\includegraphics[width=0.235\textwidth]{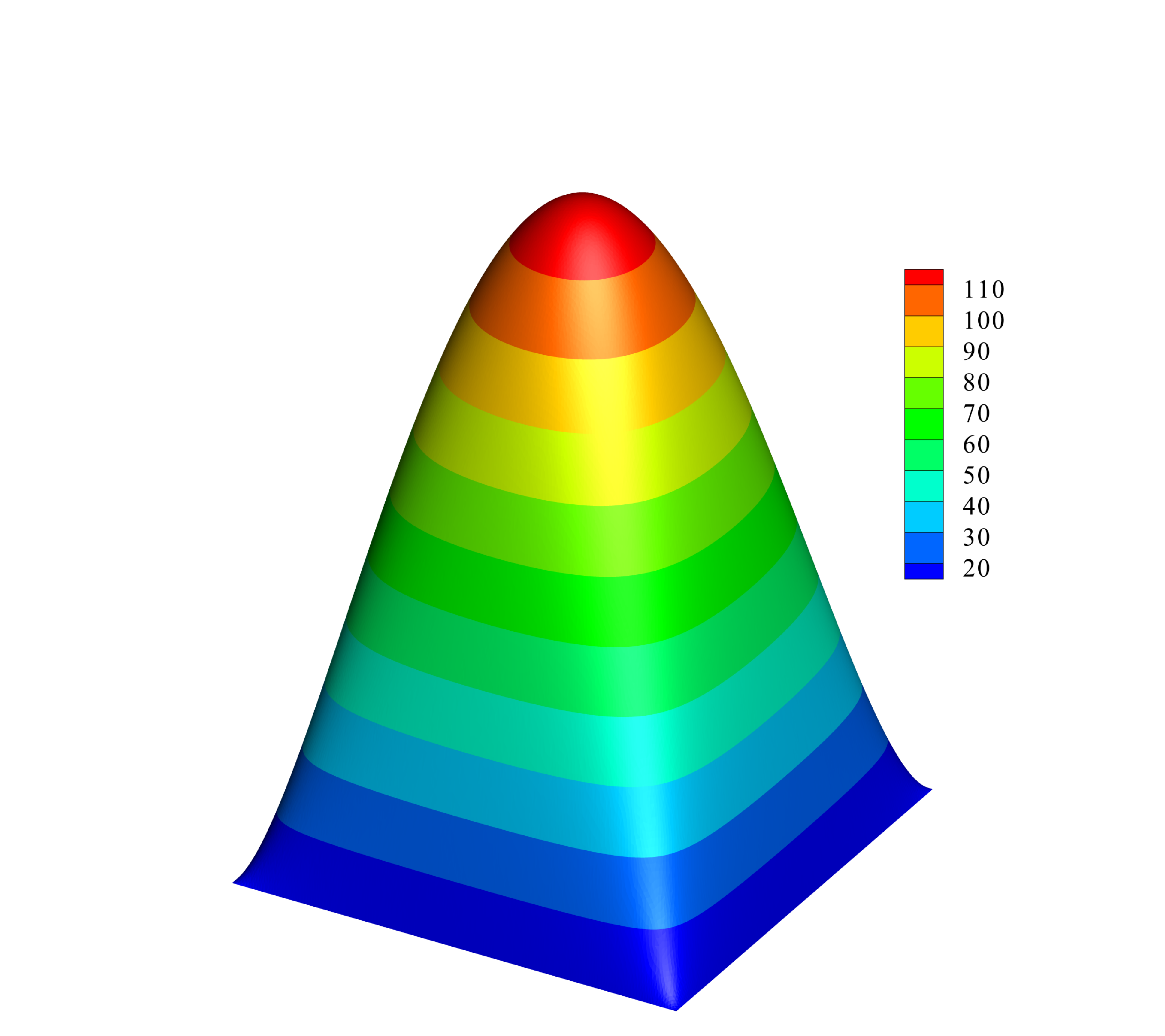}}
\subcaptionbox{$u_{\scriptscriptstyle 1}^{\scriptscriptstyle(1\varepsilon)}$\label{fig:y2d_u1B}}
{\includegraphics[width=0.235\textwidth]{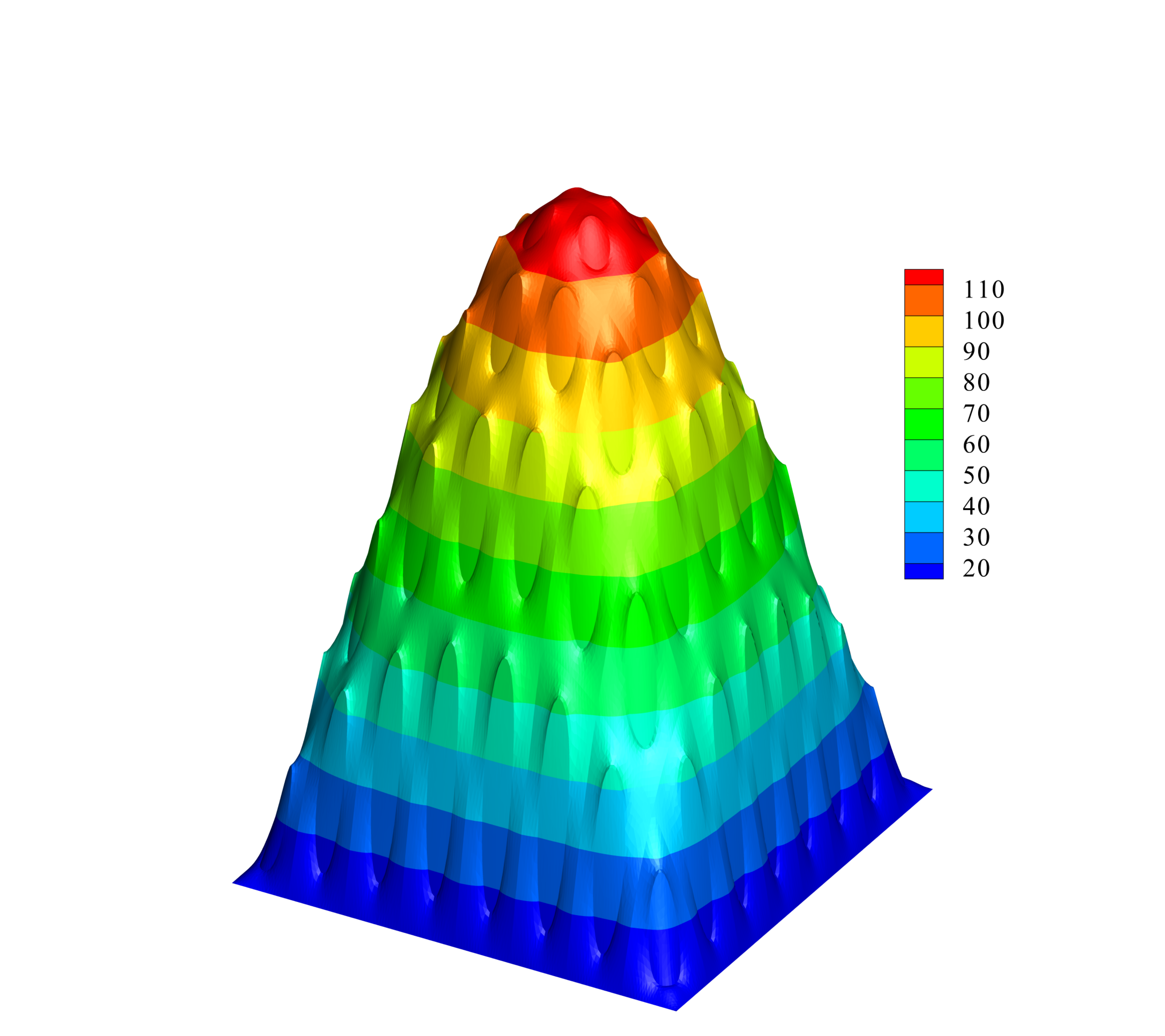}}
\subcaptionbox{$u_{\scriptscriptstyle 1}^{\scriptscriptstyle(2\varepsilon)}$\label{fig:y2d_u1C}}
{\includegraphics[width=0.235\textwidth]{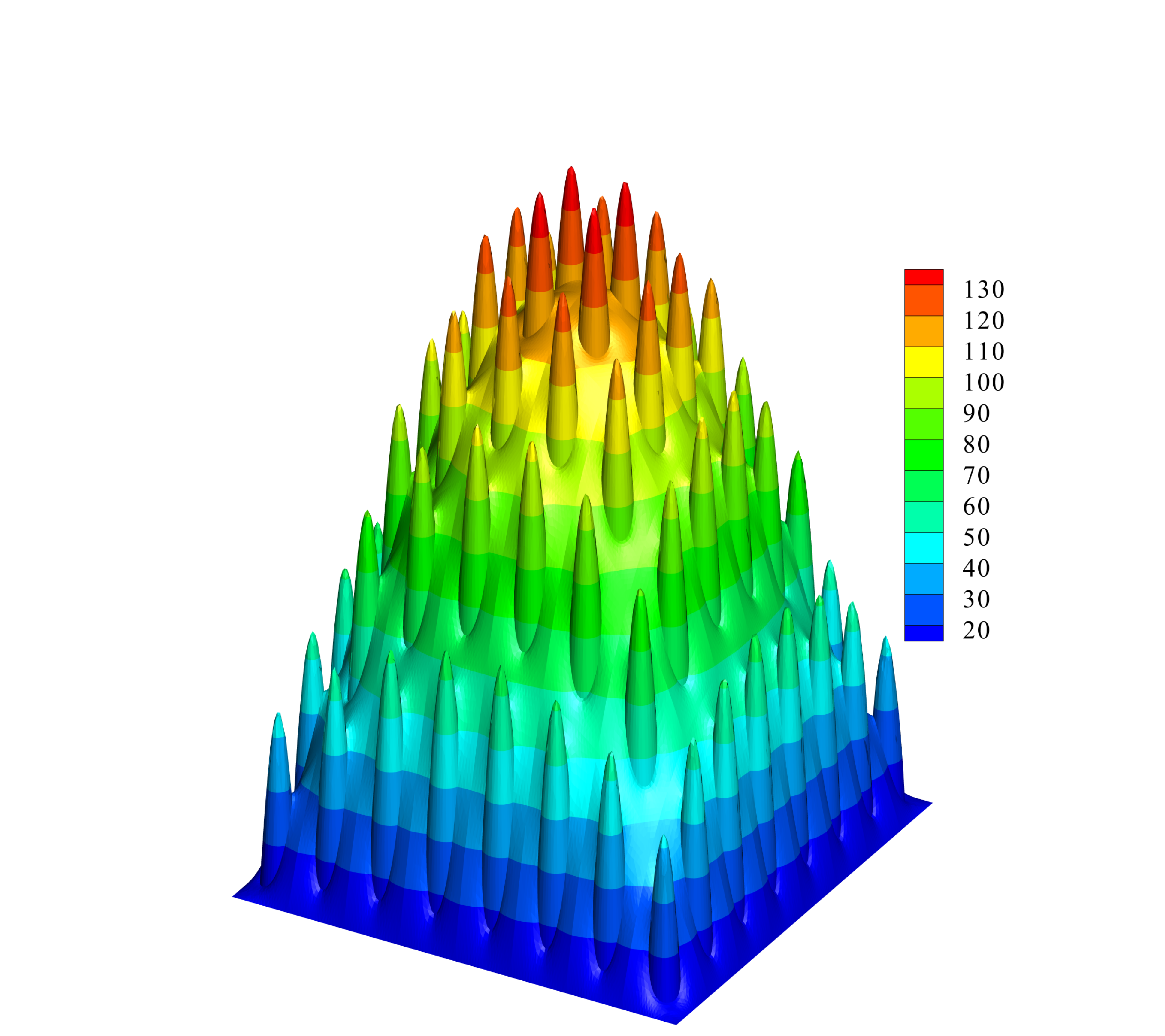}}
\subcaptionbox{$u_{\scriptscriptstyle 1}^{\scriptscriptstyle\varepsilon}$\label{fig:y2d_u1D}}
{\includegraphics[width=0.235\textwidth]{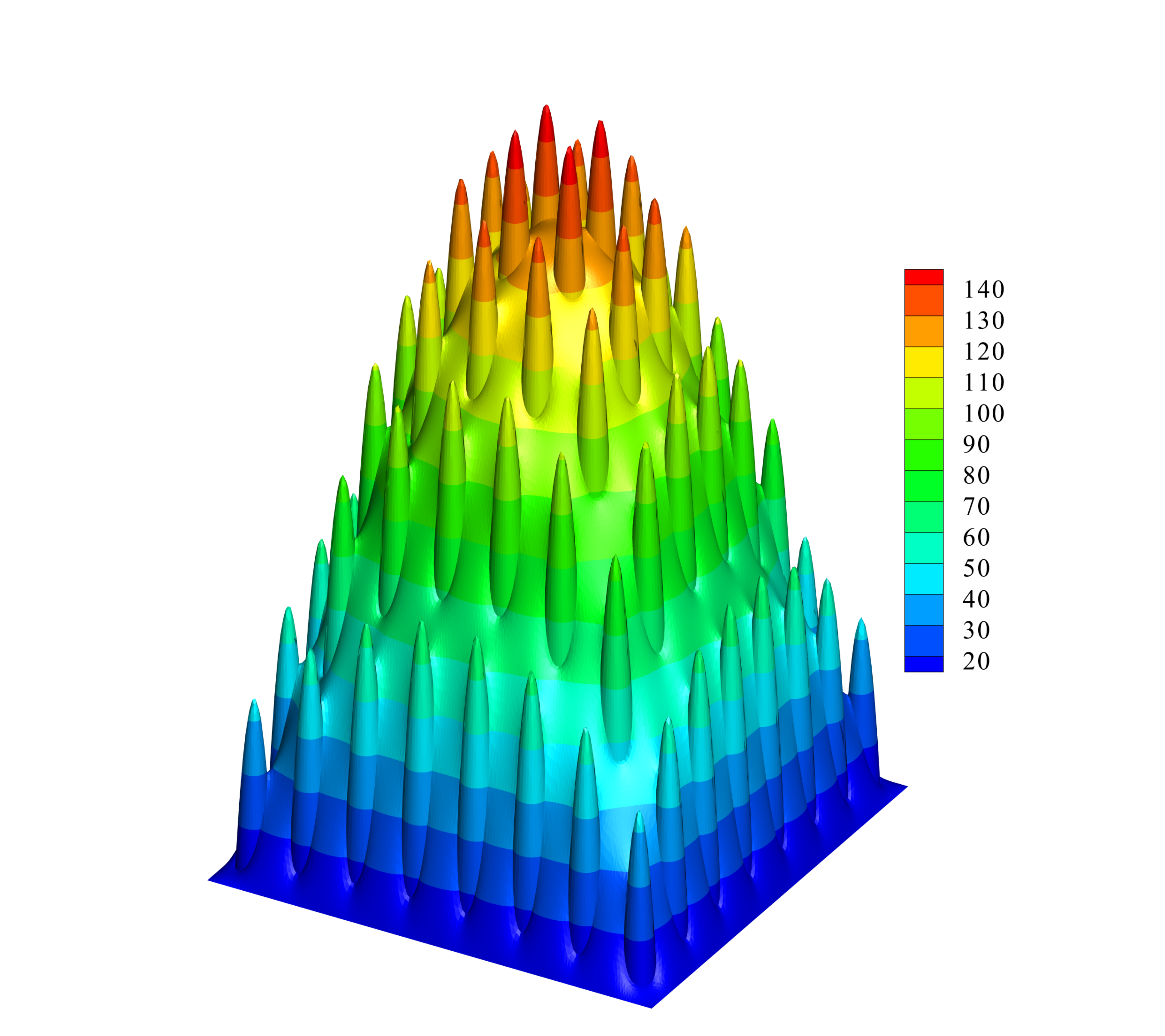}}
\caption{The pressure field at $t = 1.0 $ of first-continuum media.}
\label{fig:yuan2du1}
\end{figure}
\begin{figure}[pos=htbp]
\centering
\subcaptionbox{$u_{\scriptscriptstyle 2}^{\scriptscriptstyle(0)}$\label{fig:y2d_u2A}}
{\includegraphics[width=0.235\textwidth]{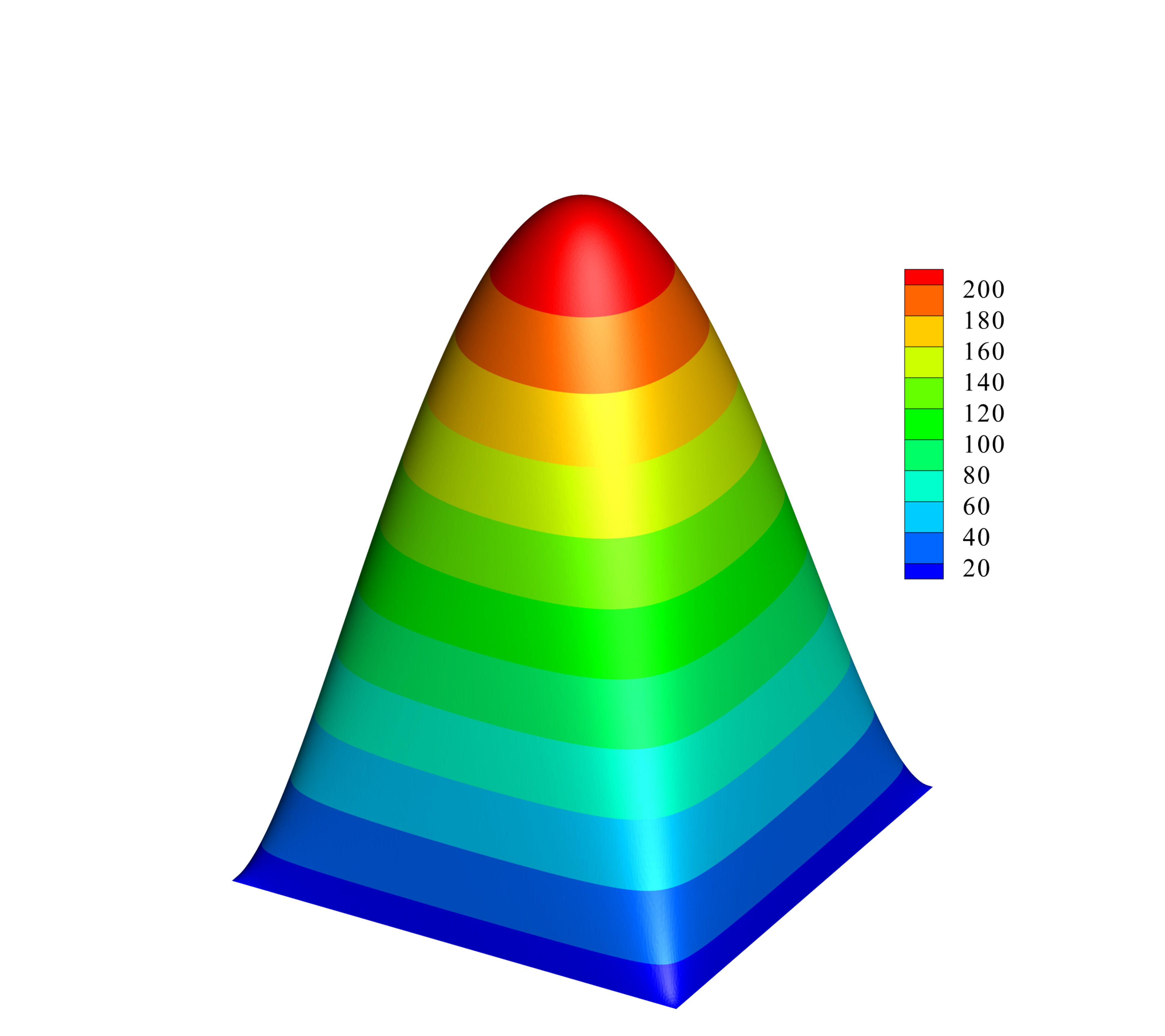}}
\subcaptionbox{$u_{\scriptscriptstyle 2}^{\scriptscriptstyle(1\varepsilon)}$\label{fig:y2d_u2B}}
{\includegraphics[width=0.235\textwidth]{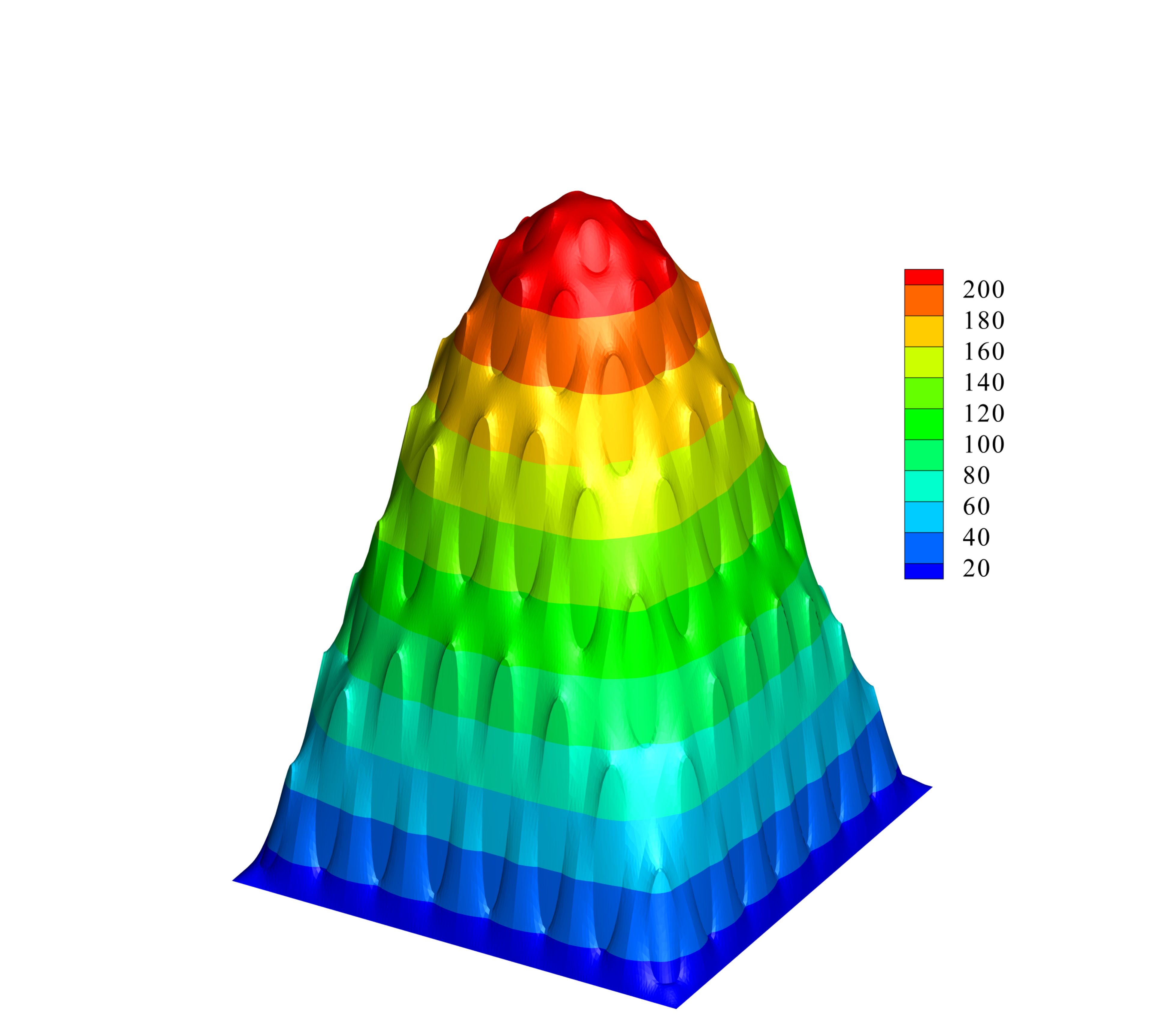}}
\subcaptionbox{$u_{\scriptscriptstyle 2}^{\scriptscriptstyle(2\varepsilon)}$\label{fig:y2d_u2C}}
{\includegraphics[width=0.235\textwidth]{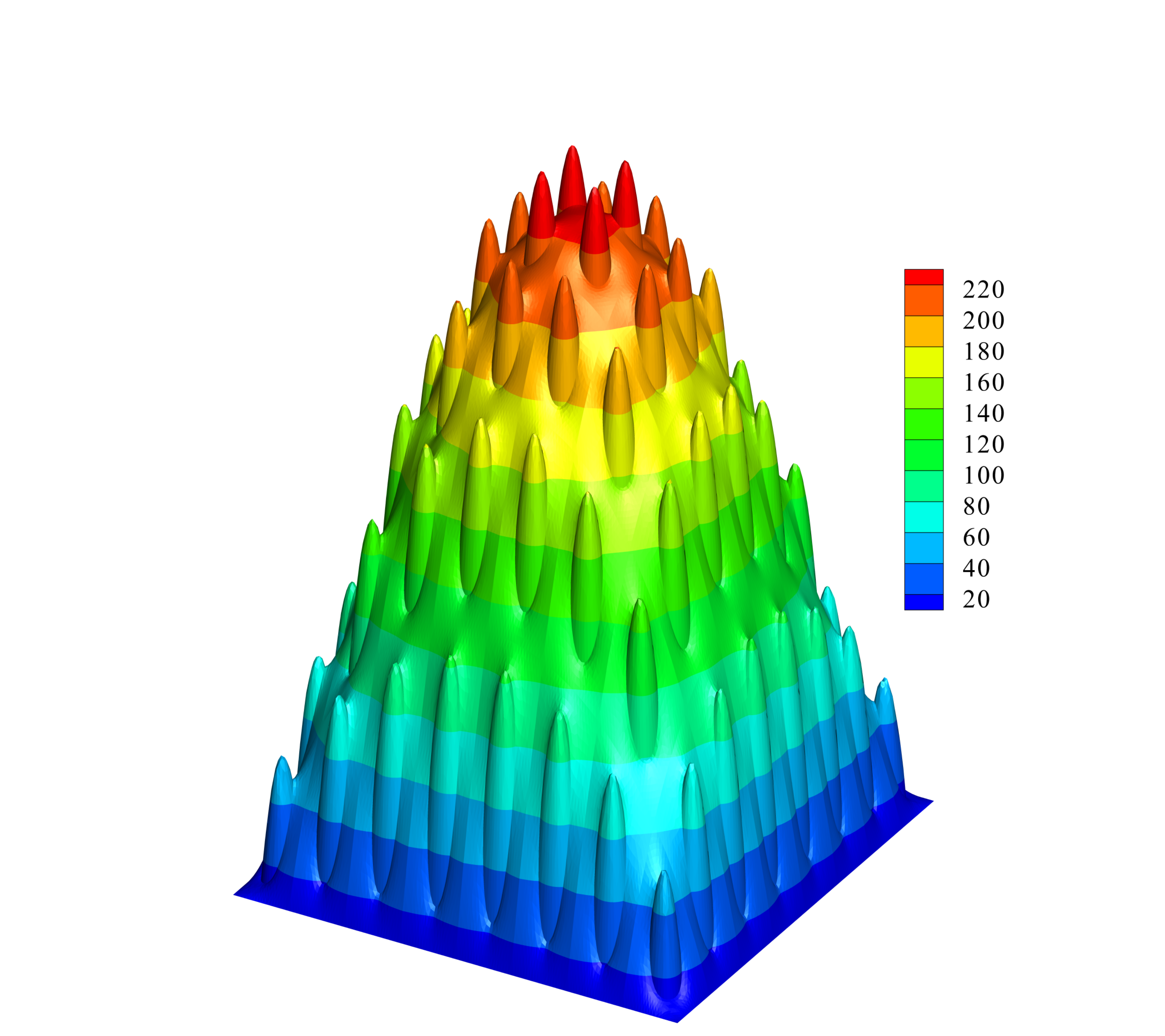}}
\subcaptionbox{$u_{\scriptscriptstyle 2}^{\scriptscriptstyle\varepsilon}$\label{fig:y2d_u2D}}
{\includegraphics[width=0.235\textwidth]{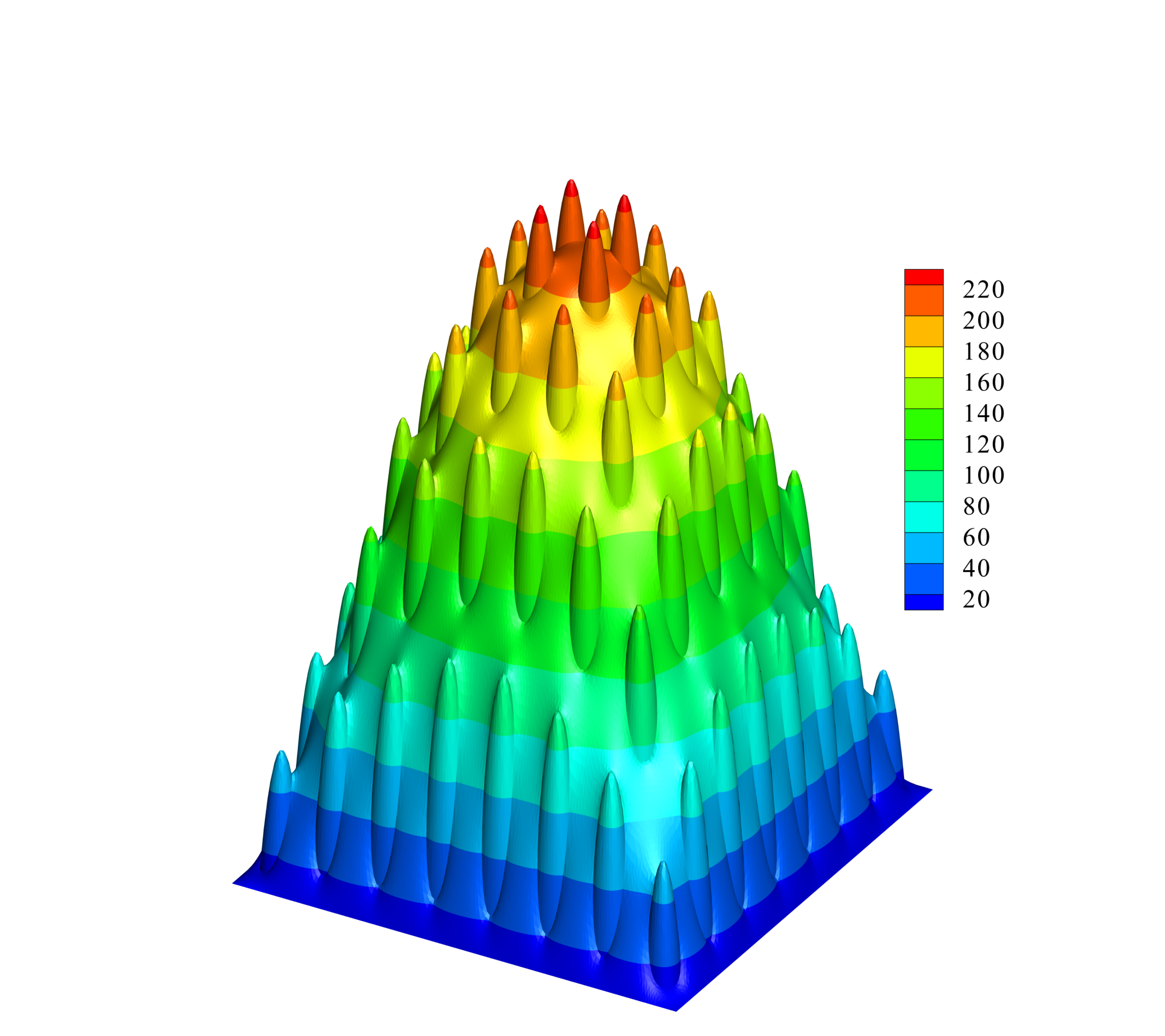}}
\caption{The pressure field at $t = 1.0 $ of second-continuum media.}
\label{fig:yuan2du2}
\end{figure}

\begin{figure}[pos=htbp]
\centering
\subcaptionbox{\label{fig:y2derA}}
{\includegraphics[width=0.233\textwidth]{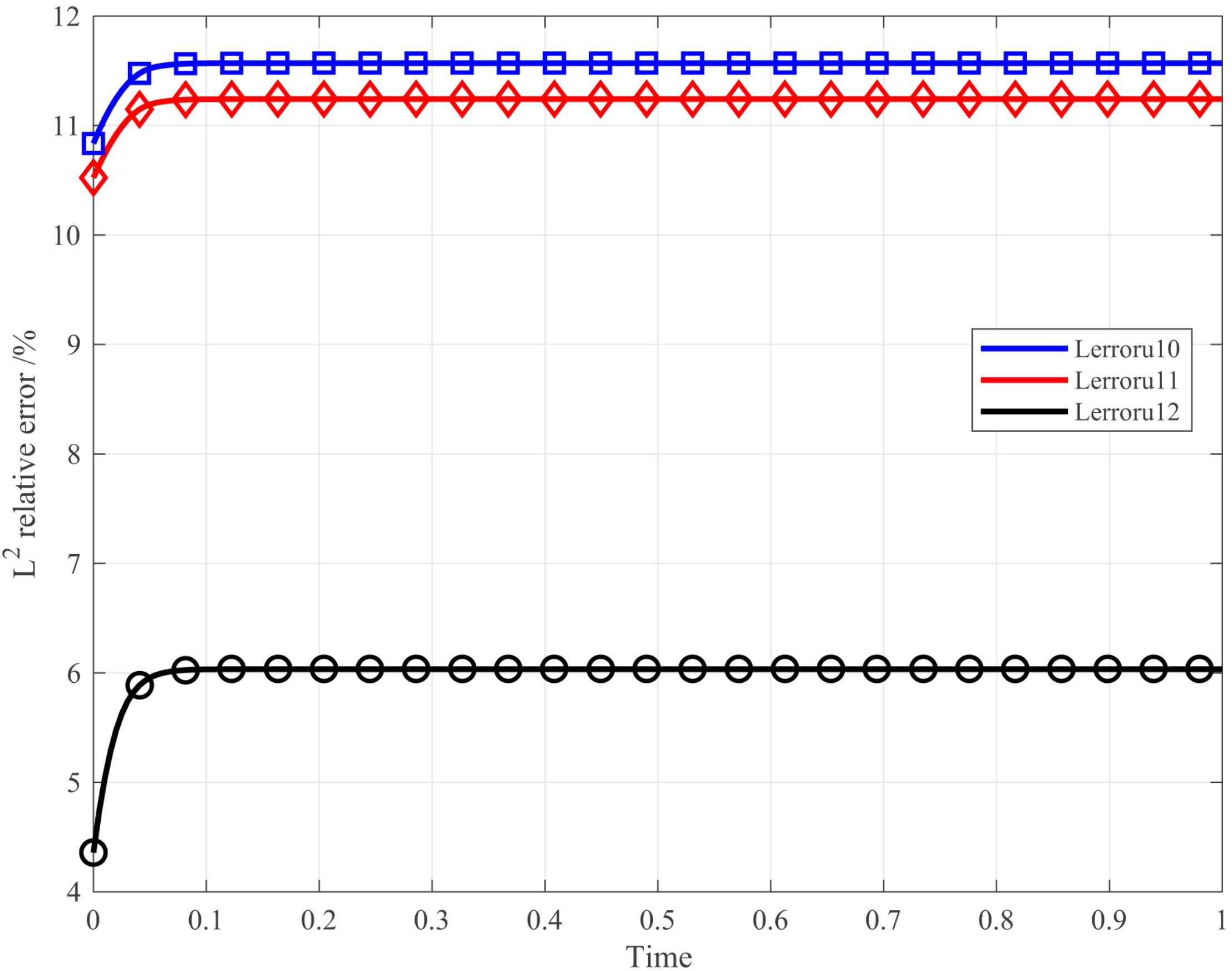}}
\subcaptionbox{\label{fig:y2derB}}
{\includegraphics[width=0.233\textwidth]{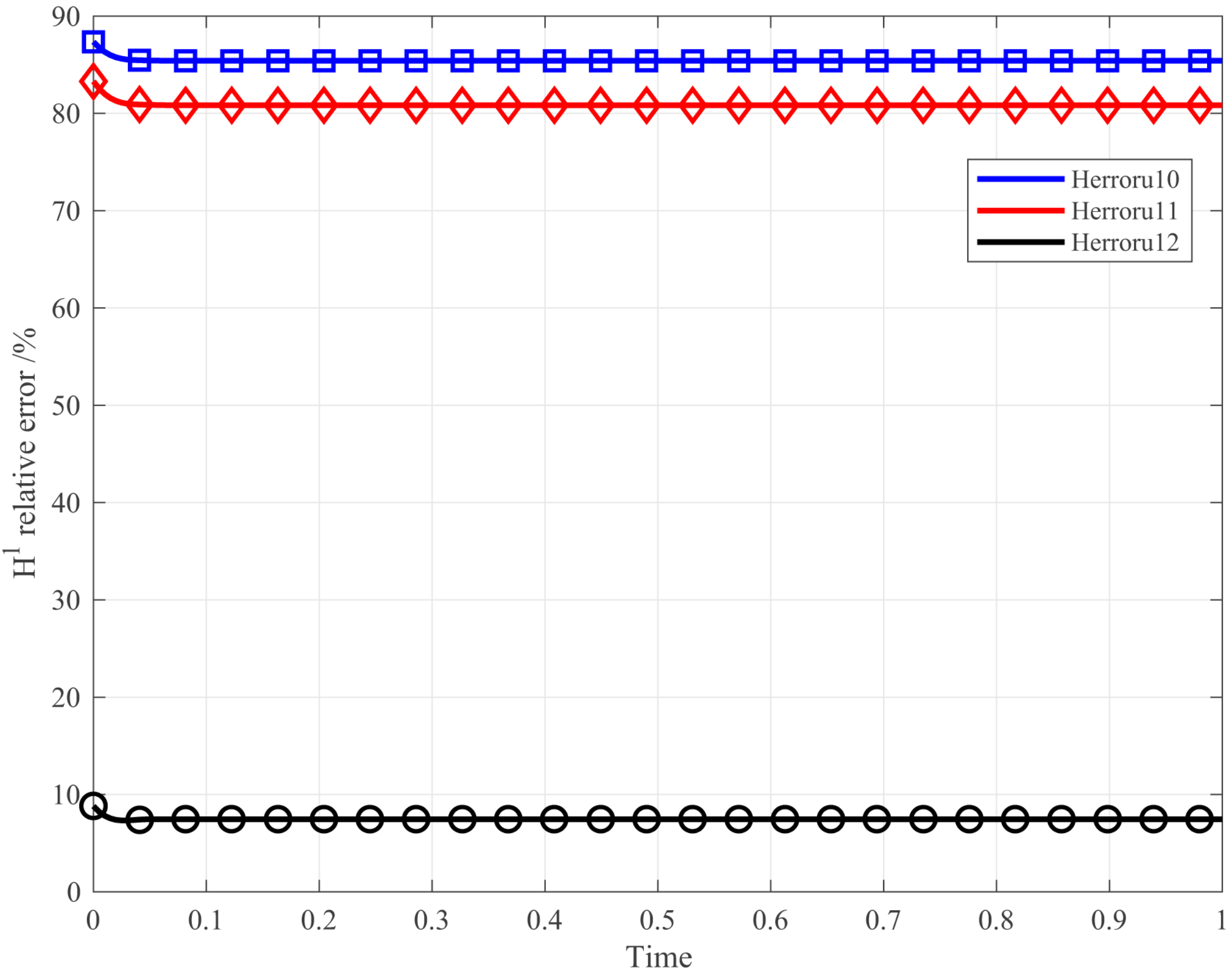}}
\subcaptionbox{\label{fig:y2derC}}
{\includegraphics[width=0.233\textwidth]{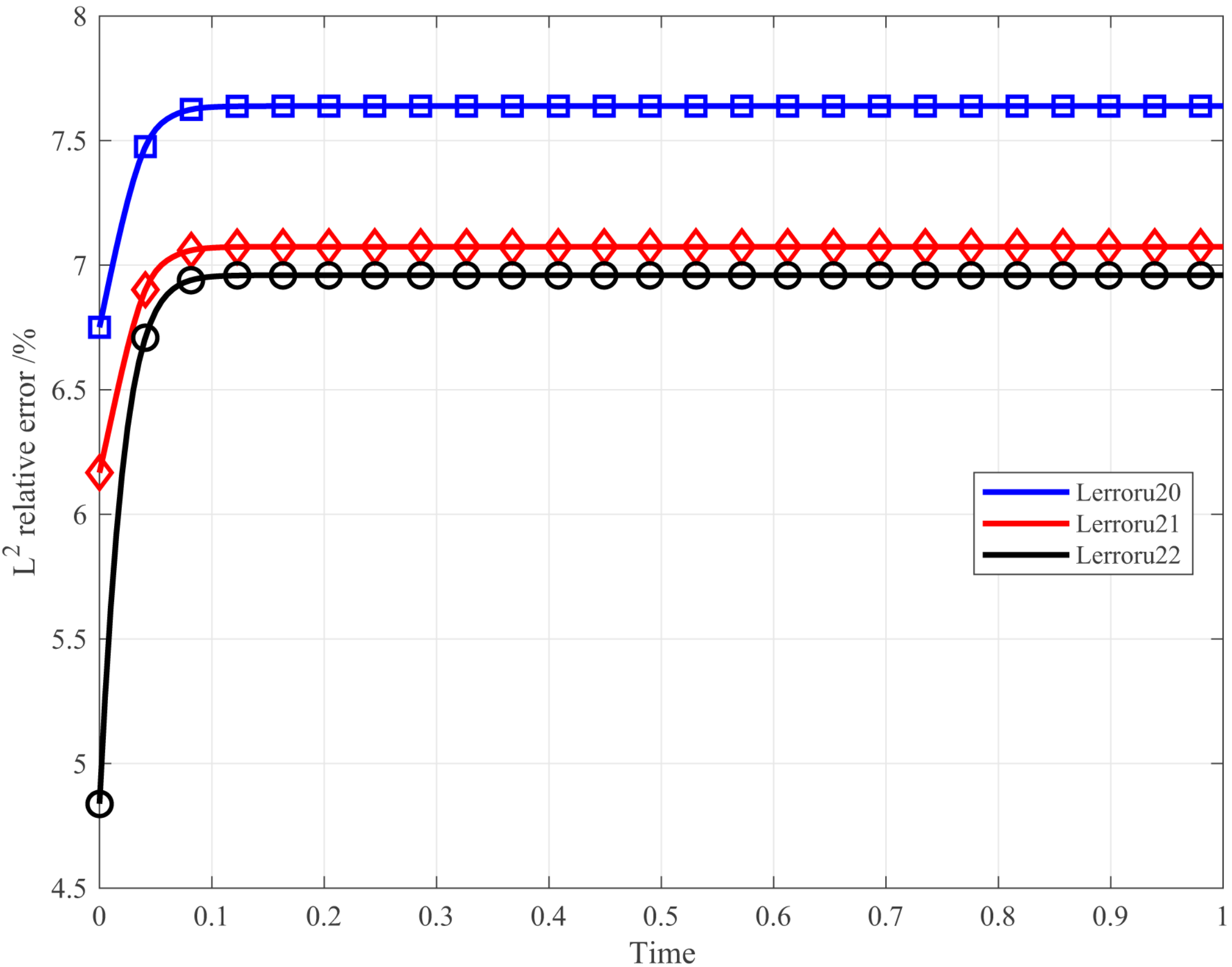}}
\subcaptionbox{\label{fig:y2derD}}
{\includegraphics[width=0.233\textwidth]{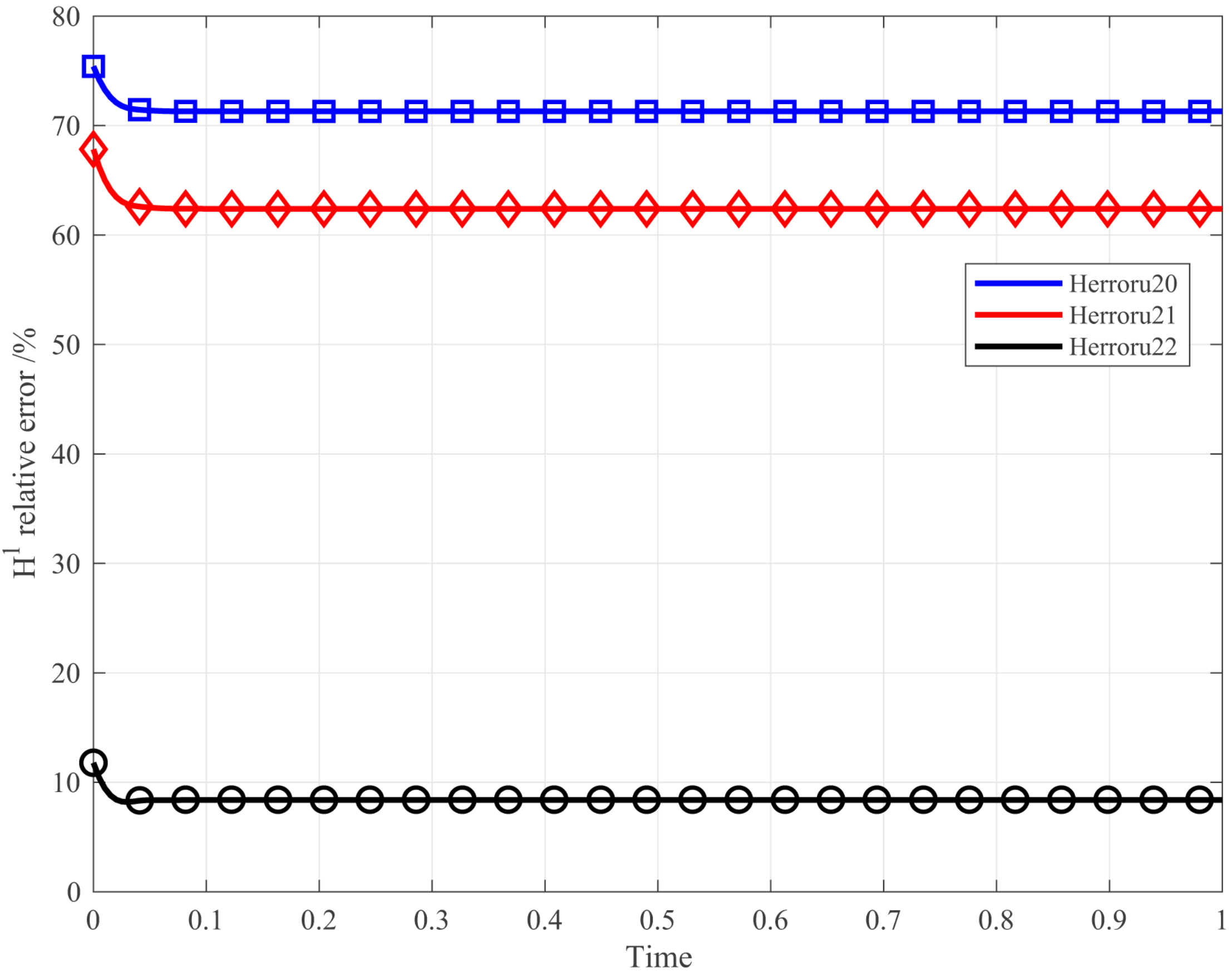}}
\caption{Evolution of the relative errors of the $L^{\scriptscriptstyle 2}$ norm and $H^{\scriptscriptstyle 1}$ seminorm over time: (a) $\text{Lerr1}$; (b) $\text{Herr1}$; (c) $\text{Lerr2}$; (d) $\text{Herr2}$.}\label{fig:Ery2d}
\end{figure}

From the mesh information in Table~\ref{tab:y2d}, it can be observed that the computational grid resource overhead required by the second-order two-scale method is significantly smaller than that of the direct FEM, and its efficiency is also markedly superior to the FEM. As shown in Fig.~\ref{fig:yuan2du1} and Fig.~\ref{fig:yuan2du2}, the approximation accuracy of the HOMS solution is significantly better than that of both the homogenized solution and the FOMS solution. Only the HOMS solution effectively captures the local oscillatory behavior of the highly heterogeneous media. From Fig.~\ref{fig:Ery2d}, it can be observed that under the \( L^{\scriptscriptstyle 2} \) norm, the relative errors of \( u_{\scriptscriptstyle 1}^{\scriptscriptstyle(2\varepsilon)} \) and \( u_{\scriptscriptstyle 2}^{\scriptscriptstyle(2\varepsilon)} \) are only about 6\% and 6\%, respectively; under the \( H^{\scriptscriptstyle 1} \) seminorm, the relative errors of \( u_{\scriptscriptstyle 1}^{\scriptscriptstyle(2\varepsilon)} \) and \( u_{\scriptscriptstyle 2}^{\scriptscriptstyle(2\varepsilon)} \) are only approximately 8\% and 9\%, significantly lower than those of the homogenized solution and the FOMS solution, thus meeting the requirements for engineering computations. Furthermore, as shown in Fig.~\ref{fig:Ery2d}, the HOMS method exhibits excellent numerical stability over time and can be effectively applied to the computation of dynamic problem \eqref{eq:3} that evolves with time.

\subsection{Example 2. 3D porous media}
For the three-dimensional case of problem \eqref{eq:3}, consider the macroscopic domain \(\Omega = (x_1, x_2, x_3) = [0, 1]^3\) and the microscopic unit cell \(Y = (y_1, y_2, y_3) = [0, 1]^3\) as shown in Fig.~\ref{fig:yuan3d}, where \(\varepsilon = 1/5\).

\begin{figure}[pos=htbp]
\centering
{\includegraphics[width=0.482\textwidth]{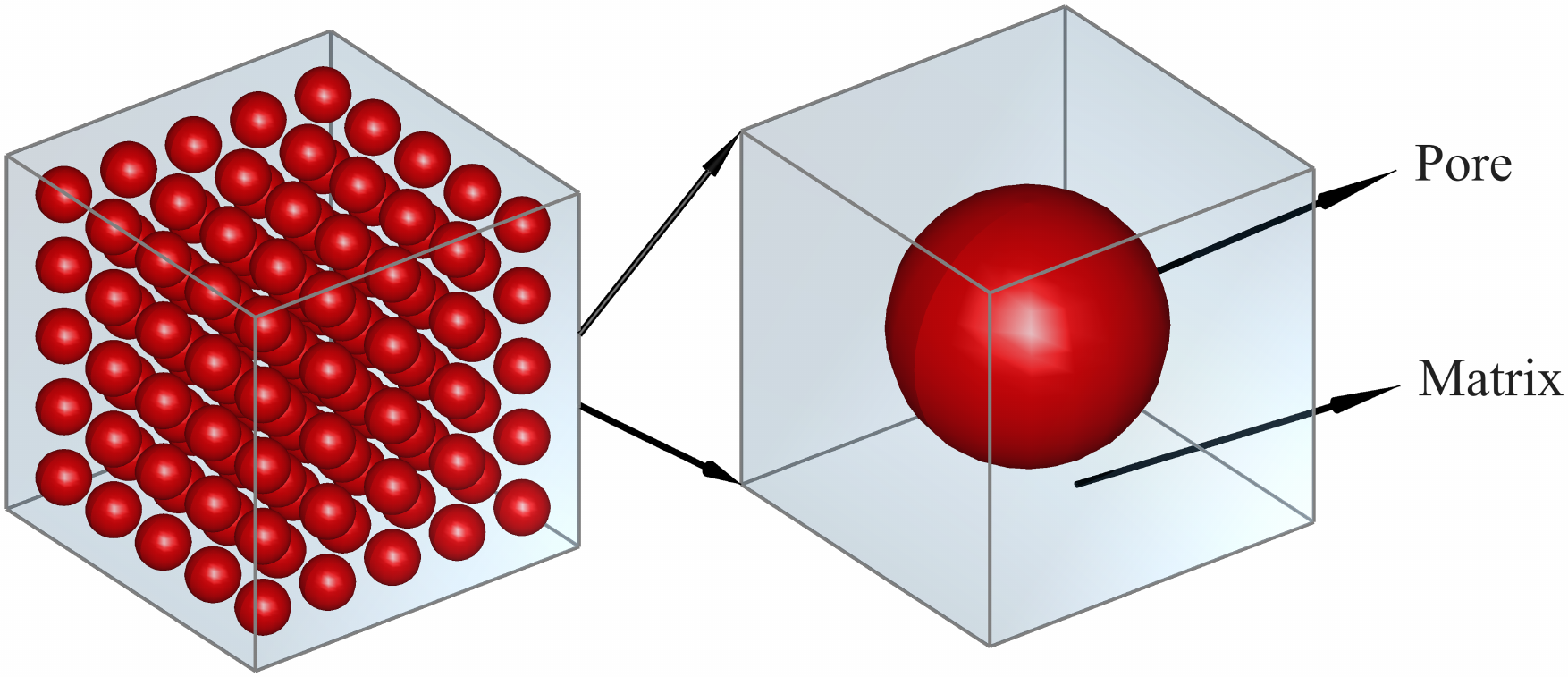}}
\caption{The schematic of 3D periodic media}
\label{fig:yuan3d}
\end{figure}
The input parameters for the validation example are given in Table~\ref{tab:materialy3d}. The source term, initial pressure, and boundary conditions for this problem are as follows:
\[
\begin{gathered}
q(\boldsymbol{x},t)=1\times10^{\scriptscriptstyle 5},\ g_{\scriptscriptstyle 1}(\boldsymbol{x})=60,\ g_{\scriptscriptstyle 2}(\boldsymbol{x})=60, \\
u_{\scriptscriptstyle 1}^{\scriptscriptstyle \epsilon}(\boldsymbol{x},t)=60,\ u_{\scriptscriptstyle 2}^{\scriptscriptstyle \epsilon}(\boldsymbol{x},t)=60.
\end{gathered}
\]

\begin{table}[htbp]
\centering
\caption{Input parameters}
\label{tab:materialy3d}
\begin{tabular}{@{} >{\centering\arraybackslash}p{3.9cm} >{\centering\arraybackslash}p{3.9cm} @{}}
\toprule
Material parameters    & Matrix / Channel  \\
\midrule
\( c_{\scriptscriptstyle 1} \)    
    & \( 50 \ / \ 25 \) \\
\( c_{\scriptscriptstyle 2} \)    
    & \( 125 \ / \ 15 \) \\
\( \kappa_{\scriptscriptstyle 1,ij} \)    
    & \( 800 \ / \ 30 \) \\
\( \kappa_{\scriptscriptstyle 2,ij} \)    
    & \( 1000 \ /\ 50 \) \\
\( Q_{\scriptscriptstyle 1} \)    
    & \( 250 \ /\ 25 \) \\
\( Q_{\scriptscriptstyle 2} \)    
    & \( 250 \ /\ 25 \) \\
\bottomrule
\end{tabular}
\end{table}

Given that the analytical solution \( u_{\scriptscriptstyle l}^{\varepsilon} \) for problem \eqref{eq:3} is difficult to obtain directly, this paper uses the finite element reference solution \( u_{\scriptscriptstyle l,Fe}^{\scriptscriptstyle\varepsilon} \) computed on an extremely fine mesh as a substitute. By comparing and analyzing this finite element reference solution with the asymptotic solutions of various orders obtained through the two-scale method, the accuracy of the HOMS solution is validated. To this end, tetrahedral mesh partitions are performed for the macroscopic domain, the unit cell domain, and the homogenized domain. Table~\ref{tab:y3d} lists the element and node information for the three sets of meshes, as well as a comparison of computation times between the refined FEM and the HOMS method. 

\begin{table}[htbp]
\centering
\setlength{\tabcolsep}{2pt}
\caption{Summary of computational cost}
\label{tab:y3d}
\begin{tabular}{lccc}
\toprule
 & \textbf{Multiscale eqs.} & \textbf{Cell eqs.} & \textbf{Homogenized eqs.} \\
\midrule
\textbf{FEM elements}   & 565,341    & 83,536     & 93,750 \\
\textbf{FEM nodes}      & 93,714     & 16,914     & 17,576 \\
\midrule
 & \textbf{FEM}    & \multicolumn{2}{c}{\textbf{FOMS}} \\
\midrule
\textbf{time}   & 8970.49\,s    & \multicolumn{2}{c}{2629.17\,s} \\
\bottomrule
\end{tabular}
\end{table}
Set the time step as $\Delta t = 0.002$, then compute the solutions $u_{\scriptscriptstyle l}^{\scriptscriptstyle\varepsilon}$, $u_{\scriptscriptstyle l}^{\scriptscriptstyle(0)}$, $u_{\scriptscriptstyle l}^{\scriptscriptstyle(1\varepsilon)}$ and $u_{\scriptscriptstyle l}^{\scriptscriptstyle(2\varepsilon)}$ of problem~\eqref{eq:3} over the time interval $[0,1]$. Denote the $L^{\scriptscriptstyle 2}$ norm and $H^{\scriptscriptstyle 1}$ seminorm as $\|\cdot\|_{L^{\scriptscriptstyle 2}(\Omega)}$ and $|\cdot|_{H^{\scriptscriptstyle 1}(\Omega)}$, respectively.

Fig.~\ref{fig:yuan3du1} and Fig.~\ref{fig:yuan3du2} display the distribution profiles of $u_{\scriptscriptstyle l}^{\scriptscriptstyle(0)}$, $u_{\scriptscriptstyle l}^{\scriptscriptstyle(1\varepsilon)}$, $u_{\scriptscriptstyle l}^{\scriptscriptstyle(2\varepsilon)}$ and $u_{\scriptscriptstyle l}^{\scriptscriptstyle\varepsilon}$ at time $t = 1.0$. 
Fig.~\ref{fig:Ery2d} shows the evolution of the relative errors in the $L^{\scriptscriptstyle 2}$ norm and $H^{\scriptscriptstyle 1}$ seminorm for $u_{\scriptscriptstyle l}^{\scriptscriptstyle(0)}$, $u_{\scriptscriptstyle l}^{\scriptscriptstyle(1\varepsilon)}$, and $u_{\scriptscriptstyle l}^{\scriptscriptstyle(2\varepsilon)}$ over the time interval $[0,1]$.

\begin{figure}[pos=htbp]
\centering
\subcaptionbox{$u_{\scriptscriptstyle 1}^{\scriptscriptstyle(0)}$\label{fig:y3d_u1A}}
{\includegraphics[width=0.23\textwidth]{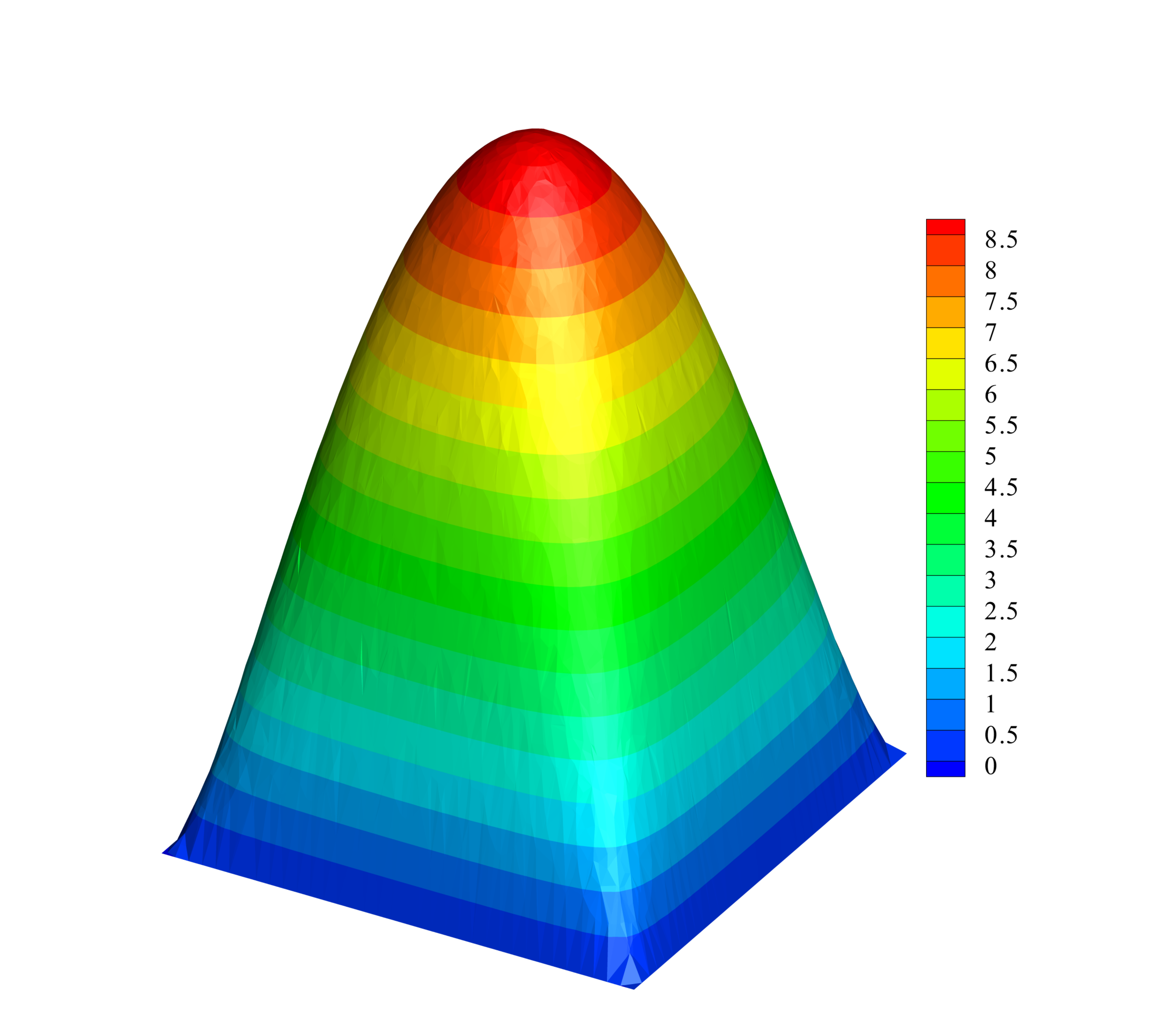}}
\subcaptionbox{$u_{\scriptscriptstyle 1}^{\scriptscriptstyle(1\varepsilon)}$\label{fig:y3d_u1B}}
{\includegraphics[width=0.23\textwidth]{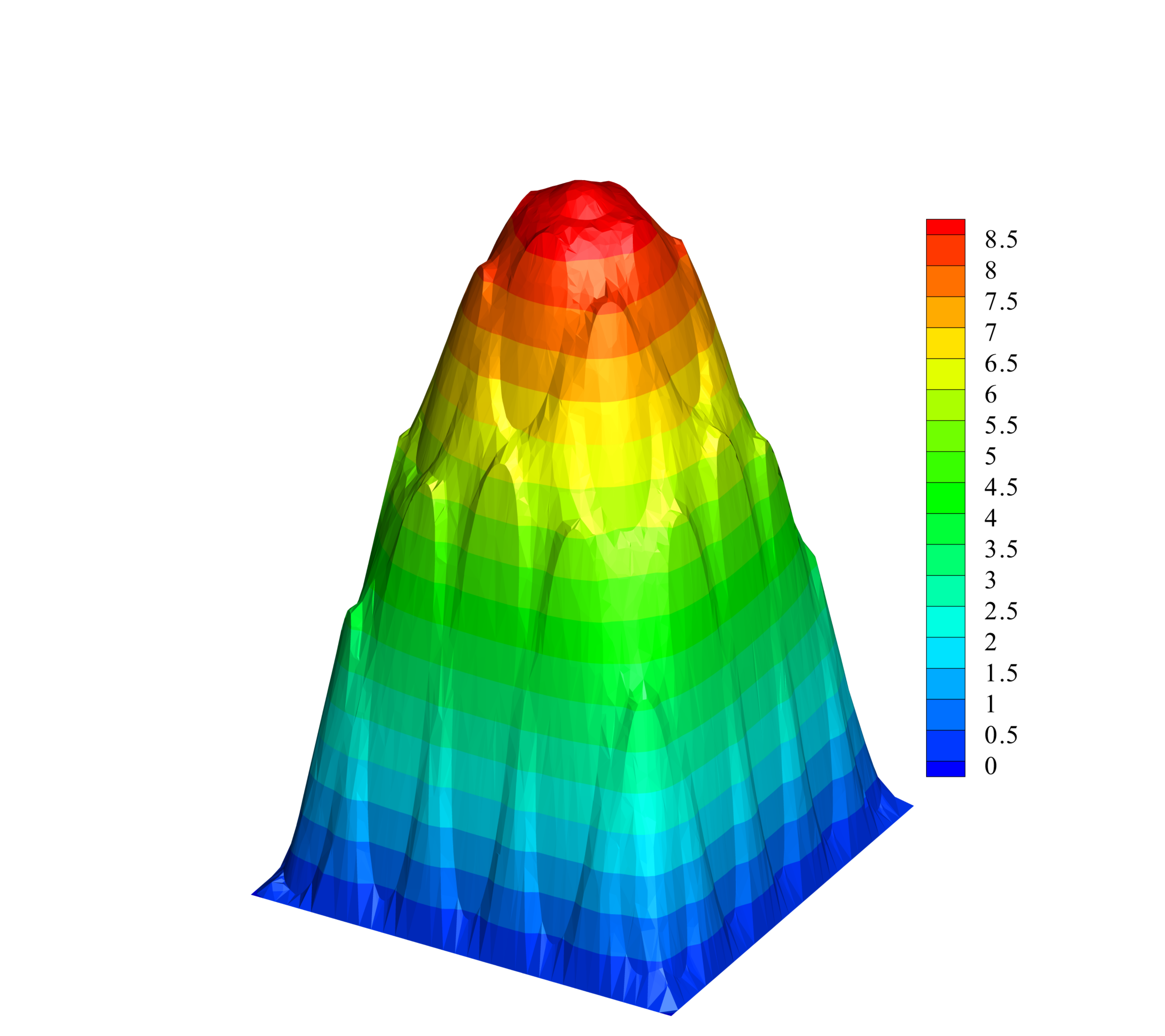}}
\subcaptionbox{$u_{\scriptscriptstyle 1}^{\scriptscriptstyle(2\varepsilon)}$\label{fig:y3d_u1C}}
{\includegraphics[width=0.23\textwidth]{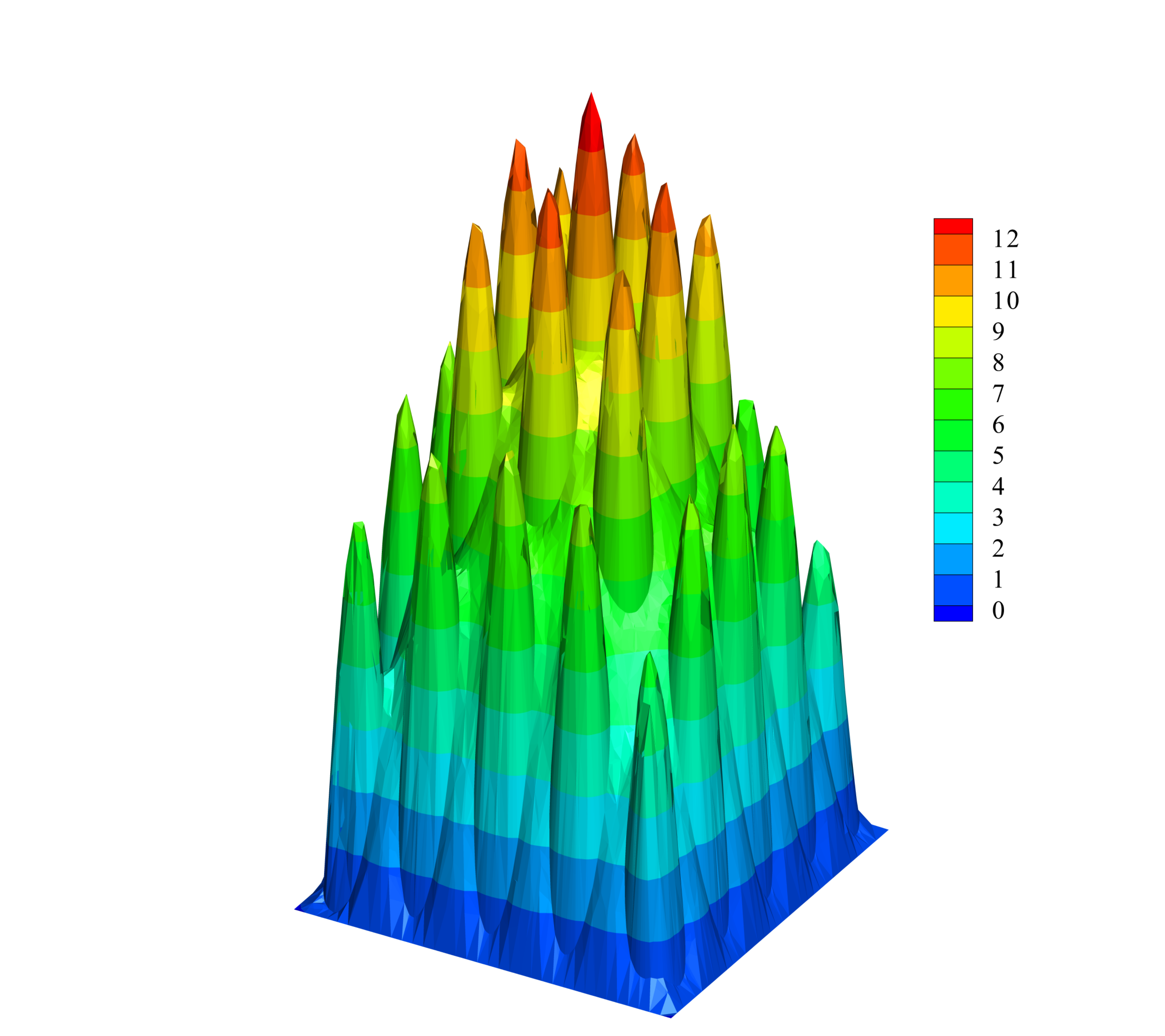}}
\subcaptionbox{$u_{\scriptscriptstyle 1}^{\scriptscriptstyle\varepsilon}$\label{fig:y3d_u1D}}
{\includegraphics[width=0.23\textwidth]{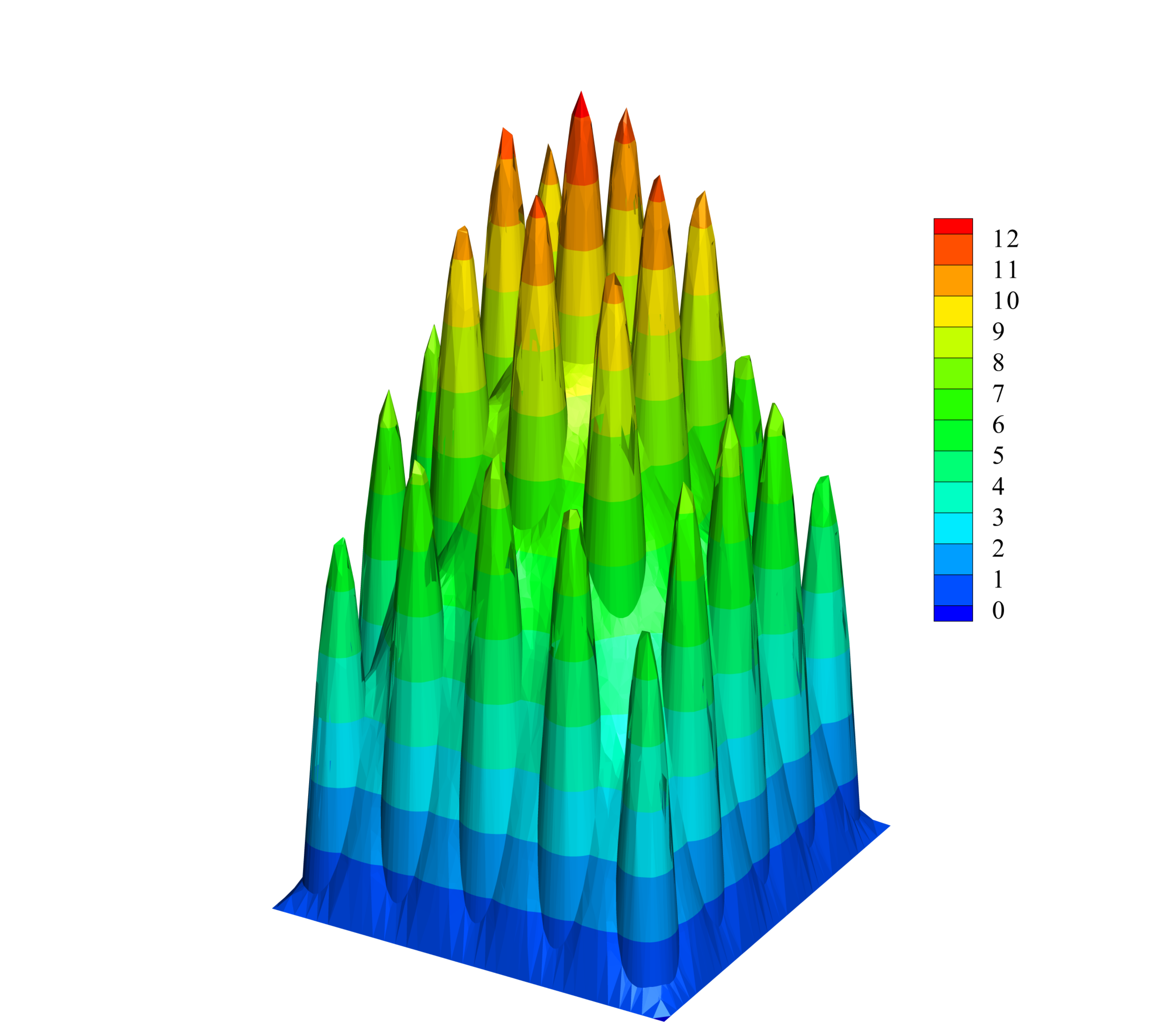}}
\caption{The pressure field in $x_{\scriptscriptstyle 3} = 0.5 $ at $t = 1.0 $ of first-continuum media.}\label{fig:yuan3du1}
\end{figure}
\begin{figure}[pos=htbp]
\centering
\subcaptionbox{$u_{\scriptscriptstyle 2}^{\scriptscriptstyle(0)}$\label{fig:y3d_u2A}}
{\includegraphics[width=0.23\textwidth]{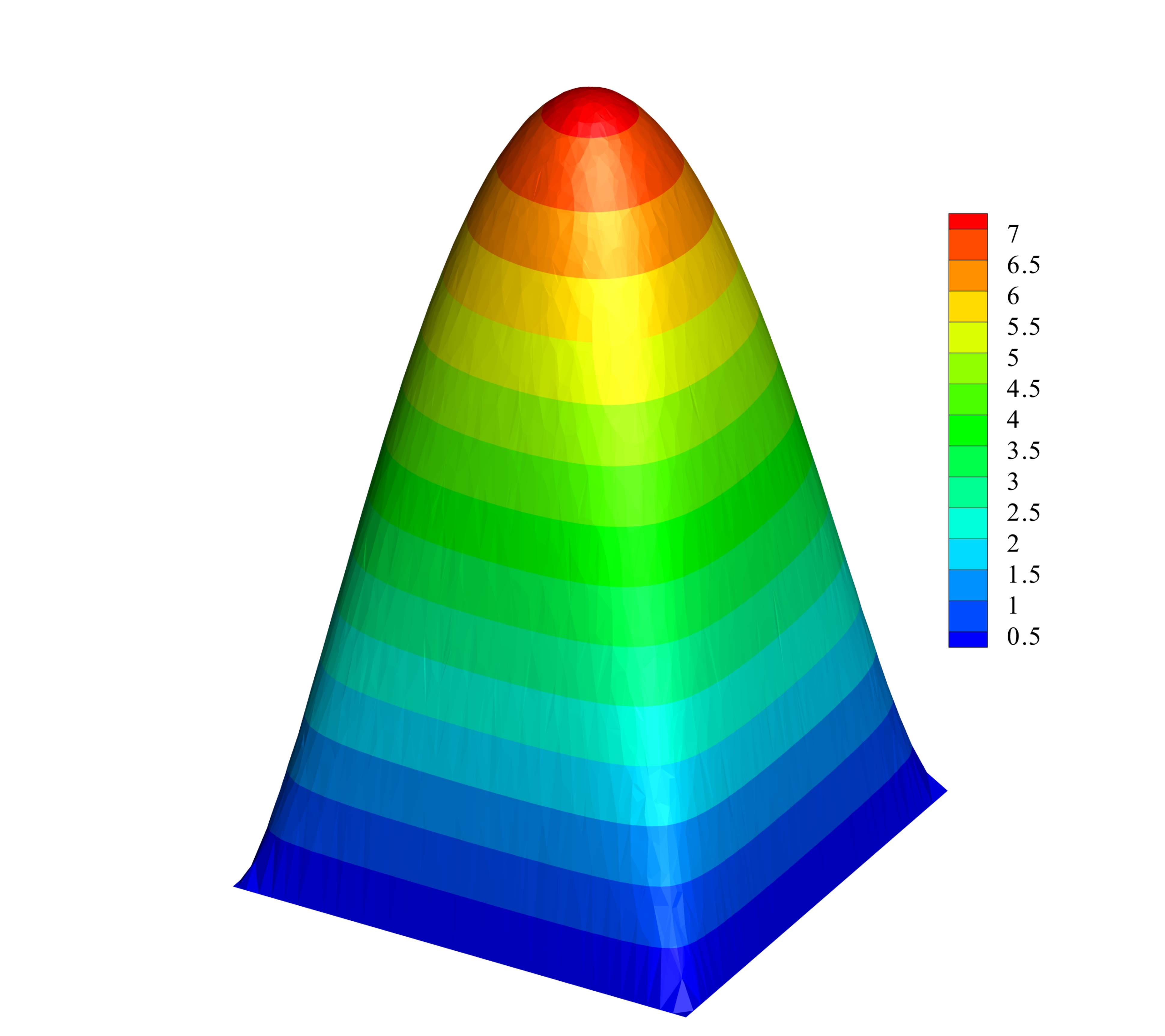}}
\subcaptionbox{$u_{\scriptscriptstyle 2}^{\scriptscriptstyle(1\varepsilon)}$\label{fig:y3d_u2B}}
{\includegraphics[width=0.23\textwidth]{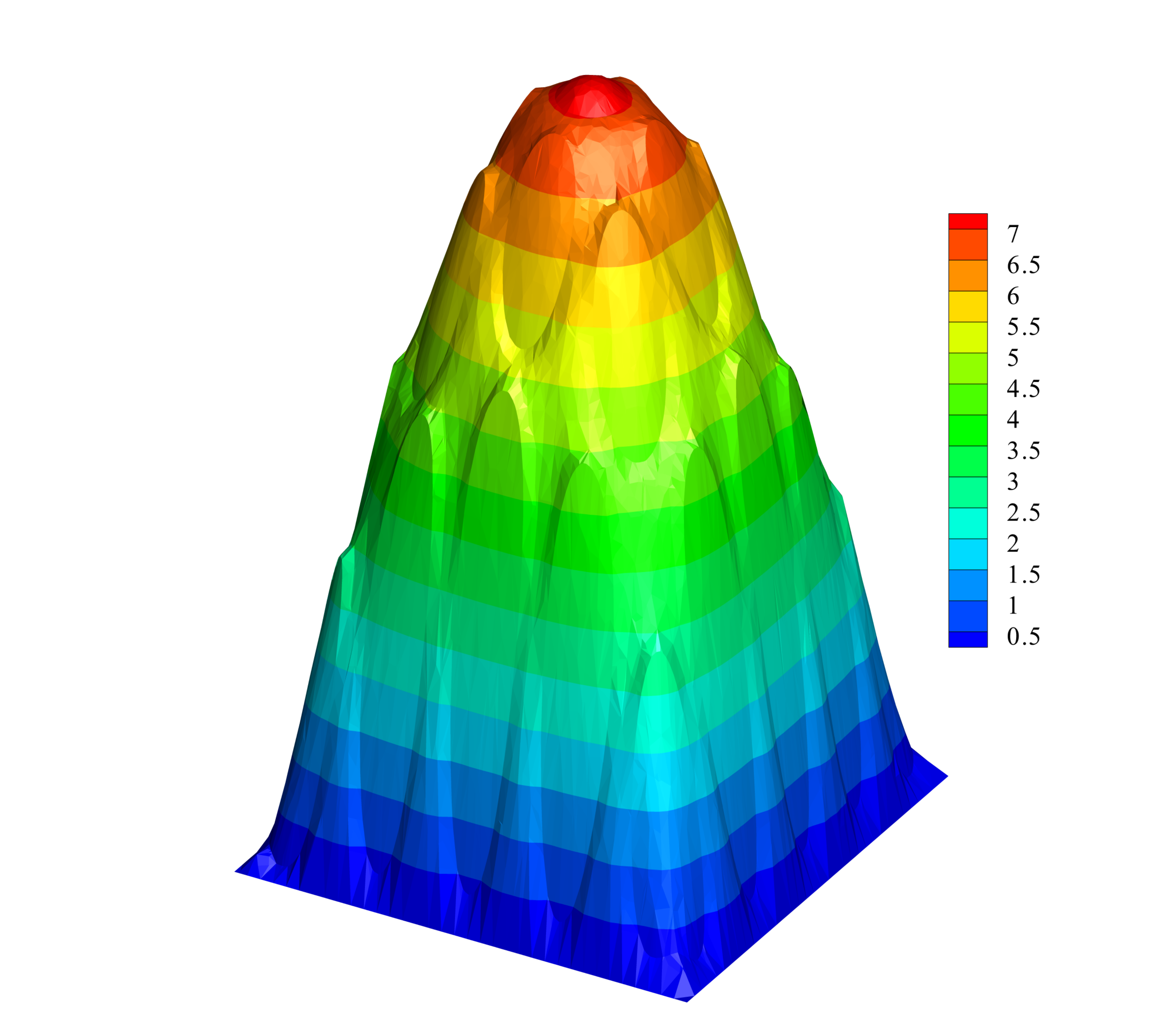}}
\subcaptionbox{$u_{\scriptscriptstyle 2}^{\scriptscriptstyle(2\varepsilon)}$\label{fig:y3d_u2C}}
{\includegraphics[width=0.23\textwidth]{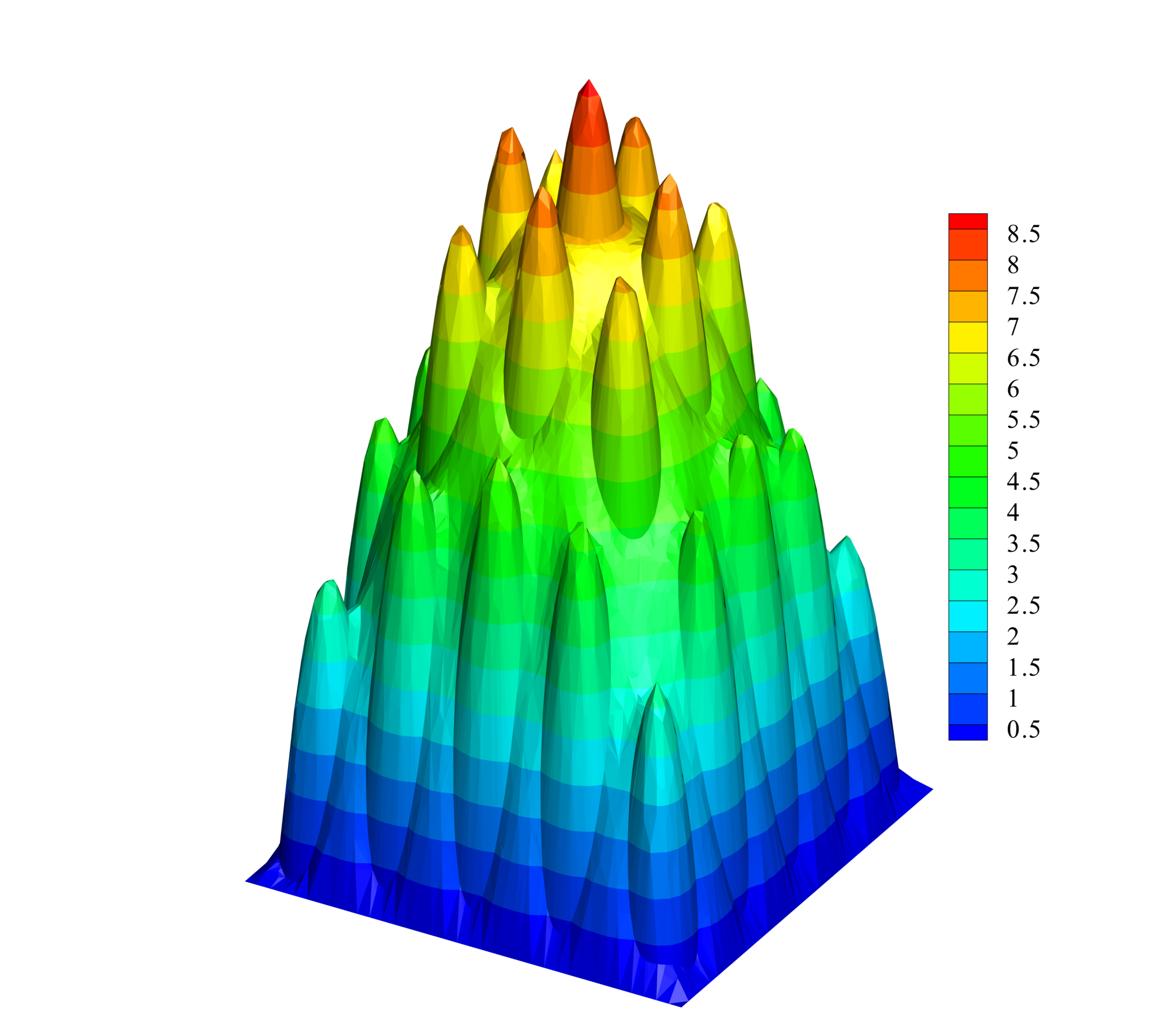}}
\subcaptionbox{$u_{\scriptscriptstyle 2}^{\scriptscriptstyle\varepsilon}$\label{fig:y3d_u2D}}
{\includegraphics[width=0.23\textwidth]{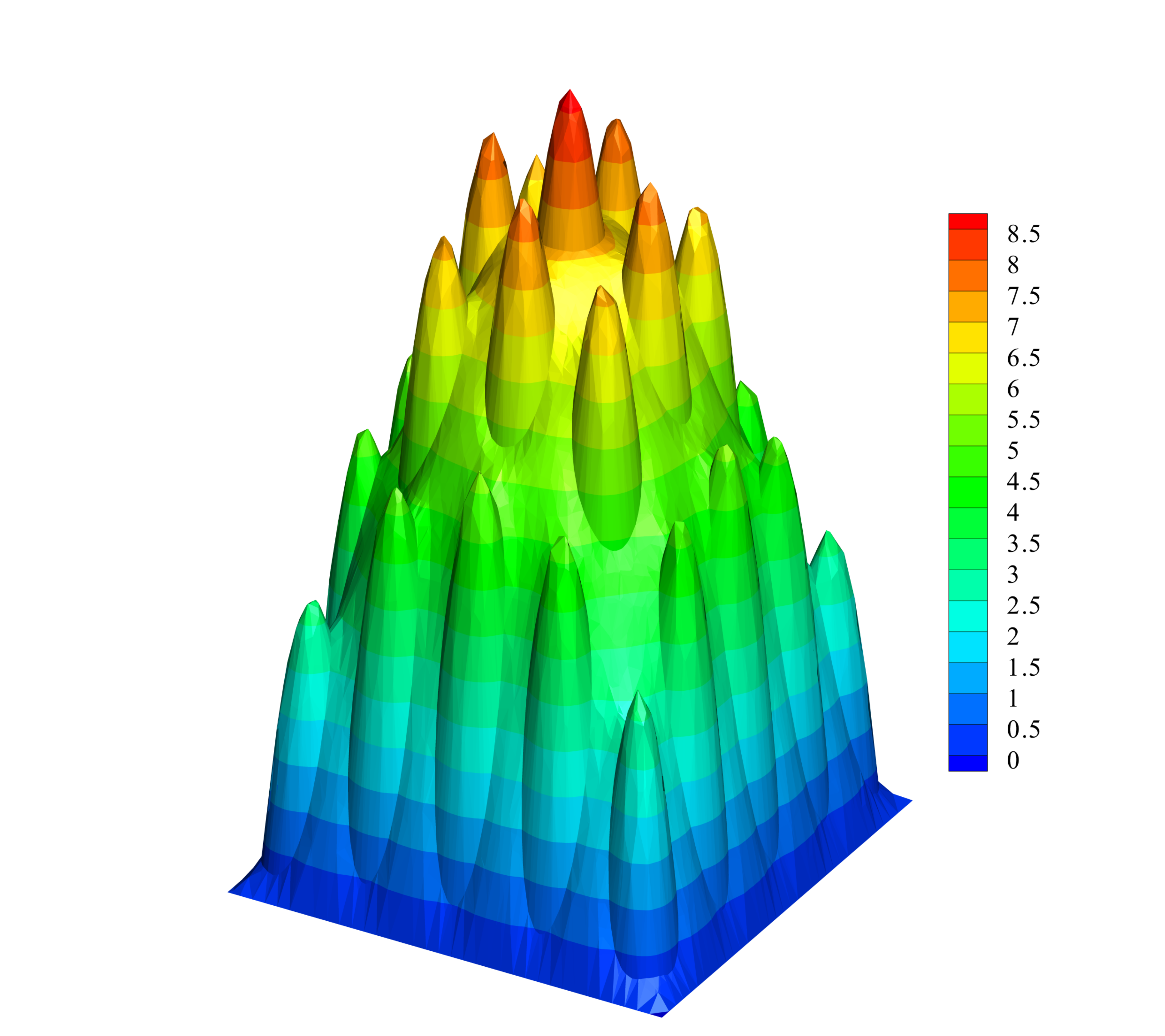}}
\caption{The pressure field in $x_{\scriptscriptstyle 3} = 0.5 $ at $t = 1.0 $ of second-continuum media.}\label{fig:yuan3du2}
\end{figure}

\begin{figure}[pos=htbp]
\centering
\subcaptionbox{\label{fig:y3derA}}
{\includegraphics[width=0.23\textwidth]{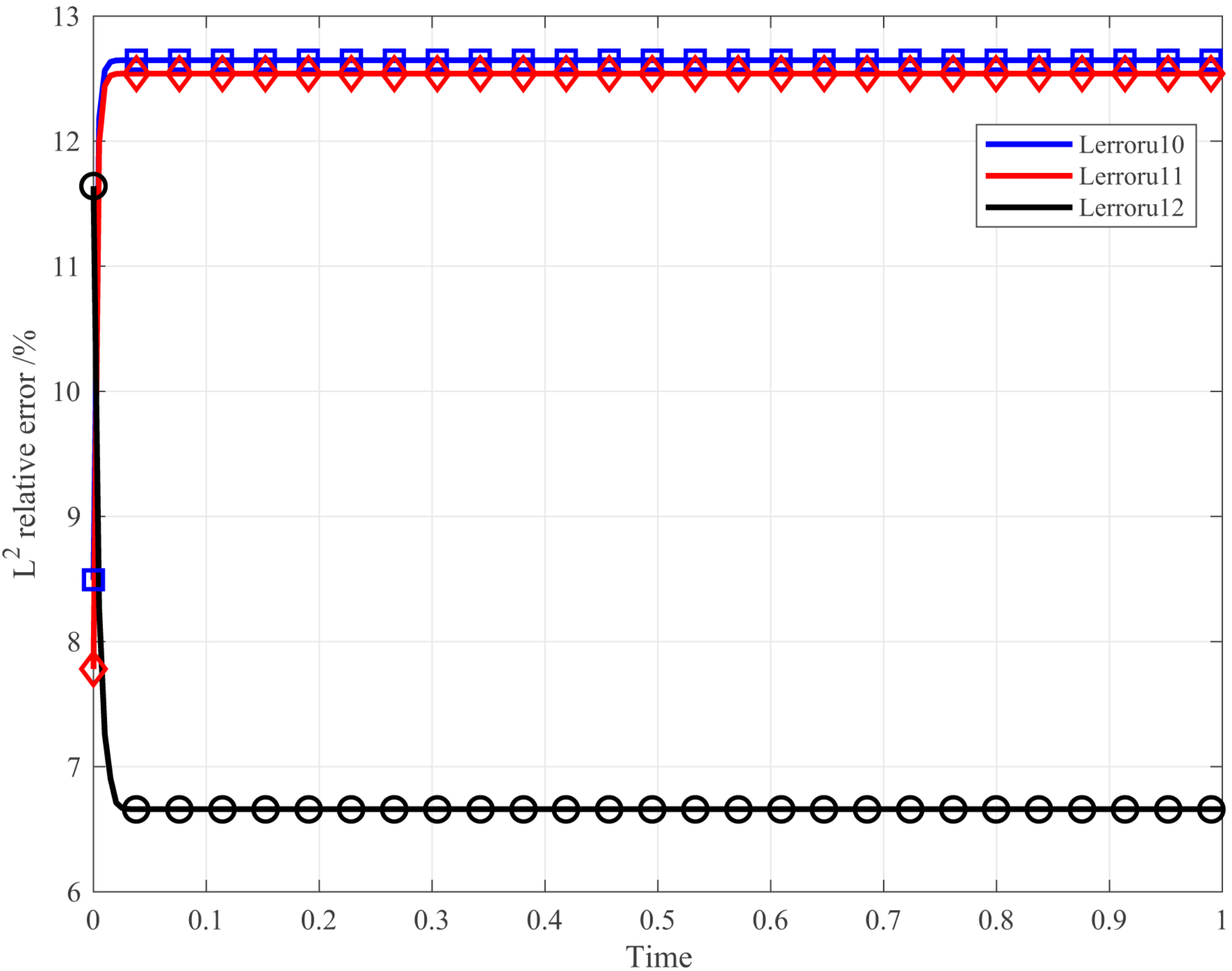}}
\subcaptionbox{\label{fig:y3derB}}
{\includegraphics[width=0.23\textwidth]{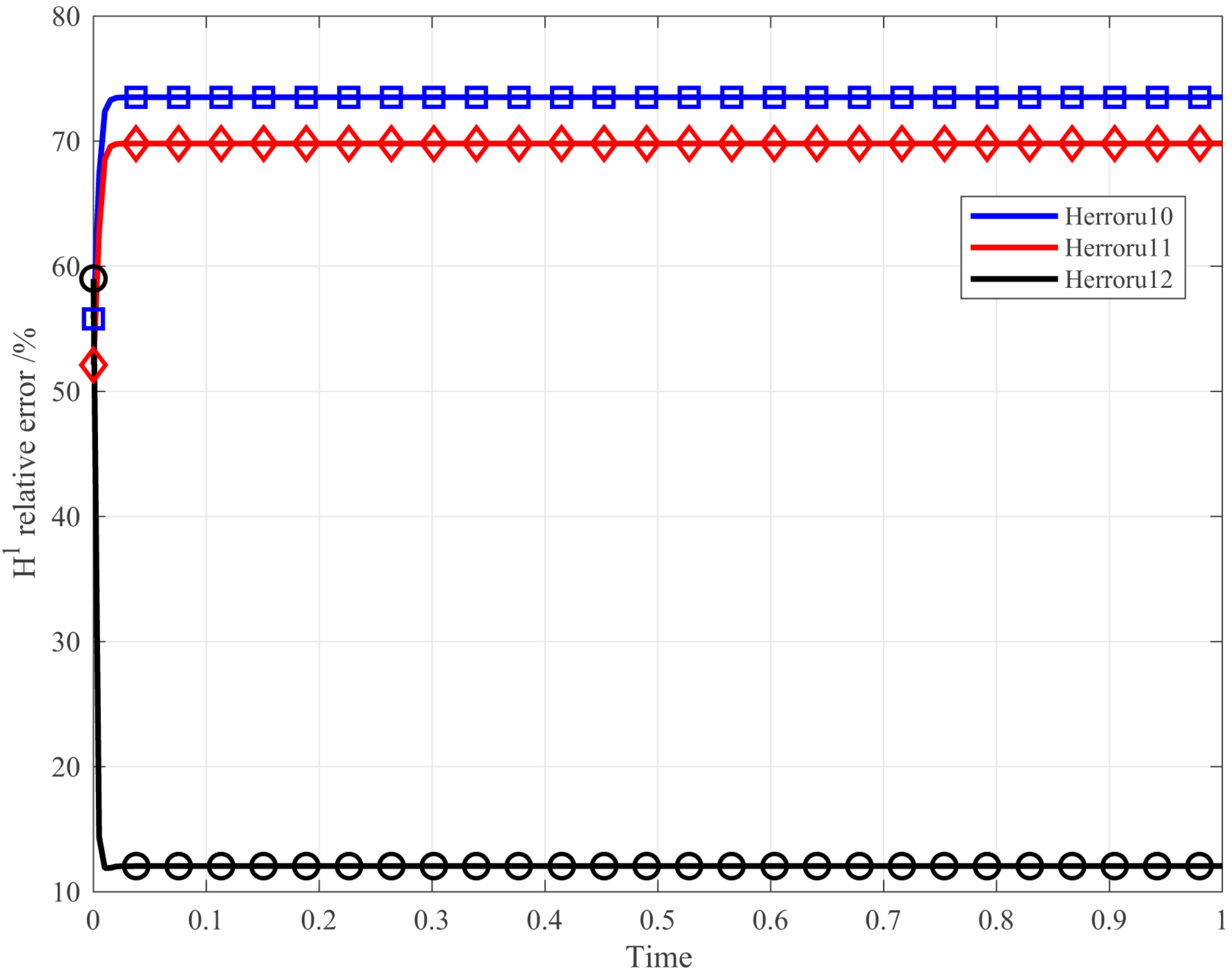}}
\subcaptionbox{\label{fig:y3derC}}
{\includegraphics[width=0.23\textwidth]{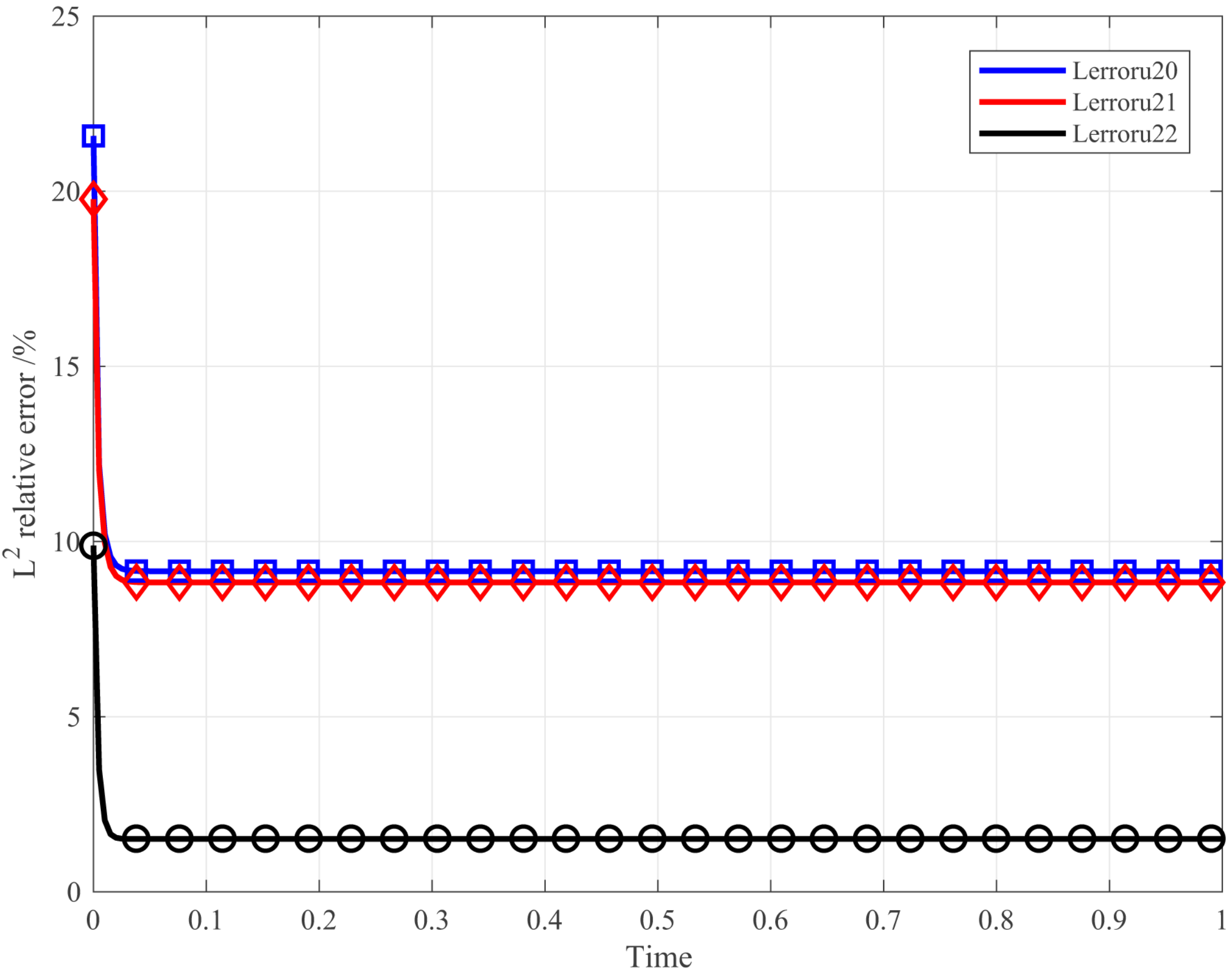}}
\subcaptionbox{\label{fig:y3derD}}
{\includegraphics[width=0.23\textwidth]{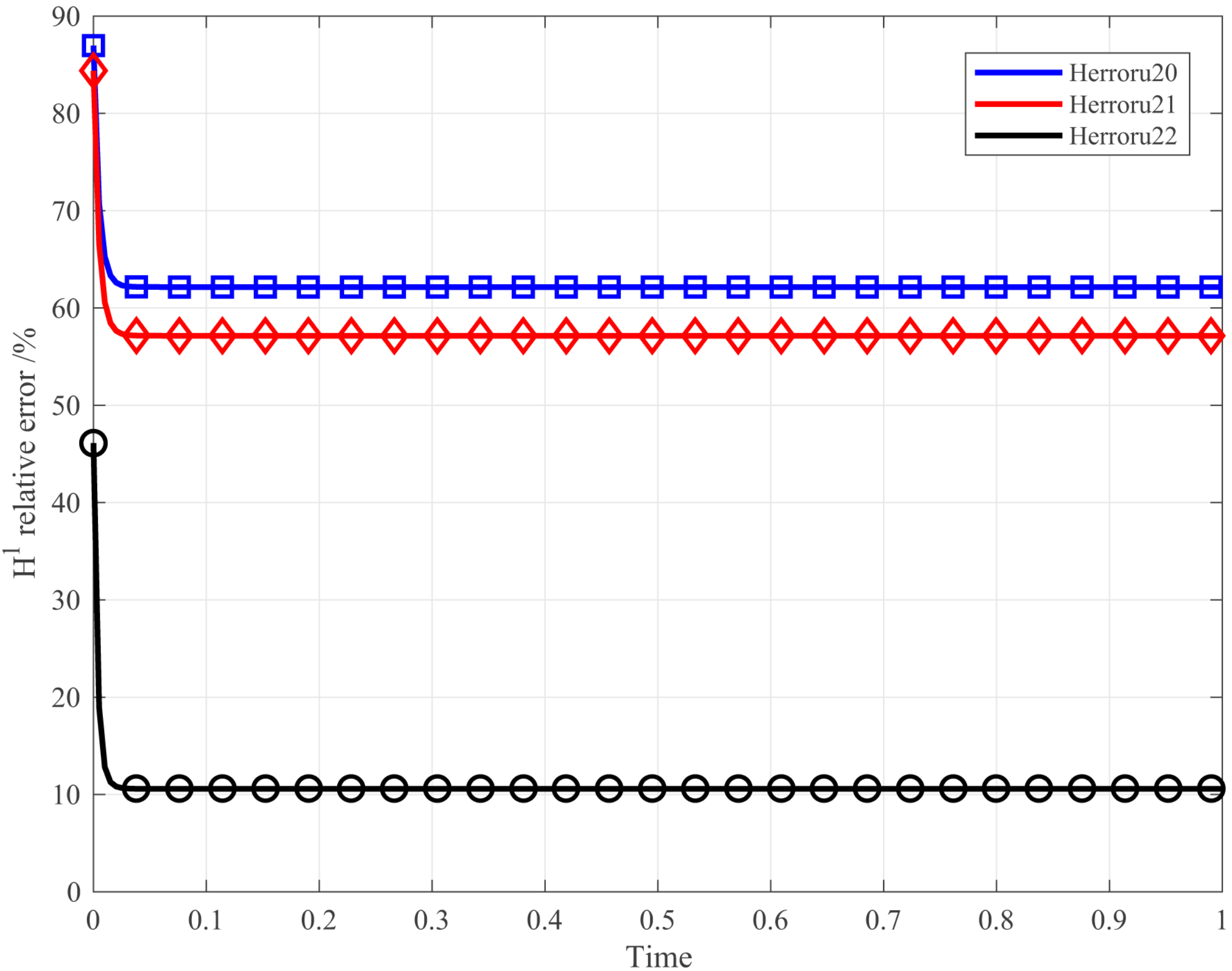}}
\caption{Evolution of the relative errors of the $L^{\scriptscriptstyle 2}$ norm and $H^{\scriptscriptstyle 1}$ seminorm over time: (a) $\text{Lerr1}$; (b) $\text{Herr1}$; (c) $\text{Lerr2}$; (d) $\text{Herr2}$.}\label{fig:Ery3d}
\end{figure}
From the mesh information in Table~\ref{tab:y3d}, it can be observed that the computational grid resource overhead required by the HOMS method is significantly smaller than that of the direct FEM, and its efficiency is also markedly superior to the FEM. As shown in Fig.~\ref{fig:yuan3du1} and Fig.~\ref{fig:yuan3du2}, the approximation accuracy of the HOMS solution is significantly better than that of both the homogenized solution and the FOMS solution. Only the HOMS solution effectively captures the local oscillatory behavior of the highly heterogeneous media. From Fig.~\ref{fig:Ery3d}, it can be observed that under the \( L^{\scriptscriptstyle 2} \) norm, the relative errors of \( u_{\scriptscriptstyle 1}^{\scriptscriptstyle(2\varepsilon)} \) and \( u_{\scriptscriptstyle 2}^{\scriptscriptstyle(2\varepsilon)} \) are only about 6\% and 1\%, respectively; under the \( H^{\scriptscriptstyle 1} \) seminorm, the relative errors of \( u_{\scriptscriptstyle 1}^{\scriptscriptstyle(2\varepsilon)} \) and \( u_{\scriptscriptstyle 2}^{\scriptscriptstyle(2\varepsilon)} \) are only approximately 12\% and 10\%, significantly lower than those of the homogenized solution and the FOMS solution, thus meeting the requirements for engineering computations. Furthermore, as shown in Fig.~\ref{fig:Ery3d}, the HOMS method exhibits excellent numerical stability over time and can be effectively applied to the computation of dynamic problem \eqref{eq:3} that evolves with time.

\subsection{Example 3. 2D channel media}
For the two-dimensional case of problem \eqref{eq:3}, consider the macroscopic domain \(\Omega = (x_1, x_2) = [0, 1]^{2}\) and the microscopic unit cell \(Y = (y_1, y_2) = [0, 1]^2\) as shown in Fig.~\ref{fig:cgd2d}, where \(\varepsilon = 1/8\).

\begin{figure}[pos=htbp]
\centering
{\includegraphics[width=0.483\textwidth]{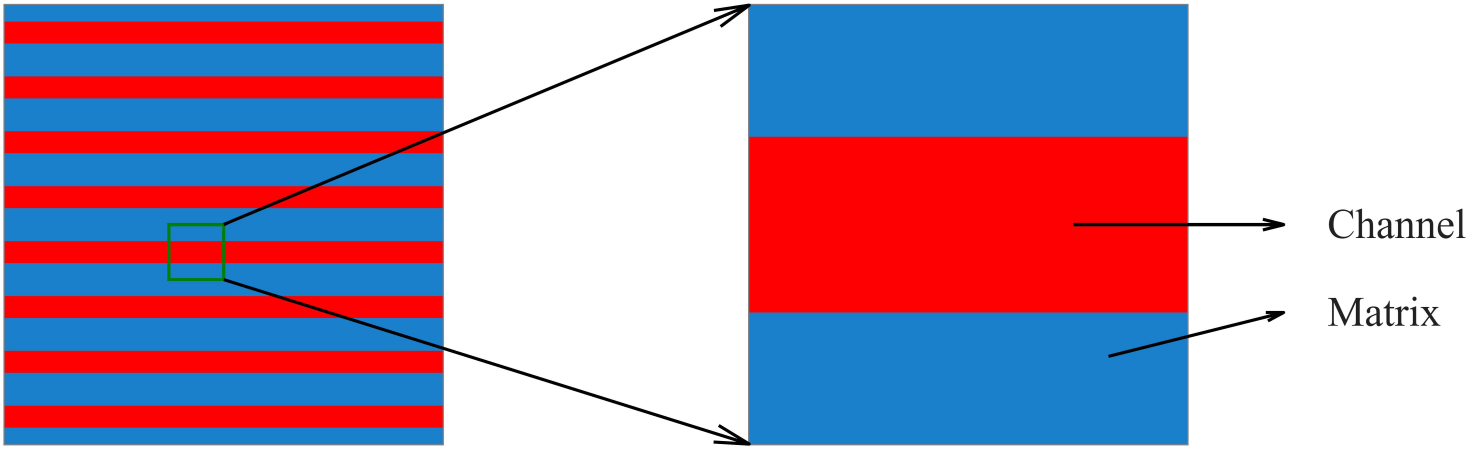}}
\caption{The schematic of 2D periodic media}\label{fig:cgd2d}
\end{figure}

The input parameters for the validation example are given in Table~\ref{tab:materialcgd2d}. The source term, initial pressure, and boundary conditions for this problem are as follows:
\[
\begin{gathered}
q(\boldsymbol{x},t)=1\times10^{\scriptscriptstyle 5},\ g_{\scriptscriptstyle 1}(\boldsymbol{x})=60,\ g_{\scriptscriptstyle 2}(\boldsymbol{x})=60, \\
u_{\scriptscriptstyle 1}^{\scriptscriptstyle \epsilon}(\boldsymbol{x},t)=60,\ u_{\scriptscriptstyle 2}^{\scriptscriptstyle \epsilon}(\boldsymbol{x},t)=60.
\end{gathered}
\]
\begin{table}[htbp]
\centering
\caption{Input parameters}
\label{tab:materialcgd2d}
\begin{tabular}{@{} >{\centering\arraybackslash}p{3.9cm} >{\centering\arraybackslash}p{3.9cm} @{}}
\toprule
Material parameters    & Matrix / Porous  \\
\midrule
\( c_{\scriptscriptstyle 1} \)    
    & \( 50 \ / \ 25 \) \\
\( c_{\scriptscriptstyle 2} \)    
    & \( 30 \ / \ 10 \) \\
\( \kappa_{\scriptscriptstyle 1,ij} \)    
    & \( 100 \ / \ 4 \) \\
\( \kappa_{\scriptscriptstyle 2,ij} \)    
    & \( 50 \ /\ 5 \) \\
\( Q_{\scriptscriptstyle1} \)    
    & \( 30 \ /\ 30 \) \\
\( Q_{\scriptscriptstyle 2} \)    
    & \( 30 \ /\ 30 \) \\
\bottomrule
\end{tabular}
\end{table}

Given that the analytical solution \( u_{\scriptscriptstyle l}^{\scriptscriptstyle \varepsilon} \) for problem \eqref{eq:3} is difficult to obtain directly, this paper uses the finite element reference solution \( u_{\scriptscriptstyle l,Fe}^{\scriptscriptstyle\varepsilon} \) computed on an extremely fine mesh as a substitute. By comparing and analyzing this finite element reference solution with the asymptotic solutions of various orders obtained through the HOMS method, the accuracy of the HOMS solution is validated. To this end, tetrahedral mesh partitions are performed for the macroscopic domain, the unit cell domain, and the homogenized domain. Table~\ref{tab:cgd2d} lists the element and node information for the three sets of meshes, as well as a comparison of computation times between the refined FEM and the HOMS method. 
\begin{table}[htbp]
\centering
\setlength{\tabcolsep}{2pt}
\caption{Summary of computational cost}
\label{tab:cgd2d}
\begin{tabular}{lccc}
\toprule
 & \textbf{Multiscale eqs.} & \textbf{Cell eqs.} & \textbf{Homogenized eqs.} \\
\midrule
\textbf{FEM elements}   & 56,832    & 1,946     & 15,080 \\
\textbf{FEM nodes}      & 28,737    & 1,034     & 7,701 \\
\midrule
 & \textbf{FEM}    & \multicolumn{2}{c}{\textbf{HOMS}} \\
\midrule
\textbf{time}   & 232.122\,s    & \multicolumn{2}{c}{77.028\,s} \\
\bottomrule
\end{tabular}
\end{table}

Set the time step as $\Delta t = 0.02$, then compute the solutions $u_{\scriptscriptstyle l}^{\scriptscriptstyle\varepsilon}$, $u_{\scriptscriptstyle l}^{\scriptscriptstyle(0)}$, $u_{\scriptscriptstyle l}^{\scriptscriptstyle(1\varepsilon)}$ and $u_{\scriptscriptstyle l}^{\scriptscriptstyle(2\varepsilon)}$ of problem~\eqref{eq:3} over the time interval $[0,1]$. Denote the $L^{\scriptscriptstyle 2}$ norm and $H^{\scriptscriptstyle 1}$ seminorm as $\|\cdot\|_{L^{\scriptscriptstyle 2}(\Omega)}$ and $|\cdot|_{H^{\scriptscriptstyle 1}(\Omega)}$, respectively.

Fig.~\ref{fig:cgd2du1} and Fig.~\ref{fig:cgd2du2} display the distribution profiles of $u_{\scriptscriptstyle l}^{\scriptscriptstyle(0)}$, $u_{\scriptscriptstyle l}^{\scriptscriptstyle(1\varepsilon)}$, $u_{\scriptscriptstyle l}^{\scriptscriptstyle(2\varepsilon)}$ and $u_{\scriptscriptstyle l}^{\scriptscriptstyle\varepsilon}$ at time $t = 1.0$. Fig.~\ref{fig:Ercgd2d} shows the evolution of the relative errors in the $L^{\scriptscriptstyle 2}$ norm and $H^{\scriptscriptstyle 1}$ seminorm for $u_{\scriptscriptstyle l}^{\scriptscriptstyle(0)}$, $u_{\scriptscriptstyle l}^{\scriptscriptstyle(1\varepsilon)}$, and $u_{\scriptscriptstyle l}^{\scriptscriptstyle(2\varepsilon)}$ over the time interval $[0,1]$.

\begin{figure}[pos=htbp]
\centering
\subcaptionbox{$u_{\scriptscriptstyle 1}^{\scriptscriptstyle(0)}$\label{fig:cgd2d_u1A}}
{\includegraphics[width=0.235\textwidth]{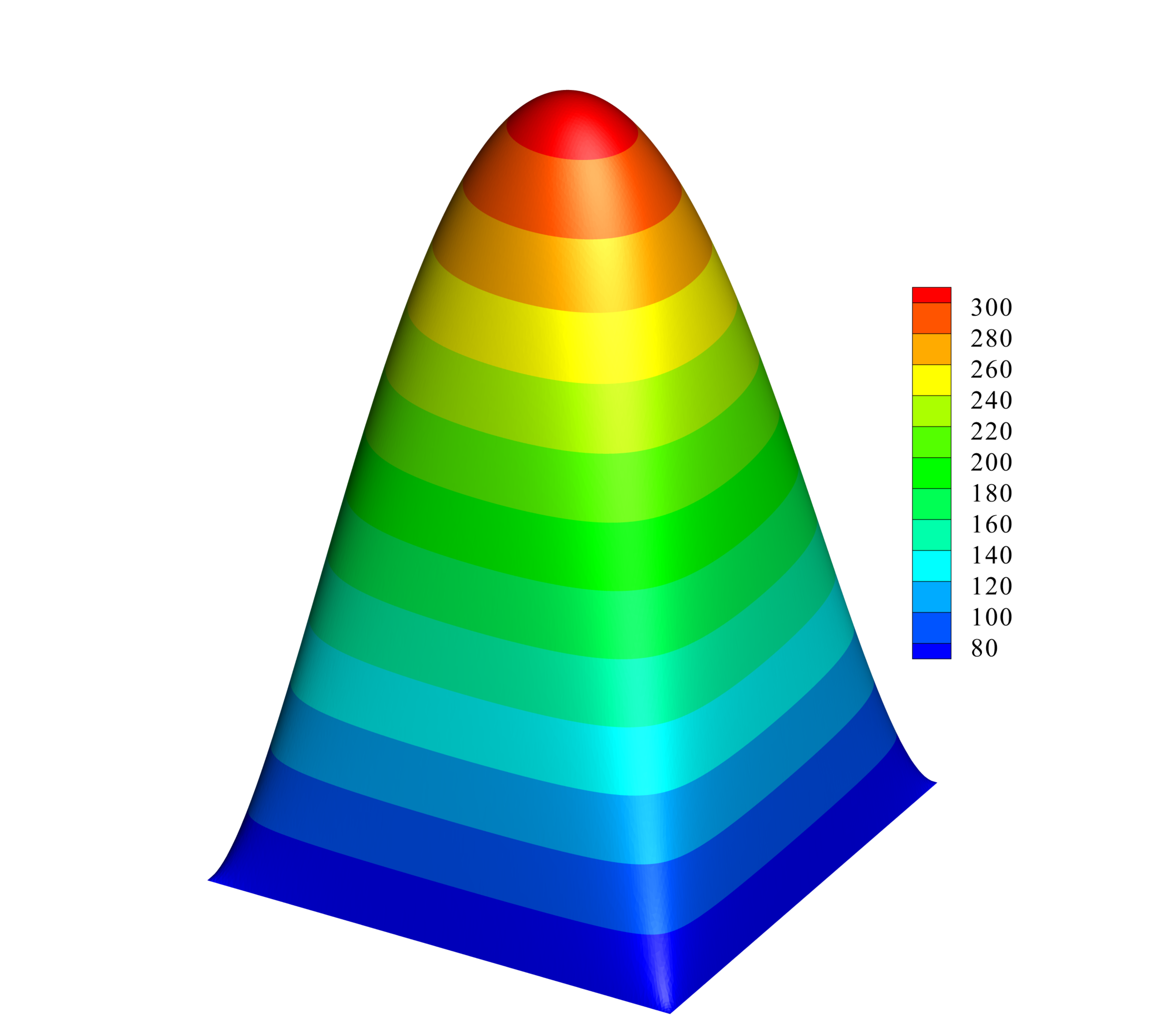}}
\subcaptionbox{$u_{\scriptscriptstyle 1}^{\scriptscriptstyle(1\varepsilon)}$\label{fig:cgd2d_u1B}}
{\includegraphics[width=0.235\textwidth]{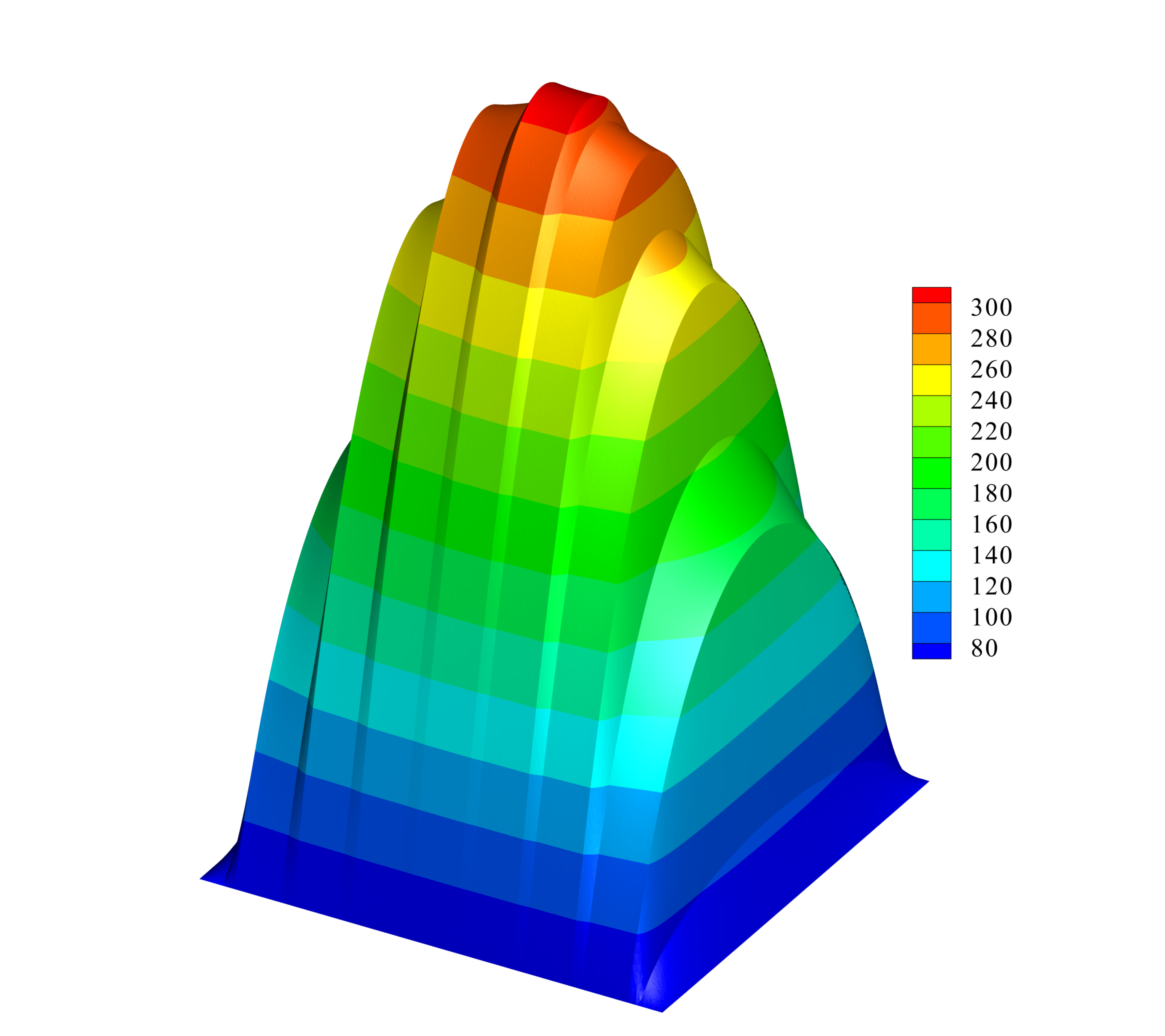}}
\subcaptionbox{$u_{\scriptscriptstyle 1}^{\scriptscriptstyle(2\varepsilon)}$\label{fig:cgd2d_u1C}}
{\includegraphics[width=0.235\textwidth]{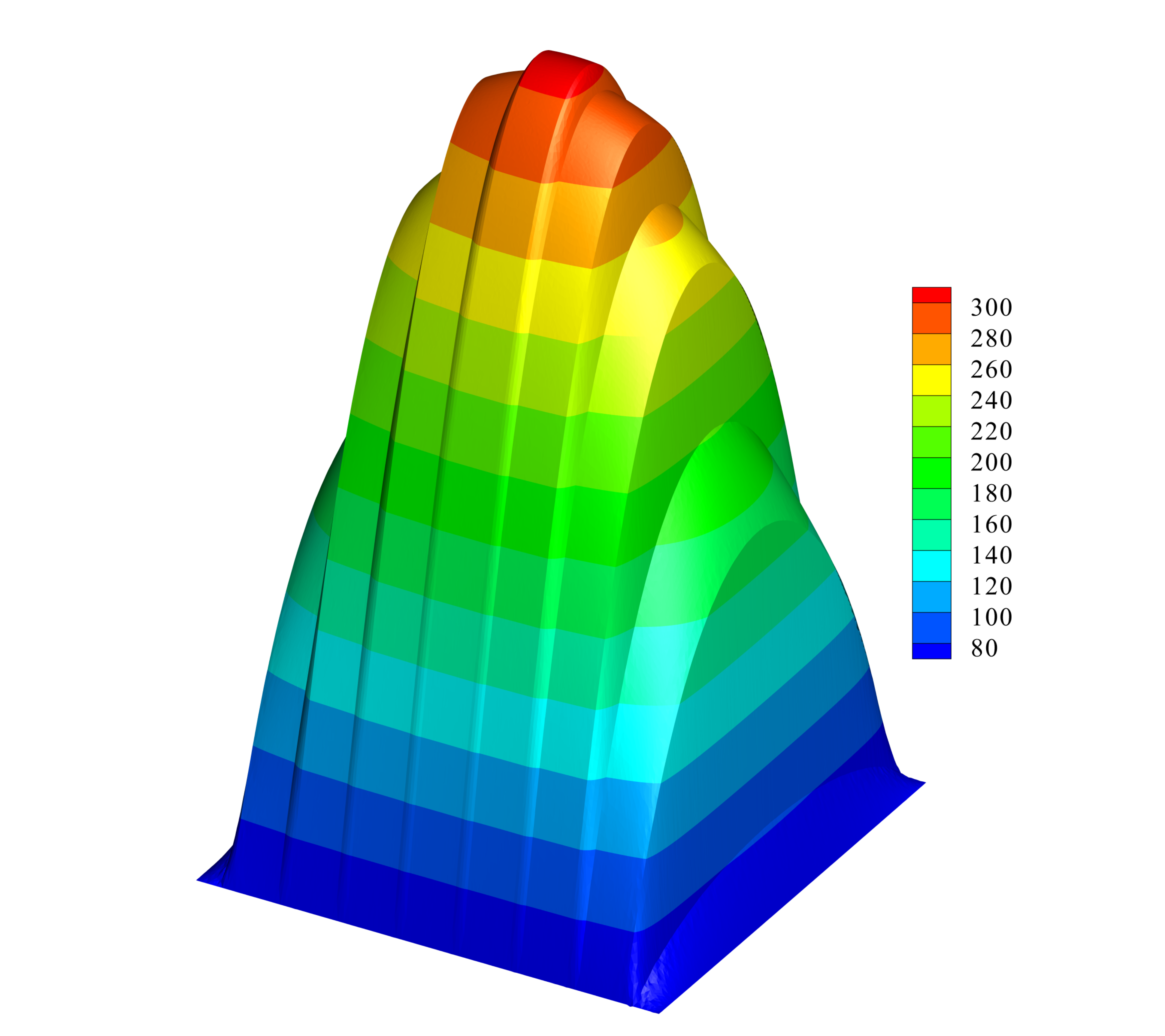}}
\subcaptionbox{$u_{\scriptscriptstyle 1}^{\scriptscriptstyle\varepsilon}$\label{fig:cgd2d_u1D}}
{\includegraphics[width=0.235\textwidth]{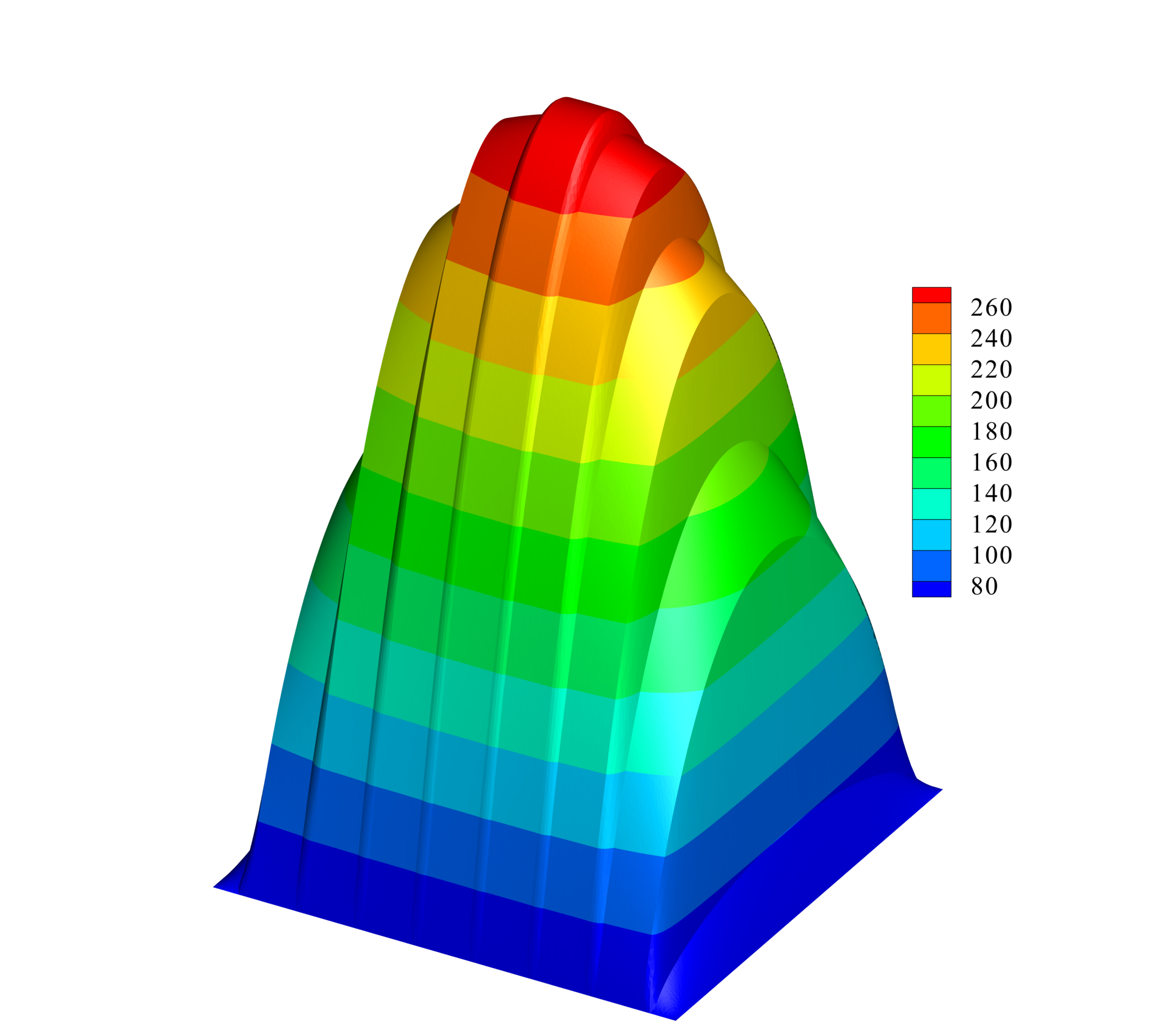}}
\caption{The pressure field at $t = 1.0 $ of first-continuum media.}\label{fig:cgd2du1}
\end{figure}
\begin{figure}[pos=htbp]
\centering
\subcaptionbox{$u_{\scriptscriptstyle 2}^{\scriptscriptstyle(0)}$\label{fig:cgd2d_u2A}}
{\includegraphics[width=0.235\textwidth]{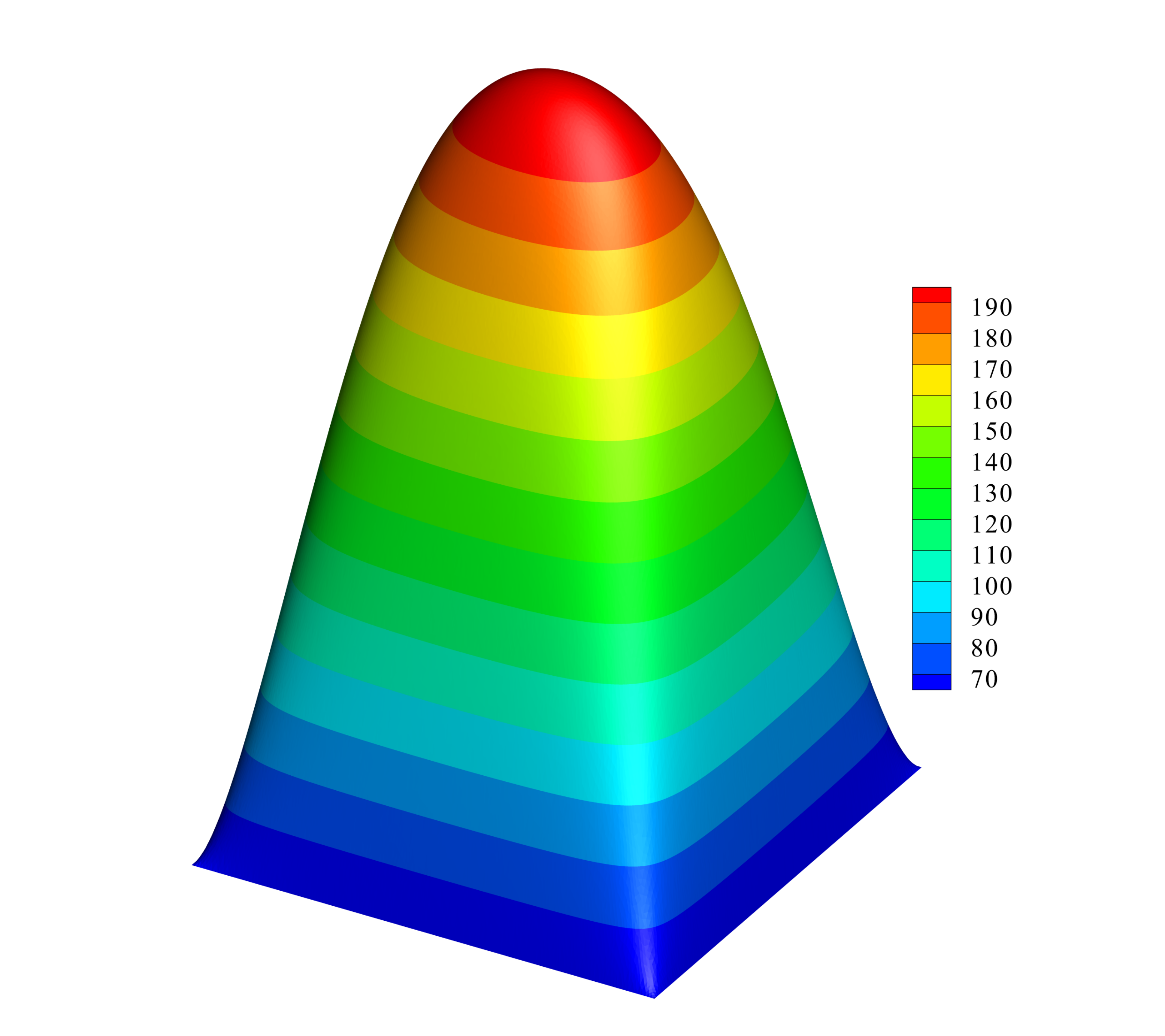}}
\subcaptionbox{$u_{\scriptscriptstyle 2}^{\scriptscriptstyle(1\varepsilon)}$\label{fig:cgd2d_u2B}}
{\includegraphics[width=0.235\textwidth]{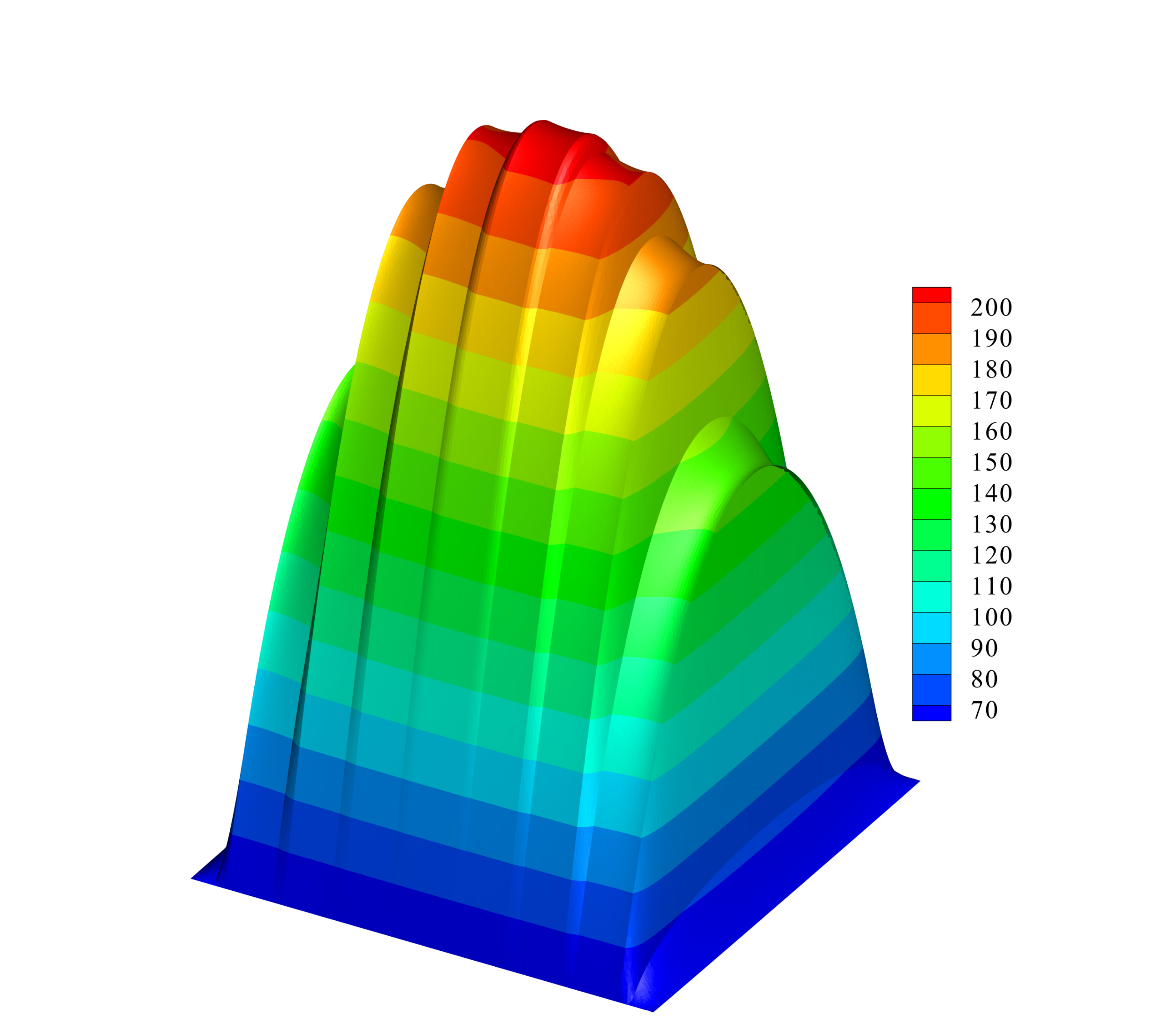}}
\subcaptionbox{$u_{\scriptscriptstyle 2}^{\scriptscriptstyle(2\varepsilon)}$\label{fig:cgd2d_u2C}}
{\includegraphics[width=0.235\textwidth]{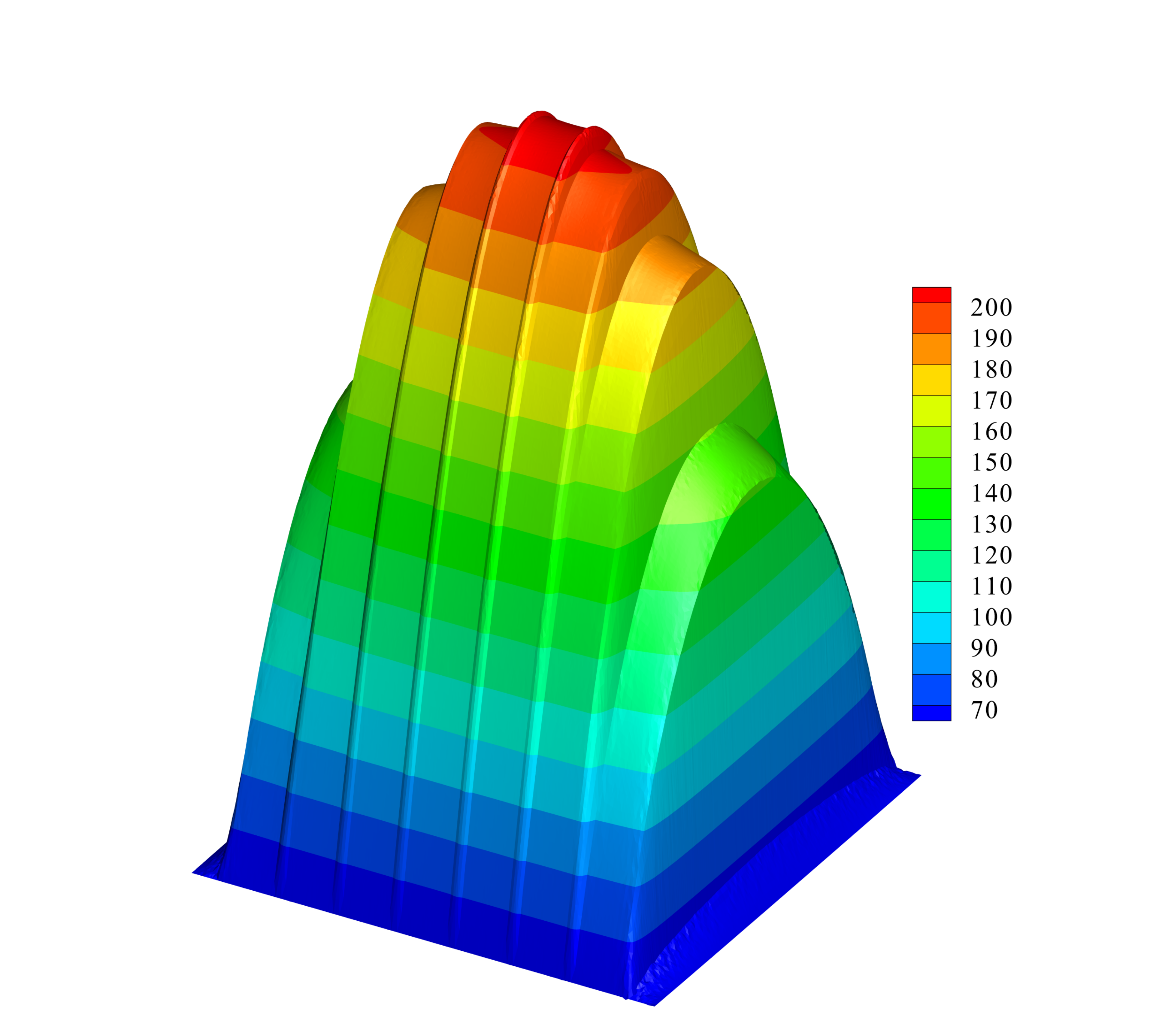}}
\subcaptionbox{$u_{\scriptscriptstyle 2}^{\scriptscriptstyle\varepsilon}$\label{fig:cgd2d_u2D}}
{\includegraphics[width=0.235\textwidth]{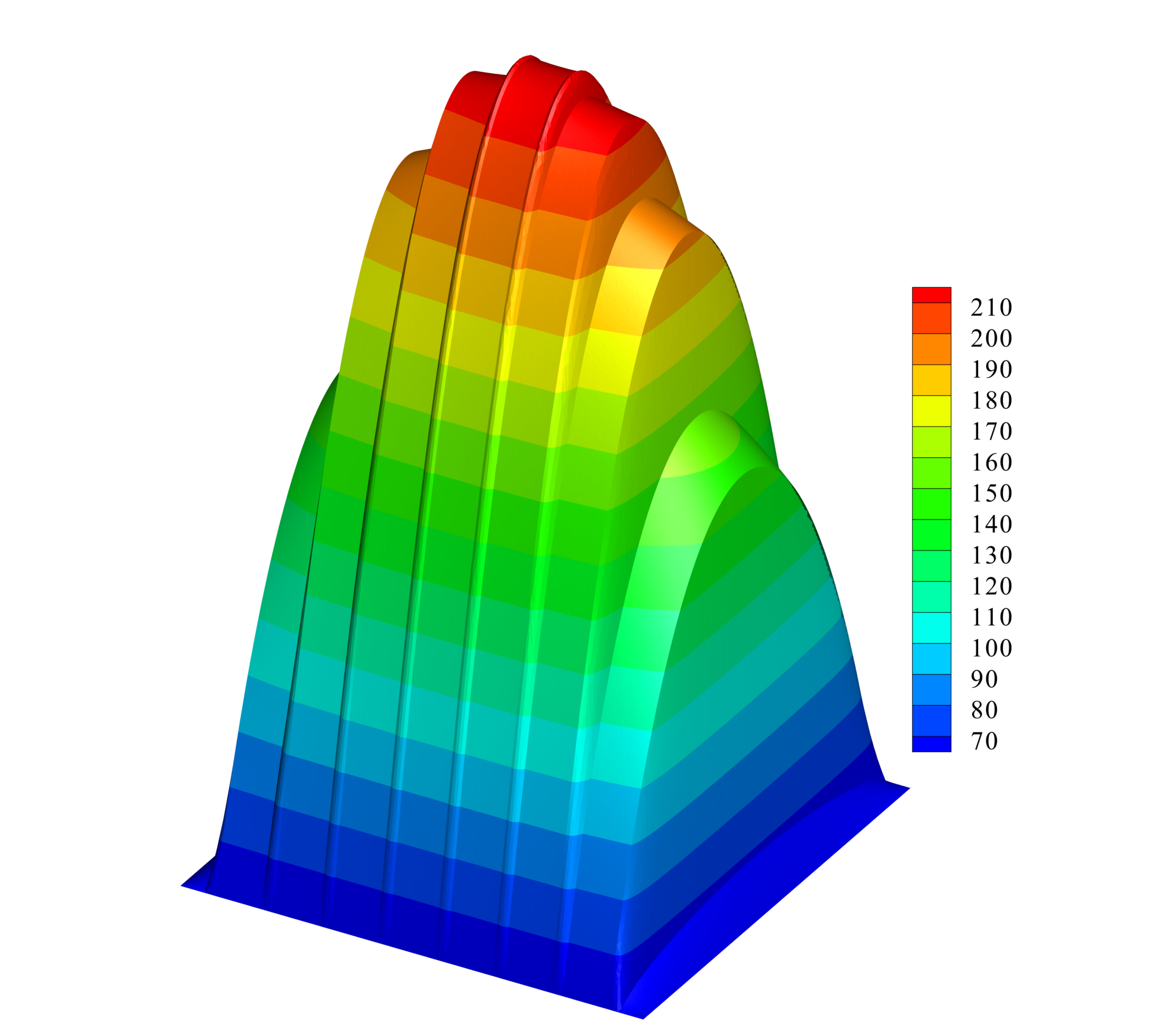}}
\caption{The pressure field at $t = 1.0 $ of second-continuum media.}\label{fig:cgd2du2}
\end{figure}

\begin{figure}[pos=htbp]
\centering
\subcaptionbox{\label{fig:cgd2derA}}
{\includegraphics[width=0.235\textwidth]{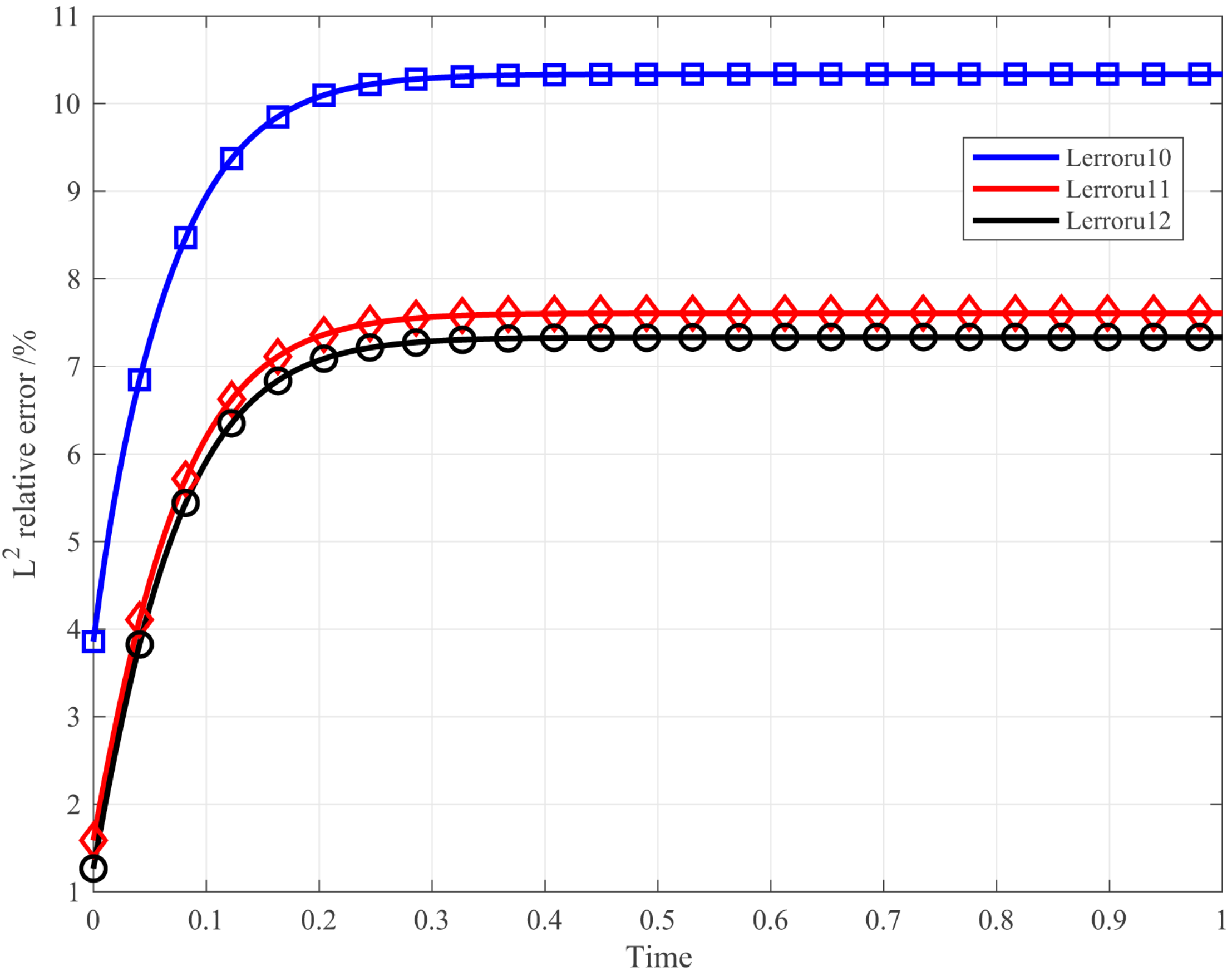}}
\subcaptionbox{\label{fig:cgd2derB}}
{\includegraphics[width=0.235\textwidth]{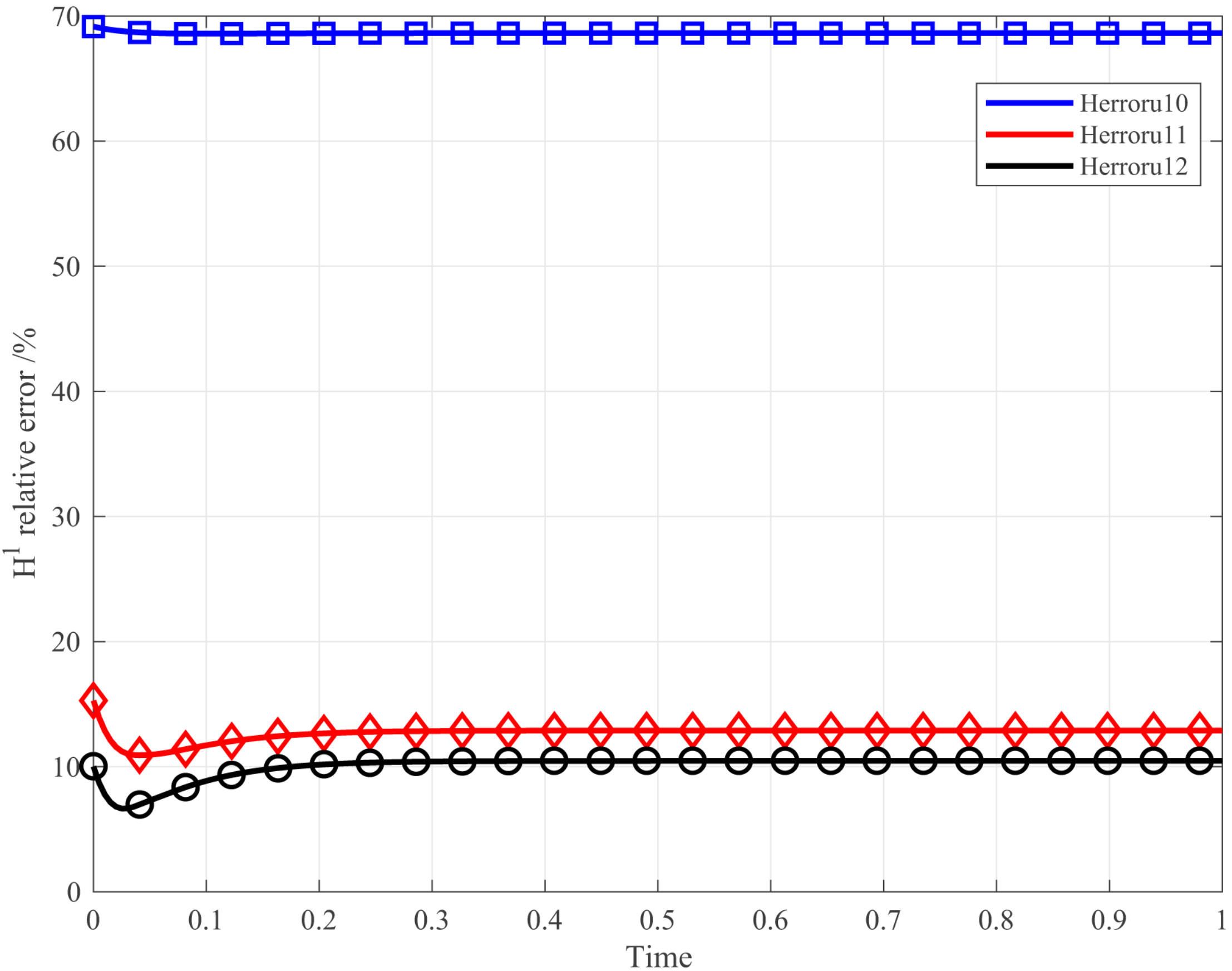}}
\subcaptionbox{\label{fig:cgd2derC}}
{\includegraphics[width=0.235\textwidth]{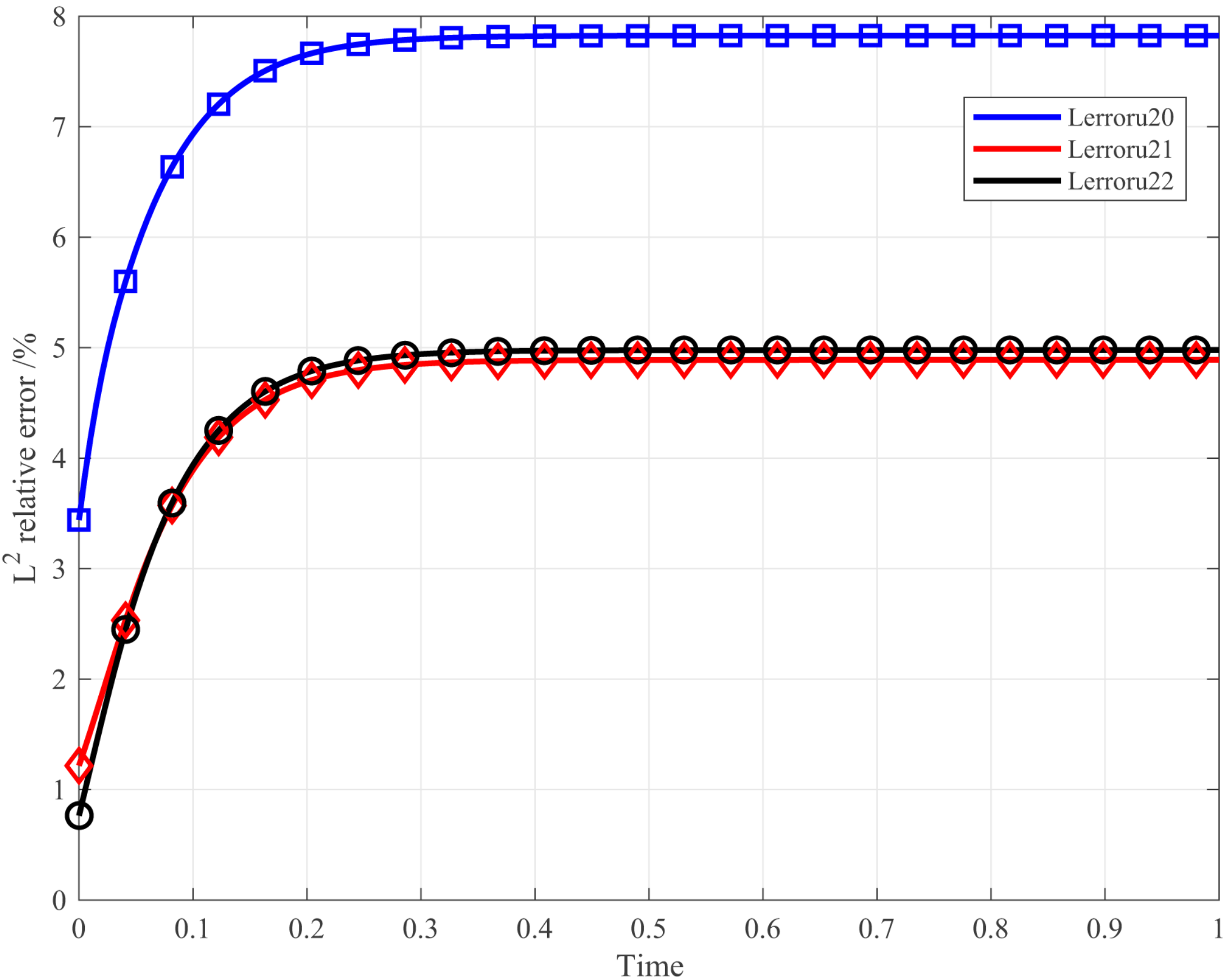}}
\subcaptionbox{\label{fig:cgd2derD}}
{\includegraphics[width=0.235\textwidth]{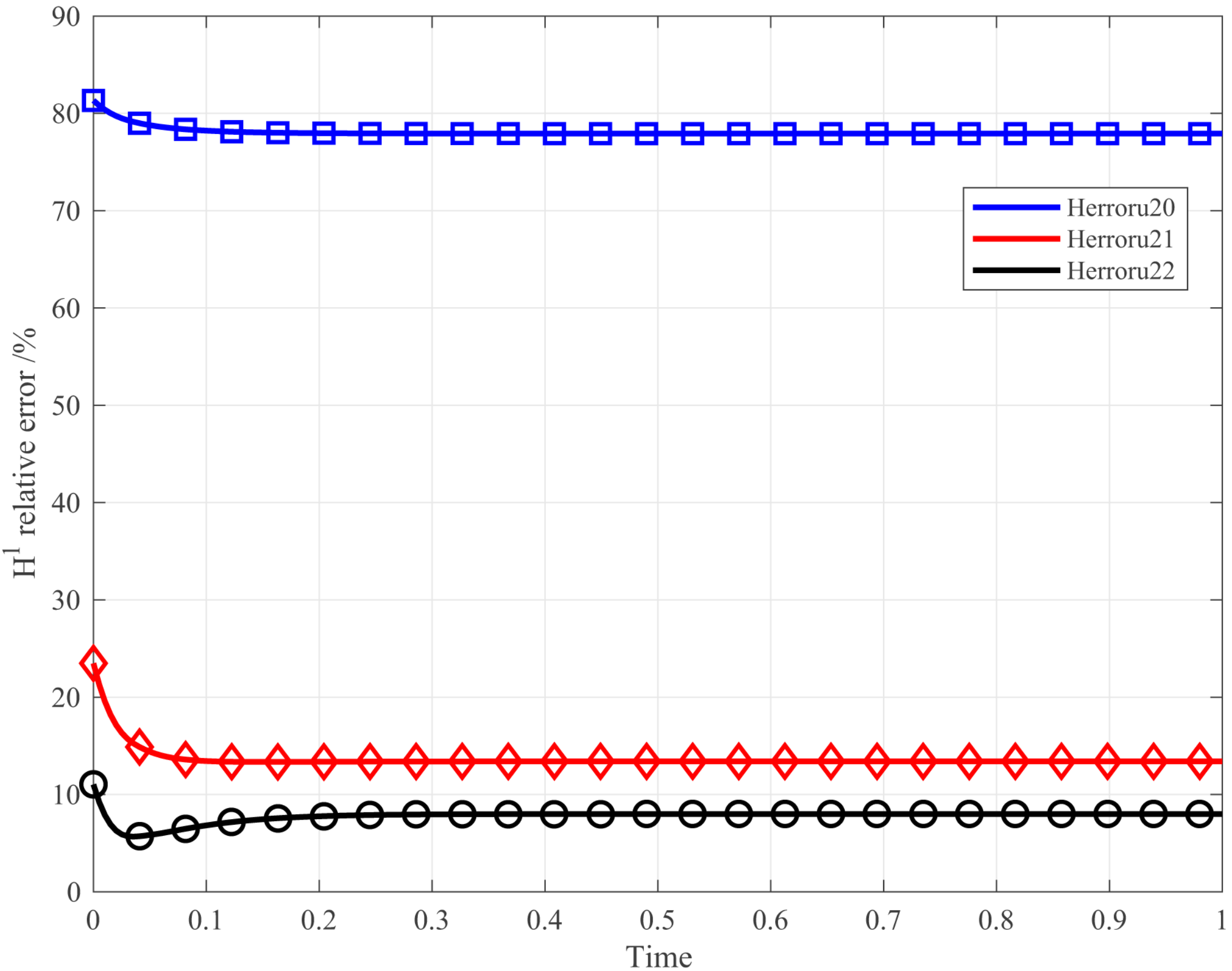}}
\caption{Evolution of the relative errors of the $L^{\scriptscriptstyle 2}$ norm and $H^{\scriptscriptstyle 1}$ seminorm over time: (a) $\text{Lerr1}$; (b) $\text{Herr1}$; (c) $\text{Lerr2}$; (d) $\text{Herr2}$.}\label{fig:Ercgd2d}
\end{figure}

From the mesh information in Table~\ref{tab:cgd2d}, it can be observed that the computational grid resource overhead required by the HOMS method is significantly smaller than that of the direct FEM, and its efficiency is also markedly superior to the FEM. As shown in Fig.~\ref{fig:cgd2du1} and Fig.~\ref{fig:cgd2du2}, the approximation accuracy of the HOMS solution is significantly better than that of both the homogenized solution and the FOMS solution. Only the HOMS solution effectively captures the local oscillatory behavior of the highly heterogeneous media. From Fig.~\ref{fig:Ercgd2d}, it can be observed that under the \( L^{\scriptscriptstyle 2} \) norm, the relative errors of \( u_{\scriptscriptstyle 1}^{(\scriptscriptstyle2\varepsilon)} \) and \( u_{\scriptscriptstyle 2}^{\scriptscriptstyle(2\varepsilon)} \) are only about 7\% and 5\%, respectively; under the \( H^{\scriptscriptstyle 1} \) seminorm, the relative errors of \( u_{\scriptscriptstyle 1}^{\scriptscriptstyle(2\varepsilon)} \) and \( u_{\scriptscriptstyle 2}^{\scriptscriptstyle(2\varepsilon)} \) are only approximately 10\% and 8\%, significantly lower than those of the homogenized solution and the FOMS solution, thus meeting the requirements for engineering computations. Furthermore, as shown in Fig.~\ref{fig:Ercgd2d}, the HOMS method exhibits excellent numerical stability over time and can be effectively applied to the computation of dynamic problem \eqref{eq:3} that evolves with time.

\subsection{Example 4. 3D channel media}
For the three-dimensional case of problem \eqref{eq:3}, consider the macroscopic domain \(\Omega = (x_1, x_2, x_3) = [0, 1]^3\) and the microscopic unit cell \(Y = (y_1, y_2, y_3) = [0, 1]^3\) as shown in Fig.~\ref{fig:cgd3d}, where \(\varepsilon = 1/4\).
\begin{figure}[pos=htbp]
\centering
{\includegraphics[width=0.483\textwidth]{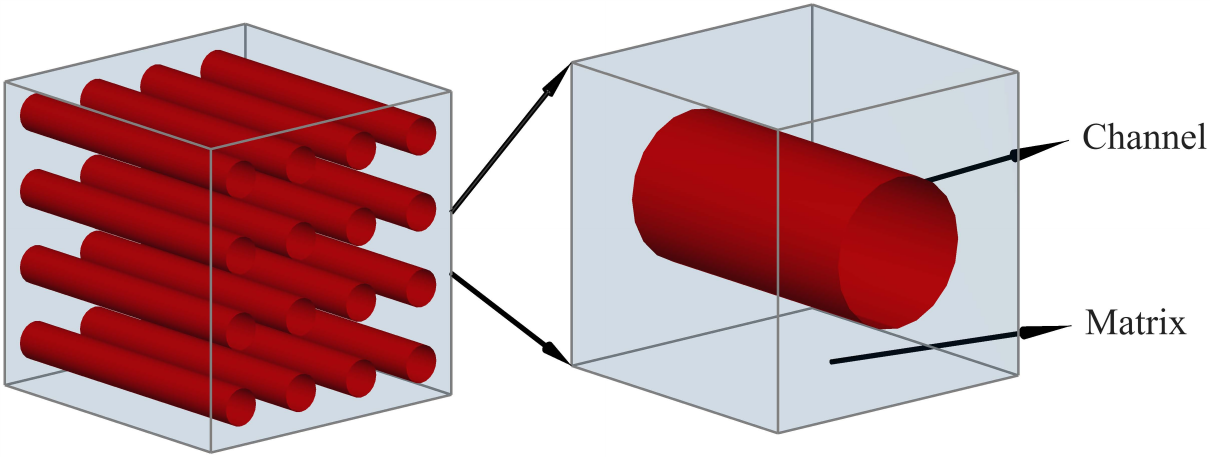}}
\caption{The schematic of 3D periodic media}\label{fig:cgd3d}
\end{figure}

The input parameters for the validation example are given in Table~\ref{tab:materialcgd3d}. The source term, initial pressure, and boundary conditions for this problem are as follows:
\[
\begin{gathered}
q(\boldsymbol{x},t)=1\times10^{\scriptscriptstyle 5},\ g_{\scriptscriptstyle 1}(\boldsymbol{x})=0,\ g_{\scriptscriptstyle 2}(\boldsymbol{x})=0, \\
u_{\scriptscriptstyle 1}^{\scriptscriptstyle \epsilon}(\boldsymbol{x},t)=0,\ u_{\scriptscriptstyle 2}^{\scriptscriptstyle \epsilon}(\boldsymbol{x},t)=0.
\end{gathered}
\]

\begin{table}[htbp]
\centering
\caption{Input parameters}
\label{tab:materialcgd3d}
\begin{tabular}{@{} >{\centering\arraybackslash}p{3.9cm} >{\centering\arraybackslash}p{3.9cm} @{}}
\toprule
Material parameters    & Matrix / Channel  \\
\midrule
\( c_{\scriptscriptstyle 1} \)    
    & \( 5 \ / \ 100 \) \\
\( c_{\scriptscriptstyle 2} \)    
    & \( 125 \ / \ 15 \) \\
\( \kappa_{\scriptscriptstyle 1,ij} \)    
    & \( 800 \ / \ 30 \) \\
\( \kappa_{\scriptscriptstyle 2,ij} \)    
    & \( 1000 \ /\ 50 \) \\
\( Q_{\scriptscriptstyle 1} \)    
    & \( 250 \ /\ 25 \) \\
\( Q_{\scriptscriptstyle 2} \)    
    & \( 250 \ /\ 25 \) \\
\bottomrule
\end{tabular}
\end{table}

Given that the analytical solution \( u_{\scriptscriptstyle l}^{\scriptscriptstyle\varepsilon} \) for problem \eqref{eq:3} is difficult to obtain directly, this paper uses the finite element reference solution \( u_{\scriptscriptstyle l,Fe}^{\scriptscriptstyle\varepsilon} \) computed on an extremely fine mesh as a substitute. By comparing and analyzing this finite element reference solution with the asymptotic solutions of various orders obtained through the HOMS method, the accuracy of the HOMS solution is validated. To this end, tetrahedral mesh partitions are performed for the macroscopic domain, the unit cell domain, and the homogenized domain. Table~\ref{tab:cgd3d} lists the element and node information for the three sets of meshes, as well as a comparison of computation times between the refined FEM and the HOMS method. 
\begin{table}[htbp]
\centering
\setlength{\tabcolsep}{2pt}
\caption{Summary of computational cost}
\label{tab:cgd3d}
\begin{tabular}{lccc}
\toprule
 & \textbf{Multiscale eqs.} & \textbf{Cell eqs.} & \textbf{Homogenized eqs.} \\
\midrule
\textbf{FEM elements}   & 941,440    & 161,850     & 93,750 \\
\textbf{FEM nodes}      & 164,945    & 29,380      & 17,576 \\
\midrule
 & \textbf{FEM}    & \multicolumn{2}{c}{\textbf{HOMS}} \\
\midrule
\textbf{time}   & 14892.6\,s    & \multicolumn{2}{c}{4217.79\,s} \\
\bottomrule
\end{tabular}
\end{table}

Set the time step as $\Delta t = 0.002$, then compute the solutions $u_{\scriptscriptstyle l}^{\scriptscriptstyle\varepsilon}$, $u_{\scriptscriptstyle l}^{\scriptscriptstyle(0)}$, $u_{\scriptscriptstyle l}^{\scriptscriptstyle(1\varepsilon)}$ and $u_{\scriptscriptstyle l}^{\scriptscriptstyle(2\varepsilon)}$ of problem~\eqref{eq:3} over the time interval $[0,1]$. Denote the $L^{\scriptscriptstyle 2}$ norm and $H^{\scriptscriptstyle 1}$ seminorm as $\|\cdot\|_{L^2(\Omega)}$ and $|\cdot|_{H^{\scriptscriptstyle 1}(\Omega)}$, respectively.

Fig.~\ref{fig:cgd3du1} and Fig.~\ref{fig:cgd3du2} display the distribution profiles of $u_{\scriptscriptstyle i}^{\scriptscriptstyle(0)}$, $u_{\scriptscriptstyle l}^{\scriptscriptstyle(1\varepsilon)}$, $u_{\scriptscriptstyle l}^{\scriptscriptstyle(2\varepsilon)}$ and $u_{\scriptscriptstyle l}^{\scriptscriptstyle\varepsilon}$ at time $t = 1.0$. Fig.~\ref{fig:Ercgd3d} shows the evolution of the relative errors in the $L^{\scriptscriptstyle 2}$ norm and $H^{\scriptscriptstyle 1}$ seminorm for $u_{\scriptscriptstyle l}^{\scriptscriptstyle(0)}$, $u_{\scriptscriptstyle l}^{\scriptscriptstyle(1\varepsilon)}$, and $u_{\scriptscriptstyle l}^{\scriptscriptstyle(2\varepsilon)}$ over the time interval $[0,1]$.

\begin{figure}[pos=htbp]
\centering
\subcaptionbox{$u_{\scriptscriptstyle 1}^{\scriptscriptstyle(0)}$\label{fig:cgd3d_u1A}}
{\includegraphics[width=0.235\textwidth]{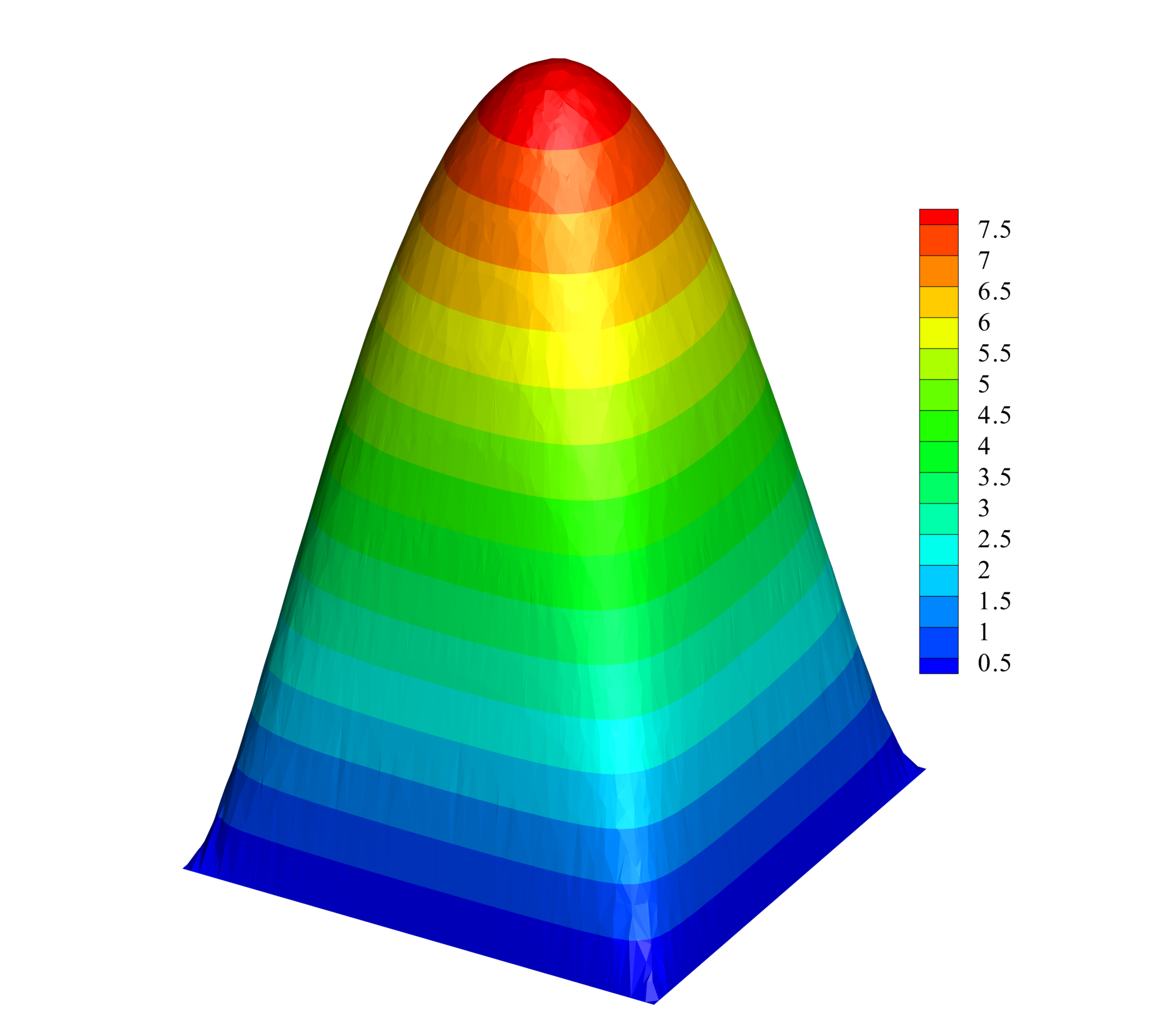}}
\subcaptionbox{$u_{\scriptscriptstyle 1}^{\scriptscriptstyle(1\varepsilon)}$\label{fig:cgd3d_u1B}}
{\includegraphics[width=0.235\textwidth]{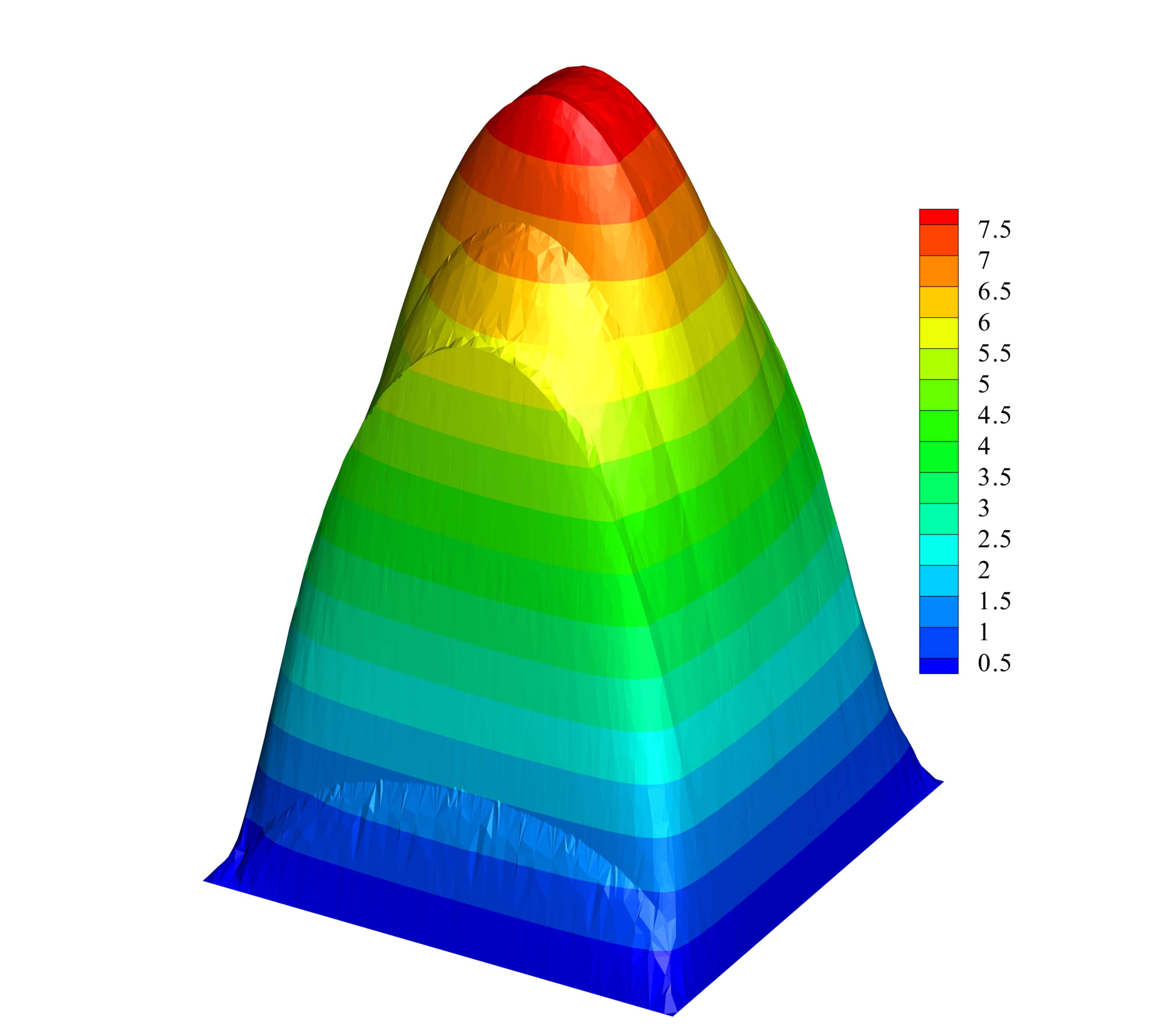}}
\subcaptionbox{$u_{\scriptscriptstyle 1}^{\scriptscriptstyle(2\varepsilon)}$\label{fig:cgd3d_u1C}}
{\includegraphics[width=0.235\textwidth]{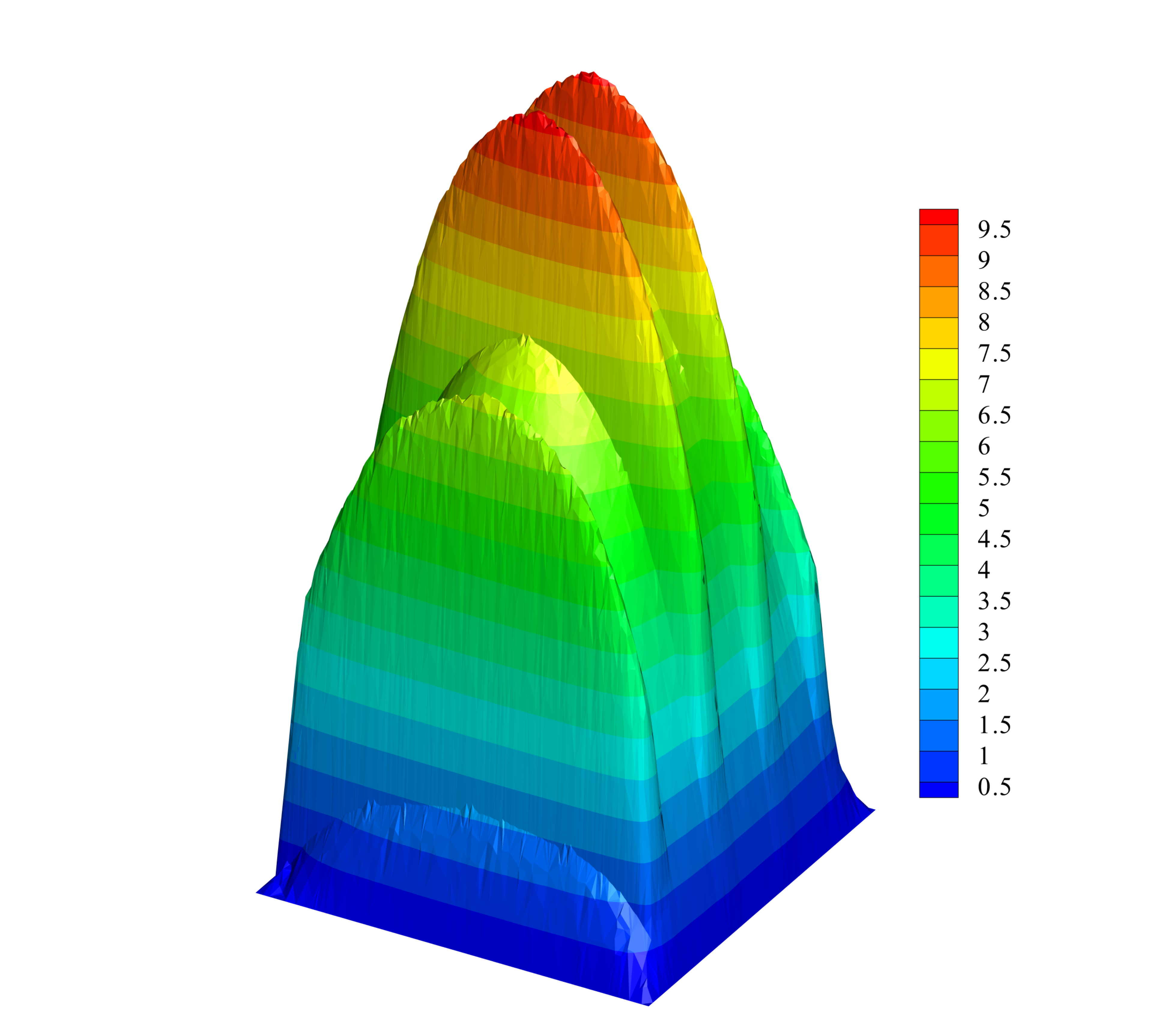}}
\subcaptionbox{$u_{\scriptscriptstyle 1}^{\scriptscriptstyle\varepsilon}$\label{fig:cgd3d_u1D}}
{\includegraphics[width=0.235\textwidth]{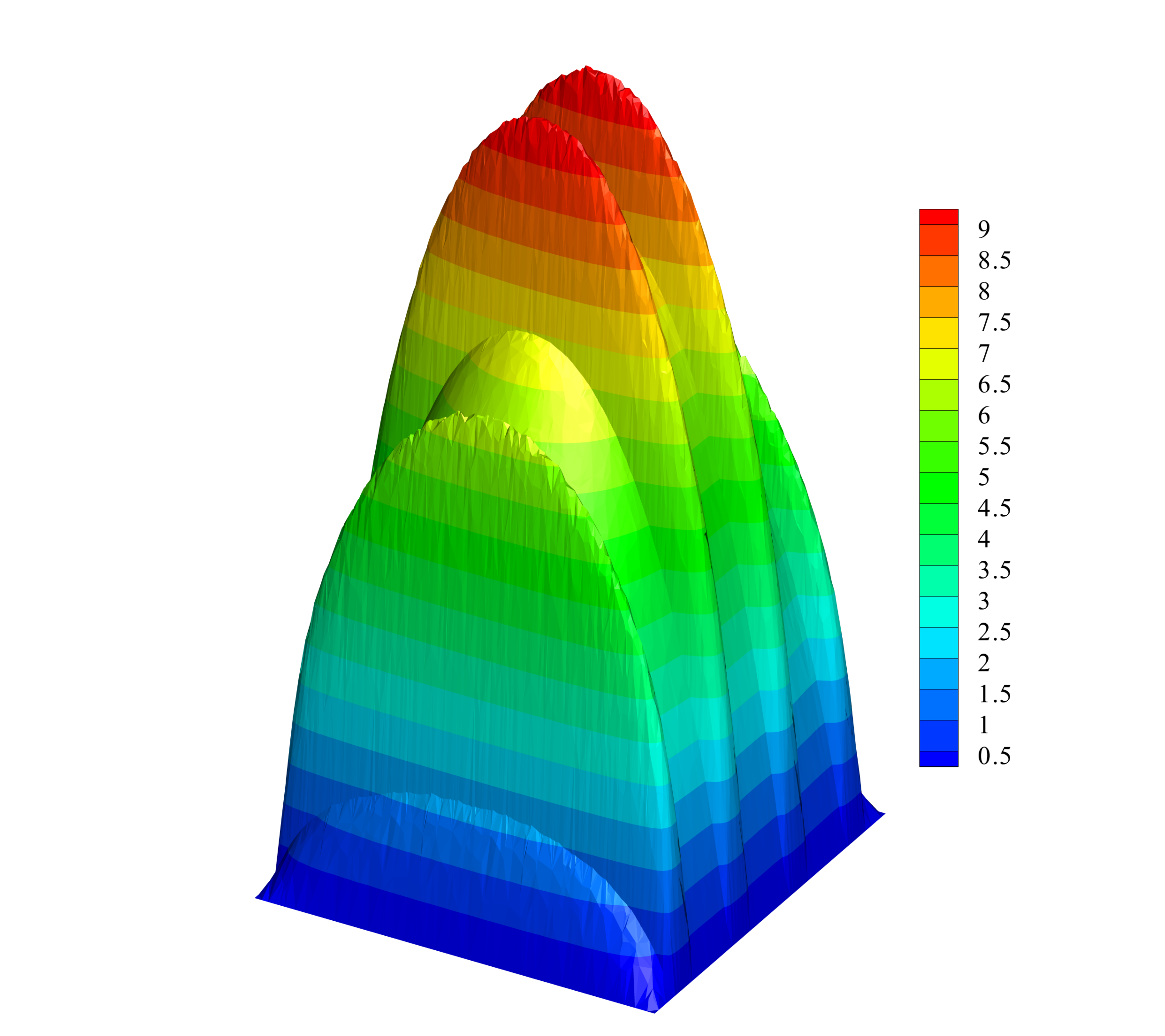}}
\caption{The pressure field in $x_3 = 0.375 $ at $t = 1.0 $ of first-continuum media.}
\label{fig:cgd3du1}
\end{figure}
\begin{figure}[pos=htbp]
\centering
\subcaptionbox{$u_{\scriptscriptstyle 2}^{\scriptscriptstyle(0)}$\label{fig:cgd3d_u2A}}
{\includegraphics[width=0.235\textwidth]{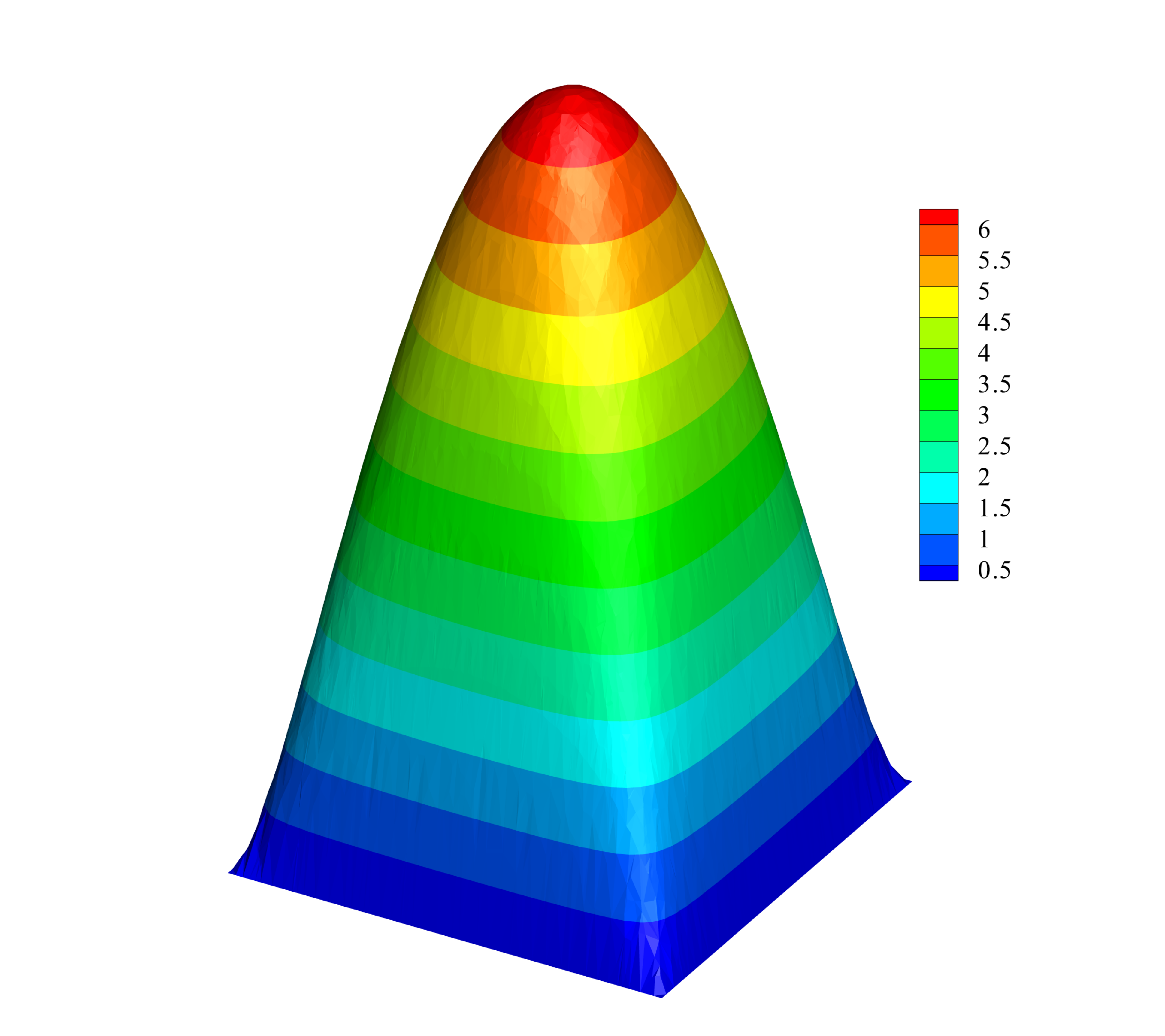}}
\subcaptionbox{$u_{\scriptscriptstyle 2}^{\scriptscriptstyle(1\varepsilon)}$\label{fig:cgd3d_u2B}}
{\includegraphics[width=0.235\textwidth]{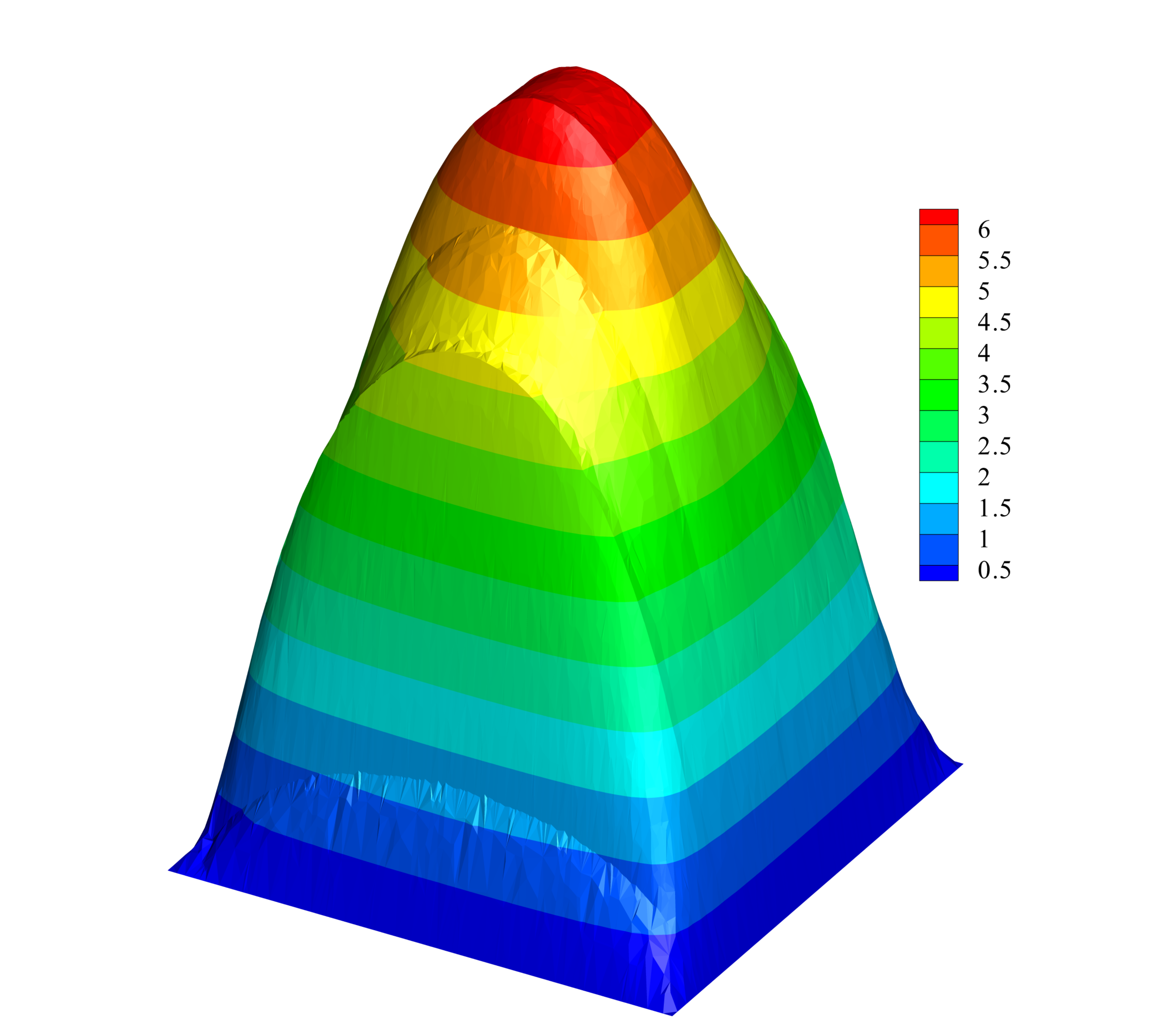}}
\subcaptionbox{$u_{\scriptscriptstyle 2}^{\scriptscriptstyle(2\varepsilon)}$\label{fig:cgd3d_u2C}}
{\includegraphics[width=0.235\textwidth]{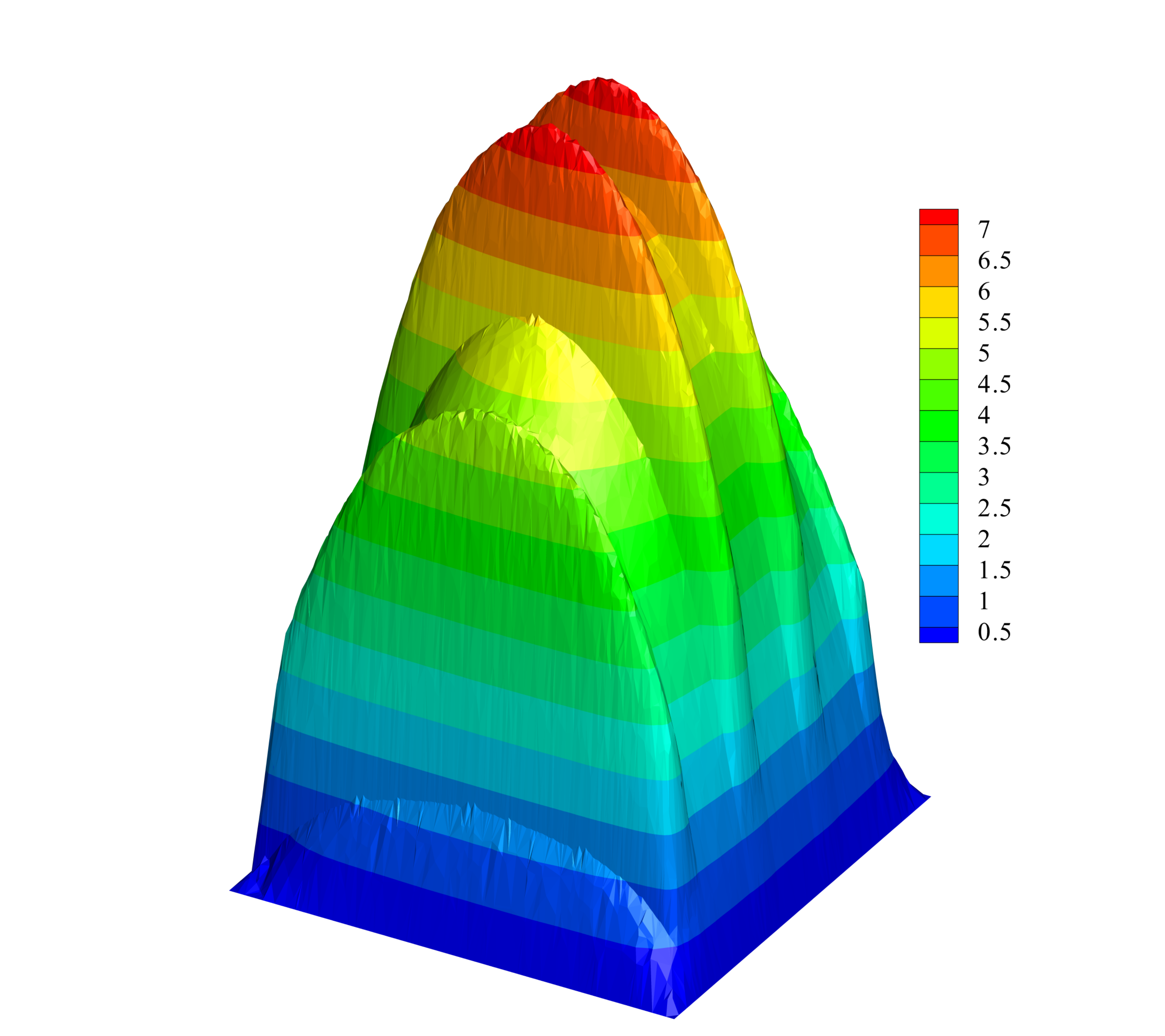}}
\subcaptionbox{$u_{\scriptscriptstyle 2}^{\scriptscriptstyle\varepsilon}$\label{fig:cgd3d_u2D}}
{\includegraphics[width=0.235\textwidth]{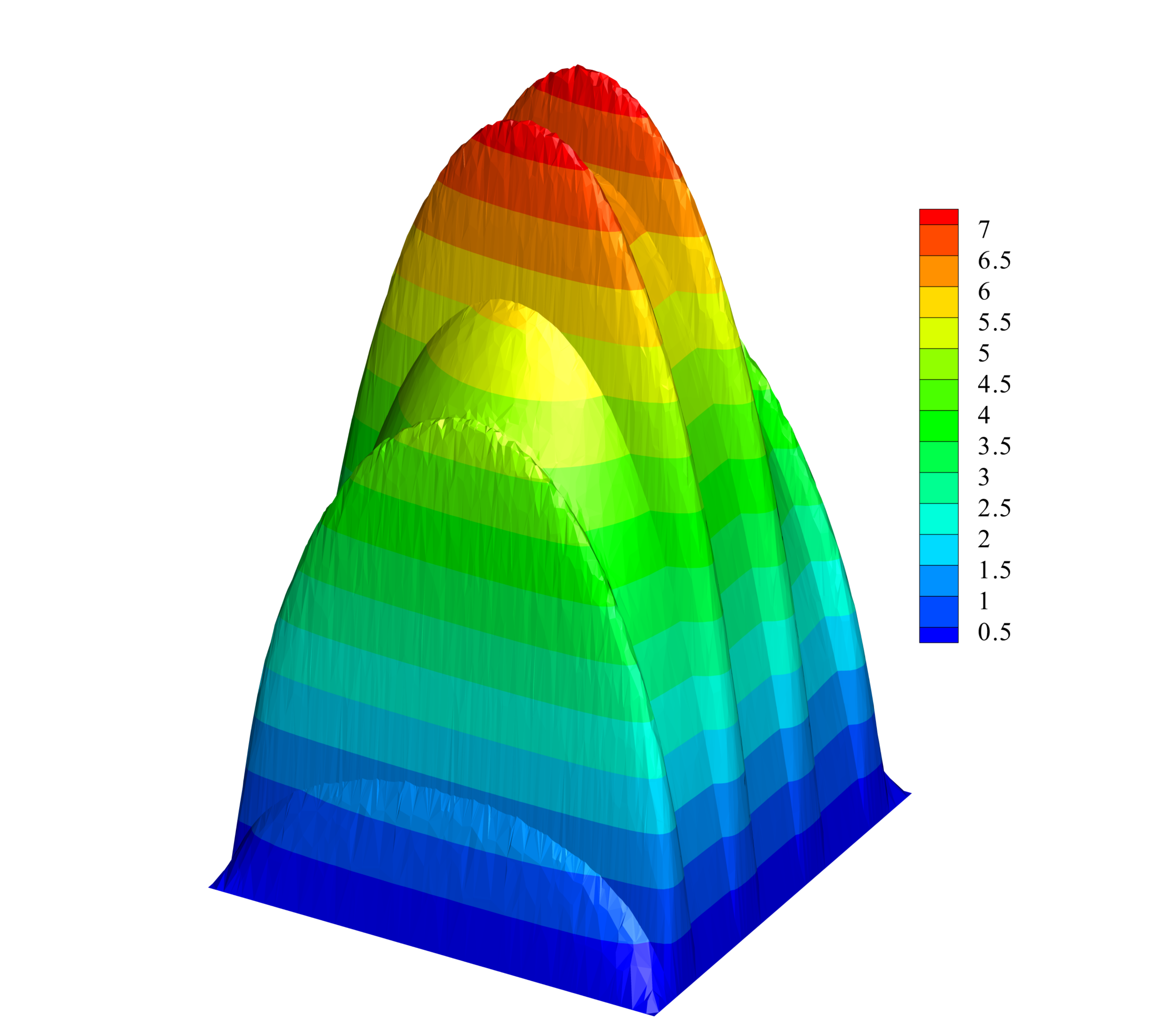}}
\caption{The pressure field in $x_3 = 0.375 $ at $t = 1.0 $ of second-continuum media.}
\label{fig:cgd3du2}
\end{figure}

\begin{figure}[pos=htbp]
\centering
\subcaptionbox{\label{fig:cgd3derA}}
{\includegraphics[width=0.235\textwidth]{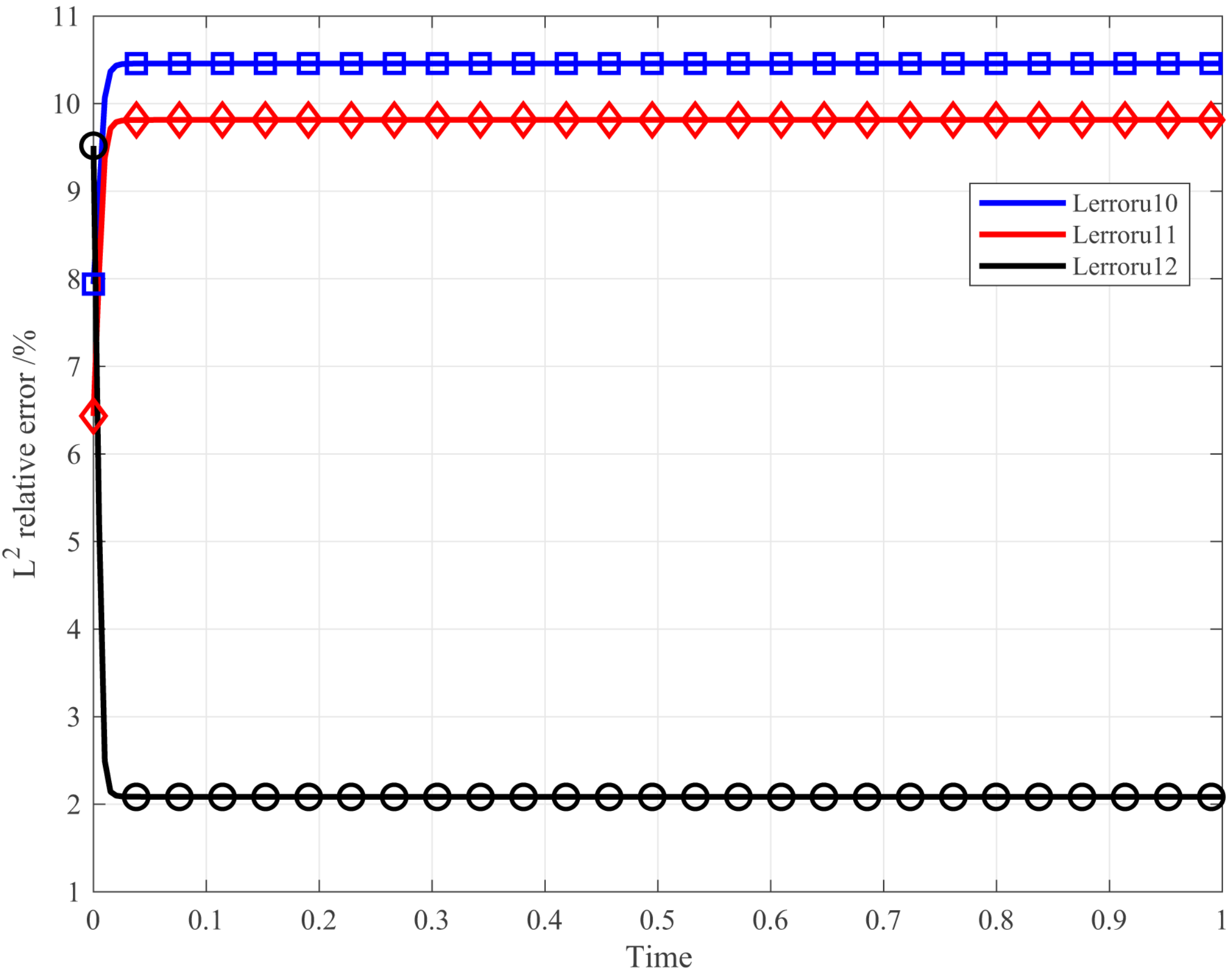}}
\subcaptionbox{\label{fig:cgd3derB}}
{\includegraphics[width=0.235\textwidth]{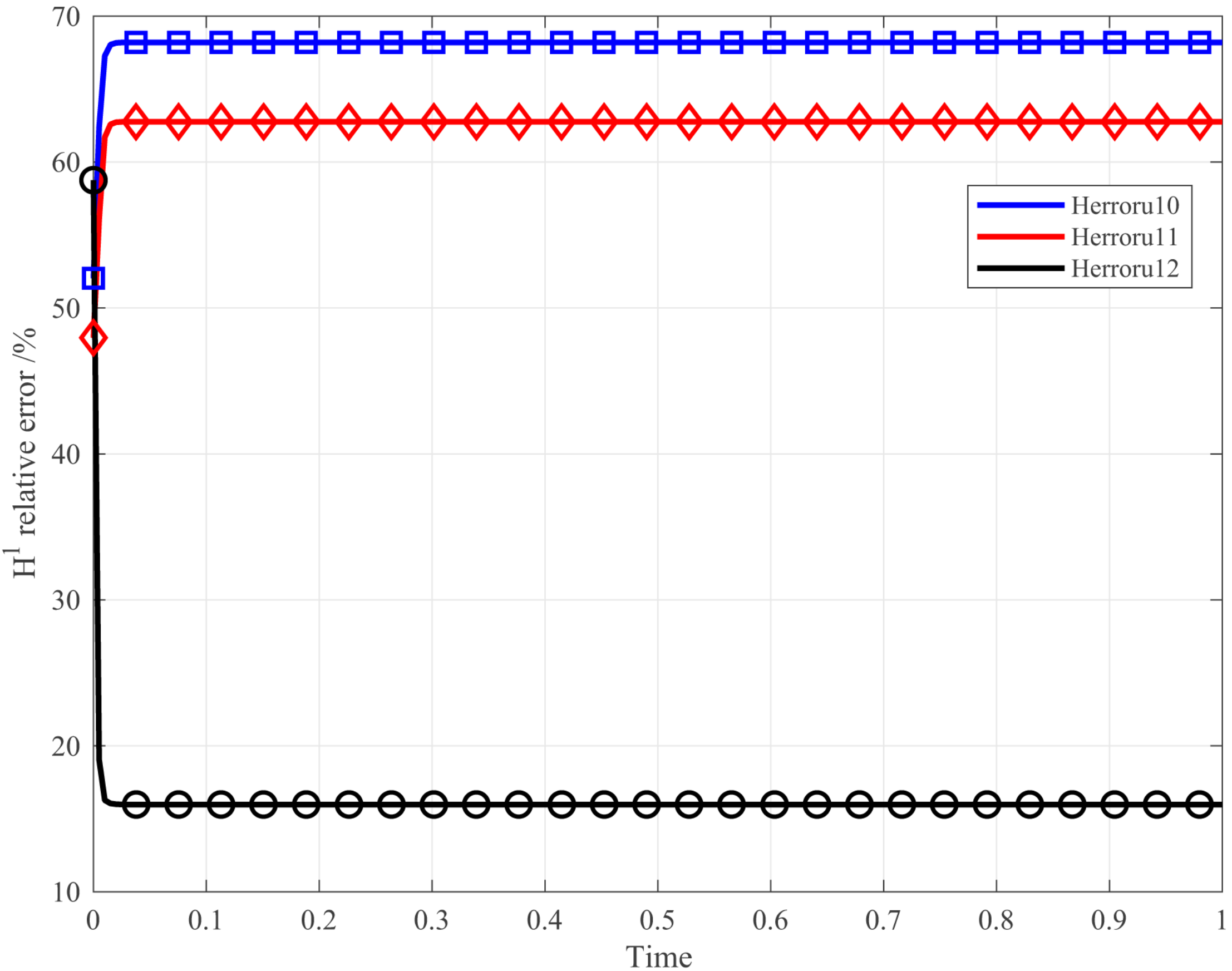}}
\subcaptionbox{\label{fig:cgd3derC}}
{\includegraphics[width=0.235\textwidth]{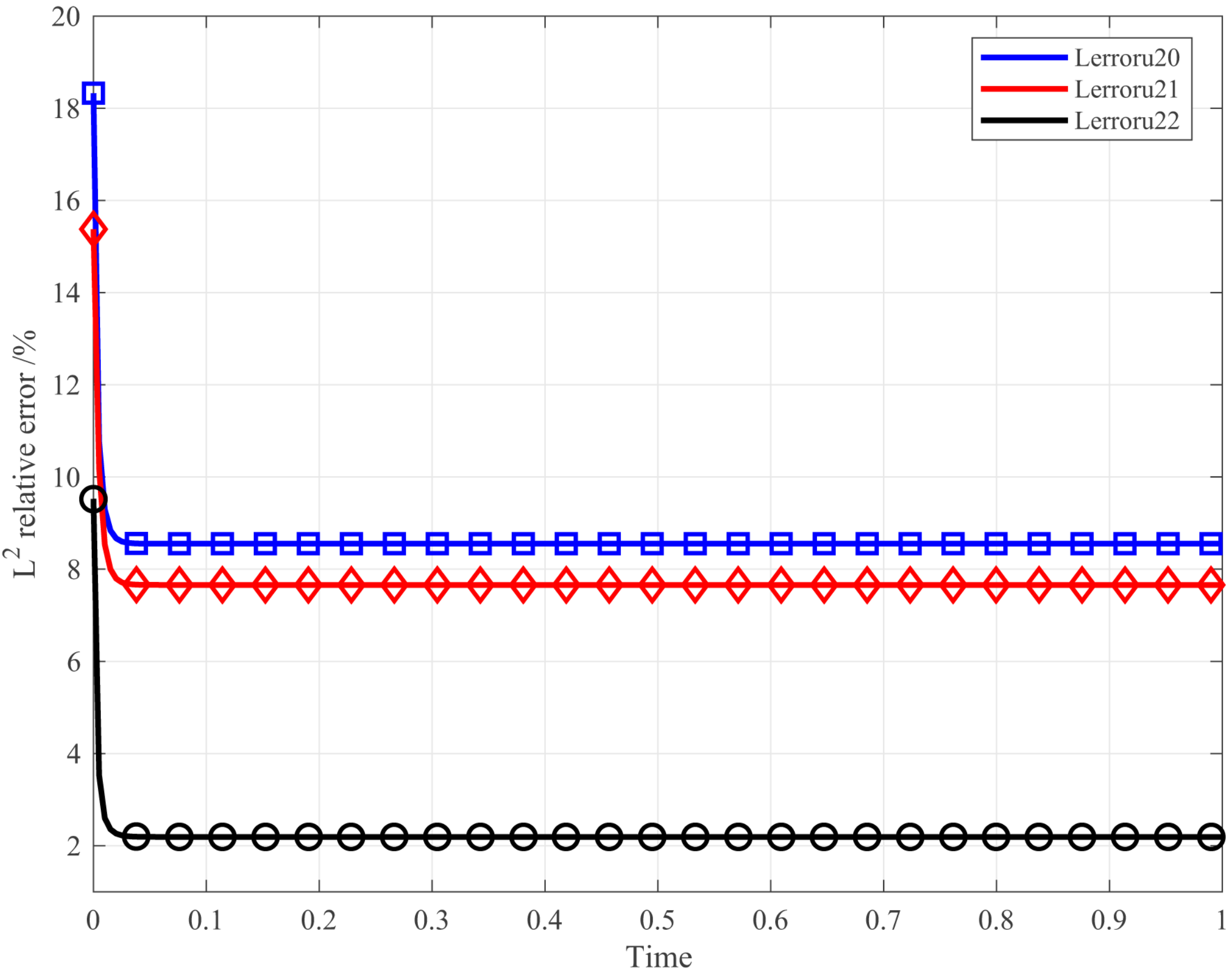}}
\subcaptionbox{\label{fig:cgd3derD}}
{\includegraphics[width=0.235\textwidth]{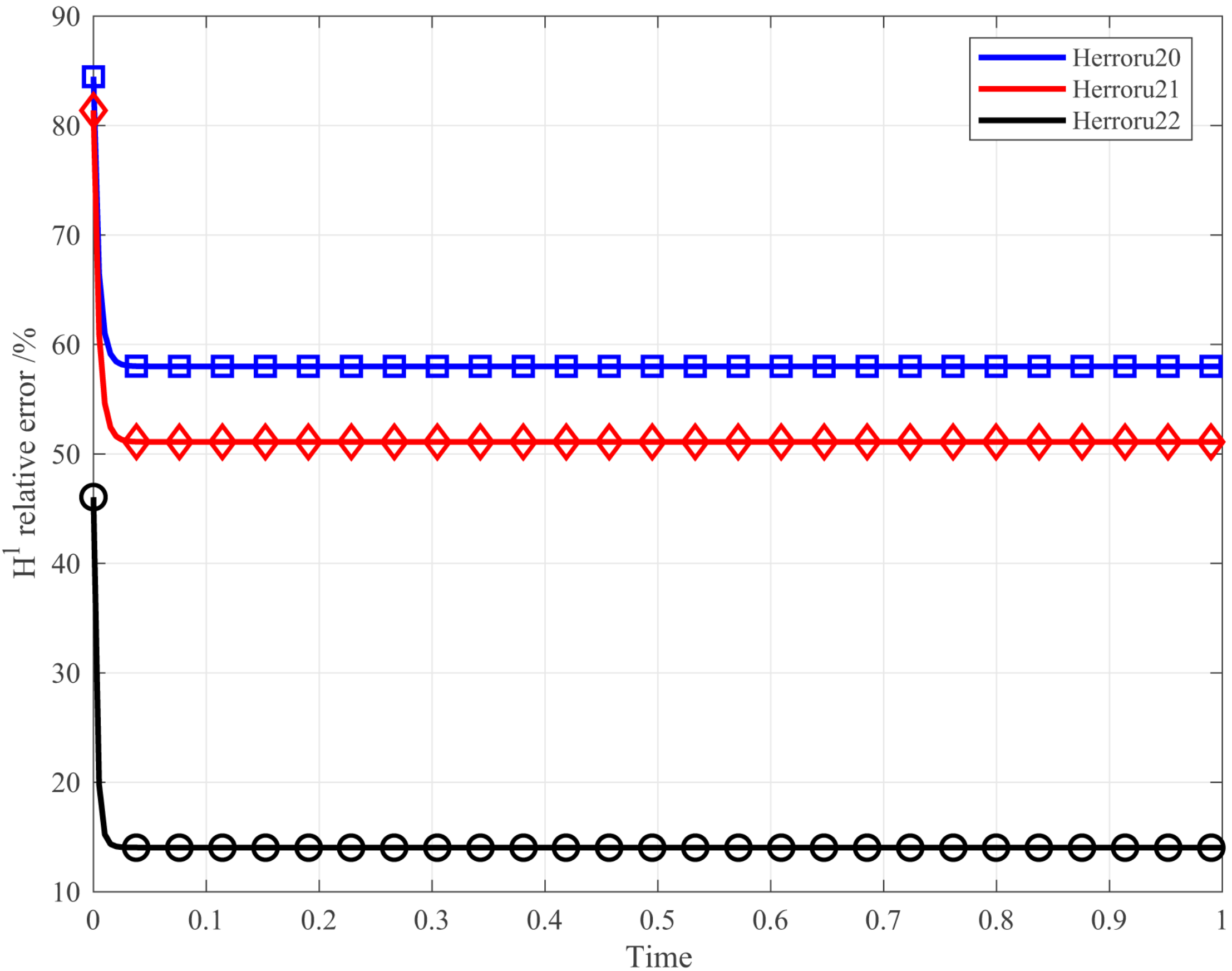}}
\caption{Evolution of the relative errors of the $L^{\scriptscriptstyle 2}$ norm and $H^{\scriptscriptstyle 1}$ seminorm over time: (a) $\text{Lerr1}$; (b) $\text{Herr1}$; (c) $\text{Lerr2}$; (d) $\text{Herr2}$.}\label{fig:Ercgd3d}
\end{figure}

From the mesh information in Table~\ref{tab:cgd3d}, it can be observed that the computational grid resource overhead required by the HOMS method is significantly smaller than that of the direct FEM, and its efficiency is also markedly superior to the FEM. As shown in Fig.~\ref{fig:cgd3du1} and Fig.~\ref{fig:cgd3du2}, the approximation accuracy of the HOMS solution is significantly better than that of both the homogenized solution and the FOMS solution. Only the HOMS solution effectively captures the local oscillatory behavior of the highly heterogeneous media. From Fig.~\ref{fig:Ercgd3d}, it can be observed that under the \( L^{\scriptscriptstyle 2} \) norm, the relative errors of \( u_{\scriptscriptstyle 1}^{\scriptscriptstyle(2\varepsilon)} \) and \( u_{\scriptscriptstyle 2}^{\scriptscriptstyle(2\varepsilon)} \) are only about 2\% and 2\%, respectively; under the \( H^{\scriptscriptstyle 1} \) seminorm, the relative errors of \( u_{\scriptscriptstyle 1}^{\scriptscriptstyle(2\varepsilon)} \) and \( u_{\scriptscriptstyle 2}^{\scriptscriptstyle(2\varepsilon)} \) are only approximately 17\% and 15\%, significantly lower than those of the homogenized solution and the FOMS solution, thus meeting the requirements for engineering computations. Furthermore, as shown in Fig.~\ref{fig:Ercgd3d}, the HOMS method exhibits excellent numerical stability over time and can be effectively applied to the computation of dynamic problem \eqref{eq:3} that evolves with time.

\section{\label{sec:6}Summary and Outlook}
\subsection{Summary of Research Findings}
This paper investigates multi-continuum problems in highly heterogeneous media, and develops corresponding multiscale computational models, multiscale algorithms, and convergence theories. The main achievements are as follows:
\begin{enumerate}
\item A high-precision HOMS framework has been established, defining auxiliary cell functions, deriving macroscopic homogenized equations and equivalent parameter calculation formulas, and further obtaining the HOMS asymptotic solution.  
\item Under the assumption of local equilibrium among the continua, the approximation properties of the HOMS solution to the original equation in the pointwise sense are analyzed, and the convergence order of the HOMS asymptotic solution in the integral sense is proved.
\item By integrating the finite element method, finite difference method, and interpolation method, a corresponding HOMS numerical algorithm has been developed. Numerical experiments were conducted, verifying the effectiveness of the established HOMS method and the necessity of introducing the second-order correction term.  
\end{enumerate}

In summary, the work in this paper provides a powerful numerical simulation tool for studying the physical and mechanical properties of highly heterogeneous media. Furthermore, the proposed method has broad applicability for the numerical solution of other multi-continuum problems.

\subsection{Prospects for Future Research Directions}
Multiscale analysis, as an interdisciplinary field, deeply integrates fundamental theories from materials science, mathematics, and mechanics, and is closely linked with engineering practice, exhibiting significant foundational, cross-disciplinary, and cutting-edge characteristics. Based on the research presented in this paper, the author believes the following directions warrant further in-depth exploration and refinement:
\begin{enumerate}
\item This paper mainly analyzes the coupling interaction of two-fluid continuous media. However, in practical scenarios such as hydrogeology and fractured rock masses in engineering, multi-continuum systems exist widely in nature. Therefore, future research can extend the existing model from two continua to multi-continua.
\item This paper focuses on linear multi-continuum problems. In practical applications, however, factors such as non-Darcy effects induced by high flow velocity, coupling between medium deformation and stress, and interfacial effects between multiphase fluids will lead to significant nonlinear characteristics. Linear models are difficult to accurately describe the above real physical mechanisms. Hence, it is necessary to further study nonlinear multi-continuum coupling problems in the future, and establish nonlinear governing equations and solution methods that are more consistent with actual engineering and natural scenarios.
\item The media considered in this paper are periodic media. In natural hydrogeological fracture networks and engineered porous media systems, affected by formation conditions, evolution processes, external disturbances and other factors, the pore distribution, fracture morphology and structural parameters of the media usually exhibit irregular random distribution characteristics. Random media problems will be further investigated in future work.
\item This paper focuses on the analysis of two-scale fluid-medium systems. In practical engineering and natural scenarios, however, fluid flow often presents three-scale coupling characteristics involving micro-pores, meso-structures, and macro-reservoirs/aquifers. A two-scale framework cannot fully reflect the multi-scale seepage mechanism of fluids in real scenarios. In the future, the research scale can be extended to three-scale fluid-medium systems, and a three-scale coupled fluid pressure solution model covering micro-meso-macro scales can be established to better meet the practical demands of various fields such as hydrogeology and engineering fractures.
\end{enumerate}


\appendix
\section{\label{app}appendix}
The specific forms of \(f_{\scriptscriptstyle 10}(\boldsymbol{x},\boldsymbol{y},t)\) and \(f_{\scriptscriptstyle 20}(\boldsymbol{x},\boldsymbol{y},t)\) are given as follows:
\begin{equation}
\begin{aligned}
&f_{\scriptscriptstyle 10}(\boldsymbol{x},\boldsymbol{y},t) = \bigl(c_{\scriptscriptstyle 1}^{\scriptscriptstyle *} - c_{\scriptscriptstyle 1}\bigr)\frac{\partial u_{\scriptscriptstyle 1}^{\scriptscriptstyle (0)}(\boldsymbol{x},t)}{\partial t} \\
&- \bigl(Q^{\scriptscriptstyle*} + Q_{\scriptscriptstyle 1}\bigl(M_{\scriptscriptstyle 1} + M_{\scriptscriptstyle 2}\bigr)\bigr)\bigl(u_{\scriptscriptstyle 2}^{\scriptscriptstyle(0)}(\boldsymbol{x},t) - u_{\scriptscriptstyle 1}^{\scriptscriptstyle (0)}(\boldsymbol{x},t)\bigr) \\
& + \biggl(\kappa_{\scriptscriptstyle 1,ij} - \kappa_{\scriptscriptstyle 1,ij}^{\scriptscriptstyle *} + \kappa_{\scriptscriptstyle 1,i\alpha_1}\frac{\partial N_{\scriptscriptstyle 1}^{\scriptscriptstyle j}}{\partial y_{\alpha_1}} + \frac{\partial\bigl(\kappa_{\scriptscriptstyle 1,\alpha_1 i}N_{\scriptscriptstyle 1}^{\scriptscriptstyle j}\bigr)}{\partial y_{\alpha_1}}\biggr)\frac{\partial^{\scriptscriptstyle 2} u_{\scriptscriptstyle 1}^{\scriptscriptstyle (0)}(\boldsymbol{x},t)}{\partial x_i\partial x_j} \\
& + \biggl(\bar{K}_{\scriptscriptstyle 2i}^{\scriptscriptstyle 1*} - Q_{\scriptscriptstyle 1} N_{\scriptscriptstyle 1}^{\scriptscriptstyle i} - \kappa_{\scriptscriptstyle 1,i\alpha_1}\frac{\partial M_{\scriptscriptstyle 1}}{\partial y_j} - \frac{\partial\bigl(\kappa_{\scriptscriptstyle 1,ji} M_{\scriptscriptstyle 1}\bigr)}{\partial y_j}\biggr)\frac{\partial u_{\scriptscriptstyle1}^{\scriptscriptstyle(0)}(\boldsymbol{x},t)}{\partial x_i} \\
& + \biggl(Q_{\scriptscriptstyle 1} N_{\scriptscriptstyle 2}^{\scriptscriptstyle i} - \bar{K}_{\scriptscriptstyle 1i}^{\scriptscriptstyle 1*} + \kappa_{\scriptscriptstyle 1,i\alpha_1}\frac{\partial M_{\scriptscriptstyle 1}}{\partial y_j} + \frac{\partial\bigl(\kappa_{\scriptscriptstyle 1,ji}M_{\scriptscriptstyle 1}\bigr)}{\partial y_j}\biggr)\frac{\partial u_{\scriptscriptstyle 2}^{\scriptscriptstyle(0)}(\boldsymbol{x},t)}{\partial x_i}
\end{aligned}
\label{eq:f10}
\end{equation}

\begin{equation}
\begin{aligned}
&f_{\scriptscriptstyle 20}(\boldsymbol{x},\boldsymbol{y},t) = \bigl(c_{\scriptscriptstyle 2}^{\scriptscriptstyle *} - c_{\scriptscriptstyle 2} \bigr)\frac{\partial u_{\scriptscriptstyle 1}^{\scriptscriptstyle (0)}(\boldsymbol{x},t)}{\partial t} \\
& - \bigl(Q^* + Q_{\scriptscriptstyle 2}\bigl(M_{\scriptscriptstyle 1}+ M_{\scriptscriptstyle 2}\bigr)\bigl(u_{\scriptscriptstyle 1}^{\scriptscriptstyle(0)}(\boldsymbol{x},t) - u_{\scriptscriptstyle 2}^{\scriptscriptstyle(0)}(\boldsymbol{x},t)\bigr) \\
& + \biggl(\kappa_{\scriptscriptstyle 2,ij} - \kappa_{\scriptscriptstyle 2,ij}^{\scriptscriptstyle*} + \kappa_{\scriptscriptstyle 2,i\alpha_1} \frac{\partial N_{\scriptscriptstyle 2}^{\scriptscriptstyle j}}{\partial y_{\alpha_1}} + \frac{\partial\bigl(\kappa_{\scriptscriptstyle 2,\alpha_1 i} N_{\scriptscriptstyle 2}^{\scriptscriptstyle j}\bigr)}{\partial y_{\alpha_1}}\biggr)\frac{\partial^{\scriptscriptstyle 2} u_{\scriptscriptstyle 2}^{\scriptscriptstyle (0)}(\boldsymbol{x},t)}{\partial x_i\partial x_j} \\
& + \biggl(\bar{K}_{\scriptscriptstyle 2i}^{\scriptscriptstyle 2*} - Q_{\scriptscriptstyle 2} N_{\scriptscriptstyle 2}^{\scriptscriptstyle i} - \kappa_{\scriptscriptstyle 2,i\alpha_1} \frac{\partial M_{\scriptscriptstyle 2}}{\partial y_j} - \frac{\partial \bigl(\kappa_{\scriptscriptstyle 2,ji} M_{\scriptscriptstyle 2}\bigr)}{\partial y_j}\biggr)\frac{\partial u_{\scriptscriptstyle 2}^{\scriptscriptstyle (0)}(\boldsymbol{x},t)}{\partial x_i} \\
& + \biggl(Q_{\scriptscriptstyle 2} N_{\scriptscriptstyle 1}^{\scriptscriptstyle i} - \bar{K}_{\scriptscriptstyle 1i}^{\scriptscriptstyle 2*} + \kappa_{\scriptscriptstyle 2,i\alpha_1} \frac{\partial M_{\scriptscriptstyle 2}}{\partial y_j} + \frac{\partial\bigl(\kappa_{\scriptscriptstyle 2,ji} M_{\scriptscriptstyle 2}\bigr)}{\partial y_j}\biggr)\frac{\partial u_{\scriptscriptstyle 1}^{\scriptscriptstyle (0)}(\boldsymbol{x},t)}{\partial x_i}
\end{aligned}
\label{eq:f20}
\end{equation}

The specific forms of \(\tilde{f}_{\scriptscriptstyle 1}(\boldsymbol{x},\boldsymbol{y},t)\) and \(\tilde{f}_{\scriptscriptstyle 2}(\boldsymbol{x},\boldsymbol{y},t)\) are given as follows:
\begin{equation}
\begin{aligned}
&\tilde{f}_{\scriptscriptstyle 1}(\boldsymbol{x},\boldsymbol{y},t) = \bigg(
\kappa_{\scriptscriptstyle 1,ij}(\boldsymbol{y}) N_{\scriptscriptstyle 1}^{\scriptscriptstyle \alpha_1}(\boldsymbol{y}) \frac{\partial^{\scriptscriptstyle 3} u_{\scriptscriptstyle 1}^{\scriptscriptstyle(0)}(\boldsymbol{x},t)}{\partial x_i \partial x_j \partial x_{\scriptscriptstyle \alpha_1}} \\
& + M_{\scriptscriptstyle 1}(\boldsymbol{y}) \big( \frac{\partial^{\scriptscriptstyle 2} u_{\scriptscriptstyle 2}^{\scriptscriptstyle(0)}(\boldsymbol{x},t)}{\partial x_i \partial x_j} - \frac{\partial^{\scriptscriptstyle 2} u_{\scriptscriptstyle 1}^{\scriptscriptstyle(0)}(\boldsymbol{x},t)}{\partial x_i \partial x_j} \big) \\
& - c_{\scriptscriptstyle 1}(\boldsymbol{y}) M_{\scriptscriptstyle 1}(\boldsymbol{y}) \big( \frac{\partial u_{\scriptscriptstyle 2}^{\scriptscriptstyle(0)}(\boldsymbol{x},t)}{\partial t} - \frac{\partial u_{\scriptscriptstyle 1}^{\scriptscriptstyle(0)}(\boldsymbol{x},t)}{\partial t} \big) \\
& - c_{\scriptscriptstyle 1}(\boldsymbol{y}) N_{\scriptscriptstyle 1}^{\scriptscriptstyle \alpha_1}(\boldsymbol{y}) \frac{\partial^{\scriptscriptstyle 2} u_{\scriptscriptstyle 1}^{\scriptscriptstyle(0)}(\boldsymbol{x},t)}{\partial t \partial x_{\scriptscriptstyle \alpha_1}} + Q_{\scriptscriptstyle 1}(\boldsymbol{y}) O(1)
\bigg)
\end{aligned}
\label{eq:f1}
\end{equation}

\begin{equation}
\begin{aligned}
&\tilde{f}_{\scriptscriptstyle 2}(\boldsymbol{x},\boldsymbol{y},t) = \bigg(
\kappa_{\scriptscriptstyle 2,ij}(\boldsymbol{y}) N_{\scriptscriptstyle 2}^{\scriptscriptstyle \alpha_1}(\boldsymbol{y}) \frac{\partial^{\scriptscriptstyle 3} u_{\scriptscriptstyle 2}^{\scriptscriptstyle (0)}(\boldsymbol{x},t)}{\partial x_i \partial x_j \partial x_{\alpha_1}} \\
& + M_{\scriptscriptstyle 2}(\boldsymbol{y}) \big( \frac{\partial^{\scriptscriptstyle 2} u_{\scriptscriptstyle 1}^{\scriptscriptstyle(0)}(\boldsymbol{x},t)}{\partial x_i \partial x_j} - \frac{\partial^{\scriptscriptstyle2} u_{\scriptscriptstyle 2}^{\scriptscriptstyle(0)}(\boldsymbol{x},t)}{\partial x_i \partial x_j} \big) \\
& - c_{\scriptscriptstyle 2}(\boldsymbol{y}) M_{\scriptscriptstyle 2}(\boldsymbol{y}) \big( \frac{\partial u_{\scriptscriptstyle 1}^{\scriptscriptstyle(0)}(\boldsymbol{x},t)}{\partial t} - \frac{\partial u_{\scriptscriptstyle 2}^{\scriptscriptstyle(0)}(\boldsymbol{x},t)}{\partial t} \big) \\
& - c_{\scriptscriptstyle 2}(\boldsymbol{y}) N_{\scriptscriptstyle 2}^{\alpha_{\scriptscriptstyle 1}}(\boldsymbol{y}) \frac{\partial^{\scriptscriptstyle2} u_{\scriptscriptstyle 2}^{\scriptscriptstyle(0)}(\boldsymbol{x},t)}{\partial t \partial x_{\alpha_{\scriptscriptstyle 1}}} + Q_{\scriptscriptstyle 2}(\boldsymbol{y}) O(1)
\bigg)
\end{aligned}
\label{eq:f2_tilde}
\end{equation}

The specific forms of \(\tilde{\psi}_{\scriptscriptstyle 1}(\boldsymbol{x})\) and \(\tilde{\psi}_{\scriptscriptstyle 2}(\boldsymbol{x})\) are given as follows:
\begin{equation}
\! \tilde{\psi}_{\scriptscriptstyle 1}(\boldsymbol{x}) \!= \!-N_{\scriptscriptstyle 1}^{\scriptscriptstyle\! \alpha_{\scriptscriptstyle 1}}\!(\boldsymbol{y}) \dfrac{\partial g_{\scriptscriptstyle 1}\!(\boldsymbol{x})}{\partial x_{\scriptscriptstyle \alpha_1}} \!-\! M_{\scriptscriptstyle 1}\!(\boldsymbol{y})\!\bigl(g_{\scriptscriptstyle 2}(\boldsymbol{x}) \!-\! g_{\scriptscriptstyle 1}(\boldsymbol{x})\bigr)
\label{eq:psi_tilde_1}
\end{equation}
\begin{equation}
\! \tilde{\psi}_{\scriptscriptstyle 2}(\boldsymbol{x})\! =\! -\!N_{\scriptscriptstyle 2}^{\scriptscriptstyle\! \alpha_{\scriptscriptstyle 1}}\!
(\boldsymbol{y}) \dfrac{\partial g_{\scriptscriptstyle 2}(\boldsymbol{x})}{\partial x_{\scriptscriptstyle \alpha_1}} \!-\! M_{\scriptscriptstyle 2}\!(\boldsymbol{y})\!\bigl(g_{\scriptscriptstyle 1}(\boldsymbol{x})\! -\! g_{\scriptscriptstyle 2}(\boldsymbol{x})\bigr)
\label{eq:psi_tilde_2}
\end{equation}

The specific forms of \({f}_{\scriptscriptstyle 1}(\boldsymbol{x},\boldsymbol{y},t)\) and \({f}_{\scriptscriptstyle 2}(\boldsymbol{x},\boldsymbol{y},t)\) are given as follows:
\begin{equation}
\begin{aligned}
&f_{\scriptscriptstyle 1} = \bigg(
Q_{\scriptscriptstyle 1} \Big(G_{\scriptscriptstyle 1} \frac{\partial u_{\scriptscriptstyle 1}^{\scriptscriptstyle(0)}}{\partial t} \!+\! N_{\scriptscriptstyle 1}^{\scriptscriptstyle \alpha_1\alpha_2} \frac{\partial^{\scriptscriptstyle 2} u_{\scriptscriptstyle 1}^{\scriptscriptstyle (0)}}{\partial x_{\scriptscriptstyle \alpha_1}\partial x_{\scriptscriptstyle \alpha_2}} + \left(C_{\scriptscriptstyle 1}^{\scriptscriptstyle \alpha_1} \! - \!F_{\scriptscriptstyle 2}^{\scriptscriptstyle \alpha_1}\right) \frac{\partial u_{\scriptscriptstyle 1}^{\scriptscriptstyle (0)}}{\partial x_{\scriptscriptstyle \alpha_1}} \\
&  + \left(F_{\scriptscriptstyle 1}^{\scriptscriptstyle \alpha_1} - C_{\scriptscriptstyle 2}^{\scriptscriptstyle \alpha_1}\right) \frac{\partial u_{\scriptscriptstyle 2}^{\scriptscriptstyle (0)}}{\partial x_{\scriptscriptstyle \alpha_1}} + (K_{\scriptscriptstyle 1} + K_{\scriptscriptstyle 2})(u_{\scriptscriptstyle 2}^{\scriptscriptstyle (0)} - u_{\scriptscriptstyle 1}^{\scriptscriptstyle (0)}) - G_{\scriptscriptstyle 2} \frac{\partial u_{\scriptscriptstyle 2}^{\scriptscriptstyle (0)}}{\partial t}  \\
& - N_{\scriptscriptstyle 2}^{\scriptscriptstyle \alpha_1\alpha_2} \frac{\partial^{\scriptscriptstyle 2} u_{\scriptscriptstyle 2}^{\scriptscriptstyle (0)}}{\partial x_{\scriptscriptstyle \alpha_1}\partial x_{\scriptscriptstyle \alpha_2}}\Big) 
\! -\!  c_{\scriptscriptstyle 1} \Big(N_{\scriptscriptstyle 1}^{\scriptscriptstyle \alpha_1} \frac{\partial^{\scriptscriptstyle 2} u_{\scriptscriptstyle 1}^{\scriptscriptstyle (0)}}{\partial t\partial x_{\scriptscriptstyle \alpha_1}} + M_{\scriptscriptstyle 1} \big( \frac{\partial u_{\scriptscriptstyle 2}^{\scriptscriptstyle (0)}}{\partial t} \!- \! \frac{\partial u_{\scriptscriptstyle 1}^{\scriptscriptstyle (0)}}{\partial t} \big) \! \Big) \\
& + \kappa_{\scriptscriptstyle 1,ij} \Big(N_{\scriptscriptstyle 1}^{\scriptscriptstyle \alpha_1} \frac{\partial^{\scriptscriptstyle 3} u_{\scriptscriptstyle 1}^{\scriptscriptstyle (0)}}{\partial x_{\scriptscriptstyle i}\partial x_{\scriptscriptstyle j}\partial x_{\scriptscriptstyle \alpha_1}} + M_{\scriptscriptstyle 1} \big( \frac{\partial^{\scriptscriptstyle 2} u_{\scriptscriptstyle 2}^{\scriptscriptstyle (0)}}{\partial x_{\scriptscriptstyle i}\partial x_{\scriptscriptstyle j}} \!- \! \frac{\partial^{\scriptscriptstyle 2} u_{\scriptscriptstyle 1}^{\scriptscriptstyle (0)}}{\partial x_{\scriptscriptstyle i}\partial x_{\scriptscriptstyle j}} \big) \\
&+ \frac{\partial G_{\scriptscriptstyle 1}}{\partial y_{\scriptscriptstyle j}} \frac{\partial^{\scriptscriptstyle 2} u_{\scriptscriptstyle 1}^{\scriptscriptstyle (0)}}{\partial x_{\scriptscriptstyle i}\partial t}   + \frac{\partial N_{\scriptscriptstyle 1}^{\scriptscriptstyle \alpha_1\alpha_2}}{\partial y_{\scriptscriptstyle j}} \frac{\partial^{\scriptscriptstyle 3} u_{\scriptscriptstyle 1}^{\scriptscriptstyle (0)}}{\partial x_{\scriptscriptstyle i}\partial x_{\scriptscriptstyle \alpha_1}\partial x_{\scriptscriptstyle \alpha_2}} + \frac{\partial C_{\scriptscriptstyle 1}^{\scriptscriptstyle \alpha_1}}{\partial y_{\scriptscriptstyle j}} \frac{\partial^{\scriptscriptstyle 2} u_{\scriptscriptstyle 1}^{\scriptscriptstyle (0)}}{\partial x_{\scriptscriptstyle i}\partial x_{\scriptscriptstyle \alpha_1}} \\
&\!+ \! \frac{\partial F_{\scriptscriptstyle 1}^{\scriptscriptstyle \alpha_1}}{\partial y_{\scriptscriptstyle j}} \frac{\partial^{\scriptscriptstyle 2} u_{\scriptscriptstyle 2}^{\scriptscriptstyle (0)}}{\partial x_{\scriptscriptstyle i}\partial x_{\scriptscriptstyle \alpha_1}}  
\! + \!\frac{\partial K_{\scriptscriptstyle 1}}{\partial y_{\scriptscriptstyle j}} \! \big( \frac{\partial u_{\scriptscriptstyle 2}^{\scriptscriptstyle (0)}}{\partial x_{\scriptscriptstyle i}} \!-\!  \frac{\partial u_{\scriptscriptstyle 1}^{\scriptscriptstyle (0)}}{\partial x_{\scriptscriptstyle i}} \big) \! \Big) \! + \! \frac{\partial(\kappa_{\scriptscriptstyle 1,ij} G_{\scriptscriptstyle 1})}{\partial y_{\scriptscriptstyle i}} \frac{\partial^{\scriptscriptstyle 2} u_{\scriptscriptstyle 1}^{\scriptscriptstyle (0)}}{\partial x_{\scriptscriptstyle j}\partial t} \\ 
&+\frac{\partial(\kappa_{\scriptscriptstyle 1,ij} N_{\scriptscriptstyle 1}^{\scriptscriptstyle \alpha_1\alpha_2})}{\partial y_{\scriptscriptstyle i}} \frac{\partial^{\scriptscriptstyle 3} u_{\scriptscriptstyle 1}^{\scriptscriptstyle (0)}}{\partial x_{\scriptscriptstyle j}\partial x_{\scriptscriptstyle \alpha_1}\partial x_{\scriptscriptstyle \alpha_2}} + \frac{\partial(\kappa_{\scriptscriptstyle 1,ij} C_{\scriptscriptstyle 1}^{\scriptscriptstyle \alpha_1})}{\partial y_{\scriptscriptstyle i}} \frac{\partial^{\scriptscriptstyle 2} u_{\scriptscriptstyle 1}^{\scriptscriptstyle (0)}}{\partial x_{\scriptscriptstyle j}\partial x_{\scriptscriptstyle \alpha_1}} \\
& + \frac{\partial(\kappa_{\scriptscriptstyle 1,ij} F_{\scriptscriptstyle 1}^{\scriptscriptstyle \alpha_1})}{\partial y_{\scriptscriptstyle i}} \frac{\partial^{\scriptscriptstyle 2} u_{\scriptscriptstyle 2}^{\scriptscriptstyle (0)}}{\partial x_{\scriptscriptstyle j}\partial x_{\scriptscriptstyle \alpha_1}} + \frac{\partial(\kappa_{\scriptscriptstyle 1,ij} K_{\scriptscriptstyle 1})}{\partial y_{\scriptscriptstyle i}}\big(\frac{\partial u_{\scriptscriptstyle 2}^{\scriptscriptstyle (0)}}{\partial x_{\scriptscriptstyle j}} - \frac{\partial u_{\scriptscriptstyle 1}^{\scriptscriptstyle (0)}}{\partial x_{\scriptscriptstyle j}}\big) \\
& - \varepsilon c_{\scriptscriptstyle 1}\!\Big(G_{\scriptscriptstyle 1} \frac{\partial^{\scriptscriptstyle 2} u_{\scriptscriptstyle 1}^{\scriptscriptstyle (0)}}{\partial^{\scriptscriptstyle 2} t} + N_{\scriptscriptstyle 1}^{\scriptscriptstyle \alpha_1\alpha_2} \frac{\partial^{\scriptscriptstyle 3} u_{\scriptscriptstyle 1}^{\scriptscriptstyle (0)}}{\partial t\partial x_{\scriptscriptstyle \alpha_1}\partial x_{\scriptscriptstyle \alpha_2}} + C_{\scriptscriptstyle 1}^{\scriptscriptstyle \alpha_1} \frac{\partial^{\scriptscriptstyle 2} u_{\scriptscriptstyle 1}^{\scriptscriptstyle (0)}}{\partial t\partial x_{\scriptscriptstyle \alpha_1}}\\
&+\! F_{\scriptscriptstyle 1}^{\scriptscriptstyle \alpha_{\scriptscriptstyle 1}} \frac{\partial^{\scriptscriptstyle 2} u_{\scriptscriptstyle 2}^{\scriptscriptstyle (0)}}{\partial t\partial x_{\scriptscriptstyle \alpha_{\scriptscriptstyle 1}}} \!+\! K_{\scriptscriptstyle 1} \big(\frac{\partial u_{\scriptscriptstyle 2}^{\scriptscriptstyle (0)}}{\partial t} \!-\! \frac{\partial u_{\scriptscriptstyle 1}^{\scriptscriptstyle (0)}}{\partial t}\big) \! \Big)  \!+\! \varepsilon \kappa_{\scriptscriptstyle 1,ij}\!\Big(G_{\scriptscriptstyle 1} \frac{\partial^{\scriptscriptstyle 3} u_{\scriptscriptstyle 1}^{\scriptscriptstyle (0)}}{\partial x_{\scriptscriptstyle i}\partial x_{\scriptscriptstyle j}\partial t} \\
&\!+\!  C_{\scriptscriptstyle 1}^{\scriptscriptstyle \alpha_{\scriptscriptstyle 1}} \frac{\partial^{\scriptscriptstyle 3} u_{\scriptscriptstyle 1}^{\scriptscriptstyle (0)}}{\partial x_{\scriptscriptstyle i}\partial x_{\scriptscriptstyle j}\partial x_{\scriptscriptstyle \alpha_{\scriptscriptstyle 1}}}
\!+\! F_{\scriptscriptstyle 1}^{\scriptscriptstyle \alpha_{\scriptscriptstyle 1}} \frac{\partial^{\scriptscriptstyle 3} u_{\scriptscriptstyle 2}^{\scriptscriptstyle (0)}}{\partial x_{\scriptscriptstyle i}\partial x_{\scriptscriptstyle j}\partial x_{\scriptscriptstyle \alpha_{\scriptscriptstyle 1}}} \!+\! Q_{\scriptscriptstyle 1} O(1)\\
&\!+\!N_{\scriptscriptstyle 1}^{\scriptscriptstyle \alpha_{\scriptscriptstyle 1}\alpha_{\scriptscriptstyle 2}} \frac{\partial^{\scriptscriptstyle 4} u_{\scriptscriptstyle 1}^{\scriptscriptstyle (0)}}{\partial x_{\scriptscriptstyle i}\partial x_{\scriptscriptstyle j}\partial x_{\scriptscriptstyle \alpha_{\scriptscriptstyle 1}}\partial x_{\scriptscriptstyle \alpha_{\scriptscriptstyle 2}}} 
+\! K_{\scriptscriptstyle 1} \big( \frac{\partial^{\scriptscriptstyle 2} u_{\scriptscriptstyle 2}^{\scriptscriptstyle (0)}}{\partial x_{\scriptscriptstyle i}\partial x_{\scriptscriptstyle j}} -  \frac{\partial^{\scriptscriptstyle 2} u_{\scriptscriptstyle 1}^{\scriptscriptstyle (0)}}{\partial x_{\scriptscriptstyle i}\partial x_{\scriptscriptstyle j}}\big) \Big)
\bigg)
\label{eq:f1_full}
\end{aligned}
\end{equation}

\begin{equation}
\begin{aligned}
&f_{\scriptscriptstyle 2} = \bigg(
Q_{\scriptscriptstyle 2} \Big(G_{\scriptscriptstyle 2} \frac{\partial u_{\scriptscriptstyle 2}^{\scriptscriptstyle(0)}}{\partial t} \!+\!N_{\scriptscriptstyle 2}^{\scriptscriptstyle \alpha_1\alpha_2} \frac{\partial^{\scriptscriptstyle 2} u_{\scriptscriptstyle 2}^{\scriptscriptstyle (0)}}{\partial x_{\scriptscriptstyle \alpha_1}\partial x_{\scriptscriptstyle \alpha_2}} + \left(C_{\scriptscriptstyle 2}^{\scriptscriptstyle \alpha_1} \! - \!F_{\scriptscriptstyle 1}^{\scriptscriptstyle \alpha_1}\right) \frac{\partial u_{\scriptscriptstyle 2}^{\scriptscriptstyle (0)}}{\partial x_{\scriptscriptstyle \alpha_1}} \\
&  + \left(F_{\scriptscriptstyle 2}^{\scriptscriptstyle \alpha_1} - C_{\scriptscriptstyle 1}^{\scriptscriptstyle \alpha_1}\right) \frac{\partial u_{\scriptscriptstyle 2}^{\scriptscriptstyle (0)}}{\partial x_{\scriptscriptstyle \alpha_1}} + (K_{\scriptscriptstyle 1} + K_{\scriptscriptstyle 2})(u_{\scriptscriptstyle 1}^{\scriptscriptstyle (0)} - u_{\scriptscriptstyle 2}^{\scriptscriptstyle (0)}) - G_{\scriptscriptstyle 1} \frac{\partial u_{\scriptscriptstyle 1}^{\scriptscriptstyle (0)}}{\partial t}  \\
& - N_{\scriptscriptstyle 1}^{\scriptscriptstyle \alpha_1\alpha_2} \frac{\partial^{\scriptscriptstyle 2} u_{\scriptscriptstyle 1}^{\scriptscriptstyle (0)}}{\partial x_{\scriptscriptstyle \alpha_1}\partial x_{\scriptscriptstyle \alpha_2}}\Big) 
\! -\!  c_{\scriptscriptstyle 2} \Big(N_{\scriptscriptstyle 2}^{\scriptscriptstyle \alpha_1} \frac{\partial^{\scriptscriptstyle 2} u_{\scriptscriptstyle 2}^{\scriptscriptstyle (0)}}{\partial t\partial x_{\scriptscriptstyle \alpha_1}} + M_{\scriptscriptstyle 2} \big( \frac{\partial u_{\scriptscriptstyle 1}^{\scriptscriptstyle (0)}}{\partial t} \!- \! \frac{\partial u_{\scriptscriptstyle 2}^{\scriptscriptstyle (0)}}{\partial t} \big) \! \Big) \\
& + \kappa_{\scriptscriptstyle 2,ij} \Big(N_{\scriptscriptstyle 2}^{\scriptscriptstyle \alpha_1} \frac{\partial^{\scriptscriptstyle 3} u_{\scriptscriptstyle 2}^{\scriptscriptstyle (0)}}{\partial x_{\scriptscriptstyle i}\partial x_{\scriptscriptstyle j}\partial x_{\scriptscriptstyle \alpha_1}} + M_{\scriptscriptstyle 2} \big( \frac{\partial^{\scriptscriptstyle 2} u_{\scriptscriptstyle 1}^{\scriptscriptstyle (0)}}{\partial x_{\scriptscriptstyle i}\partial x_{\scriptscriptstyle j}} \!- \! \frac{\partial^{\scriptscriptstyle 2} u_{\scriptscriptstyle 2}^{\scriptscriptstyle (0)}}{\partial x_{\scriptscriptstyle i}\partial x_{\scriptscriptstyle j}} \big) \\
&+ \frac{\partial G_{\scriptscriptstyle 2}}{\partial y_{\scriptscriptstyle j}} \frac{\partial^{\scriptscriptstyle 2} u_{\scriptscriptstyle 2}^{\scriptscriptstyle (0)}}{\partial x_{\scriptscriptstyle i}\partial t}   + \frac{\partial N_{\scriptscriptstyle 2}^{\scriptscriptstyle \alpha_1\alpha_2}}{\partial y_{\scriptscriptstyle j}} \frac{\partial^{\scriptscriptstyle 3} u_{\scriptscriptstyle 2}^{\scriptscriptstyle (0)}}{\partial x_{\scriptscriptstyle i}\partial x_{\scriptscriptstyle \alpha_1}\partial x_{\scriptscriptstyle \alpha_2}} + \frac{\partial C_{\scriptscriptstyle 2}^{\scriptscriptstyle \alpha_1}}{\partial y_{\scriptscriptstyle j}} \frac{\partial^{\scriptscriptstyle 2} u_{\scriptscriptstyle 2}^{\scriptscriptstyle (0)}}{\partial x_{\scriptscriptstyle i}\partial x_{\scriptscriptstyle \alpha_1}} \\
&+ \! \frac{\partial F_{\scriptscriptstyle 2}^{\scriptscriptstyle \alpha_1}}{\partial y_{\scriptscriptstyle j}} \frac{\partial^{\scriptscriptstyle 2} u_{\scriptscriptstyle 1}^{\scriptscriptstyle (0)}}{\partial x_{\scriptscriptstyle i}\partial x_{\scriptscriptstyle \alpha_1}}  
\! + \!\frac{\partial K_{\scriptscriptstyle 2}}{\partial y_{\scriptscriptstyle j}} \! \big( \frac{\partial u_{\scriptscriptstyle 1}^{\scriptscriptstyle (0)}}{\partial x_{\scriptscriptstyle i}} \!-\!  \frac{\partial u_{\scriptscriptstyle 2}^{\scriptscriptstyle (0)}}{\partial x_{\scriptscriptstyle i}} \big) \! \Big) \! + \! \frac{\partial(\kappa_{\scriptscriptstyle 2,ij} G_{\scriptscriptstyle 2})}{\partial y_{\scriptscriptstyle i}} \frac{\partial^{\scriptscriptstyle 2} u_{\scriptscriptstyle 2}^{\scriptscriptstyle (0)}}{\partial x_{\scriptscriptstyle j}\partial t} \\ &+\frac{\partial(\kappa_{\scriptscriptstyle 2,ij} N_{\scriptscriptstyle 2}^{\scriptscriptstyle \alpha_1\alpha_2})}{\partial y_{\scriptscriptstyle i}} \frac{\partial^{\scriptscriptstyle 3} u_{\scriptscriptstyle 2}^{\scriptscriptstyle (0)}}{\partial x_{\scriptscriptstyle j}\partial x_{\scriptscriptstyle \alpha_1}\partial x_{\scriptscriptstyle \alpha_2}} + \frac{\partial(\kappa_{\scriptscriptstyle 2,ij} C_{\scriptscriptstyle 2}^{\scriptscriptstyle \alpha_1})}{\partial y_{\scriptscriptstyle i}} \frac{\partial^{\scriptscriptstyle 2} u_{\scriptscriptstyle 2}^{\scriptscriptstyle (0)}}{\partial x_{\scriptscriptstyle j}\partial x_{\scriptscriptstyle \alpha_1}} \\
& + \frac{\partial(\kappa_{\scriptscriptstyle 2,ij} F_{\scriptscriptstyle 2}^{\scriptscriptstyle \alpha_1})}{\partial y_{\scriptscriptstyle i}} \frac{\partial^{\scriptscriptstyle 2} u_{\scriptscriptstyle 1}^{\scriptscriptstyle (0)}}{\partial x_{\scriptscriptstyle j}\partial x_{\scriptscriptstyle \alpha_1}} + \frac{\partial(\kappa_{\scriptscriptstyle 2,ij} K_{\scriptscriptstyle 2})}{\partial y_{\scriptscriptstyle i}}\big(\frac{\partial u_{\scriptscriptstyle 1}^{\scriptscriptstyle (0)}}{\partial x_{\scriptscriptstyle j}} - \frac{\partial u_{\scriptscriptstyle 2}^{\scriptscriptstyle (0)}}{\partial x_{\scriptscriptstyle j}}\big) \\
& - \varepsilon c_{\scriptscriptstyle 2}\!\Big(G_{\scriptscriptstyle 2} \frac{\partial^{\scriptscriptstyle 2} u_{\scriptscriptstyle 2}^{\scriptscriptstyle (0)}}{\partial^{\scriptscriptstyle 2} t} + N_{\scriptscriptstyle 2}^{\scriptscriptstyle \alpha_1\alpha_2} \frac{\partial^{\scriptscriptstyle 3} u_{\scriptscriptstyle 2}^{\scriptscriptstyle (0)}}{\partial t\partial x_{\scriptscriptstyle \alpha_1}\partial x_{\scriptscriptstyle \alpha_2}} + C_{\scriptscriptstyle 2}^{\scriptscriptstyle \alpha_1} \frac{\partial^{\scriptscriptstyle 2} u_{\scriptscriptstyle 2}^{\scriptscriptstyle (0)}}{\partial t\partial x_{\scriptscriptstyle \alpha_1}}\\
&+ F_{\scriptscriptstyle 2}^{\scriptscriptstyle \alpha_{\scriptscriptstyle 1}} \frac{\partial^{\scriptscriptstyle 2} u_{\scriptscriptstyle 1}^{\scriptscriptstyle (0)}}{\partial t\partial x_{\scriptscriptstyle \alpha_{\scriptscriptstyle 1}}} + K_{\scriptscriptstyle 2} \frac{\partial u_{\scriptscriptstyle 1}^{\scriptscriptstyle (0)}}{\partial t} - K_{\scriptscriptstyle 2} \frac{\partial u_{\scriptscriptstyle 2}^{\scriptscriptstyle (0)}}{\partial t} + Q_{\scriptscriptstyle 1} O(1)\Big) \\
& + \varepsilon \kappa_{\scriptscriptstyle 2,ij}\!\Big(G_{\scriptscriptstyle 2} \frac{\partial^{\scriptscriptstyle 3} u_{\scriptscriptstyle 2}^{\scriptscriptstyle (0)}}{\partial x_{\scriptscriptstyle i}\partial x_{\scriptscriptstyle j}\partial t} \!+\!  C_{\scriptscriptstyle 2}^{\scriptscriptstyle \alpha_{\scriptscriptstyle 1}} \frac{\partial^{\scriptscriptstyle 3} u_{\scriptscriptstyle 2}^{\scriptscriptstyle (0)}}{\partial x_{\scriptscriptstyle i}\partial x_{\scriptscriptstyle j}\partial x_{\scriptscriptstyle \alpha_{\scriptscriptstyle 1}}}
\!+\! F_{\scriptscriptstyle 2}^{\scriptscriptstyle \alpha_{\scriptscriptstyle 1}} \frac{\partial^{\scriptscriptstyle 3} u_{\scriptscriptstyle 1}^{\scriptscriptstyle (0)}}{\partial x_{\scriptscriptstyle i}\partial x_{\scriptscriptstyle j}\partial x_{\scriptscriptstyle \alpha_{\scriptscriptstyle 1}}}\\ 
&+N_{\scriptscriptstyle 2}^{\scriptscriptstyle \alpha_{\scriptscriptstyle 1}\alpha_{\scriptscriptstyle 2}} \frac{\partial^{\scriptscriptstyle 4} u_{\scriptscriptstyle 2}^{\scriptscriptstyle (0)}}{\partial x_{\scriptscriptstyle i}\partial x_{\scriptscriptstyle j}\partial x_{\scriptscriptstyle \alpha_{\scriptscriptstyle 1}}\partial x_{\scriptscriptstyle \alpha_{\scriptscriptstyle 2}}} +\! K_{\scriptscriptstyle 2} \frac{\partial^{\scriptscriptstyle 2} u_{\scriptscriptstyle 1}^{\scriptscriptstyle (0)}}{\partial x_{\scriptscriptstyle i}\partial x_{\scriptscriptstyle j}} - K_{\scriptscriptstyle 2} \frac{\partial^{\scriptscriptstyle 2} u_{\scriptscriptstyle 2}^{\scriptscriptstyle (0)}}{\partial x_{\scriptscriptstyle i}\partial x_{\scriptscriptstyle j}}\Big)
\bigg)
\label{eq:f2_full}
\end{aligned}
\end{equation}

The specific forms of \({\psi}_{\scriptscriptstyle 1}(\boldsymbol{x})\) and \({\psi}_{\scriptscriptstyle 2}(\boldsymbol{x})\) are given as follows:
\begin{equation}
\begin{aligned}
&\psi_{\scriptscriptstyle 1}(\boldsymbol{x}) = 
- N_{\scriptscriptstyle 1}^{\scriptscriptstyle \alpha_{1}}(\boldsymbol{y}) \frac{\partial g_{\scriptscriptstyle 1}(\boldsymbol{x})}{\partial x_{\scriptscriptstyle \alpha_{1}}} \\
&- M_{\scriptscriptstyle 1}(\boldsymbol{y}) \big( g_{\scriptscriptstyle 2}(\boldsymbol{x})\! - g_{\scriptscriptstyle 1}(\boldsymbol{x})  \big) \!
- \varepsilon G_{\scriptscriptstyle 1}(\boldsymbol{y}) \frac{\partial u_{\scriptscriptstyle 1}^{\scriptscriptstyle (0)}(\boldsymbol{x}, t)}{\partial t} \bigg|_{t=0} \\
&- \varepsilon N_{\scriptscriptstyle 1}^{\scriptscriptstyle \alpha_1\alpha_{2}}(\boldsymbol{y}) \frac{\partial^{2} g_{\scriptscriptstyle 1}(\boldsymbol{x})}{\partial x_{\scriptscriptstyle \alpha_{1}} \partial x_{\scriptscriptstyle \alpha_{2}}} 
- \varepsilon C_{\scriptscriptstyle 1}^{\scriptscriptstyle \alpha_{1}}(\boldsymbol{y}) \frac{\partial g_{\scriptscriptstyle 1}(\boldsymbol{x})}{\partial x_{\scriptscriptstyle \alpha_{1}}} \\
&- \varepsilon F_{\scriptscriptstyle 1}^{\scriptscriptstyle \alpha_{1}}(\boldsymbol{y}) \frac{\partial g_{\scriptscriptstyle 2}(\boldsymbol{x})}{\partial x_{\scriptscriptstyle \alpha_{1}}} 
- \varepsilon K_{\scriptscriptstyle 1}(\boldsymbol{y}) \big(  g_{\scriptscriptstyle 2}(\boldsymbol{x}) \!- g_{\scriptscriptstyle 1}(\boldsymbol{x})  \big),
\end{aligned}
\label{eq:psi_expressions_1}
\end{equation}
\begin{equation}
\begin{aligned}
&\psi_{\scriptscriptstyle 2}(\boldsymbol{x}) = 
- N_{\scriptscriptstyle 2}^{\scriptscriptstyle \alpha_{1}}(\boldsymbol{y}) \frac{\partial g_{\scriptscriptstyle 2}(\boldsymbol{x})}{\partial x_{\scriptscriptstyle \alpha_{1}}} \\
 &- M_{\scriptscriptstyle 2}(\boldsymbol{y}) \big( g_{\scriptscriptstyle 1}(\boldsymbol{x})\! - g_{\scriptscriptstyle 2}(\boldsymbol{x}) \big) 
\! - \varepsilon G_{\scriptscriptstyle 2}(\boldsymbol{y}) \frac{\partial u_{\scriptscriptstyle 2}^{\scriptscriptstyle (0)}(\boldsymbol{x}, t)}{\partial t} \bigg|_{t=0} \\
&- \varepsilon N_{\scriptscriptstyle 2}^{\scriptscriptstyle \alpha_{\scriptscriptstyle 1}\alpha_{\scriptscriptstyle 2}}(\boldsymbol{y}) \frac{\partial^{2} g_{\scriptscriptstyle 2}(\boldsymbol{x})}{\partial x_{\scriptscriptstyle \alpha_{1}} \partial x_{\scriptscriptstyle \alpha_{2}}} 
- \varepsilon C_{\scriptscriptstyle 2}^{\scriptscriptstyle \alpha_{1}}(\boldsymbol{y}) \frac{\partial g_{\scriptscriptstyle 2}^(\boldsymbol{x})}{\partial x_{\scriptscriptstyle \alpha_{1}}} \\
&- \varepsilon F_{\scriptscriptstyle 2}^{\scriptscriptstyle \alpha_{1}}(\boldsymbol{y}) \frac{\partial g_{\scriptscriptstyle 1}(\boldsymbol{x})}{\partial x_{\scriptscriptstyle \alpha_{1}}} 
- \varepsilon K_{\scriptscriptstyle 2}(\boldsymbol{y}) \big( g_{\scriptscriptstyle 1}(\boldsymbol{x}) - g_{\scriptscriptstyle 2}(\boldsymbol{x}) \big),
\end{aligned}
\label{eq:psi_expressions_2}
\end{equation}


%


\begin{thebibliography}{41}%
\makeatletter
\providecommand \@ifxundefined [1]{%
 \@ifx{#1\undefined}
}%
\providecommand \@ifnum [1]{%
 \ifnum #1\expandafter \@firstoftwo
 \else \expandafter \@secondoftwo
 \fi
}%
\providecommand \@ifx [1]{%
 \ifx #1\expandafter \@firstoftwo
 \else \expandafter \@secondoftwo
 \fi
}%
\providecommand \natexlab [1]{#1}%
\providecommand \enquote  [1]{``#1''}%
\providecommand \bibnamefont  [1]{#1}%
\providecommand \bibfnamefont [1]{#1}%
\providecommand \citenamefont [1]{#1}%
\providecommand \href@noop [0]{\@secondoftwo}%
\providecommand \href [0]{\begingroup \@sanitize@url \@href}%
\providecommand \@href[1]{\@@startlink{#1}\@@href}%
\providecommand \@@href[1]{\endgroup#1\@@endlink}%
\providecommand \@sanitize@url [0]{\catcode `\\12\catcode `\$12\catcode `\&12\catcode `\#12\catcode `\^12\catcode `\_12\catcode `\%12\relax}%
\providecommand \@@startlink[1]{}%
\providecommand \@@endlink[0]{}%
\providecommand \url  [0]{\begingroup\@sanitize@url \@url }%
\providecommand \@url [1]{\endgroup\@href {#1}{\urlprefix }}%
\providecommand \urlprefix  [0]{URL }%
\providecommand \Eprint [0]{\href }%
\providecommand \doibase [0]{http://dx.doi.org/}%
\providecommand \selectlanguage [0]{\@gobble}%
\providecommand \bibinfo  [0]{\@secondoftwo}%
\providecommand \bibfield  [0]{\@secondoftwo}%
\providecommand \translation [1]{[#1]}%
\providecommand \BibitemOpen [0]{}%
\providecommand \bibitemStop [0]{}%
\providecommand \bibitemNoStop [0]{.\EOS\space}%
\providecommand \EOS [0]{\spacefactor3000\relax}%
\providecommand \BibitemShut  [1]{\csname bibitem#1\endcsname}%
\let\auto@bib@innerbib\@empty
\bibitem [{\citenamefont {Zhao}\ \emph {et~al.}(2022)\citenamefont {Zhao}, \citenamefont {Wang}, \citenamefont {Sima}, \citenamefont {Guo},\ and\ \citenamefont {Zhang}}]{ZHAO2022110987}%
  \BibitemOpen
  \bibfield  {author} {\bibinfo {author} {\bibfnamefont {N.}~\bibnamefont {Zhao}}, \bibinfo {author} {\bibfnamefont {L.}~\bibnamefont {Wang}}, \bibinfo {author} {\bibfnamefont {L.}~\bibnamefont {Sima}}, \bibinfo {author} {\bibfnamefont {Y.}~\bibnamefont {Guo}}, \ and\ \bibinfo {author} {\bibfnamefont {H.}~\bibnamefont {Zhang}},\ }\bibfield  {title} {\enquote {\bibinfo {title} {Understanding stress-sensitive behavior of pore structure in tight sandstone reservoirs under cyclic compression using mineral, morphology, and stress analyses},}\ }\href@noop {} {\bibfield  {journal} {\bibinfo  {journal} {Journal of Petroleum Science and Engineering}\ }\textbf {\bibinfo {volume} {218}},\ \bibinfo {pages} {110987} (\bibinfo {year} {2022})}\BibitemShut {NoStop}%
\bibitem [{\citenamefont {Henning}\ and\ \citenamefont {Ohlberger}(2009)}]{henning_heterogeneous_2009}%
  \BibitemOpen
  \bibfield  {author} {\bibinfo {author} {\bibfnamefont {P.}~\bibnamefont {Henning}}\ and\ \bibinfo {author} {\bibfnamefont {M.}~\bibnamefont {Ohlberger}},\ }\bibfield  {title} {\enquote {\bibinfo {title} {The heterogeneous multiscale finite element method for elliptic homogenization problems in perforated domains},}\ }\href@noop {} {\bibfield  {journal} {\bibinfo  {journal} {NUMERISCHE MATHEMATIK}\ }\textbf {\bibinfo {volume} {113}},\ \bibinfo {pages} {601--629} (\bibinfo {year} {2009})}\BibitemShut {NoStop}%
\bibitem [{\citenamefont {Hillairet}(2018)}]{hillairet_homogenization_2018}%
  \BibitemOpen
  \bibfield  {author} {\bibinfo {author} {\bibfnamefont {M.}~\bibnamefont {Hillairet}},\ }\bibfield  {title} {\enquote {\bibinfo {title} {On the {Homogenization} of the {Stokes} {Problem} in a {Perforated} {Domain}},}\ }\href@noop {} {\bibfield  {journal} {\bibinfo  {journal} {ARCHIVE FOR RATIONAL MECHANICS AND ANALYSIS}\ }\textbf {\bibinfo {volume} {230}},\ \bibinfo {pages} {1179--1228} (\bibinfo {year} {2018})}\BibitemShut {NoStop}%
\bibitem [{\citenamefont {Allaire}(1991{\natexlab{a}})}]{Gr1991Homogenization1}%
  \BibitemOpen
  \bibfield  {author} {\bibinfo {author} {\bibfnamefont {G.}~\bibnamefont {Allaire}},\ }\bibfield  {title} {\enquote {\bibinfo {title} {Homogenization of the navier-stokes equations in open sets perforated with tiny holes i. abstract framework, a volume distribution of holes},}\ }\href@noop {} {\bibfield  {journal} {\bibinfo  {journal} {Archive for Rational Mechanics \& Analysis}\ } (\bibinfo {year} {1991}{\natexlab{a}})}\BibitemShut {NoStop}%
\bibitem [{\citenamefont {Allaire}(1991{\natexlab{b}})}]{Gr1991Homogenization2}%
  \BibitemOpen
  \bibfield  {author} {\bibinfo {author} {\bibfnamefont {G.}~\bibnamefont {Allaire}},\ }\bibfield  {title} {\enquote {\bibinfo {title} {Homogenization of the navier-stokes equations in open sets perforated with tiny holes ii: Non-critical sizes of the holes for a volume distribution and a surface distribution of holes},}\ }\href@noop {} {\bibfield  {journal} {\bibinfo  {journal} {Archive for Rational Mechanics \& Analysis}\ } (\bibinfo {year} {1991}{\natexlab{b}})}\BibitemShut {NoStop}%
\bibitem [{\citenamefont {Allaire}(1991{\natexlab{c}})}]{Gr1991Homogenization3}%
  \BibitemOpen
  \bibfield  {author} {\bibinfo {author} {\bibfnamefont {G.}~\bibnamefont {Allaire}},\ }\bibfield  {title} {\enquote {\bibinfo {title} {Homogenization of the navier-stokes equations with a slip boundary condition},}\ }\href@noop {} {\bibfield  {journal} {\bibinfo  {journal} {Communications on Pure and Applied Mathematics}\ } (\bibinfo {year} {1991}{\natexlab{c}})}\BibitemShut {NoStop}%
\bibitem [{\citenamefont {Efendiev}, \citenamefont {Galvis},\ and\ \citenamefont {Hou}(2013)}]{2013Generalized}%
  \BibitemOpen
  \bibfield  {author} {\bibinfo {author} {\bibfnamefont {Y.}~\bibnamefont {Efendiev}}, \bibinfo {author} {\bibfnamefont {J.}~\bibnamefont {Galvis}}, \ and\ \bibinfo {author} {\bibfnamefont {T.~Y.}\ \bibnamefont {Hou}},\ }\bibfield  {title} {\enquote {\bibinfo {title} {Generalized multiscale finite element methods (gmsfem)},}\ }\href@noop {} {\bibfield  {journal} {\bibinfo  {journal} {Journal of Computational Physics}\ }\textbf {\bibinfo {volume} {251}},\ \bibinfo {pages} {116--135} (\bibinfo {year} {2013})}\BibitemShut {NoStop}%
\bibitem [{\citenamefont {Chung}, \citenamefont {Vasilyeva},\ and\ \citenamefont {Wang}(2017)}]{chung_conservative_2017}%
  \BibitemOpen
  \bibfield  {author} {\bibinfo {author} {\bibfnamefont {E.~T.}\ \bibnamefont {Chung}}, \bibinfo {author} {\bibfnamefont {M.}~\bibnamefont {Vasilyeva}}, \ and\ \bibinfo {author} {\bibfnamefont {Y.}~\bibnamefont {Wang}},\ }\bibfield  {title} {\enquote {\bibinfo {title} {A conservative local multiscale model reduction technique for {Stokes} flows in heterogeneous perforated domains},}\ }\href@noop {} {\bibfield  {journal} {\bibinfo  {journal} {JOURNAL OF COMPUTATIONAL AND APPLIED MATHEMATICS}\ }\textbf {\bibinfo {volume} {321}},\ \bibinfo {pages} {389--405} (\bibinfo {year} {2017})}\BibitemShut {NoStop}%
\bibitem [{\citenamefont {Chung}\ \emph {et~al.}(2016)\citenamefont {Chung}, \citenamefont {Efendiev}, \citenamefont {Li},\ and\ \citenamefont {Vasilyeva}}]{chung_generalized_2016}%
  \BibitemOpen
  \bibfield  {author} {\bibinfo {author} {\bibfnamefont {E.~T.}\ \bibnamefont {Chung}}, \bibinfo {author} {\bibfnamefont {Y.}~\bibnamefont {Efendiev}}, \bibinfo {author} {\bibfnamefont {G.}~\bibnamefont {Li}}, \ and\ \bibinfo {author} {\bibfnamefont {M.}~\bibnamefont {Vasilyeva}},\ }\bibfield  {title} {\enquote {\bibinfo {title} {Generalized multiscale finite element methods for problems in perforated heterogeneous domains},}\ }\href@noop {} {\bibfield  {journal} {\bibinfo  {journal} {APPLICABLE ANALYSIS}\ }\textbf {\bibinfo {volume} {95}},\ \bibinfo {pages} {2254--2279} (\bibinfo {year} {2016})}\BibitemShut {NoStop}%
\bibitem [{\citenamefont {Chung}, \citenamefont {Leung},\ and\ \citenamefont {Vasilyeva}(2016)}]{chung_mixed_2016}%
  \BibitemOpen
  \bibfield  {author} {\bibinfo {author} {\bibfnamefont {E.~T.}\ \bibnamefont {Chung}}, \bibinfo {author} {\bibfnamefont {W.~T.}\ \bibnamefont {Leung}}, \ and\ \bibinfo {author} {\bibfnamefont {M.}~\bibnamefont {Vasilyeva}},\ }\bibfield  {title} {\enquote {\bibinfo {title} {Mixed {GMsFEM} for second order elliptic problem in perforated domains},}\ }\href@noop {} {\bibfield  {journal} {\bibinfo  {journal} {JOURNAL OF COMPUTATIONAL AND APPLIED MATHEMATICS}\ }\textbf {\bibinfo {volume} {304}},\ \bibinfo {pages} {84--99} (\bibinfo {year} {2016})}\BibitemShut {NoStop}%
\bibitem [{\citenamefont {Chung}\ \emph {et~al.}(2017{\natexlab{a}})\citenamefont {Chung}, \citenamefont {Efendiev}, \citenamefont {Leung}, \citenamefont {Vasilyeva},\ and\ \citenamefont {Wang}}]{chung_online_2017}%
  \BibitemOpen
  \bibfield  {author} {\bibinfo {author} {\bibfnamefont {E.~T.}\ \bibnamefont {Chung}}, \bibinfo {author} {\bibfnamefont {Y.}~\bibnamefont {Efendiev}}, \bibinfo {author} {\bibfnamefont {W.~T.}\ \bibnamefont {Leung}}, \bibinfo {author} {\bibfnamefont {M.}~\bibnamefont {Vasilyeva}}, \ and\ \bibinfo {author} {\bibfnamefont {Y.}~\bibnamefont {Wang}},\ }\bibfield  {title} {\enquote {\bibinfo {title} {Online adaptive local multiscale model reduction for heterogeneous problems in perforated domains},}\ }\href@noop {} {\bibfield  {journal} {\bibinfo  {journal} {APPLICABLE ANALYSIS}\ }\textbf {\bibinfo {volume} {96}},\ \bibinfo {pages} {2002--2031} (\bibinfo {year} {2017}{\natexlab{a}})}\BibitemShut {NoStop}%
\bibitem [{\citenamefont {Xie}\ \emph {et~al.}(2025{\natexlab{a}})\citenamefont {Xie}, \citenamefont {Galvis}, \citenamefont {Yang},\ and\ \citenamefont {Huang}}]{xie_time_2025}%
  \BibitemOpen
  \bibfield  {author} {\bibinfo {author} {\bibfnamefont {W.}~\bibnamefont {Xie}}, \bibinfo {author} {\bibfnamefont {J.}~\bibnamefont {Galvis}}, \bibinfo {author} {\bibfnamefont {Y.}~\bibnamefont {Yang}}, \ and\ \bibinfo {author} {\bibfnamefont {Y.}~\bibnamefont {Huang}},\ }\bibfield  {title} {\enquote {\bibinfo {title} {On time integrators for {Generalized} {Multiscale} {Finite} {Element} {Methods} applied to advection-diffusion in high-contrast multiscale media},}\ }\href@noop {} {\bibfield  {journal} {\bibinfo  {journal} {JOURNAL OF COMPUTATIONAL AND APPLIED MATHEMATICS}\ }\textbf {\bibinfo {volume} {460}} (\bibinfo {year} {2025}{\natexlab{a}})}\BibitemShut {NoStop}%
\bibitem [{\citenamefont {Park}\ and\ \citenamefont {Hoang}(2020)}]{park_hierarchical_2020}%
  \BibitemOpen
  \bibfield  {author} {\bibinfo {author} {\bibfnamefont {J.~S.~R.}\ \bibnamefont {Park}}\ and\ \bibinfo {author} {\bibfnamefont {V.~H.}\ \bibnamefont {Hoang}},\ }\bibfield  {title} {\enquote {\bibinfo {title} {Hierarchical multiscale finite element method for multi-continuum media},}\ }\href@noop {} {\bibfield  {journal} {\bibinfo  {journal} {JOURNAL OF COMPUTATIONAL AND APPLIED MATHEMATICS}\ }\textbf {\bibinfo {volume} {369}} (\bibinfo {year} {2020})}\BibitemShut {NoStop}%
\bibitem [{\citenamefont {Park}\ and\ \citenamefont {Hoang}(2022)}]{park_homogenization_2022}%
  \BibitemOpen
  \bibfield  {author} {\bibinfo {author} {\bibfnamefont {J.~S.~R.}\ \bibnamefont {Park}}\ and\ \bibinfo {author} {\bibfnamefont {V.~H.}\ \bibnamefont {Hoang}},\ }\bibfield  {title} {\enquote {\bibinfo {title} {Homogenization of a multiscale multi-continuum system},}\ }\href@noop {} {\bibfield  {journal} {\bibinfo  {journal} {APPLICABLE ANALYSIS}\ }\textbf {\bibinfo {volume} {101}},\ \bibinfo {pages} {1271--1298} (\bibinfo {year} {2022})}\BibitemShut {NoStop}%
\bibitem [{\citenamefont {Park}\ \emph {et~al.}(2020)\citenamefont {Park}, \citenamefont {Cheung}, \citenamefont {Mai},\ and\ \citenamefont {Hoang}}]{park_multiscale_2020}%
  \BibitemOpen
  \bibfield  {author} {\bibinfo {author} {\bibfnamefont {J.~S.~R.}\ \bibnamefont {Park}}, \bibinfo {author} {\bibfnamefont {S.~W.}\ \bibnamefont {Cheung}}, \bibinfo {author} {\bibfnamefont {T.}~\bibnamefont {Mai}}, \ and\ \bibinfo {author} {\bibfnamefont {V.~H.}\ \bibnamefont {Hoang}},\ }\bibfield  {title} {\enquote {\bibinfo {title} {Multiscale simulations for upscaled multi-continuum flows},}\ }\href@noop {} {\bibfield  {journal} {\bibinfo  {journal} {JOURNAL OF COMPUTATIONAL AND APPLIED MATHEMATICS}\ }\textbf {\bibinfo {volume} {374}} (\bibinfo {year} {2020})}\BibitemShut {NoStop}%
\bibitem [{\citenamefont {Park}, \citenamefont {Cheung},\ and\ \citenamefont {Mai}(2021)}]{park_multiscale_2021}%
  \BibitemOpen
  \bibfield  {author} {\bibinfo {author} {\bibfnamefont {J.~S.~R.}\ \bibnamefont {Park}}, \bibinfo {author} {\bibfnamefont {S.~W.}\ \bibnamefont {Cheung}}, \ and\ \bibinfo {author} {\bibfnamefont {T.}~\bibnamefont {Mai}},\ }\bibfield  {title} {\enquote {\bibinfo {title} {Multiscale simulations for multi-continuum {Richards} equations},}\ }\href@noop {} {\bibfield  {journal} {\bibinfo  {journal} {JOURNAL OF COMPUTATIONAL AND APPLIED MATHEMATICS}\ }\textbf {\bibinfo {volume} {397}} (\bibinfo {year} {2021})}\BibitemShut {NoStop}%
\bibitem [{\citenamefont {Barenblatt}, \citenamefont {Zheltov},\ and\ \citenamefont {Kochina}(1960)}]{1960Basic}%
  \BibitemOpen
  \bibfield  {author} {\bibinfo {author} {\bibfnamefont {G.~I.}\ \bibnamefont {Barenblatt}}, \bibinfo {author} {\bibfnamefont {Y.~P.}\ \bibnamefont {Zheltov}}, \ and\ \bibinfo {author} {\bibfnamefont {I.~N.}\ \bibnamefont {Kochina}},\ }\bibfield  {title} {\enquote {\bibinfo {title} {Basic concepts in the theory of seepage ofhomogeneous liquids in fissured rocks},}\ }\href@noop {} {\bibfield  {journal} {\bibinfo  {journal} {Journal of Applied Mathematics}\ }\textbf {\bibinfo {volume} {24}},\ \bibinfo {pages} {5--1303} (\bibinfo {year} {1960})}\BibitemShut {NoStop}%
\bibitem [{\citenamefont {Chung}\ \emph {et~al.}(2017{\natexlab{b}})\citenamefont {Chung}, \citenamefont {Efendiev}, \citenamefont {Leung},\ and\ \citenamefont {Vasilyeva}}]{2017Coupling}%
  \BibitemOpen
  \bibfield  {author} {\bibinfo {author} {\bibfnamefont {E.~T.}\ \bibnamefont {Chung}}, \bibinfo {author} {\bibfnamefont {Y.}~\bibnamefont {Efendiev}}, \bibinfo {author} {\bibfnamefont {T.}~\bibnamefont {Leung}}, \ and\ \bibinfo {author} {\bibfnamefont {M.}~\bibnamefont {Vasilyeva}},\ }\bibfield  {title} {\enquote {\bibinfo {title} {Coupling of multiscale and multi-continuum approaches},}\ }\href@noop {} {\bibfield  {journal} {\bibinfo  {journal} {GEM - International Journal on Geomathematics}\ }\textbf {\bibinfo {volume} {8}},\ \bibinfo {pages} {9--41} (\bibinfo {year} {2017}{\natexlab{b}})}\BibitemShut {NoStop}%
\bibitem [{\citenamefont {Pruess}\ and\ \citenamefont {Narasimhan}(1982)}]{K1982On}%
  \BibitemOpen
  \bibfield  {author} {\bibinfo {author} {\bibfnamefont {K.}~\bibnamefont {Pruess}}\ and\ \bibinfo {author} {\bibfnamefont {T.~N.}\ \bibnamefont {Narasimhan}},\ }\bibfield  {title} {\enquote {\bibinfo {title} {On fluid reserves and the production of superheated steam from fractured, vapor-dominated geothermal reservoirs},}\ }\href@noop {} {\bibfield  {journal} {\bibinfo  {journal} {Journal of Geophysical Research}\ }\textbf {\bibinfo {volume} {87}},\ \bibinfo {pages} {9329} (\bibinfo {year} {1982})}\BibitemShut {NoStop}%
\bibitem [{\citenamefont {Pruess}\ and\ \citenamefont {Narasimhan}(1985)}]{1985Practical}%
  \BibitemOpen
  \bibfield  {author} {\bibinfo {author} {\bibfnamefont {K.}~\bibnamefont {Pruess}}\ and\ \bibinfo {author} {\bibfnamefont {T.~N.}\ \bibnamefont {Narasimhan}},\ }\bibfield  {title} {\enquote {\bibinfo {title} {Practical method for modeling fluid and heat flow in fractured porous media},}\ }\href@noop {} {\bibfield  {journal} {\bibinfo  {journal} {Society of Petroleum Engineers Journal}\ }\textbf {\bibinfo {volume} {25}},\ \bibinfo {pages} {14--26} (\bibinfo {year} {1985})}\BibitemShut {NoStop}%
\bibitem [{\citenamefont {Wu}\ and\ \citenamefont {Pruess}(1988)}]{1988A}%
  \BibitemOpen
  \bibfield  {author} {\bibinfo {author} {\bibfnamefont {Y.~S.}\ \bibnamefont {Wu}}\ and\ \bibinfo {author} {\bibfnamefont {K.}~\bibnamefont {Pruess}},\ }\bibfield  {title} {\enquote {\bibinfo {title} {A multiple-porosity method for simulation of naturally fractured petroleum reservoirs},}\ }\href@noop {} {\bibfield  {journal} {\bibinfo  {journal} {Spe Reservoir Engineering}\ }\textbf {\bibinfo {volume} {3}},\ \bibinfo {pages} {327--336} (\bibinfo {year} {1988})}\BibitemShut {NoStop}%
\bibitem [{\citenamefont {Xie}\ \emph {et~al.}(2025{\natexlab{b}})\citenamefont {Xie}, \citenamefont {Efendiev}, \citenamefont {Huang}, \citenamefont {Leung},\ and\ \citenamefont {Yang}}]{xie_multicontinuum_2025}%
  \BibitemOpen
  \bibfield  {author} {\bibinfo {author} {\bibfnamefont {W.}~\bibnamefont {Xie}}, \bibinfo {author} {\bibfnamefont {Y.}~\bibnamefont {Efendiev}}, \bibinfo {author} {\bibfnamefont {Y.}~\bibnamefont {Huang}}, \bibinfo {author} {\bibfnamefont {W.~T.}\ \bibnamefont {Leung}}, \ and\ \bibinfo {author} {\bibfnamefont {Y.}~\bibnamefont {Yang}},\ }\bibfield  {title} {\enquote {\bibinfo {title} {Multicontinuum homogenization in perforated domains},}\ }\href@noop {} {\bibfield  {journal} {\bibinfo  {journal} {JOURNAL OF COMPUTATIONAL PHYSICS}\ }\textbf {\bibinfo {volume} {530}} (\bibinfo {year} {2025}{\natexlab{b}})}\BibitemShut {NoStop}%
\bibitem [{\citenamefont {Ammosov}\ \emph {et~al.}(2026)\citenamefont {Ammosov}, \citenamefont {Huang}, \citenamefont {Leung},\ and\ \citenamefont {Shan}}]{ammosov_multicontinuum_2026}%
  \BibitemOpen
  \bibfield  {author} {\bibinfo {author} {\bibfnamefont {D.}~\bibnamefont {Ammosov}}, \bibinfo {author} {\bibfnamefont {J.}~\bibnamefont {Huang}}, \bibinfo {author} {\bibfnamefont {W.~T.}\ \bibnamefont {Leung}}, \ and\ \bibinfo {author} {\bibfnamefont {B.}~\bibnamefont {Shan}},\ }\bibfield  {title} {\enquote {\bibinfo {title} {Multicontinuum homogenization for coupled flow and transport equations},}\ }\href@noop {} {\bibfield  {journal} {\bibinfo  {journal} {JOURNAL OF COMPUTATIONAL AND APPLIED MATHEMATICS}\ }\textbf {\bibinfo {volume} {471}} (\bibinfo {year} {2026})}\BibitemShut {NoStop}%
\bibitem [{\citenamefont {D.~A.}\ \emph {et~al.}(2024)\citenamefont {D.~A.}, \citenamefont {J.}, \citenamefont {W.~T.},\ and\ \citenamefont {B.}}]{2024Generalized}%
  \BibitemOpen
  \bibfield  {author} {\bibinfo {author} {\bibfnamefont {A.}~\bibnamefont {D.~A.}}, \bibinfo {author} {\bibfnamefont {H.}~\bibnamefont {J.}}, \bibinfo {author} {\bibfnamefont {L.}~\bibnamefont {W.~T.}}, \ and\ \bibinfo {author} {\bibfnamefont {S.}~\bibnamefont {B.}},\ }\bibfield  {title} {\enquote {\bibinfo {title} {Generalized multiscale finite element method for multicontinuum coupled flow and transport model},}\ }\href@noop {} {\bibfield  {journal} {\bibinfo  {journal} {Lobachevskii journal of mathematics}\ ,\ \bibinfo {pages} {45}} (\bibinfo {year} {2024})}\BibitemShut {NoStop}%
\bibitem [{\citenamefont {Wu}\ \emph {et~al.}(2011)\citenamefont {Wu}, \citenamefont {Di}, \citenamefont {Kang},\ and\ \citenamefont {Fakcharoenphol}}]{wu_multiple-continuum_2011}%
  \BibitemOpen
  \bibfield  {author} {\bibinfo {author} {\bibfnamefont {Y.-S.}\ \bibnamefont {Wu}}, \bibinfo {author} {\bibfnamefont {Y.}~\bibnamefont {Di}}, \bibinfo {author} {\bibfnamefont {Z.}~\bibnamefont {Kang}}, \ and\ \bibinfo {author} {\bibfnamefont {P.}~\bibnamefont {Fakcharoenphol}},\ }\bibfield  {title} {\enquote {\bibinfo {title} {A multiple-continuum model for simulating single-phase and multiphase flow in naturally fractured vuggy reservoirs},}\ }\href@noop {} {\bibfield  {journal} {\bibinfo  {journal} {Journal of Petroleum Science and Engineering}\ }\textbf {\bibinfo {volume} {78}},\ \bibinfo {pages} {13--22} (\bibinfo {year} {2011})}\BibitemShut {NoStop}%
\bibitem [{\citenamefont {Li}, \citenamefont {Zhang},\ and\ \citenamefont {Li}(2015)}]{li_multi-continuum_2015}%
  \BibitemOpen
  \bibfield  {author} {\bibinfo {author} {\bibfnamefont {X.}~\bibnamefont {Li}}, \bibinfo {author} {\bibfnamefont {D.}~\bibnamefont {Zhang}}, \ and\ \bibinfo {author} {\bibfnamefont {S.}~\bibnamefont {Li}},\ }\bibfield  {title} {\enquote {\bibinfo {title} {A multi-continuum multiple flow mechanism simulator for unconventional oil and gas recovery},}\ }\href@noop {} {\bibfield  {journal} {\bibinfo  {journal} {Journal of Natural Gas Science and Engineering}\ }\textbf {\bibinfo {volume} {26}},\ \bibinfo {pages} {652--669} (\bibinfo {year} {2015})}\BibitemShut {NoStop}%
\bibitem [{\citenamefont {Kazemi}\ \emph {et~al.}(1976)\citenamefont {Kazemi}, \citenamefont {Merrill}, \citenamefont {Porterfield},\ and\ \citenamefont {Zeman}}]{1976Numerical}%
  \BibitemOpen
  \bibfield  {author} {\bibinfo {author} {\bibfnamefont {H.}~\bibnamefont {Kazemi}}, \bibinfo {author} {\bibfnamefont {L.~S.}\ \bibnamefont {Merrill}}, \bibinfo {author} {\bibfnamefont {K.~L.}\ \bibnamefont {Porterfield}}, \ and\ \bibinfo {author} {\bibfnamefont {P.~R.}\ \bibnamefont {Zeman}},\ }\bibfield  {title} {\enquote {\bibinfo {title} {Numerical simulation of water-oil flow in naturally fractured reservoirs},}\ }\href@noop {} {\bibfield  {journal} {\bibinfo  {journal} {Society of Petroleum Engineers Journal}\ }\textbf {\bibinfo {volume} {16}},\ \bibinfo {pages} {317--326} (\bibinfo {year} {1976})}\BibitemShut {NoStop}%
\bibitem [{\citenamefont {Warren}\ and\ \citenamefont {Root}(1963)}]{1963The}%
  \BibitemOpen
  \bibfield  {author} {\bibinfo {author} {\bibfnamefont {J.~E.}\ \bibnamefont {Warren}}\ and\ \bibinfo {author} {\bibfnamefont {P.~J.}\ \bibnamefont {Root}},\ }\bibfield  {title} {\enquote {\bibinfo {title} {The behavior of naturally fractured reservoirs},}\ }\href@noop {} {\bibfield  {journal} {\bibinfo  {journal} {Society of Petroleum Engineers Journal}\ }\textbf {\bibinfo {volume} {3}},\ \bibinfo {pages} {245--255} (\bibinfo {year} {1963})}\BibitemShut {NoStop}%
\bibitem [{\citenamefont {Bensoussan}\ and\ \citenamefont {Alain}(1978)}]{Bensoussan1978Asymptotic}%
  \BibitemOpen
  \bibfield  {author} {\bibinfo {author} {\bibnamefont {Bensoussan}}\ and\ \bibinfo {author} {\bibnamefont {Alain}},\ }\href@noop {} {\emph {\bibinfo {title} {Asymptotic analysis for periodic structures /}}}\ (\bibinfo  {publisher} {Asymptotic analysis for periodic structures /},\ \bibinfo {year} {1978})\BibitemShut {NoStop}%
\bibitem [{\citenamefont {Oleinik}, \citenamefont {Shamaev},\ and\ \citenamefont {Yosifian}(2012)}]{2012Mathematical}%
  \BibitemOpen
  \bibfield  {author} {\bibinfo {author} {\bibfnamefont {O.}~\bibnamefont {Oleinik}}, \bibinfo {author} {\bibfnamefont {A.}~\bibnamefont {Shamaev}}, \ and\ \bibinfo {author} {\bibfnamefont {G.}~\bibnamefont {Yosifian}},\ }\bibfield  {title} {\enquote {\bibinfo {title} {Mathematical problems in elasticity and homogenization},}\ }\href@noop {} {\  (\bibinfo {year} {2012})}\BibitemShut {NoStop}%
\bibitem [{\citenamefont {Cioranescu}\ and\ \citenamefont {Donato}(1999)}]{1999An}%
  \BibitemOpen
  \bibfield  {author} {\bibinfo {author} {\bibfnamefont {D.}~\bibnamefont {Cioranescu}}\ and\ \bibinfo {author} {\bibfnamefont {P.}~\bibnamefont {Donato}},\ }\bibfield  {title} {\enquote {\bibinfo {title} {An introduction to homogenization},}\ }\href@noop {} {\bibfield  {journal} {\bibinfo  {journal} {Oxford University Press,}\ } (\bibinfo {year} {1999})}\BibitemShut {NoStop}%
\bibitem [{\citenamefont {Wang}, \citenamefont {Cao},\ and\ \citenamefont {Wong}(2015)}]{wang_multiscale_2015}%
  \BibitemOpen
  \bibfield  {author} {\bibinfo {author} {\bibfnamefont {X.}~\bibnamefont {Wang}}, \bibinfo {author} {\bibfnamefont {L.}~\bibnamefont {Cao}}, \ and\ \bibinfo {author} {\bibfnamefont {Y.}~\bibnamefont {Wong}},\ }\bibfield  {title} {\enquote {\bibinfo {title} {{MULTISCALE} {COMPUTATION} {AND} {CONVERGENCE} {FOR} {COUPLED} {THERMOELASTIC} {SYSTEM} {IN} {COMPOSITE} {MATERIALS}},}\ }\href@noop {} {\bibfield  {journal} {\bibinfo  {journal} {MULTISCALE MODELING \& SIMULATION}\ }\textbf {\bibinfo {volume} {13}},\ \bibinfo {pages} {661--690} (\bibinfo {year} {2015})}\BibitemShut {NoStop}%
\bibitem [{\citenamefont {Dong}\ \emph {et~al.}(2018{\natexlab{a}})\citenamefont {Dong}, \citenamefont {Cui}, \citenamefont {Nie}, \citenamefont {Yang},\ and\ \citenamefont {Yang}}]{dong_multiscale_2018}%
  \BibitemOpen
  \bibfield  {author} {\bibinfo {author} {\bibfnamefont {H.}~\bibnamefont {Dong}}, \bibinfo {author} {\bibfnamefont {J.}~\bibnamefont {Cui}}, \bibinfo {author} {\bibfnamefont {Y.}~\bibnamefont {Nie}}, \bibinfo {author} {\bibfnamefont {Z.}~\bibnamefont {Yang}}, \ and\ \bibinfo {author} {\bibfnamefont {Z.}~\bibnamefont {Yang}},\ }\bibfield  {title} {\enquote {\bibinfo {title} {Multiscale computational method for heat conduction problems of composite structures with diverse periodic configurations in different subdomains},}\ }\href@noop {} {\bibfield  {journal} {\bibinfo  {journal} {COMPUTERS \& MATHEMATICS WITH APPLICATIONS}\ }\textbf {\bibinfo {volume} {76}},\ \bibinfo {pages} {2549--2565} (\bibinfo {year} {2018}{\natexlab{a}})}\BibitemShut {NoStop}%
\bibitem [{\citenamefont {Cao}(2006)}]{cao_multiscale_2006}%
  \BibitemOpen
  \bibfield  {author} {\bibinfo {author} {\bibfnamefont {L.}~\bibnamefont {Cao}},\ }\bibfield  {title} {\enquote {\bibinfo {title} {Multiscale asymptotic expansion and finite element methods for the mixed boundary value problems of second order elliptic equation in perforated domains},}\ }\href@noop {} {\bibfield  {journal} {\bibinfo  {journal} {NUMERISCHE MATHEMATIK}\ }\textbf {\bibinfo {volume} {103}},\ \bibinfo {pages} {11--45} (\bibinfo {year} {2006})}\BibitemShut {NoStop}%
\bibitem [{\citenamefont {Dong}\ \emph {et~al.}(2018{\natexlab{b}})\citenamefont {Dong}, \citenamefont {Cui}, \citenamefont {Nie},\ and\ \citenamefont {Yang}}]{dong_second-order_2018}%
  \BibitemOpen
  \bibfield  {author} {\bibinfo {author} {\bibfnamefont {H.}~\bibnamefont {Dong}}, \bibinfo {author} {\bibfnamefont {J.}~\bibnamefont {Cui}}, \bibinfo {author} {\bibfnamefont {Y.}~\bibnamefont {Nie}}, \ and\ \bibinfo {author} {\bibfnamefont {Z.}~\bibnamefont {Yang}},\ }\bibfield  {title} {\enquote {\bibinfo {title} {Second-order two-scale computational method for damped dynamic thermo-mechanical problems of quasi-periodic composite materials},}\ }\href@noop {} {\bibfield  {journal} {\bibinfo  {journal} {JOURNAL OF COMPUTATIONAL AND APPLIED MATHEMATICS}\ }\textbf {\bibinfo {volume} {343}},\ \bibinfo {pages} {575--601} (\bibinfo {year} {2018}{\natexlab{b}})}\BibitemShut {NoStop}%
\bibitem [{\citenamefont {Dong}\ \emph {et~al.}(2018{\natexlab{c}})\citenamefont {Dong}, \citenamefont {Nie}, \citenamefont {Cui}, \citenamefont {Yang},\ and\ \citenamefont {Wang}}]{dong_second-order_2018-1}%
  \BibitemOpen
  \bibfield  {author} {\bibinfo {author} {\bibfnamefont {H.}~\bibnamefont {Dong}}, \bibinfo {author} {\bibfnamefont {Y.}~\bibnamefont {Nie}}, \bibinfo {author} {\bibfnamefont {J.}~\bibnamefont {Cui}}, \bibinfo {author} {\bibfnamefont {Z.}~\bibnamefont {Yang}}, \ and\ \bibinfo {author} {\bibfnamefont {Z.}~\bibnamefont {Wang}},\ }\bibfield  {title} {\enquote {\bibinfo {title} {{SECOND}-{ORDER} {TWO}-{SCALE} {ANALYSIS} {METHOD} {FOR} {DYNAMIC} {THERMO}-{MECHANICAL} {PROBLEMS} {OF} {COMPOSITE} {STRUCTURES} {WITH} {CYLINDRICAL} {PERIODICITY}},}\ }\href@noop {} {\bibfield  {journal} {\bibinfo  {journal} {INTERNATIONAL JOURNAL OF NUMERICAL ANALYSIS AND MODELING}\ }\textbf {\bibinfo {volume} {15}},\ \bibinfo {pages} {834--863} (\bibinfo {year} {2018}{\natexlab{c}})}\BibitemShut {NoStop}%
\bibitem [{\citenamefont {Dong}\ \emph {et~al.}(2018{\natexlab{d}})\citenamefont {Dong}, \citenamefont {Cao}, \citenamefont {Wang},\ and\ \citenamefont {Huang}}]{dong_multiscale_2018-1}%
  \BibitemOpen
  \bibfield  {author} {\bibinfo {author} {\bibfnamefont {Q.-l.}\ \bibnamefont {Dong}}, \bibinfo {author} {\bibfnamefont {L.-q.}\ \bibnamefont {Cao}}, \bibinfo {author} {\bibfnamefont {X.}~\bibnamefont {Wang}}, \ and\ \bibinfo {author} {\bibfnamefont {J.-z.}\ \bibnamefont {Huang}},\ }\bibfield  {title} {\enquote {\bibinfo {title} {Multiscale numerical algorithms for elastic wave equations with rapidly oscillating coefficients},}\ }\href@noop {} {\bibfield  {journal} {\bibinfo  {journal} {APPLIED MATHEMATICS AND COMPUTATION}\ }\textbf {\bibinfo {volume} {336}},\ \bibinfo {pages} {16--35} (\bibinfo {year} {2018}{\natexlab{d}})}\BibitemShut {NoStop}%
\bibitem [{\citenamefont {Cao}\ and\ \citenamefont {Cui}(2004)}]{cao_asymptotic_2004}%
  \BibitemOpen
  \bibfield  {author} {\bibinfo {author} {\bibfnamefont {L.}~\bibnamefont {Cao}}\ and\ \bibinfo {author} {\bibfnamefont {J.}~\bibnamefont {Cui}},\ }\bibfield  {title} {\enquote {\bibinfo {title} {Asymptotic expansions and numerical algorithms of eigenvalues and eigenfunctions of the {Dirichlet} problem for second order elliptic equations in perforated domains},}\ }\href@noop {} {\bibfield  {journal} {\bibinfo  {journal} {NUMERISCHE MATHEMATIK}\ }\textbf {\bibinfo {volume} {96}},\ \bibinfo {pages} {525--581} (\bibinfo {year} {2004})}\BibitemShut {NoStop}%
\bibitem [{\citenamefont {Feng}\ and\ \citenamefont {Cui}(2004)}]{feng_multi-scale_2004}%
  \BibitemOpen
  \bibfield  {author} {\bibinfo {author} {\bibfnamefont {Y.}~\bibnamefont {Feng}}\ and\ \bibinfo {author} {\bibfnamefont {J.}~\bibnamefont {Cui}},\ }\bibfield  {title} {\enquote {\bibinfo {title} {Multi-scale analysis and {FE} computation for the structure of composite materials with small periodic configuration under condition of coupled thermoelasticity},}\ }\href@noop {} {\bibfield  {journal} {\bibinfo  {journal} {INTERNATIONAL JOURNAL FOR NUMERICAL METHODS IN ENGINEERING}\ }\textbf {\bibinfo {volume} {60}},\ \bibinfo {pages} {1879--1910} (\bibinfo {year} {2004})}\BibitemShut {NoStop}%
\bibitem [{\citenamefont {Dong}\ and\ \citenamefont {Cao}(2009)}]{dong_multiscale_2009}%
  \BibitemOpen
  \bibfield  {author} {\bibinfo {author} {\bibfnamefont {Q.-L.}\ \bibnamefont {Dong}}\ and\ \bibinfo {author} {\bibfnamefont {L.-Q.}\ \bibnamefont {Cao}},\ }\bibfield  {title} {\enquote {\bibinfo {title} {Multiscale asymptotic expansions and numerical algorithms for the wave equations of second order with rapidly oscillating coefficients},}\ }\href@noop {} {\bibfield  {journal} {\bibinfo  {journal} {APPLIED NUMERICAL MATHEMATICS}\ }\textbf {\bibinfo {volume} {59}},\ \bibinfo {pages} {3008--3032} (\bibinfo {year} {2009})}\BibitemShut {NoStop}%
\bibitem [{\citenamefont {Dong}\ and\ \citenamefont {Cao}(2014)}]{dong_multiscale_2014}%
  \BibitemOpen
  \bibfield  {author} {\bibinfo {author} {\bibfnamefont {Q.-L.}\ \bibnamefont {Dong}}\ and\ \bibinfo {author} {\bibfnamefont {L.-Q.}\ \bibnamefont {Cao}},\ }\bibfield  {title} {\enquote {\bibinfo {title} {Multiscale asymptotic expansions methods and numerical algorithms for the wave equations in perforated domains},}\ }\href@noop {} {\bibfield  {journal} {\bibinfo  {journal} {APPLIED MATHEMATICS AND COMPUTATION}\ }\textbf {\bibinfo {volume} {232}},\ \bibinfo {pages} {872--887} (\bibinfo {year} {2014})}\BibitemShut {NoStop}%
\end{thebibliography}
\end{document}